\documentclass{amsart}

\usepackage{amsmath,amssymb,amsfonts,amssymb,amsthm}

\usepackage{verbatim}
\usepackage[usenames]{color}
\usepackage{hyperref}
\usepackage{url}
\usepackage{tikz,tikz-qtree,ifthen,cancel}
\usepackage{array,tikz-qtree,ifthen,cancel}
\usepackage{graphicx}
\usepackage{adjustbox}
\usepackage{amsthm,graphicx,tikz,appendix,tikz-qtree,ifthen,cancel}
\usetikzlibrary{calc,shapes,patterns,positioning}

\newtheorem{thm}{Theorem}[section]
\newtheorem{prop}[thm]{Proposition}
\newtheorem{mainthm}[thm]{Main Theorem}
\newtheorem{lem}[thm]{Lemma}

\newtheorem*{thmR1}{Theorem \ref{thm.MillikenIPOC}}
\newtheorem*{thmR2}{Theorem \ref{thm.mainRamsey}}
\newtheorem*{thmfinalthm}{Theorem \ref{finalthm}}
\newtheorem{claim}{Claim}
\newtheorem{fact}[thm]{Fact}

\theoremstyle{remark}
\newtheorem{rem}[thm]{Remark}

\theoremstyle{definition}
\newtheorem{defn}[thm]{Definition}
\newtheorem{conv}[thm]{Convention}

\newtheorem{notation}[thm]{Notation}
\newtheorem{assumption}[thm]{Assumption}
\newtheorem{example}[thm]{Example}
\newtheorem{question}[thm]{Question}

\theoremstyle{remark}
\newtheorem*{ack}{Acknowledgments}

\newcommand{\al}{\alpha}
\newcommand{\om}{\omega}

\newcommand{\sse}{\subseteq}
\newcommand{\contains}{\supseteq}
\newcommand{\forces}{\Vdash}

\DeclareMathOperator{\ran}{ran}

\DeclareMathOperator{\stem}{stem}
\DeclareMathOperator{\Spl}{Spl}
\DeclareMathOperator{\spl}{spl}
\DeclareMathOperator{\Lev}{Lev}

\DeclareMathOperator{\Sims}{Sim}
\DeclareMathOperator{\Sim}{Sim}
\DeclareMathOperator{\Ext}{Ext}

\DeclareMathOperator{\splitpred}{splitpred}
\DeclareMathOperator{\MPE}{MPE}

\newcommand{\re}{\restriction}
\newcommand{\bP}{\mathbb{P}}

\newcommand{\bD}{\mathbb{D}}

\newcommand{\bQ}{\mathbb{Q}}

\newcommand{\bT}{\mathbb{T}}
\newcommand{\bS}{\mathbb{S}}

\newcommand{\G}{\mathrm{G}}
\newcommand{\HH}{\mathrm{H}}

\newcommand{\A}{\mathrm{A}}
\newcommand{\B}{\mathrm{B}}
\newcommand{\C}{\mathrm{C}}
\newcommand{\D}{\mathrm{D}}
\newcommand{\E}{\mathrm{E}}

\newcommand{\ssim}{\stackrel{s}{\sim}}
\newcommand{\sssim}{\stackrel{ss}{\sim}}

\newcommand{\ra}{\rightarrow}

\newcommand{\lgl}{\langle}
\newcommand{\rgl}{\rangle}

\newcommand{\POC}{Parallel $1$'s Criterion}
\newcommand{\SPOC}{Strict Parallel $1$'s Criterion}
\newcommand{\STROC}{Strong Parallel $1$'s Criterion}
\newcommand{\IPOC}{Incremental Parallel $1$'s Criterion}

\newcommand{\Nesetril}{Ne{\v{s}}et{\v{r}}il}

\newcommand{\Rodl}{R{\"{o}}dl}
\newcommand{\Erdos}{Erd{\H{o}}s}

\newcommand{\Fraisse}{Fra{\"{i}}ss{\'{e}}}
\newcommand{\Lauchli}{L{\"{a}}uchli}

\newcommand{\noprint}[1]{\relax}


\title[Ramsey theory of the   universal homogeneous triangle-free graph]{The Ramsey theory of\\   the   universal homogeneous triangle-free graph}

\author{N. Dobrinen}
\address{University of Denver\\
Department of Mathematics, 2390 S. York St., Denver, CO 80208, USA}
\email{natasha.dobrinen@du.edu}
  \urladdr{\url{http://cs.du.edu/~ndobrine}}
\thanks{This research was commenced  whilst the author was a visiting fellow at the Isaac Newton Institute for Mathematical Sciences in the programme `Mathematical, Foundational and Computational Aspects of the Higher Infinite' (HIF).
It was continued  while the author was a visitor at the Centre de Recerca Matem\`atica in the `Intensive Research Program on Large Cardinals and Strong Logics.'
The author gratefully acknowledges support from  the Isaac Newton Institute,
the Centre de Recerca Matem\`atica,
 and   National Science Foundation Grants DMS-1301665 and DMS-1600781}

\subjclass[2010]{05D10, 05C55,  05C15, 05C05,  03C15, 03E75}

\keywords{Ramsey theory, universal triangle-free graph, trees}

\begin{document}

\maketitle

\begin{abstract}
The  universal homogeneous triangle-free graph, constructed by Henson \cite{Henson71} and  denoted $\mathcal{H}_3$,
is the triangle-free analogue of the Rado graph.
While the Ramsey theory of the Rado graph has been completely established,
beginning with \Erdos-Hajnal-Pos\'{a} \cite{Erdos/Hajnal/Posa75} and culminating in work of
  Sauer \cite{Sauer06} and Laflamme-Sauer-Vuksanovic  \cite{Laflamme/Sauer/Vuksanovic06},
 the Ramsey theory of $\mathcal{H}_3$ had only progressed to bounds for vertex colorings \cite{Komjath/Rodl86}
 and edge colorings \cite{Sauer98}.
 This was due to  a lack of broadscale techniques.

We solve this problem in general:
 For each finite triangle-free graph $\G$, there is a finite number $T(\G)$ such that for
 any coloring of all copies of $\G$ in $\mathcal{H}_3$ into finitely many colors,
there is a subgraph  of $\mathcal{H}_3$ which is again universal homogeneous triangle-free
 in which the coloring takes  no more than $T(\G)$  colors.
This is the first such result  for a homogeneous structure omitting copies of some non-trivial finite structure.
The proof entails developments of new broadscale techniques,
including
 a  flexible  method for
constructing    trees which code $\mathcal{H}_3$
and the development of their Ramsey theory.
\end{abstract}







\begin{center}
{\sc Overview}
\end{center}

 Ramsey theory  of finite structures is a well-established field with robust current activity.
Seminal examples include the classes of  finite linear orders \cite{Ramsey30},
finite Boolean algebras \cite{Graham/Rothschild71},
finite vector spaces over a finite field \cite{Graham/Leeb/Rothschild72},
finite ordered graphs \cite{Abramson/Harringon78}, \cite{Nesetril/Rodl77} and \cite{Nesetril/Rodl83},
finite ordered $k$-clique-free graphs  \cite{Nesetril/Rodl77} and \cite{Nesetril/Rodl83}, as well as many more recent advances.
{\em Homogeneous structures} are infinite structures  in which  any isomorphism between two finitely generated substructures can be extended to an automorphism of the whole structure.
A class of finite structures may have the Ramsey property, while the  homogeneous structure obtained by taking its limit may not.
The most basic example of this is linear orders.
The rational numbers are the \Fraisse\ limit of the class of all finite linear orders.
The latter has the Ramsey property, while the rationals do not:
There is a coloring of pairs of rational numbers into two colors such that every subset of the rationals forming another dense linear order without endpoints
has pairs taking each of the colors \cite{DevlinThesis}.

A  central question in the theory of  homogeneous relational structures asks the following:
Given a homogeneous structure $\bf{S}$ and
a finite substructure $\mathrm{A}$,
is there a number bound $T(\mathrm{A})$ such that for any coloring of all copies of $\mathrm{A}$ in $\bf{S}$ into finitely many colors,
there is a substructure $\bf{S}'$ of $\bf{S}$, isomorphic to  $\bf{S}$,
in which all copies of $\mathrm{A}$  take no more than $T(\mathrm{A})$ colors?
This question,  of interest for several decades since Laver's and Devlin's work on the rational numbers,  has gained recent momentum as it was
 brought into focus
 by Kechris, Pestov, and Todorcevic in \cite{Kechris/Pestov/Todorcevic05}.
This is interesting not only as a Ramsey-type property for infinite structures, but also because of its implications for topological dynamics, as shown in \cite{Zucker17}.

Prior to work in this paper, this problem  had been solved for only a few types of  homogeneous structures: the rationals (\cite{DevlinThesis}), the Rado graph and similar binary relational simple structures such as the random tournament (\cite{Sauer06}), ultrametric spaces (\cite{NVT08}), and
 enriched versions of the rationals and related circular directed graphs (\cite{Laflamme/NVT/Sauer10}).
Each of these do not omit any non-trivial substructures.
According to  \cite{NVTHabil},
``so far, the lack of tools to represent ultrahomogeneous structures is the major obstacle towards a better understanding of their infinite partition properties."
This paper
addresses this obstacle by
providing new tools to represent the universal homogeneous triangle-free graph and developing  the necessary Ramsey theory to prove upper bounds for the Ramsey degrees $T(\mathrm{A})$ for colorings of copies of a given finite triangle-free graph $\mathrm{A}$
within $\mathcal{H}_3$.
The methods  developed are robust
enough that  modifications
should likely apply to
 a large class of homogeneous structures omitting some finite substructure; particularly, in a forthcoming paper, the author is extending these methods to all $k$-clique free homogeneous graphs.

\section{Introduction}

The premise of Ramsey theory is that complete disorder is nearly impossible.
By beginning with a large enough structure, it is often possible to find  substructures in which order emerges
and persists among all smaller structures within it.
Although Ramsey-theoretic  statements are often simple,
they can be powerful tools:
in recent decades, the heart of  many  problems
in mathematics
have turned out to have  at their core some Ramsey-theoretic content.
This has been seen clearly in  Banach spaces and topological dynamics.

The field of Ramsey theory opened with the following celebrated result.
\begin{thm}[Ramsey, \cite{Ramsey30}]\label{thm.RamseyInfinite}
Let $k$ and $r$ be positive integers, and suppose $P_i$,  $i<r$, is a partition of all $k$-element subsets of $\mathbb{N}$.
Then there is an infinite subset $M$ of natural numbers
and some $i<r$
such that all $k$-element subsets of $M$ lie in $P_i$.
\end{thm}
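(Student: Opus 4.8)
The plan is to prove the infinite Ramsey theorem by the standard induction on $k$, the size of the subsets being colored. First I would dispense with the base case $k=1$: a partition of $\mathbb{N}$ itself into $r$ pieces $P_0,\dots,P_{r-1}$ must, by the pigeonhole principle, have at least one infinite piece $P_i$, and that piece is the desired $M$. This is the only genuinely combinatorial input; everything else is bookkeeping.

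For the inductive step, assume the result holds for $k-1$ and let $P_i$, $i<r$, partition the $k$-element subsets of $\mathbb{N}$. The idea is to build a decreasing sequence of infinite sets $\mathbb{N}=M_0\supseteq M_1\supseteq M_2\supseteq\cdots$ together with elements $a_0<a_1<a_2<\cdots$, where $a_n=\min M_n$, such that the ``color'' of a $k$-set $\{a_n\}\cup F$ (with $F$ a $(k-1)$-subset of $M_{n+1}$) depends only on $a_n$, not on $F$. To get $M_{n+1}$ from $M_n$: having chosen $a_n=\min M_n$, define a coloring of the $(k-1)$-element subsets of $M_n\setminus\{a_n\}$ by sending $F$ to the color of $\{a_n\}\cup F$; apply the induction hypothesis inside the infinite set $M_n\setminus\{a_n\}$ to extract an infinite homogeneous $M_{n+1}$, and let $c(a_n)<r$ be the single color it realizes. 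Carry this out for all $n\in\mathbb{N}$. Then $A=\{a_n:n\in\mathbb{N}\}$ is infinite, and the map $a_n\mapsto c(a_n)$ colors $A$ with $r$ colors; apply the $k=1$ case (pigeonhole) to $A$ to find an infinite $M\subseteq A$ on which $c$ is constantly some $i<r$. I would then check that all $k$-element subsets of $M$ lie in $P_i$: given such a subset, list it as $a_{n_0}<a_{n_1}<\cdots<a_{n_{k-1}}$; since each subsequent element lies in $M_{n_0+1}\supseteq M_{n_1}\supseteq\cdots$, the set $\{a_{n_1},\dots,a_{n_{k-1}}\}$ is a $(k-1)$-subset of $M_{n_0+1}$, so the color of the whole $k$-set is $c(a_{n_0})=i$.

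I expect the main obstacle — such as it is — to be purely notational: keeping the nested sets $M_n$, the extracted elements $a_n$, and the induced colorings straight, and verifying the final claim that an arbitrary $k$-subset of $M$ inherits the common color. There is a mild foundational point in that the construction of the sequence $(M_n,a_n)$ uses dependent choice, but this is standard and I would not belabor it. An alternative route avoiding the explicit recursion is to use a nonprincipal ultrafilter on $\mathbb{N}$ and define a color on $(k-1)$-sets by taking the $\mathcal{U}$-majority color of their one-point extensions, then iterate downward in $k$; this is slicker but requires machinery (or a weak choice principle) not yet set up in the paper, so I would present the recursive proof as the primary argument.
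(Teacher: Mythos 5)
Your proposal is correct: it is the standard proof of the infinite Ramsey theorem by induction on $k$, with the pigeonhole base case, the recursively constructed nested sets $M_0\supseteq M_1\supseteq\cdots$ with $a_n=\min M_n$, and the final pigeonhole applied to the induced coloring $a_n\mapsto c(a_n)$; your verification that an arbitrary $k$-subset $\{a_{n_0}<\cdots<a_{n_{k-1}}\}$ of $M$ gets color $c(a_{n_0})$ because $\{a_{n_1},\dots,a_{n_{k-1}}\}\subseteq M_{n_0+1}$ is exactly the point that needs checking, and you do it correctly. Note that the paper itself gives no proof of Theorem \ref{thm.RamseyInfinite}: it is quoted as Ramsey's classical 1930 result for background, so there is no internal argument to compare yours against; your recursive argument (rather than the ultrafilter variant you mention) is the appropriate self-contained choice.
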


The finite version of Ramsey's Theorem states that given positive integers $k,m,r$, there is a number $n$ large enough so that
given any partition of the $k$-element subsets of $\{0,\dots, n-1\}$ into $r$ pieces,
there is a subset $X$ of $\{0,\dots,n-1\}$ of size $m$ such that all $k$-element subsets of $X$ lie in one piece of the partition.
This follows from the infinite version using a  compactness argument.
The set $X$ is called {\em homogeneous} for the given partition.

The idea of partitioning  certain subsets of a given finite  set and looking for a large homogeneous subset has been extended to structures.
A \Fraisse\ class $\mathcal{K}$ of finite structures is said to have the {\em Ramsey property}
if for any $\A,\B\in\mathcal{K}$ with $\A$ embedding into $\B$,
 (written $A\le B$), and for any finite number $k$,
there is a finite ordered graph $\C$ such that
for any coloring of the copies of $\A$ in $\C$ into $k$ colors, there is a copy $\B'\le C$ of $\B$  such that all copies of $\A$ in $\B'$ have the same color.
We use the standard notation
\begin{equation}\C\ra (\B)^{\A}_k
\end{equation}
to denote that for any coloring of the copies of $A$ in $C$, there is a copy $B'$ of $B$ inside $C$ such that all copies of $A$ in $C$ have the same color.
Examples of \Fraisse\ classes of finite structures
 with the Ramsey property, having no extra relations, include
finite Boolean algebras (Graham and Rothschild, \cite{Graham/Rothschild71}) and
finite vector spaces over a finite field (Graham, Leeb, and Rothschild, \cite{Graham/Leeb/Rothschild72} and
\cite{Graham/Leeb/Rothschild73}).
Examples  of \Fraisse\ classes with extra structure satisfying the Ramsey  property include
 finite  ordered relational structures  (independently, Abramson and Harrington, \cite{Abramson/Harringon78} and \Nesetril\ and \Rodl, \cite{Nesetril/Rodl77}, \cite{Nesetril/Rodl83}).
In particular, this includes the class of finite ordered graphs, denoted $\mathcal{G}^{<}$.
The papers \cite{Nesetril/Rodl77} and \cite{Nesetril/Rodl83} further proved  the quite general result that  all set-systems of  finite ordered relational structures omitting some irreducible substructure have the Ramsey property.
This includes
 the \Fraisse\ class of finite ordered graphs omitting $n$-cliques, denoted $\mathcal{K}_n^{<}$.

In contrast, the \Fraisse\ class of unordered finite graphs  does not have the Ramsey property.
However, it does posses a non-trivial remnant of the Ramsey property, called
  finite Ramsey degrees.
Given any \Fraisse\ class $\mathcal{K}$ of finite structures,
for each $A\in\mathcal{K}$, let
$t(A,\mathcal{K})$  be the smallest number $t$, if it exists, such that
for each $B\in \mathcal{K}$ with $A\le B$ and for each $k\ge 2$,
there is some $C\in\mathcal{K}$, into which $B$ embeds, such that
\begin{equation}
C\ra (B)^A_{k,t},
\end{equation}
where this means that for each coloring of the copies of $A$ in $C$ into $k$ colors,
there is a copy $B'$ of $B$ in $C$ such that all copies of $A$ in $B'$ take no more than $t$ colors.
Then $\mathcal{K}$ has  {\em finite (small) Ramsey degrees} if
for each $\A\in\mathcal{K}$ the number
 $t(\A,\mathcal{K})$  exists.
The number $t(\A,\mathcal{K})$ is called the {\em Ramsey degree of $A$} in $\mathcal{K}$ (\cite{Fouche98}).
Note that $\mathcal{K}$ has the Ramsey property if and only if $t(A,\mathcal{K})=1$ for each $A\in\mathcal{K}$.
A strong connection between \Fraisse\ classes with finite Ramsey degrees and ordered expansions is made explicit in Section 10 of \cite{Kechris/Pestov/Todorcevic05},
where it is shown that if an ordered expansion $\mathcal{K}^{<}$ of a \Fraisse\ class $\mathcal{K}$ has the Ramsey property,
then $\mathcal{K}$ has finite small Ramsey degrees, and the degree of $\A\in\mathcal{K}$  can be computed from the number of non-isomorphic order expansions it has in $\mathcal{K}^{<}$.
A similar result holds for pre-compact expansions (see \cite{NVTHabil}).
It follows from the results stated above  that the classes of finite graphs and finite graphs omitting $n$-cliques have finite small Ramsey degrees.

At this point, it is pertinent to  mention recent
advances connecting Ramsey theory with topological dynamics.
A new connection
was established in \cite{Kechris/Pestov/Todorcevic05} which accounts for previously known phenomena regarding universal minimal flows.
In that paper,
Kechris, Pestov, and Todorcevic  proved several strong correspondences between Ramsey theory and topological dynamics.
A \Fraisse\ order class is a \Fraisse\  class which
has at least one relation which is a linear order.
One of their main theorems  (Theorem 4.7)
shows that  the extremely amenable (fixed point property on compacta) closed subgroups of the infinite symmetric group $S_{\infty}$ are exactly those of the form Aut$(\mathbf{F}^*)$, where $\mathbf{F}^*$
is the \Fraisse\ limit of some \Fraisse\ order class satisfying the Ramsey property.
Another main theorem (Theorem 10.8)
provides a way to compute the universal minimal flow of
topological groups which arise as the automorphism groups of \Fraisse\ limits of \Fraisse\ classes with the Ramsey property and the ordering property.
That the ordering property can be relaxed to the expansion property was proved by Nguyen Van Th\'{e} in \cite{NVT13}.

We now turn to Ramsey theory on infinite structures.
One may  ask whether analogues of Theorem \ref{thm.RamseyInfinite}
 can hold on more complex
 infinite relational structures,
 in particular, for  \Fraisse\ limits of
\Fraisse\ classes.
The
 \Fraisse\ limit
 $\mathbf{F}$ of  a \Fraisse\ class $\mathcal{K}$  of finite relational structures is said to have  {\em finite big Ramsey degrees} if for each  member $\A$  in $\mathcal{K}$,
there is a finite number $T(\A,\mathcal{K})$ such that for any coloring $c$ of all the substructures of $\mathbf{F}$ which are isomorphic to $\A$ into finitely many colors,
there is a substructure $\mathbf{F}'$ of $\mathbf{F}$ which is isomorphic to $\mathbf{F}$ and in which $c$ takes  no more than $T(\A,\mathcal{K})$ colors.
When this is the case, we write
\begin{equation}
\mathbf{F}\ra(\mathbf{F})^{\A}_{k,T(\A,\mathcal{K})}.
\end{equation}
This notion has been around  for several decades,
but the terminology was initiated in
 \cite{Kechris/Pestov/Todorcevic05}.

The first homogeneous structure shown to have finite big Ramsey degrees is the rationals, which are the \Fraisse\ limit of the class of finite linear orders $\mathcal{LO}$.
That the upper bounds exist was known by Laver,
 following from  applications of Milliken's Theorem (see Theorem \ref{thm.Milliken}).
The lower bounds were proved by Devlin  in 1979 in his thesis \cite{DevlinThesis}, where he showed that the numbers $T(k,\mathcal{LO})$ are actually tangent numbers, coefficients of the Talyor series expansion of the tangent function.
In particular, $T(1,\mathbb{Q})=1$, as any coloring of the rationals into finitely many colors contains a copy of the rationals in one color;
thus, the rationals are {\em indivisible}.
On the other hand,
$T(2,\mathbb{Q})=2$, so immediately for colorings of pairsets of rationals, one sees that there is no Ramsey property for the rationals when one requires that the substructure $\mathbf{Q}'$ of $\mathbb{Q}$ be ``big'', meaning isomorphic to the original infinite one.

The next homogeneous structure for which big Ramsey degrees have been proved is the
the Rado graph, denoted
$\mathcal{R}$.
Also known as the random graph, $\mathcal{R}$ is the countable graph which
is universal for all countable graphs, meaning each countable graph embeds into $\mathcal{R}$ as an induced substructure.
Equivalently, the Rado graph is the \Fraisse\ limit of the  class of  finite graphs, denoted $\mathcal{G}$.
It is an easy exercise from the defining property of the Rado graph to show that  the Rado graph  is  indivisible, meaning that  the big Ramsey degree  for vertices in the Rado graph is $1$.
The first non-trivial lower bound result for big Ramsey degrees was proved by
\Erdos, Hajnal and P\'{o}sa in \cite{Erdos/Hajnal/Posa75} in 1975, where they
showed
there is a coloring of the edges in $\mathcal{R}$ into two colors such that for any subgraph $\mathcal{R}'$ of the Rado graph such that $\mathcal{R}'$ is also universal for countable graphs,
the  edges in $\mathcal{R}'$ take on both colors.
That this  upper bound is sharp was proved over two decades later in 1996 by Pouzet and Sauer
in \cite{Pouzet/Sauer96},
and thus,
 the big Ramsey degree for edges  in the Rado graph is $2$.
The problem
of whether every finite graph has a finite big Ramsey degree in the Rado graph
took another decade to solve.
In \cite{Sauer06},
Sauer
proved that the Rado graph, and in fact a  general class  of binary relational homogeneous structures,
have finite big Ramsey degrees.
As in Laver's result, Milliken's Theorem plays a  central role in  obtaining the upper bounds.
The sharp lower bounds were proved the same year  by Laflamme, Sauer, and Vuksanovic in \cite{Laflamme/Sauer/Vuksanovic06}.

Sauer's result on the Rado graph  in conjunction with the
attention called to big Ramsey degrees  in \cite{Kechris/Pestov/Todorcevic05}
sparked new interest in the field.
In 2008,
Nguyen Van Th\'{e}
investigated big Ramsey degrees for homogeneous ultrametric spaces.
Given $S$ a set of positive real numbers,
$\mathcal{U}_S$ denotes the class of all finite ultrametric spaces  with strictly positive distances in $S$.
Its \Fraisse\ limit, denoted
$\mathbf{Q}_S$, is called the {\em Urysohn space associated with} $\mathcal{U}_S$ and is a homogeneous ultrametric space.
In  \cite{NVT08},
Nguyen Van Th\'{e} proved  that
$\mathbf{Q}_S$ has finite big Ramsey degrees whenever $S$ is finite.
Moreover, if $S$ is infinite, then any member of $\mathcal{U}_S$ of size greater than or equal to $2$ does not have a big Ramsey degree.
Soon after,
 Laflamme, Nguyen Van Th\'{e}, and Sauer proved
in
\cite{Laflamme/NVT/Sauer10}
that enriched structures of the rationals, and two related directed graphs, have  finite big Ramsey degrees.
For each $n\ge 1$,
 $\mathbb{Q}_n$ denotes the structure $(\mathbb{Q},  Q_1,\dots, Q_n,<)$, where  $Q_1,\dots, Q_n$ are disjoint dense subsets of $\mathbb{Q}$  whose union is $\mathbb{Q}$.
This is the \Fraisse\ limit of the class $\mathcal{P}_n$ of all finite linear orders equipped with an equivalence relation with  $n$ many equivalence classes.
Laflamme, Nguyen Van Th\'{e}, and Sauer proved  that
each member of $\mathcal{P}_n$ has a finite big Ramsey degree in $\mathbb{Q}_n$.
Further,
using the bi-definability between $\mathbb{Q}_n$ and the circular directed graphs $\mathbf{S}(n)$, for $n=2,3$,
they proved that
 $\mathbf{S}(2)$ and  $\mathbf{S}(3)$
have finite big Ramsey degrees.
Central to these results is a colored verision of  Milliken's theorem which they  proved in order to deduce the big Ramsey degrees.
For a more detailed overview of these results, the reader is referred to
\cite{NVTHabil}.

A common theme emerges when one looks at the proofs in \cite{DevlinThesis}, \cite{Sauer06}, and
\cite{Laflamme/NVT/Sauer10}.
The first two rely  in an essential way  on Milliken's Theorem,
Theorem \ref{thm.Milliken}  in Section \ref{sec.2}.
The third proves a new colored version of Milliken's Theorem and uses it to deduce the results.
The results in \cite{NVT08} use Ramsey's theorem.
This would lead one to conclude or at least conjecture that, aside from Ramsey's Theorem itself,  Milliken's Theorem contains the core combinatorial content of  big Ramsey degree results.
The lack of such a result applicable to  homogeneous structures omitting non-trivial substructures
posed  the main obstacle to the investigation of  their big Ramsey degrees.
This is addressed in the present paper.

This article is concerned with the question of big Ramsey degrees for the  homogeneous countable  triangle-free graph, denoted $\mathcal{H}_3$.
A graph $\G$ is {\em triangle-free} if for any three vertices  in $\G$, there is at least one pair with no edge between them;
in other words, no triangle embeds into $\G$ as an induced subgraph.
A triangle-free graph $\mathcal{H}$ on countably many vertices is a {\em homogeneous}
if each isomorphism between two finite (triangle-free) subgraphs can be extended to an automorphism of $\mathcal{H}$.
It is {\em universal} if every triangle-free graph on countably many vertices embeds into it. 
Universal homogeneous triangle-free graphs were first constructed by Henson in  \cite{Henson71}.
Such graphs are also seen to be
the \Fraisse\ limit of $\mathcal{K}_3$, the \Fraisse\ class of all countable triangle-free graphs,
 and
any two universal  homogeneous triangle-free graphs are isomorphic.

As mentioned above,
 \Nesetril\ and \Rodl\
proved that  the \Fraisse\ class of finite ordered triangle-free graphs, denoted $\mathcal{K}^{<}_3$, has the Ramsey property.
It follows that the \Fraisse\ class of  unordered finite triangle-free graphs, denoted
$\mathcal{K}_3$, has finite small Ramsey degrees.
In contrast, whether or not
every finite triangle-free graph has  a
finite  big Ramsey degree in  $\mathcal{H}_3$
had been open until now.
The first result on colorings of vertices of $\mathcal{H}_3$
 was obtained by Henson in \cite{Henson71} in 1971.
In that paper,
 he proved that  $\mathcal{H}_3$ is weakly indivisible:
Given  any coloring of the vertices  of $\mathcal{H}_3$ into  two colors,
either
there is a copy  of $\mathcal{H}_3$ in which all vertices have the first  color,
or else a copy of each member of $\mathcal{K}_3$ can be found with all vertices having the second color.
From this follows a prior result of Folkman in \cite{Folkman70}, that  for any finite triangle-free graph $\G$ and any number $k\ge 2$,
there is a finite triangle-free  graph $\HH$ such that for any partition of the vertices of $\mathrm{H}$ into $k$ pieces,
there is a copy of $\G$ in having all its vertices in one of the pieces of the partition.
In 1986,
Komj\'{a}th and \Rodl\  proved
that $\mathcal{H}_3$ is indivisible;
thus, the  big Ramsey degree for vertex colorings is $1$.
It then became of interest whether this result would extend to colorings of copies of a fixed finite triangle-free graph, rather than just colorings of vertices.

In 1998, Sauer proved in \cite{Sauer98} that edges  have  finite big Ramsey degree of $2$ in $\mathcal{H}_3$, leaving
 open the general question:

\begin{question}\label{q.fRd}
Does every finite triangle-free graph have finite big Ramsey degree in $\mathcal{H}_3$?
\end{question}

This paper answers this question in the affirmative.

Ideas from  Sauer's proof   in \cite{Sauer06}
that the Rado
 graph has finite big Ramsey degrees
provided  a strategy for our proof in this paper.
A rough outline of Sauer's proof is as follows:
Graphs can be coded by nodes on trees.
Given such codings, the graph coded by the nodes in
the tree consisting of all finite length sequences of $0$'s and $1$'s, denoted as
 $2^{<\om}$,
 is bi-embeddable with the Rado graph.
Only certain subsets, called strongly diagonal,  need to be considered when handling tree codings of a given finite graph $\G$.
Any finite strongly diagonal set can be enveloped into a strong tree, which is a tree isomorphic to $2^{\le k}$ for some $k$.
The coloring on the copies of $\G$ can be extended to color the strong tree envelopes.
Applying Milliken's Theorem  for strong trees  finitely many times, one obtains an infinite  strong subtree $S$  of $2^{<\om}$ in which for
all diagonal sets coding $\G$ with the same strong similarity type have the same color.
To finish,  take a strongly diagonal $\D$ subset of $S$ which codes the Rado graph, so that
all codings of $\G$ in $\D$ must be strongly diagonal.
Since there are only finitely many similarity types of strongly diagonal sets coding $\G$, this yields the finite big Ramsey degrees for the Rado graph.
See Section \ref{sec.2} for more details.

This outline seemed to the author the most likely to succeed if
indeed the universal triangle-free graph were to have finite  big Ramsey degrees.
However,
there were  difficulties involved  in each step  of trying to adapt Sauer's proof
 to the setting of $\mathcal{H}_3$,
largely because $\mathcal{H}_3$  omits a substructure, namely triangles.
First,
unlike the
 bi-embeddability between the Rado graph and the graph coded by the nodes in  $2^{<\om}$,
there is no bi-embeddability relationship between $\mathcal{H}_3$ and some
triangle-free graph coded by some
 tree with a  very regular  structure.
To handle this,
rather than letting certain nodes in a tree code vertices at the very end of the whole proof scheme as Sauer does in \cite{Sauer06},
we
 introduce a new notion of {\em strong triangle-free tree}
in which  we
 distinguish certain  nodes {\em in} the tree   (called {\em coding nodes})
to code the vertices of a given graph, and in which the branching is maximal subject to the constraint of  these distinguished nodes not coding any triangles.
We further develop a flexible construction method for creating   strong triangle-free trees in which the distinguished nodes code $\mathcal{H}_3$.
These are found in Section \ref{sec.3}.

Next,
we wanted  an analogue of Milliken's Theorem for strong triangle-free trees.
While  we were able to prove
such a theorem
 for any  configuration extending some fixed stem,
the result simply does not hold  for colorings of stems, as  can be
seen by an example of a bad coloring defined using interference between splitting nodes and coding nodes on the same level  (Example \ref{ex.bc}).
The means around this  was to introduce the new notion of {\em  strong coding tree}, which  is a skew tree that stretches  a strong triangle-free tree while preserving all important aspects of its coding structure.
Strong coding trees are defined and constructed in Section \ref{sec.4}.
There, the fundamentals of the collection of strong coding trees are charted, including sufficient conditions
guaranteeing when a finite subtree $A$  of a strong coding tree $T$ may be end-extended into $T$ to form another strong coding tree.

Having formulated the correct kind of trees to code $\mathcal{H}_3$, the next task is to prove an analogue of Milliken's Theorem for strong coding trees.
This is accomplished in Sections \ref{sec.5} and  \ref{sec.1SPOC}.
First, we prove analogues of
 the Halpern-\Lauchli\ Theorem  (Theorem \ref{thm.HL})
 for strong coding trees.
There are two cases, depending on whether the level sets being colored contain a splitting node or a coding node.
In  Case (a) of
Theorem \ref{thm.matrixHL}, we obtain
the direct analogue of the Halpern-\Lauchli\ Theorem when the level set being colored has a splitting node.
A similar result is proved in Case (b) of Theorem \ref{thm.matrixHL} for level sets containing a coding node, but some restrictions apply, and these are   taken care of in Section \ref{sec.1SPOC}.
The proof of Theorem \ref{thm.matrixHL}
 Section  \ref{sec.5} uses the set-theoretic method of forcing, using some forcing posets created specifically for strong coding trees.
However, one never moves into a generic extension; rather the forcing mechanism is used to do an unbounded search for a finite object.
Once found, it is used to build the next finite level of the tree homogeneous for a given coloring.
Thus, the result is a ZFC proof.
This builds on
 ideas from Harrington's forcing proof of the Halpern-\Lauchli\ Theorem.

In Section \ref{sec.1SPOC},
after an initial lemma obtaining end-homogeneity,
we
achieve the analogue of the Halpern-\Lauchli\ Theorem
for Case (b) in Lemma \ref{lem.Case(c)}.
The proof
introduces a third forcing
which homogenizes over the possibly different end-homogeneous colorings, but again achieves a ZFC result.
Then, using much induction and fusion,
we obtain the first of our two Milliken-style theorems.
\begin{thmR1}
Let $T$ be a strong coding tree and let $A$ be a finite subtree of $T$ satisfying the \STROC.
Then for any coloring of all strictly similar copies of $A$ in $T$ into finitely many colors,
there is a strong coding tree $S\le T$
such that all strictly similar copies of $A$ in $S$  have the same color.
\end{thmR1}
The \STROC\ is  made clear in Definition \ref{defn.strPOC}.
Initial segments of strong coding trees automatically satisfy the \STROC.
Essentially, it is a strong condition which guarantees that the finite subtree can be extended to a tree coding $\mathcal{H}_3$.

Developing the correct notion of strong subtree envelope for the setting of triangle-free graphs  presented a further  obstacle.
The idea of extending a subset $X$ of a strong coding tree $T$  to an envelope which is a  finite strong triangle-free  tree  and applying Theorem \ref{thm.MillikenIPOC}
(which would be the direct analogue of Sauer's method)
simply does not work, as it can lead to an infinite regression of adding coding nodes in order to
 make an envelope of that form.
That is, there is no upper bound on the number of
 similarity  types  of finite strong triangle-free  subtrees of $T$
which are minimal containing
copies of $X$ in $T$.
To overcome this,
in Sections
\ref{sec.squiggle}
   and
 \ref{sec.1color}
we develop  the   notions of  incremental new parallel $1$'s and  strict similarity type for finite diagonal sets of coding nodes
as well as a new notion of  envelope.
Given any   finite triangle-free graph $\G$, there are only finitely many strict similarity types of diagonal trees coding $\G$.
Letting
$c$ be any coloring of all copies of $\G$ in $\mathcal{H}_3$ into finitely many colors,
we
transfer the coloring to the envelopes and
 apply the results in previous sections to obtain a strong coding tree $T'\le T$ in which all  envelopes  encompassing the same strict similarity type  have the same color.
The next new idea is to thin $T'$
to an incremental strong subtree $S\le T'$ while simultaneously choosing a set $W\sse T'$ of {\em witnessing coding nodes}.
These have the property that  each  finite
subset $X$ of $S$  is incremental, and furthermore, one can add to $X$ coding nodes from $W$ to form an envelop satisfying the \STROC.
Then we arrive at our second Milliken-style theorem for strong coding trees, extending the first one.

\begin{thmR2}
[Ramsey Theorem for Strict Similarity Types]
Let $Z$ be a finite antichain of coding nodes in a strong coding tree $T$,
and let $h$ be a coloring of all subsets of $T$ which are strictly similar to $Z$ into finitely many colors.
Then  there is an incremental strong coding tree $S\le T$ such that all  subsets of $S$
strictly similar to  $Z$  have the same $h$ color.
\end{thmR2}

After thinning to a strongly diagonal subset $D\sse S$  still coding $\mathcal{H}_3$,
the only sets of coding nodes in $D$ coding a given finite triangle-free graph $\G$ are automatically antichains which are incremental and strongly diagonal.
Applying  Theorem \ref{thm.mainRamsey} to the finitely many strict similarity types of incremental strongly diagonal sets  coding $\G$, we arrive at the main theorem.
\begin{thmfinalthm}
The universal triangle-free  homogeneous graph has finite big Ramsey degrees.
\end{thmfinalthm}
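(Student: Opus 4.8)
The plan is to reduce a coloring of copies of a finite triangle-free graph $\G$ in $\mathcal{H}_3$ to a coloring of finite antichains of coding nodes in a strong coding tree, and then to apply Theorem \ref{thm.mainRamsey} finitely many times. First I would fix a strong coding tree $T$ whose coding nodes enumerate a copy of $\mathcal{H}_3$; such a $T$ exists by the construction of Section \ref{sec.4}. Since the vertices coded by the coding nodes of $T$ form a copy of $\mathcal{H}_3$, it suffices to prove the statement for this copy, with ``copy of $\G$'' meaning a set of coding nodes of $T$ whose induced graph is isomorphic to $\G$. The crucial structural input from the earlier sections is that one may pass to a strongly diagonal set $D\sse T$ which still codes $\mathcal{H}_3$, and that inside such a $D$ every set of coding nodes coding $\G$ is automatically an antichain which is incremental and strongly diagonal. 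Consequently, up to strict similarity, a copy of $\G$ inside a strongly diagonal set can take only finitely many shapes: let $Z_1,\dots,Z_N$ be representatives of the finitely many strict similarity types of incremental strongly diagonal sets of coding nodes whose induced graph is isomorphic to $\G$, where finiteness of $N$ is exactly what the analysis of incremental new parallel $1$'s in Sections \ref{sec.squiggle} and \ref{sec.1color} provides.

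Now let $c$ be an arbitrary finite coloring of all copies of $\G$ in $\mathcal{H}_3$, viewed as a coloring of all subsets of $T$ coding $\G$. For each $i\le N$, the restriction of $c$ to the subsets of $T$ strictly similar to $Z_i$ is a finite coloring, so Theorem \ref{thm.mainRamsey} yields an incremental strong coding tree $S_1\le T$ on which all subsets strictly similar to $Z_1$ get a single color. Applying Theorem \ref{thm.mainRamsey} inside $S_1$ to the type $Z_2$, then inside $S_2$ to $Z_3$, and so on, after $N$ steps we obtain an incremental strong coding tree $S=S_N\le T$ such that for every $i\le N$, all subsets of $S$ strictly similar to $Z_i$ receive the same $c$-color. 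Finally I would thin $S$ to a strongly diagonal set $D\sse S$ which still codes $\mathcal{H}_3$. Then every copy of $\G$ inside $D$ is a strongly diagonal incremental antichain of coding nodes, hence strictly similar to exactly one of $Z_1,\dots,Z_N$; therefore $c$ takes at most $N$ colors on the copies of $\G$ in the copy of $\mathcal{H}_3$ coded by $D$. This establishes $T(\G)\le N$, and since $\G$ was arbitrary, $\mathcal{H}_3$ has finite big Ramsey degrees.

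The main obstacle is not in this final assembly, which is just a finite iteration of Theorem \ref{thm.mainRamsey} followed by one diagonalization, but in the two facts imported from the earlier sections. The first is the finiteness of the number $N$ of strict similarity types of incremental strongly diagonal sets coding a fixed $\G$: without it, the naive attempt to envelope a copy of $\G$ into a finite strong triangle-free subtree and apply Theorem \ref{thm.MillikenIPOC} runs into an infinite regression of added coding nodes, which is precisely why the notions of incremental new parallel $1$'s and strict similarity had to be developed. The second is the extraction of strongly diagonal subsets still coding $\mathcal{H}_3$, which is what guarantees that arbitrary copies of $\G$ in the ambient graph are genuinely captured by the controlled antichains to which Theorem \ref{thm.mainRamsey} applies. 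Granting these two ingredients, the proof proceeds as outlined above.
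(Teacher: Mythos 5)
Your proposal is correct and follows essentially the same route as the paper: transfer the coloring to antichains of coding nodes in a strong coding tree, apply Theorem \ref{thm.mainRamsey} once per strict similarity type of incremental strongly diagonal sets coding $\G$, and then pass to a strongly diagonal subset coding $\mathcal{H}_3$ (Lemma \ref{lem.bD}) so that every copy of $\G$ is captured by one of the finitely many homogenized types. The two ingredients you flag as imported are exactly the ones the paper invokes at this point, so your assembly matches the paper's proof.
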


For each $\G\in\mathcal{K}_3$,
the number $T(\G,\mathcal{K}_3)$
is bounded by  the number   of strict similarity types of diagonal sets of coding nodes coding $\G$, which we denote as StrSim$(\G,\bT)$, $\bT$ referring to any strong coding tree (see Section \ref{sec.4}).
It is  presently open to see if StrSim$(\G,\bT)$ is in fact the lower bound.
If  it is, then recent work of Zucker  would provide an interesting connection with topological dynamics.
In \cite{Zucker19},
Zucker proved that
if  a  \Fraisse\ structure  $\mathbf{F}$ has finite big Ramsey degrees and moreover, $\mathbf{F}$  admits a big Ramsey structure,
then
any big Ramsey flow of Aut$(\mathbf{F})$ is a universal completion flow, and further,  any two universal completion flows are isomorphic.
His proof of  existence  of a big Ramsey structure  a \Fraisse\ structure presently
relies on the existence of  colorings for an increasing sequence of finite objects whose union is $\mathbf{F}$ exhibiting all color classes which cannot be removed and which cohere in a  natural  way.
In particular, the lower bounds for the big Ramsey numbers are necessary to Zucker's analysis.
His work already applies to the rationals,  the Rado graph,  lower bounds being obtained by Laflamme, Sauer, and Vuksanovic in \cite{Laflamme/Sauer/Vuksanovic06} and calculated for each class of graphs of fixed finite size by Larson in \cite{Larson08},
finite ultrametric spaces with distances  from a fixed finite set,
 $\bQ_n$ for each $n\ge 2$, $\mathbf{S}(2)$, and $\mathbf{S}(3)$.
As the strict similarity types found in this paper satisfy Zucker's  coherence condition,
the precise lower bounds for the big Ramsey degrees of $\mathcal{H}_3$ would provide another such example of a universal completion flow.

\begin{ack}
Much gratitude goes to
 Dana Barto\v{s}ov\'a for listening to and making helpful comments on early and later stages of these results and for her continued encouragement of my work on this problem;
 Jean Larson for listening to early stages of this work and her encouragement;
Norbert Sauer for discussing key aspects of the homogeneous triangle-free graph with me during a research visit in Calgary in 2014;    Stevo Todorcevic for pointing out to me in 2012 that any proof of finite Ramsey degrees for $\mathcal{H}_3$ would likely involve a new type of Halpern-\Lauchli\ Theorem;
 and to the organizers and participants of the
Ramsey Theory Doccourse at Charles University, Prague, 2016, for their encouragement.
Most of all,  I am grateful for  and much indebted to
  Richard Laver  for
providing  for me in 2011 the main  points of
 Harrington's forcing proof of the Halpern-\Lauchli\ Theorem, from which I could reconstruct the proof,
setting the stage for the possibility of accomplishing this work.  His spirit lives on.
\end{ack}


\section{Background: Trees coding graphs and the Halpern-\Lauchli\ and  Milliken Theorems}\label{sec.2}

This section
 provides background and context for  the developments in this paper.
It contains the method of using trees to code graphs, the Halpern-\Lauchli\ and Milliken Theorems,
and a discussion of  their applications to
previously known results on
 big Ramsey degrees
for homogeneous structures.


\subsection{Trees coding graphs}\label{subsection.treescodinggraphs}

In \cite{Erdos/Hajnal/Posa75},
\Erdos, Hajnal and P\'{o}sa
gave the vertices in a graph a natural lexicographic order
 and used it to solve problems regarding strong embeddings of graphs.
The set of  vertices of a graph ordered by this lexicographic order can be
viewed as nodes in
the  binary tree of finite sequences of $0$'s and $1$'s
with
 the usual tree ordering.
This   was made explicit  in \cite{Sauer98} and is described  below.

The following notation is standard in mathematical logic and shall be used throughout.
The set of all natural numbers $\{0,1,2,\dots\}$ is denoted by $\om$.
Each natural number  $k\in\om$
is equated with  the set  of all natural numbers strictly less than $k$.
Thus, $0$ denotes the emptyset, $1=\{0\}$,  $2=\{0,1\}$, etc.
For each natural number $k$, $2^k$  denotes
 the set of all functions from $\{0,\dots, k-1\}$ into $\{0,1\}$.
A finite {\em binary sequence} is a function
 $s:k\ra 2$ for some $k\in \om$.
We may  write $s$ as $\lgl s(0), \dots, s(k-1)\rgl$;
 for each $i<k$, $s(i)$ denotes the $i$-th {\em value} or {\em entry}  of the sequence $s$.
We shall  use  $2^{<\om}$  to denote the collection $\bigcup_{k\in \om}2^k$
 of all finite binary sequences.
For $s\in 2^{<\om}$, we let $|s|$ denote the {\em length} of $s$; this is exactly the domain of $s$.
For nodes $s,t\in 2^{<\om}$, we write
$s\sse t$ if and only if $|s|\le |t|$ and for each $i<|s|$, $s(i)=t(i)$.
In this case, we say that  $s$ is an {\em initial segment} of $t$, or that $t$ {\em extends} $s$.
If $t$ extends $s$ and $|t|>|s|$, then we write $s\subset t$, and we say that $s$ is a proper initial segment of $t$.
For $i<\om$, we let $s\re i$ denote
the function $s$ restricted to domain $i$.
Thus, if $i< |s|$, then $s\re i$ is
 the proper initial segment of $s$ of length $i$, $s\re i=\lgl s(0),\dots, s(i-1)\rgl$;
if $i\ge |s|$, then $s\re i$  equals $s$.
The set $2^{<\om}$ forms a tree when partially ordered by inclusion.

Let $v,w$ be vertices in some graph.
Two nodes
 $s,t\in 2^{<\om}$  are said to   {\em represent} $v$ and $w$,  respectively, if and only if,  without loss of generality assuming that $|s|<|t|$,
then
$v$ and $w$ have an edge between them
if and only if
 $t(|s|)=1$.
The number $t(|s|)$ is called the {\em passing number} of $t$ at $s$.
Thus, if $t$ has  passing number  $1$  at $s$, then
$s$ and $t$ code an edge  between $v$ and $w$;
and if $t$ has passing number $0$ at $s$, then $s$ and $t$ code a non-edge between $v$ and $w$.

Using this idea, any graph can be coded by nodes in a binary tree as follows.
Let $\G$ be a  graph with $N$ vertices,
where $N\le \om$,
and
let  $\lgl v_n:n<N\rgl$ be any enumeration of
the vertices of  $\G$.
Choose any node  $t_0\in 2^{<\om}$ to  represent the vertex $v_0$.
For  $n>0$,
given nodes $t_0,\dots,t_{n-1}$  in $2^{<\om}$ coding the vertices $v_0,\dots,v_{n-1}$,
take  $t_n$  to be any node in $2^{<\om}$  such that
 $|t_n|>|t_{n-1}|$ and
 for all $i< n$,
$v_n$ and $ v_i$ have an edge between them if and only if $t_n(|t_i|)=1$.
Then the set of nodes $\{t_n:n<N\}$  codes the graph $\G$.
Note that any finite graph of size $k$ can be coded by a collection of  nodes in $\bigcup_{i<k}{}^i 2$.
Throughout this paper we shall hold to the convention that the nodes in a tree used to code a
 graph will have different lengths.
Figure 1.\  shows a  set of nodes $\{t_0,t_1,t_2,t_3\}$  from $2^{<\om}$  coding the four-cycle $\{v_0,v_1,v_2,v_3\}$.

\begin{figure}
\begin{tikzpicture}[grow'=up,scale=.5]
\tikzstyle{level 1}=[sibling distance=4in]
\tikzstyle{level 2}=[sibling distance=2in]
\tikzstyle{level 3}=[sibling distance=1in]
\tikzstyle{level 4}=[sibling distance=0.5in]
\node {$\left< \ \right >$}
 child{
	child{
		child{edge from parent[draw=none]
			child{edge from parent[draw=none]}
			child{edge from parent[draw=none]}}
		child{ coordinate (t2)
			child{edge from parent[draw=none]}
			child {edge from parent[draw=none]}}
		}
	child{ coordinate (t1)
		child{
			child{edge from parent[draw=none]}
			child{coordinate (t3)}}
		child {edge from parent[draw=none]} }}
 child{coordinate (t0)
	child{edge from parent[draw=none]
		child{edge from parent[draw=none]
			child{edge from parent[draw=none]}
			child{edge from parent[draw=none]}
		}
		child{edge from parent[draw=none]
			child{edge from parent[draw=none]}
			child {edge from parent[draw=none]}}
	}
	child {edge from parent[draw=none]}};
		
\node[right] at (t0) {$t_{0}$};
\node[right] at (t1) {$t_{1}$};
\node[right] at (t2) {$t_{2}$};
\node[right] at (t3) {$t_{3}$};

\node[circle, fill=black,inner sep=0pt, minimum size=7pt] at (t0) {};
\node[circle, fill=black,inner sep=0pt, minimum size=7pt] at (t1) {};
\node[circle, fill=black,inner sep=0pt, minimum size=7pt] at (t2) {};
\node[circle, fill=black,inner sep=0pt, minimum size=7pt] at (t3) {};

\draw[thick, dotted] let \p1=(t1) in (-12,\y1) node (v1) {$\bullet$} -- (7,\y1);
\draw[thick, dotted] let \p1=(t2) in (-12,\y1) node (v2) {$\bullet$} -- (7,\y1);
\draw[thick, dotted] let \p1=(t3) in (-12,\y1) node (v3) {$\bullet$} -- (7,\y1);
\draw[thick, dotted] let \p1=(t0) in (-12,\y1) node (v0) {$\bullet$} -- (7,\y1);

\node[left] at (v1) {$v_1$};
\node[left] at (v2) {$v_2$};
\node[left] at (v3) {$v_3$};
\node[left] at (v0) {$v_0$};

\draw[thick] (v0.center) to (v1.center) to (v2.center) to (v3.center) to [bend left] (v0.center);

\end{tikzpicture}
\caption{A tree with nodes $\{t_0,t_1,t_2,t_3\}$ coding the 4-cycle $\{v_0,v_1,v_2,v_3\}$}
\end{figure}


\subsection{The Halpern-\Lauchli\ and Milliken Theorems}\label{subsection.HLM}

The theorem of Halpern and \Lauchli\ below
was  established
as
 a technical lemma  containing core combinatorial content  of the
proof that
 the Boolean Prime Ideal Theorem, the statement that any filter can be extended to an ultrafilter, is strictly weaker than the Axiom of Choice, assuming  the Zermelo-Fraenkel axioms of set theory.
(See  \cite{Halpern/Levy71}.)
The Halpern-\Lauchli\ Theorem forms the basis for a Ramsey theorem on  strong trees due to Milliken, which in turn
 forms the backbone of all previously found
finite big Ramsey degrees, except where Ramsey's Theorem itself suffices.
An in-depth presentation of the various versions of the Halpern-\Lauchli\ Theorem as well as Milliken's Theorem can be found in  \cite{TodorcevicBK10}.
An account focused solely on
the aspects  relevant to  the present work can be found in \cite{DobrinenRIMS17}.
Here, we merely give an overview sufficient for this article, and shall restrict to subtrees of $2^{<\om}$, though the results hold more generally for  finitely branching trees.

In  this paper,
we use the  definition of tree which is standard for  Ramsey theory on trees.
The meet of two nodes $s$ and $t$ in $2^{<\om}$, denoted $s\wedge t$,
is the longest member $u\in 2^{<\om}$ which is an initial segment of both $s$ and $t$.
Thus, $u=s\wedge t$ if and only if $u=s\re|u|=t\re|u|$ and $s \re (|u|+1)\ne t\re (|u|+1)$.
In particular, if $s\sse t$ then $s\wedge t=s$.
A set  of nodes $A\sse 2^{<\om}$ is {\em closed under meets}
if $s\wedge t$ is in $A$, for each pair $s,t\in A$.

\begin{defn}\label{defn.tree}
A  subset $T\sse 2^{<\om}$  is a {\em tree}
if $T$
 is closed under meets and  for each pair $s,t\in T$
with $|s|\le |t|$,
$t\re |s|$ is also in $T$.
\end{defn}

Given $n<\om$ and a set of nodes $A\sse 2^{<\om}$, define
 \begin{equation}
A(n)=\{t\in A:|t|=n\}.
\end{equation}
A set $X\sse A$ is a {\em level set}
 if $X\sse A(n)$ for some  $n<\om$.
Note that a  tree $T$ does not have to contain all   initial segments  of its members, but for each $s\in T$, the level set $T(|s|)$ must equal $\{t\re |s|:t\in T$ and $|t|\ge |s|\}$.

Let
 $T\sse 2^{<\om}$ be a tree and  let
$L=\{|s|:s\in T\}$.
If $L$ is infinite, then
$T$
is a
{\em strong tree} if every node in $T$ {\em splits} in $T$; that is, for each $t\in T$, there are $u,v\in T$ such that  $u$ and $v$ properly extend $t$,
and $u(|t|)=0$ and $v(|t|)=1$.
If $L$ is finite,
then $T$ is a {\em strong tree}
if for each node $t\in T$ with $|t|<\max(L)$,
$t$ splits in $T$.
A finite strong tree subtree of $2^{<\om}$ with $k$ many levels is called
a {\em strong tree of height $k$}.
Note that each finite strong subtree of $2^{<\om}$
is isomorphic as a tree to some binary tree of height $k$.
In particular, a strong tree  of height $1$ is simply a node in $2^{<\om}$.
See Figure 2. for an example of a strong tree of height $3$.


\begin{figure}
\begin{tikzpicture}[grow'=up,scale=.6]
\tikzstyle{level 1}=[sibling distance=4in]
\tikzstyle{level 2}=[sibling distance=2in]
\tikzstyle{level 3}=[sibling distance=1in]
\tikzstyle{level 4}=[sibling distance=0.5in]
\tikzstyle{level 5}=[sibling distance=0.2in]
\node {} coordinate (t9)
child{coordinate (t0) edge from parent[thick]
			child{coordinate (t00) edge from parent[thin]
child{coordinate (t000)
child {coordinate(t0000)
child{coordinate(t00000) edge from parent[color=black] }
child{coordinate(t00001)}}
child {coordinate(t0001) edge from parent[color=black]
child {coordinate(t00010)}
child{coordinate(t00011)}}}
child{ coordinate(t001)
child{ coordinate(t0010)
child{ coordinate(t00100)}
child{ coordinate(t00101) edge from parent[color=black] }}
child{ coordinate(t0011) edge from parent[color=black]
child{ coordinate(t00110)}
child{ coordinate(t00111)}}}}
			child{ coordinate(t01)  edge from parent[color=black]
child{ coordinate(t010)
child{ coordinate(t0100) edge from parent[thin]
child{ coordinate(t01000)}
child{ coordinate(t01001)}}
child{ coordinate(t0101)
child{ coordinate(t01010) edge from parent[thin]}
child{ coordinate(t01011)}}}
child{ coordinate(t011)
child{ coordinate(t0110)
child{ coordinate(t01100) edge from parent[thin]}
child{ coordinate(t01101)}}
child{ coordinate(t0111) edge from parent[thin]
child { coordinate(t01110)}
child{ coordinate(t01111)}}}}}
		child{ coordinate(t1)  edge from parent[thick]
			child{ coordinate(t10)
child{ coordinate(t100)
child{ coordinate(t1000) edge from parent[color=black, thin]
child{ coordinate(t10000)}
child{ coordinate(t10001)}}
child{ coordinate(t1001)
child{ coordinate(t10010)}
child{ coordinate(t10011) edge from parent[color=black, thin] }}}
child{ coordinate(t101)
child{ coordinate(t1010) edge from parent[color=black, thin]
child{ coordinate(t10100) }
child{ coordinate(t10101)}}
child{ coordinate(t1011)
child{ coordinate(t10110) edge from parent[color=black, thin] }
child{ coordinate(t10111)}}}}
			child{  coordinate(t11)  edge from parent[color=black, thin]
child{ coordinate(t110)
child{ coordinate(t1100)
child{ coordinate(t11000)}
child{ coordinate(t11001)}}
child{ coordinate(t1101)
child{ coordinate(t11010)}
child{ coordinate(t11011)}}}
child{  coordinate(t111)
child{  coordinate(t1110)
child{  coordinate(t11100)}
child{  coordinate(t11101)}}
child{  coordinate(t1111)
child{  coordinate(t11110)}
child{  coordinate(t11111)}}}} };

\node[left] at (t0) {$0$};
\node[left] at (t00) {$00$};
\node[left] at (t000) {$000$};
\node[left] at (t001) {$001$};
\node[left] at (t01) {$01$};
\node[left] at (t010) {$010$};
\node[left] at (t011) {$011$};
\node[right] at (t1) {$1$};
\node[right] at (t10) {$10$};
\node[right] at (t100) {$100$};
\node[right] at (t101) {$101$};
\node[right] at (t11) {$11$};
\node[right] at (t110) {$110$};
\node[right] at (t111) {$111$};

\node[circle, fill=black,inner sep=0pt, minimum size=6pt] at (t9) {};

\node[circle, fill=black,inner sep=0pt, minimum size=6pt] at (t01) {};
\node[circle, fill=black,inner sep=0pt, minimum size=6pt] at (t01011) {};
\node[circle, fill=black,inner sep=0pt, minimum size=6pt] at (t01101) {};
\node[circle, fill=black,inner sep=0pt, minimum size=6pt] at (t10010) {};
\node[circle, fill=black,inner sep=0pt, minimum size=6pt] at (t10) {};
\node[circle, fill=black,inner sep=0pt, minimum size=6pt] at (t10111) {};

\end{tikzpicture}
\caption{A strong subtree of $2^{<\om}$ of height $3$}
\end{figure}


The following is  the strong tree version of the Halpern-\Lauchli\ Theorem.
It is a Ramsey theorem for colorings of products of level sets  of finitely many trees.
Here, we  restrict to the case of binary trees, since that is sufficient for the exposition in this paper.

\begin{thm}[Halpern-\Lauchli, \cite{Halpern/Lauchli66}]\label{thm.HL}
Let  $T_i=2^{<\om}$ for each $i<d$, where $d$ is any positive integer, and
let
\begin{equation}
c:\bigcup_{n<\om}\prod_{i<d} T_i(n)\ra k
\end{equation}
 be a given coloring, where $k$ is any positive integer.
Then there is an infinite set of levels $L\sse \om$ and infinite  strong subtrees $S_i\sse T_i$,  each with  nodes exactly at the levels in $L$,
such that $c$ is monochromatic on
\begin{equation}
\bigcup_{n\in L}\prod_{i<d} S_i(n).
\end{equation}
\end{thm}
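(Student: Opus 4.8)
\emph{Proof strategy.}
The plan is to follow Harrington's forcing proof of the Halpern-\Lauchli\ Theorem, in which one never actually passes to a generic extension: a forcing poset together with the \Erdos--Rado partition theorem is used to perform an unbounded search \emph{inside $V$} for finite objects, namely the successive levels of the strong subtrees being built, so the outcome is a ZFC theorem. I would reduce the theorem to a one-step extension lemma and then assemble the trees by recursion.

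The one-step lemma to isolate is the following: given $\ell<\om$ and, for each $i<d$, a nonempty finite level set $X_i\sse 2^\ell$, there are $\ell'>\ell$ and level sets $Y_i\sse 2^{\ell'}$ ($i<d$) such that every $s\in X_i$ has exactly two extensions in $Y_i$, one extending $s^\frown 0$ and one extending $s^\frown 1$, and $c$ is constant on $\prod_{i<d} Y_i$. Granting this, I would build the strong subtrees level by level: set $X_i^0=\{\lgl\rgl\}$ at level $0$, and at stage $n$ apply the one-step lemma to the current top level sets $X_i^n\sse 2^{\ell_n}$ to obtain $\ell_{n+1}>\ell_n$ and the next level sets $X_i^{n+1}$. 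This produces, for each $i<d$, an infinite strong subtree $S_i=\bigcup_n X_i^n$ with level set $\{\ell_n:n<\om\}$ on which each product $\prod_{i<d} S_i(\ell_n)$ is monochromatic, say with color $\gamma_n<k$. Since there are only $k$ colors, there are a color $\gamma$ and an infinite set of indices $n_0<n_1<\cdots$, all at least $1$, with $\gamma_{n_j}=\gamma$. Fixing for each $i$ a single node $r_i\in S_i(\ell_{n_0})$ and passing to the part of $S_i$ lying above $r_i$ at the levels $\{\ell_{n_j}:j<\om\}$, pruning to exactly two immediate successors (one in each direction) at each step, yields single-rooted binary strong subtrees on which $c$ is globally monochromatic with color $\gamma$. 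Such a binary strong subtree is automatically closed under meets --- any meet of two of its nodes occurs at one of its splitting levels and is again a node of it --- and contains the requisite initial segments, so it is a tree in the sense of Definition~\ref{defn.tree}; this completes the argument.

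To prove the one-step lemma I would fix $\ell$ and the $X_i$, put $R=\{(i,s):i<d,\ s\in X_i\}$, and choose a cardinal $\kappa$ large enough for the relevant instance of the \Erdos--Rado theorem (some $\kappa=\beth_m^+$ with $m$ bounded in terms of $|R|$ will do). Let $\bP$ be the forcing whose conditions are finite functions assigning, to finitely many triples $(i,s,\al)\in R\times\kappa$, a node of a common length $\ell_p\ge\ell$ extending $s$, ordered by coordinatewise end-extension; $\bP$ is essentially a Cohen forcing, in particular ccc. For $(i,s)\in R$ and $\al<\kappa$ let $\dot b^{(i,s)}_\al$ name the generic branch added above $s$. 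Working in $V$, to each tuple of ordinals below $\kappa$ of the arity needed to index the relevant products of branches I would associate its finite ``decision datum'': a condition with a prescribed domain, together with a level past which that condition forces a specific $c$-value on the corresponding tuple of branch-restrictions. Coloring such tuples of ordinals by the isomorphism type of this datum --- a coloring into at most $2^{\aleph_0}$ colors --- and applying \Erdos--Rado produces an uncountable homogeneous $H\sse\kappa$ on which this data is coherent: a single level $\ell'$, a single color $\gamma$, and conditions that fit together compatibly. A concluding density and genericity argument, still inside $V$, then extracts from $H$, for each $(i,s)\in R$, two ordinals whose forced level-$\ell'$ values give the two desired extensions of $s$, one in each direction, with every tuple formed from these values receiving color $\gamma$; taking $Y_i$ to be the set of these values coming from the $s\in X_i$ proves the lemma.

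The hard part is the one-step lemma, and specifically the passage from the many separately obtained decisions (one for each relevant tuple of ordinals) to a single common level $\ell'$ and color $\gamma$ good simultaneously for all nodes of the $X_i$ and all tuples in $\prod_{i<d} Y_i$. Making this work rests on choosing the coloring of tuples of ordinals so that \Erdos--Rado homogeneity encodes exactly the coherence needed to amalgamate those decisions --- the ``names decide colors uniformly'' phenomenon --- and on using the forcing only to search $V$ for a finite witness rather than to build a generic extension. This is precisely the pattern that is developed, in much greater generality and with the extra complications arising from coding nodes, for strong coding trees in Section~\ref{sec.5}.
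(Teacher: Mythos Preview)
The paper does not give its own proof of Theorem~\ref{thm.HL}: it is stated as a classical result with a citation to \cite{Halpern/Lauchli66}, and the surrounding discussion only describes the history of Harrington's forcing proof and points to \cite{DobrinenRIMS17} for a written version. So there is no proof in the paper to compare against.

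That said, your proposal is exactly the Harrington approach the paper alludes to and uses as the template for Section~\ref{sec.5}. The reduction to a one-step ``splitting level set'' lemma, the Cohen-style forcing indexed by $R\times\kappa$, the \Erdos--Rado application to the decision data, and the final fusion with a pigeonhole on the colors $\gamma_n$ are all correct in outline and match the standard presentation. One point worth tightening: in your final thinning step you fix a single root $r_i$ and pass to the cone above it, which is fine for the conclusion as stated (the $S_i$ need only be strong subtrees, not rooted at $\lgl\rgl$), but you should make explicit that when you restrict to the levels $\{\ell_{n_j}:j<\om\}$ the resulting set is still a strong tree---this follows because in a strong tree every node at level $\ell_{n_j}$ has descendants at level $\ell_{n_{j+1}}$ extending both immediate successors, so you can indeed prune to a binary branching pattern. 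With that detail, your sketch is a faithful rendering of Harrington's argument.
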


This theorem of Halpern and \Lauchli\ was applied by Laver in
 \cite{Laver84}
to prove that
given $k\ge 2$ and given
any coloring of the product of $k$ many copies of the rationals  $\mathbb{Q}^k$
into finitely many colors,
there are subsets $X_i$ of the rationals which again are  dense linear orders without endpoints such that
$X_0\times\dots\times X_{k-1}$ has at most $k!$ colors.
Laver further proved that  $k!$ is  the lower bound.
Thus, the big Ramsey degree for the simplest object (single $k$-length sequences) in the \Fraisse\ class of products of finite linear orders
has been found.
The full  result for all big Ramsey degrees for  Age($\mathbb{Q}^k)$ would involve applications of  the extension of  Milliken's  theorem to  products of finitely many copies of $2^{<\om}$;
such an extension
 has been proved by Vlitas in \cite{Vlitas14}.

Harrington   produced   an interesting method for proving the Halpern-\Lauchli\ Theorem which uses the set-theoretic technique of forcing, but which takes place entirely  in the standard axioms of set theory, and most of mathematics, ZFC.
No  new external  model is actually built, but rather, finite  bits of information,  guaranteed by the existence of a  generic filter for the forcing, are
used to build the subtrees satisfying the Halpern-\Lauchli\ Theorem.
This proof is said to provide the clearest  intuition into the theorem (see \cite{TodorcevicBK10}).
Harrington did not publish his proof,  though the ideas were well-known
in certain circles.
A version  close to Harrington's original proof appeared in  \cite{DobrinenRIMS17}, where a   proof  was reconstructed based on an outline provided to the author by Laver in 2011.
This  proof
 formed the starting point for our proofs in
 Sections \ref{sec.5}  and
\ref{sec.1SPOC} of Halpern-\Lauchli\ style theorems for strong coding trees.
An earlier proof appeared in \cite{Farah/TodorcevicBK}.
That proof uses the  weaker assumption $\kappa\ra (\aleph_0)^d_2$ instead of Harrington's original $\kappa\ra (\aleph_1)^{2d}_{\aleph_0}$ (see Definition \ref{defn.arrownotation}), necessitating more involved arguments.

Harrington's  proof for $d$ many trees uses the forcing which adds $\kappa$ many Cohen subsets of the product of level sets of $d$ many copies of $2^{<\om}$,
where $\kappa$ satisfies a certain partition relation, depending on $d$.
For any set $X$ and cardinal $\mu$, $[X]^{\mu}$ denotes the collection of all subsets of $X$ of cardinality $\mu$.

\begin{defn}\label{defn.arrownotation}
Given cardinals $r,\sigma,\kappa,\lambda$,
\begin{equation}
\lambda\ra(\kappa)^r_{\sigma}
\end{equation}
means that for each coloring of $[\lambda]^r$ into $\sigma$ many colors,
there is a subset $X$ of $\lambda$ such that $|X|=\kappa$ and all members of $[X]^r$ have the same color.
\end{defn}

The following  ZFC result guarantees cardinals large enough to have the Ramsey property for colorings into infinitely many colors.

\begin{thm}[\Erdos-Rado, \cite{Erdos/Rado56}]\label{thm.ER}
For $r<\om$ and $\mu$ an infinite cardinal,
$$
\beth_r(\mu)^+\ra(\mu^+)_{\mu}^{r+1}.
$$
\end{thm}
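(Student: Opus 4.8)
The plan is to prove this by induction on $r$, following the classical argument of \Erdos\ and Rado. For $r=0$ the assertion $\beth_0(\mu)^+\ra(\mu^+)^1_\mu$ is exactly the regularity of $\mu^+$: a union of $\mu$ many sets each of size $\le\mu$ has size $\le\mu<\mu^+$, so in any partition of $\mu^+$ into $\mu$ classes some class has size $\mu^+$.

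For the inductive step, assume $\beth_r(\mu)^+\ra(\mu^+)^{r+1}_\mu$, write $\nu=\beth_r(\mu)$ (so $\beth_{r+1}(\mu)=2^\nu$ and $\lambda:=\beth_{r+1}(\mu)^+=(2^\nu)^+$), and let $c:[\lambda]^{r+2}\ra\mu$ be given. The crux will be to produce an \emph{end-homogeneous} set: a set $H\subseteq\lambda$ with $|H|=\nu^+=\beth_r(\mu)^+$ such that for all $s\in[H]^{r+1}$ and all $\al,\beta\in H$ with $\al,\beta>\max s$ one has $c(s\cup\{\al\})=c(s\cup\{\beta\})$. Granting this, the proof finishes immediately: the color $c(s\cup\{\al\})$ then depends only on $s$, defining a map $\bar c:[H]^{r+1}\ra\mu$; since $|H|=\beth_r(\mu)^+$, the inductive hypothesis applied to $\bar c$ yields $H'\subseteq H$ with $|H'|=\mu^+$ on which $\bar c$ is constant; and then $c$ is constant on $[H']^{r+2}$, because every $t\in[H']^{r+2}$ is $s\cup\{\max t\}$ for some $s\in[H']^{r+1}$, so $c(t)=\bar c(s)$.

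To build $H$, I would fix a large regular $\theta$ and an elementary submodel $M\prec(H_\theta,\in,<^*)$ with $c\in M$, $\nu+1\subseteq M$, $|M|=2^\nu$, and $M$ closed under $\nu$-sequences; this last requirement is satisfiable precisely because $(2^\nu)^\nu=2^\nu$, and it then forces $\nu^+\subseteq M$ and closure of $M$ under all sequences of length $<\nu^+$. Since $|M|=2^\nu<(2^\nu)^+=\lambda$ and $\lambda$ is regular, $\delta:=\sup(M\cap\lambda)<\lambda$; moreover $\delta\notin M$ while $\sup(A)\in M$ for every set of ordinals $A\in M$, so $\delta>\gamma$ for every $\gamma\in M\cap\lambda$. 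I now choose $a_\xi\in M\cap\lambda$ by recursion on $\xi<\nu^+$, strictly increasing, so that
\[
c(t\cup\{a_\xi\})=c(t\cup\{\delta\})\quad\text{for every }t\in[S_\xi]^{r+1},\qquad\text{where }S_\xi:=\{a_\eta:\eta<\xi\}.
\]
This choice is possible at \emph{every} stage, limit stages included, for the following reason. The admissible values form the set $T_\xi=\{x\in\lambda:x>\sup S_\xi\text{ and }c(t\cup\{x\})=c(t\cup\{\delta\})\text{ for all }t\in[S_\xi]^{r+1}\}$, which contains $\delta$ (so is nonempty) and lies in $M$: indeed $S_\xi\in M$, being the range of a sequence of length $<\nu^+$ of elements of $M$; and the tuple $\langle c(t\cup\{\delta\}):t\in[S_\xi]^{r+1}\rangle$ is a sequence of length $\le\nu$ of members of $\mu\subseteq M$, hence is itself in $M$. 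So $T_\xi$ is definable from parameters of $M$, whence $T_\xi\cap M\neq\emptyset$ by elementarity, and an admissible $a_\xi$ can be picked inside $M$. Then $H=\{a_\xi:\xi<\nu^+\}$ is end-homogeneous of size $\nu^+$: if $t\in[H]^{r+1}$ and $\max t=a_{\xi^*}$, then for every $\al=a_\eta\in H$ with $\eta>\xi^*$ we have $t\subseteq S_\eta$ and $a_\eta\in T_\eta$, so $c(t\cup\{\al\})=c(t\cup\{\delta\})$, the same value for all such $\al$.

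The substantive part of the argument is thus the construction of $H$, and within it the choice of $M$: the point is that closing $M$ under $\nu$-sequences while keeping $|M|=2^\nu$ makes the partial ``type of $\delta$'' over any $\le\nu$-sized set automatically an element of $M$, so that the recursion proceeds uniformly and the usual delicate bookkeeping at limit stages disappears. This is where the hypotheses are genuinely used: $(2^\nu)^\nu=2^\nu$ for the closure of $M$, and the regularity of $\lambda=(2^\nu)^+$ together with $|M|<\lambda$ to ensure $\delta=\sup(M\cap\lambda)<\lambda$. The remaining ingredients --- the base case and the final appeal to the inductive hypothesis --- are routine.
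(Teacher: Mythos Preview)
The paper does not give its own proof of this theorem; it is quoted as a classical result from \cite{Erdos/Rado56} and used as a black box in the proof of Theorem~\ref{thm.matrixHL}. Your argument is correct and is the standard elementary-submodel packaging of the classical inductive construction of an end-homogeneous set: the base case is the regularity of $\mu^+$, and the inductive step builds $H$ of size $\beth_r(\mu)^+$ end-homogeneous for $c$ by realizing, inside a suitably closed $M\prec H_\theta$, the $(r{+}1)$-type of a fixed $\delta\notin M$ over each initial segment. The one point worth checking carefully --- that $\sup S_\xi<\delta$ so that $\delta\in T_\xi$ at every stage --- follows because $S_\xi\in M$ (by closure of $M$ under $<\nu^+$-sequences), hence $\sup S_\xi\in M\cap\lambda$, hence $\sup S_\xi<\delta$; you have this implicitly but it is the place where the closure of $M$ really bites. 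The original \Erdos--Rado argument obtains the end-homogeneous set by a direct ramification (tree) construction rather than via an elementary submodel, but the two approaches are well known to be reformulations of the same idea, and yours is the cleaner modern version.
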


For $d$ many trees,  letting  $\kappa=\beth_{2d-1}(\aleph_0)^+$ suffices for Harrington's proof.
A  modified version of Harrington's proof appears in
\cite{Farah/TodorcevicBK}, where the assumption on $\kappa$ is weaker, only $\beth_{d-1}(\aleph_0)^+$,  but the construction is more complex.
This proof informed the approach in \cite{Dobrinen/Hathaway16}
to reduce the large cardinal assumption for obtaining the consistency of the Halpern-\Lauchli\ Theorem at  a measurable cardinal.
Building on this and  ideas from \cite{Shelah91} and \cite{Dzamonja/Larson/MitchellRado09}, Zhang proved the consistency of Laver's result for the $\kappa$-rationals, for $\kappa$ measurable, in \cite{Zhang17}.

The Halpern-\Lauchli\ Theorem forms the essence of the next Theorem; the proof   follows  by a several step  induction
 applying Theorem \ref{thm.HL}.

\begin{thm}[Milliken, \cite{Milliken79}]\label{thm.Milliken}
Let $k\ge 1$ be given and let  all  strong subtrees of $2^{<\om}$  of height $k$ be colored by finitely many colors.
Then there is an infinite strong subtree $T$ of $2^{<\om}$ such that all strong subtrees of $T$ of height $k$ have the same color.
\end{thm}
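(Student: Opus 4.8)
The plan is to induct on $k$, using Theorem~\ref{thm.HL} as the only substantive input. It is cleanest to carry along the evident multi-tree strengthening, which I will call $M(k,d)$: for every $d\ge 1$, whenever $S_0,\dots,S_{d-1}$ are infinite strong subtrees of $2^{<\om}$ sharing a common level set and all $d$-tuples $(U_0,\dots,U_{d-1})$ --- with each $U_i$ a strong subtree of $S_i$ of height $k$ and $U_0,\dots,U_{d-1}$ sharing one level set --- are colored with finitely many colors, there are infinite strong subtrees $S_i'\le S_i$ with a common level set such that every such tuple with each $U_i$ a subtree of $S_i'$ receives the same color. The case $d=1$ of $M(k,1)$ is exactly the assertion of the theorem, since a one-term tuple of height-$k$ strong subtrees is just one height-$k$ strong subtree and the common-level-set condition is then vacuous.

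First I would dispatch the base case $M(1,d)$, which is precisely the multi-tree Halpern--\Lauchli\ theorem: a height-$1$ strong subtree of $S_i$ is a single node, so a $d$-tuple of them sharing level $n$ is an element of $\prod_{i<d}S_i(n)$, and transporting the coloring along the natural level-collapsing identification of each $S_i$ with $2^{<\om}$ lets Theorem~\ref{thm.HL} produce the required $S_i'$. (In particular $M(1,1)$ is the $k=1$ case: an infinite strong subtree monochromatic for a coloring of its nodes.)

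For the inductive step I would assume $M(k,d')$ for all $d'$ and prove $M(k+1,d)$. The key structural fact is that a height-$(k+1)$ strong subtree $U_i$ of $S_i$ is determined by its root $u_i=\min(U_i)$ together with the ordered pair of height-$k$ strong subtrees $U_i^0,U_i^1$ sitting above the two immediate successors of $u_i$ inside $U_i$; these two halves share a level set and meet exactly at $u_i$. Thus a $d$-tuple of height-$(k+1)$ strong subtrees with a common level set is encoded by the level set $\{u_0,\dots,u_{d-1}\}$ of roots together with a $2d$-tuple $(U_0^0,U_0^1,\dots,U_{d-1}^0,U_{d-1}^1)$ of height-$k$ strong subtrees sharing one level set. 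The plan then is: (1) pass to infinite strong subtrees $T_i\le S_i$ with a common level set so that the color of a $d$-tuple of height-$(k+1)$ strong subtrees inside $\prod_i T_i$ depends only on the $d$-tuple of roots; and (2) view what is left as a coloring $\gamma$ of $\bigcup_n\prod_{i<d}T_i(n)$ and apply the base case $M(1,d)$ once more to obtain $S_i'\le T_i$ on which $\gamma$ is constant --- whereupon every $d$-tuple of height-$(k+1)$ strong subtrees in $\prod_i S_i'$ has this single color, and taking $d=1$ gives the theorem for $k+1$. Step (1) is achieved by a fusion: enumerate the $d$-tuples $(u_0,\dots,u_{d-1})$ of potential roots (finitely many per level) and process them one at a time, where to process a given tuple one applies the inductive hypothesis $M(k,2d)$ to the $2d$ sub-cones of the current residue lying above the two successors of the various $u_i$, thereby making the induced coloring $(U_0^0,U_0^1,\dots,U_{d-1}^0,U_{d-1}^1)\mapsto c(U_0,\dots,U_{d-1})$, with $U_i=\{u_i\}\cup U_i^0\cup U_i^1$, constant on those sub-cones; since later stages only shrink the residue, this homogeneity persists into the limit.

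\textbf{The main obstacle} I anticipate is not the reduction, which is routine, but the fusion bookkeeping in step~(1). Each application of $M(k,2d)$ returns subtrees carrying a sparser common level set, so one must at the same time thin the remainder of the residue to keep it a genuine infinite strong subtree with one coherent level set, while ensuring every root tuple of the final trees is processed at a finite stage. This is handled in the standard way: finalize only finitely many levels at a time and never touch them again, always pass to a common refinement of the level sets when splicing the $M(k,2d)$-output back in, and use a diagonal enumeration of root tuples so that the limit trees are infinite strong subtrees and the homogeneity gained at each stage is never lost. (The same scheme works for any finitely branching trees in place of $2^{<\om}$, using the corresponding version of Theorem~\ref{thm.HL}.)
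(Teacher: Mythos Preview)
Your proposal is correct and follows essentially the approach the paper alludes to: the paper does not give a detailed proof but simply remarks that ``the proof follows by a several step induction applying Theorem~\ref{thm.HL}.'' Your plan of strengthening to the product form $M(k,d)$, using Halpern--\Lauchli\ as the base case, and inducting by splitting height-$(k+1)$ trees into a root plus $2d$ height-$k$ cones (applying $M(k,2d)$ via fusion, then $M(1,d)$ to the residual root coloring) is exactly the standard argument this sentence points to.
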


In the Introduction,
an outline of Sauer's proof that the Rado graph has finite big Ramsey degrees was presented.
Knowledge of his proof is not a pre-requisite for reading this paper,
but the reader with knowledge of that paper will have better context for and understanding of  the present article.
A more detailed outline of the work in \cite{Sauer06} appears in Section 3 of \cite{DobrinenRIMS17}, which surveys some recent work regarding Halpern-\Lauchli\ and Milliken Theorems and variants.
Chapter 6 of \cite{TodorcevicBK10} provides a solid  foundation  for understanding how  Milliken's theorem is used to attain big Ramsey degrees for both Devlin's result on the rationals and Sauer's result on the Rado graph.
Of course,  we recommend foremost Sauer's original article \cite{Sauer06}.

We point out that
 Milliken's Theorem
 has been shown to consistently hold at a measurable cardinal by Shelah in \cite{Shelah91}, using ideas from Harrington's proof.
An enriched version was proved by D\v{z}amonja, Larson, and Mitchell in  \cite{Dzamonja/Larson/MitchellRado09} and applied  to obtain the consistency of finite big Ramsey degrees for colorings of finite subsets of the $\kappa$-rationals,
 where $\kappa$ is a measurable cardinal.
They
 obtained the consistency of finite big Ramsey degrees for colorings of finite subgraphs of the $\kappa$-Rado graph for $\kappa$ measurable
in \cite{Dzamonja/Larson/MitchellRado09}.
The uncountable height of the tree $2^{<\kappa}$ coding the $\kappa$-rationals and the $\kappa$-Rado graph   renders
 the notion of strong similarity type  more complex than for the countable cases.

There is another theorem  stronger than Theorem \ref{thm.Milliken},  also due to Milliken in \cite{Milliken81},
which shows that the collection of all infinite strong subtrees of $2^{<\om}$ forms a topological Ramsey space, meaning that it satisfies an infinite-dimensional Ramsey theorem for Baire sets when equipped with its version of the  Ellentuck  topology (see \cite{TodorcevicBK10}).
Though not outrightly used, this  fact informed some of our intuition when  approaching  the present work.


\section{Strong triangle-free trees coding $\mathcal{H}_3$}\label{sec.3}

In the previous section, it was shown how nodes in binary trees may be used to code graphs, and strong trees and Milliken's Theorem were presented.
In this section, we introduce {\em strong triangle-free trees}, which
 seem to be the correct analogue of  Milliken's  strong trees suitable for  coding triangle-free graphs.

Sauer's  proof in \cite{Sauer06} that the Rado graph has finite big Ramsey degrees
uses the fact that the Rado graph is bi-embeddable with the graph coded by the collection of  all nodes in $2^{<\om}$, where  nodes with the same length code vertices with no edges between them.
Colorings on the Rado graph are transfered to the graph represented by  the nodes in $2^{<\om}$,  Milliken's Theorem for strong trees is applied, and then the homogeneity is transfered back to the Rado graph.
In the case of the universal triangle-free graph, there is no known   bi-embeddability between $\mathcal{H}_3$ and some triangle-free graph coded by nodes in a tree with some kind of uniform structure.
Indeed, this may be fundamentally impossible  precisely because the absence of  triangles disrupts any uniformity of a coding structure.
Thus, instead of looking for a uniform sort of  structure
which codes some triangle-free graph bi-embeddable with $\mathcal{H}_3$
and trying to prove a Milliken-style theorem for them,
 we define a new kind of tree in which certain nodes are distinguished to
code the vertices of a given triangle-free graph.
Moreover,
 nodes in the tree
   branch as much as possible, subject to the constraint that at each level of the tree, no  node
is
extendible to another distinguished node  which  would code a triangle with previous distinguished  nodes.
 The precise formulation of  strong triangle-free tree appears in  Definition \ref{defn.stft}.

Some conventions and notation are now set up.
Given a triangle-free graph $\G$, finite or infinite,
let  $\lgl v_n:n<N\rgl$ be any enumeration of
the vertices of $\G$, where $N\le \om$ is the number of  vertices in $\G$.
We may construct a tree $T$ with  certain  nodes $\lgl c_n:n<N\rgl$  in $T$ coding the graph $\G$ as follows.
Let  $c_0$  be any node  in $2^{<\om}$
and declare $c_0$ to
code the vertex $v_0$.
For  $n>0$,
given  nodes $c_0,\dots,c_{n-1}$  in $2^{<\om}$ coding the vertices $v_0,\dots,v_{n-1}$,
let $c_n$ be any node in $2^{<\om}$  such that
 the length of $c_n$, denoted
$|c_n|$,
 is strictly greater than the length of $c_{n-1}$
 and
 for all $i< n$,
$c_n(|c_i|)=1$
 if and only if
$v_n$ and $ v_i$ have an edge between them.
The set  of nodes $\{c_n:n<N\}$ codes the graph $\G$.

\begin{defn}[Tree with coding nodes]\label{defn.treewcodingnodes}
A {\em tree with coding nodes}
is a structure $(T,N;\sse,<,c)$ in the language of
$\mathcal{L}=\{\sse,<,c\}$,
 where $\sse$ and $<$ are binary relation symbols  and $c$ is a unary function symbol,
 satisfying the following:
 $T$ is a subset of  $2^{<\om}$ satisfying that   $(T,\sse)$ is a tree (recall Definition \ref{defn.tree}), $N\le \om$ and $<$ is the usual linear order on $N$,  and $c:N\ra T$ is an injective  function such that  $m<n<N$ implies $|c(m)|<|c(n)|$.
\end{defn}

\begin{conv}\label{convention.l_n}
We shall use $c_n$ to denote $c(n)$ and call it the {\em $n$-th coding node} in $T$.
The length of $c_n$ shall be denoted by $l_n$.
When necessary to avoid confusion between more than one tree, the $n$-th coding node of a tree $T$ will be
denoted as $c^T_n$,  and its length as $l^T_n=|c^T_n|$.
\end{conv}

\begin{defn}\label{def.rep}
A graph $\G$ with vertices enumerated as $\lgl v_n:n<N\rgl$ is {\em  represented}  by a tree $T$ with  coding nodes $\lgl c_n:n<N\rgl$
if and only if
for each pair $i<n<N$,
 $v_n\, \E\, v_i\longleftrightarrow  c_n(l_i)=1$.
We will often simply say that $T$ {\em codes} $\G$.
\end{defn}

The next step is to determine which tree configurations code triangles, for those
 are the configurations that must  be omitted from any tree coding  a triangle-free graph.
Notice  that if $v_i,v_j,v_k$ are the vertices of some triangle, $c_i,c_j,c_k$ are coding nodes
coding these vertices, respectively, and the edge relationships between them,
and  $|c_i|<|c_j|<|c_k|$,
then  it must be the case that $c_j(|c_i|)=c_k(|c_i|)=c_k(|c_j|)=1$.
Moreover,  this is the only way a triangle can be coded by coding nodes.

Now we present a criterion which, when satisfied, guarantees that any node $t$  in the tree may be extended to a coding node without coding a triangle with any coding nodes of length less than $|t|$.

\begin{defn}[Triangle-Free   Criterion]\label{defn.trianglefreeextcrit}
Let $T\sse 2^{<\om}$ be a  tree with coding nodes $\lgl c_n:n<N\rgl$,
where  $N\le\om$.
$T$
{\em satisfies the Triangle-Free Criterion (TFC)}
if the following holds:
For each $t\in T$,
if  $l_n<|t|$ and
 $t(l_i)=c_n(l_i)=1$  for  some $i<n$,
then $t(l_n)=0$.
\end{defn}

In words, a tree  $T$ with coding  nodes $\lgl c_n:n<N\rgl$ satisfies the  TFC  if  for each $n<N$,  whenever a node $u$  in $T$ has the same length as coding node $c_n$,  and $u$ and $c_n$ both have passing number $1$ at the level of  a coding node $c_i$ for some $i<n$,
then $u^{\frown}1$ must not be in $T$.
In particular, the TFC implies  that if $c_n$ has passing number $1$ at $c_i$ for any $i<n$,
then $c_n$ cannot split; that is, ${c_n}^{\frown}1$ must not be in $T$.

\begin{rem}
The point of the TFC is as follows:
  Whenever  a finite tree  $T$ satisfies the TFC, then any maximal node of $T$ may be extended to a new  coding node without coding a triangle with the coding nodes in $T$.
\end{rem}

The next proposition provides a characterization of tree representations of triangle-free graphs.

\begin{prop}[Triangle-Free  Tree Representation]\label{prop.trianglefreerep}
Let $T\sse 2^{<\om}$ be a tree
with coding nodes $\lgl c_n:n<N\rgl$
 coding  a countable   graph $\G$  with  vertices $\lgl v_n:n<N\rgl$, where $N\le\om$.
Assume  that  the coding nodes in $T$ are dense in $T$, meaning that
for each $t\in T$, there is some coding node $c_n\in T$ such that $t\sse c_n$.
Then the  following are equivalent:
\begin{enumerate}
\item
 $\G$ is triangle-free.
\item
$T$ satisfies the Triangle-Free Criterion.
\end{enumerate}
\end{prop}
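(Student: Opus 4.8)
The plan is to prove the two directions separately, using the combinatorial description of triangles coded by coding nodes that was established just before the statement. Recall that if coding nodes $c_i, c_j, c_k$ with $l_i < l_j < l_k$ code the three vertices $v_i, v_j, v_k$ of a triangle, then necessarily $c_j(l_i) = c_k(l_i) = c_k(l_j) = 1$, and conversely this passing-number pattern is the \emph{only} way a triangle can be coded. This observation is the pivot of both implications.

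First I would prove $\neg(2) \Rightarrow \neg(1)$. Suppose $T$ fails the Triangle-Free Criterion: there is a node $t \in T$ and indices $i < n$ with $l_n < |t|$, $t(l_i) = c_n(l_i) = 1$, and yet $t(l_n) = 1$. Using the density of coding nodes in $T$, extend $t$ to a coding node $c_k \supseteq t$; since $|c_k| \ge |t| > l_n > l_i$, we have $k > n > i$. Then $c_k(l_i) = t(l_i) = 1$, $c_k(l_n) = t(l_n) = 1$, and $c_n(l_i) = 1$, so by the characterization above the vertices $v_i, v_n, v_k$ form a triangle in $\G$. Hence $\G$ is not triangle-free. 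A small point to check here is that extending $t$ to a coding node does not alter the relevant passing numbers, which holds because $c_k$ extends $t$ and $l_i, l_n < |t|$.

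For $(2) \Rightarrow (1)$, suppose $\G$ is not triangle-free, so there are $i < j < k < N$ with $v_i, v_j, v_k$ pairwise adjacent. By the characterization, $c_j(l_i) = c_k(l_i) = c_k(l_j) = 1$. Now apply the Triangle-Free Criterion to the node $t = c_k$: we have indices $i < j$ with $l_j < l_k = |c_k|$ and $c_k(l_i) = c_j(l_i) = 1$, so the TFC forces $c_k(l_j) = 0$, contradicting $c_k(l_j) = 1$. Therefore $T$ does not satisfy the TFC, completing the contrapositive.

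I expect the main obstacle (such as it is for a characterization lemma of this kind) to be the bookkeeping in the first direction: one must be careful that the node $t$ witnessing the failure of the TFC really does extend to a coding node \emph{above level $l_n$} so that the index inequalities $i < n < k$ come out right, and that the three witnessing passing numbers are exactly those demanded by the triangle-coding characterization. The density hypothesis on coding nodes is precisely what makes this work, and it should be flagged explicitly as the place that hypothesis is used. The converse direction is a direct unwinding of definitions with no real choices to make.
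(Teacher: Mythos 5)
Your proof is correct and follows essentially the same route as the paper: in one direction you extend the node witnessing the failure of the TFC to a coding node via density and read off a coded triangle, and in the other you observe that the three coding nodes of a triangle directly witness the failure of the TFC. The only differences are notational (your $i<n<k$ versus the paper's $i<j<k$) and your explicit remark that passing numbers below $|t|$ are preserved under extension, which the paper leaves implicit.
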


\begin{proof}
Note that if  $N$ is finite,
then the coding nodes in $T$ being dense in $T$ implies  that every maximal node in $T$ is a coding node; in this case, the maximal nodes in $T$ have different lengths.

Suppose (2) fails.
Then there are $i<j<N$
 and  $t\in T$ with length greater than $l_j$ such that
$t(l_i)=c_j(l_i)=1$ and $t(l_j)=1$.
Since every node in $T$ extends to a coding  node, there is a $k>j$ such that $c_k\contains t$.
Then   $c_k$ has passing number $1$ at both $c_i$ and $c_j$.
Thus, the coding nodes $c_i,c_j,c_k$ code that
the vertices $\{v_i,v_j,v_k\}$ have edges between each pair, implying  $\G$ contains a triangle.
Therefore, (1) fails.

Conversely, suppose that  (1) fails.  Then
$\G$ contains a triangle, so
 there are
$i<j<k<N$  such that the vertices  $v_i,v_j,v_k$ have edges between each pair.
Since the coding nodes $c_i,c_j,c_k$ code these edges,
it is the case that
$c_j(l_i)= c_k(l_i)=c_k(l_j)=1$.
Hence, the nodes $c_i,c_j,c_k$ witness the failure of the TFC.
\end{proof}

\begin{defn}[Parallel $1$'s]\label{defn.parallel1s}
For two nodes $s,t\in 2^{<\om}$, we  say that $s$ and $t$  {\em have  parallel $1$'s} if there is some $l<\min(|s|,|t|)$ such that $s(l)=t(l)=1$.
\end{defn}

\begin{defn}\label{defn.p1cstft}
Let $T$ be a tree with coding nodes $\lgl c_n:n<N\rgl$
such that, above the stem of $T$,  splitting in $T$ occurs only at the levels of coding nodes.
Then $T$
satisfies the {\em Splitting Criterion}
if for each  $n<N$ and each non-maximal $t$ in $T$ with $|t|=|c_n|$,
$t$ splits in $T$ if and only if   $t$ and $c_n$ have no parallel $1$'s.
\end{defn}

Notice that  whenever a  tree $T$ with coding nodes satisfies the Splitting Criterion,
each coding node which is not solely a sequence of $0$'s
will not split in $T$.
Thus, the Splitting Criterion
produces maximal splitting subject to
ensuring that no nodes can be extended to code a triangle,
while simultaneously  reducing the number of similarity types of trees under consideration later for the big Ramsey degrees, if we require each coding node to have at least one passing number of $1$.

Next, strong triangle-free trees are defined.
These trees provide the intuition and the main structural properties of their skewed variant defined in Section  \ref{sec.4}.

\begin{defn}[Strong triangle-free tree]\label{defn.stft}
A {\em strong triangle-free tree} is  a tree with coding nodes, $(T,N;\sse,<,c)$ such that
for each $n<N$, the length of the $n$-th coding node $c_n$ is $l_n=n+1$ and
\begin{enumerate}
\item
If $N=\om$, then $T$ has no maximal nodes.
 If $N<\om$, then all maximal nodes of $T$ have the same length,  which is  $l_{N-1}$.
\item
$\stem(T)$ is the empty sequence $\lgl\rgl$.
\item
$c_0 = \lgl 1\rgl$, and
for each $0<n<N$,   $c_n(l_{n-1})=1$.
\item
For each $n<N$, the sequence of length $l_n$ consisting of all $0$'s, denoted $0^{l_n}$, is a node in $T$.
\item
$T$ satisfies the Splitting Criterion.
\end{enumerate}
$T$ is a {\em strong triangle-free tree densely coding $\mathcal{H}_3$} if $T$ is an infinite  strong triangle-free tree and
the set of coding nodes  is dense in $T$.
\end{defn}

Strong triangle-free trees  can be defined more generally than  we choose to  present here,  for instance,
 by relaxing conditions (2) and  (3),  leaving off the restriction  that $l_n=n+1$, and letting $c_0$ be any node.
The notion of strong  subtree of a given strong triangle-free tree  can also  be made precise, and the collection of such trees  end up forming  a space somewhat similar
 to the Milliken space of strong trees.
However, as
Milliken-style theorems are  impossible to prove for
strong triangle-free trees, as will be shown in Example \ref{ex.bc},
we restrict here to  a
 simpler   presentation
with the aim of
 building  the reader's understanding of the essential structure of strong triangle-free trees,
as the
 strong coding trees defined  in the next section  are skewed and slightly relaxed  versions of trees in Definition \ref{defn.stft}.

We now set  up to present  a  method for constructing strong triangle-free trees densely coding $\mathcal{H}_3$.
Let  $\mathcal{K}_3$ denote the \Fraisse\ class of all triangle-free  countable graphs.
Given a graph $\HH$ and a subset $V_0$ of the vertices of $\HH$,
the notation $\HH|V_0$ denotes the induced subgraph  of $\HH$ on the vertices in $V_0$.
In  \cite{Henson71},
Henson proved that a countable graph $\HH$ is universal for  $\mathcal{K}_3$ if and only if $\HH$ satisfies the following property.
\begin{enumerate}
\item[($A_3$)]
\begin{enumerate}
\item[(i)]
$\HH$ does not admit any triangles,
\item[(ii)]
If $V_0,V_1$ are disjoint finite sets of vertices of $\HH$ and $\HH|V_0$ does not admit an edge,
then there is another vertex which is connected in $\HH$ to every member of $V_0$ and to no member of $V_1$.
\end{enumerate}
\end{enumerate}

Henson used this property to construct a universal triangle-free graph $\mathcal{H}_3$ in \cite{Henson71}, as well as universal graphs for each \Fraisse\ class of countable graphs omitting $k$-cliques,
 as the analogues of the Rado graph for countable $k$-clique free graphs.
The following property $(A_3)'$ is a reformulation of
Henson's property $(A_3)$.

\begin{enumerate}
\item[$(A_3)'$]
\begin{enumerate}
\item[(i)]
$\HH$ does not admit any triangles.
\item[(ii)]
Let $\lgl v_n:n<\om\rgl$ enumerate the vertices of $\HH$, and
let $\lgl F_i:i<\om\rgl$ be any enumeration of the finite subsets of $\om$ such that for each $i<\om$, $\max(F_i)<i$ and each finite set  appears  infinitely many times in the enumeration.
Then there is a strictly increasing sequence $\lgl n_i: i<\om\rgl$
such that for each $i<\om$, if  $\HH|\{v_m:m\in F_i\}$
has no edges,
then  for all $m<i$, $v_{n_i} \, E\, v_m\longleftrightarrow m\in F_i$.
\end{enumerate}
\end{enumerate}

It is straightforward to check the following fact.

\begin{fact}\label{fact.A_3'}
Let $\HH$ be a  countably infinite graph. Then
$\HH$
is universal for $\mathcal{K}_3$ if and only if
$(A_3)'$ holds.
\end{fact}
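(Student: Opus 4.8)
The plan is to deduce Fact~\ref{fact.A_3'} from Henson's characterization quoted just above: a countably infinite graph $\HH$ is universal for $\mathcal{K}_3$ if and only if it satisfies $(A_3)$. Since clauses~(i) of $(A_3)$ and $(A_3)'$ are verbatim the same, it suffices to show that clause $(A_3)$(ii) and clause $(A_3)'$(ii) are equivalent. Both implications are bookkeeping; the only delicate point is reconciling Henson's ``another vertex'' with the strictly increasing sequence demanded in $(A_3)'$ and with the constraint $\max(F_i)<i$ on the enumeration.

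For $(A_3)\Rightarrow(A_3)'$, fix enumerations $\lgl v_n:n<\om\rgl$ and $\lgl F_i:i<\om\rgl$ as in $(A_3)'$ and build $\lgl n_i:i<\om\rgl$ by recursion. At stage $i$, if $\HH|\{v_m:m\in F_i\}$ has an edge, let $n_i$ be any natural number larger than $i$ and than all $n_j$, $j<i$; the implication in $(A_3)'$(ii) is then vacuous at $i$. If $\HH|\{v_m:m\in F_i\}$ has no edge, put $M=\max(\{i\}\cup\{n_j:j<i\})$ and apply $(A_3)$(ii) to the disjoint finite sets $V_0=\{v_m:m\in F_i\}$ and $V_1=\{v_m:m\le M,\ m\notin F_i\}$; this is legitimate since $\HH|V_0$ is edgeless by hypothesis. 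The resulting vertex lies outside $V_0\cup V_1=\{v_m:m\le M\}$, hence equals $v_{n_i}$ for some $n_i>M$, which keeps the sequence strictly increasing; and since $m\in F_i\iff v_m\in V_0$ and $m<i,\ m\notin F_i\Rightarrow v_m\in V_1$, we get $v_{n_i}\,\E\,v_m\iff m\in F_i$ for all $m<i$, as required.

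For $(A_3)'\Rightarrow(A_3)$, let $V_0,V_1$ be disjoint finite vertex sets with $\HH|V_0$ edgeless, and write $V_0=\{v_m:m\in A\}$, $V_1=\{v_m:m\in B\}$ with $A,B\subseteq\om$ finite and disjoint. Since $A$ occurs infinitely often among the $F_i$ and $i\mapsto n_i$ is injective (the sequence is strictly increasing), choose $i$ with $F_i=A$, with $i>\max(A\cup B)$ (so $A\cup B\subseteq\{0,\dots,i-1\}$), and with $v_{n_i}\notin V_0\cup V_1$ (all but finitely many admissible $i$ work, as the $v_{n_i}$ are distinct and $V_0\cup V_1$ is finite). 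Then $\HH|\{v_m:m\in F_i\}=\HH|V_0$ is edgeless, so $(A_3)'$(ii) gives $v_{n_i}\,\E\,v_m\iff m\in A$ for all $m<i$; hence $v_{n_i}$ is joined to every vertex of $V_0$, to no vertex of $V_1$, and is distinct from all of them, witnessing $(A_3)$(ii). The degenerate cases $V_0=\emptyset$ or $V_1=\emptyset$ are handled by the same argument, taking $F_i=\emptyset$ in the former. I anticipate no genuine obstacle here: the argument is entirely elementary once the indexing conventions are lined up, which is presumably why the paper calls it straightforward.
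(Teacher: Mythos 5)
Your argument is correct, and it is exactly the verification the paper leaves to the reader as ``straightforward'': since $(A_3)'$ is presented as a reformulation of Henson's $(A_3)$, one only needs the equivalence of the two clauses (ii), and your two recursions/index-choices establish it cleanly (in the converse direction the condition $v_{n_i}\notin V_0\cup V_1$ is even automatic, since strict monotonicity gives $n_i\ge i>\max(A\cup B)$). No gaps; this matches the intended route through Henson's characterization.
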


The following   re-formulation of  property $(A_3)'$  will be used  to build trees with coding nodes which code $\mathcal{H}_3$.
Let
$T\sse 2^{<\om}$  be a tree with coding  nodes $\lgl c_n:n<\om\rgl$.
We say that $T$ {\em satisfies property $(A_3)^{\tt tree}$} if the following holds:

\begin{enumerate}
\item[$(A_3)^{\tt{tree}}$]
\begin{enumerate}
\item[(i)]
$T$ satisfies the Triangle-Free Criterion,
\item[(ii)]
Let  $\lgl F_i:i<\om\rgl$   be any enumeration  of finite subsets of $\om$ such that
for each $i<\om$, $\max(F_i)<i$, and
 each finite subset of $\om$ appears as $F_i$ for infinitely many indices $i$.
For each $i<\om$,
if for all pairs $j<k$ in $F_i$
it is the case that
 $c_k(l_j)=0$ ,
then
there is some $n\ge i$ such that
for all $m<i$,
$c_n(l_m)=1$ if and only if $ m\in F_i$.
\end{enumerate}
\end{enumerate}

\begin{fact}\label{fact.A_3treeimpliestrianglefreegraph}
A tree  $T$  with coding nodes $\lgl c_n:n<\om\rgl$ codes $\mathcal{H}_3$  if and only if
$T$ satisfies $(A_3)^{\tt tree}$.
\end{fact}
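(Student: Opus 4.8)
\textbf{Proof proposal for Fact~\ref{fact.A_3treeimpliestrianglefreegraph}.}

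The plan is to show the two directions separately, in each case translating between the graph-theoretic statement of $(A_3)'$ (combined with Fact~\ref{fact.A_3'}) and the tree statement $(A_3)^{\tt tree}$, using Proposition~\ref{prop.trianglefreerep} to handle the triangle-free part. Throughout, let $\G$ denote the graph on vertices $\lgl v_n:n<\om\rgl$ coded by $T$ via $\lgl c_n:n<\om\rgl$, so that by Definition~\ref{def.rep}, $v_n\,\E\,v_i \llra c_n(l_i)=1$ for $i<n$. First I would observe that since $T$ codes $\G$, the coding nodes are dense in $T$ exactly when every node of $T$ extends to some $c_n$; this density hypothesis is what lets Proposition~\ref{prop.trianglefreerep} apply, so I would first argue that a tree satisfying either $(A_3)^{\tt tree}$ or (via Fact~\ref{fact.A_3'}) coding $\mathcal{H}_3$ automatically has dense coding nodes — in the first case because clause (ii) of $(A_3)^{\tt tree}$ forces infinitely many coding nodes above any given node (taking $F_i$ to reach past that node's length), and in the second because $\mathcal{H}_3$ being infinite forces the tree to keep producing coding nodes.

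For the forward direction, assume $T$ codes $\mathcal{H}_3$. Then $\G\cong\mathcal{H}_3$, so $\G$ is triangle-free, and by Proposition~\ref{prop.trianglefreerep} (using density of coding nodes) $T$ satisfies the Triangle-Free Criterion, giving clause (i) of $(A_3)^{\tt tree}$. For clause (ii), fix an enumeration $\lgl F_i:i<\om\rgl$ of finite subsets of $\om$ with $\max(F_i)<i$ and each set appearing infinitely often. Given $i$ such that $c_k(l_j)=0$ for all $j<k$ in $F_i$: by Definition~\ref{def.rep} this says $\G|\{v_m:m\in F_i\}$ has no edges. Since $\mathcal{H}_3$ satisfies Henson's property $(A_3)$(ii), applied with $V_0=\{v_m:m\in F_i\}$ and $V_1=\{v_m:m<i,\ m\notin F_i\}$ (these are disjoint finite sets and $V_0$ admits no edge), there is a vertex $v_n$ connected to every member of $V_0$ and to no member of $V_1$; enlarging $n$ if needed we may take $n\ge i$. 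Then for all $m<i$, $v_n\,\E\,v_m$ iff $m\in F_i$, i.e.\ $c_n(l_m)=1$ iff $m\in F_i$, which is exactly clause (ii). (I would use the raw property $(A_3)$ here rather than $(A_3)'$, since I only need existence of one witnessing vertex, not a simultaneously increasing sequence.)

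For the reverse direction, assume $T$ satisfies $(A_3)^{\tt tree}$. Clause (i) together with density of coding nodes and Proposition~\ref{prop.trianglefreerep} gives that $\G$ is triangle-free. To see $\G$ is universal, I would verify property $(A_3)'$(ii) for $\G$ and then invoke Fact~\ref{fact.A_3'}. Fix an enumeration $\lgl F_i:i<\om\rgl$ as in $(A_3)'$(ii); this is also a legitimate enumeration for clause (ii) of $(A_3)^{\tt tree}$. I must produce a strictly increasing sequence $\lgl n_i:i<\om\rgl$ such that whenever $\G|\{v_m:m\in F_i\}$ has no edges, then for all $m<i$, $v_{n_i}\,\E\,v_m\llra m\in F_i$. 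Here is where the care is needed: clause (ii) of $(A_3)^{\tt tree}$ hands me, for each relevant $i$, \emph{some} $n\ge i$ working for that $i$, but a priori the choices for different $i$ need not be strictly increasing. The fix is that each finite set appears as $F_i$ for infinitely many indices $i$, so I can build $\lgl n_i\rgl$ recursively: having chosen $n_0<\dots<n_{i-1}$, if $\G|\{v_m:m\in F_i\}$ has an edge set $n_i$ to be anything larger than $n_{i-1}$ and larger than $i$; if it has no edges, note that $F_i$ equals $F_{i'}$ for infinitely many $i'$, so pick such an $i'$ with $i'$ large enough that the witness $n\ge i'$ supplied by clause (ii) for index $i'$ satisfies $n>n_{i-1}$ — and crucially, since $F_{i'}=F_i$ and $i'\ge i$, for all $m<i\le i'$ we have $c_n(l_m)=1$ iff $m\in F_{i'}=F_i$, so this $n$ works as $n_i$ and is strictly above $n_{i-1}$. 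Thus $(A_3)'$(ii) holds, Fact~\ref{fact.A_3'} gives that $\G$ is universal for $\mathcal{K}_3$, hence $\G\cong\mathcal{H}_3$, i.e.\ $T$ codes $\mathcal{H}_3$.

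\textbf{Main obstacle.} The routine part is the triangle-free equivalence, which is immediate from Proposition~\ref{prop.trianglefreerep}. The one genuinely fiddly point — and the only place a careless argument would break — is extracting a \emph{strictly increasing} sequence $\lgl n_i\rgl$ in the reverse direction from the "for each $i$ there exists $n\ge i$" form of clause (ii); the resolution uses precisely the hypothesis that every finite subset of $\om$ occurs infinitely often in the enumeration, which is exactly why that redundancy is built into both $(A_3)'$ and $(A_3)^{\tt tree}$. A secondary point worth stating explicitly is that density of the coding nodes, needed to invoke Proposition~\ref{prop.trianglefreerep}, must be derived rather than assumed, and follows in both directions from the respective hypotheses as indicated above.
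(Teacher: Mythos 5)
The paper records this Fact without proof, so there is no official argument to compare against; judged on its own, your route (Definition~\ref{def.rep} to translate passing numbers into edges, Proposition~\ref{prop.trianglefreerep} for clause~(i), Henson's property via Fact~\ref{fact.A_3'} for clause~(ii), and the recurrence of each finite set in the enumeration to extract a strictly increasing sequence of witnesses) is the natural one, and your clause~(ii) translations in both directions, including the $i'$-trick, are correct. The genuine gap is your opening claim that density of the coding nodes ``follows in both directions.'' Neither derivation works: that the coded graph is infinite only yields infinitely many coding nodes, not that every node of $T$ extends to one; and clause~(ii) of $(A_3)^{\tt tree}$ only produces coding nodes with prescribed passing numbers at the levels $l_m$, $m<i$ --- it says nothing about extending an arbitrary node of $T$. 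In the reverse direction this flaw is harmless but should be rerouted rather than patched: the half of Proposition~\ref{prop.trianglefreerep} you actually need there (Triangle-Free Criterion implies the coded graph is triangle-free) uses no density at all, since a triangle $v_i,v_j,v_k$ forces $c_j(l_i)=c_k(l_i)=c_k(l_j)=1$, so the coding node $c_k$ itself violates the TFC; cite that argument (the converse half of the proof of Proposition~\ref{prop.trianglefreerep}) instead of density.

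In the forward direction the gap is essential and cannot be repaired by a cleverer derivation of density: ``$T$ codes $\mathcal{H}_3$'' (Definition~\ref{def.rep}) constrains only the coding nodes, whereas the TFC (Definition~\ref{defn.trianglefreeextcrit}) constrains every node of $T$. Concretely, adjoin to the tree $\bS$ of Theorem~\ref{thm.stftree} the single node $t=\lgl 0,1,1\rgl$ (its initial segments already lie in $\bS$, so the result is still a tree with the same coding nodes): the coded graph is unchanged, hence still $\mathcal{H}_3$, but $t(l_0)=c_1(l_0)=1$ while $t(l_1)=1$, so the TFC fails. Thus the forward implication is only true under an additional hypothesis such as ``the coding nodes are dense in $T$'' --- exactly the hypothesis of Proposition~\ref{prop.trianglefreerep}, and one enjoyed by every tree to which the paper applies this Fact; with that hypothesis made explicit, your forward argument goes through as written. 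Note also that the only direction the paper ever uses (Remark~\ref{rem.dense}, Theorem~\ref{thm.stftree}) is $(A_3)^{\tt tree}$ implies that $T$ codes $\mathcal{H}_3$, which your argument, once rerouted as above, fully establishes.
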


\begin{rem}\label{rem.dense}
Any strong triangle-free tree in which the coding nodes are dense automatically satisfies $(A_3)^{\tt tree}$, and hence codes $\mathcal{H}_3$.
\end{rem}

The next lemma shows that any finite strong triangle-free tree can  be extended to a tree satisfying $(A_3)^{\tt tree}$.

\begin{lem}\label{lem.stftextension}
Let $T$ be a finite strong triangle-free tree with coding nodes $\lgl c_n:n<N\rgl$, where $N<\om$.
Given any $F\sse N-1$ for which the set $\{c_n:n\in F\}$ codes no edges,
there is a maximal node $t\in T$ such that
for all $n<N-1$,
\begin{equation}
t(l_n)=1\ \ \longleftrightarrow \ \ n\in F.
\end{equation}
\end{lem}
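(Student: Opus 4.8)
The plan is to build the desired maximal node $t$ level by level, starting from the stem and using the structural properties of strong triangle-free trees (Definition \ref{defn.stft}), in particular the Splitting Criterion. The node $t$ must have length $l_{N-1}$, and its prescribed passing numbers at the coding node levels $l_0,\dots,l_{N-2}$ are dictated by $F$: we want $t(l_n)=1$ iff $n\in F$. So I would argue by induction on $n<N-1$ that there is a node in $T$ of length $l_n+1$ whose passing numbers at $l_0,\dots,l_n$ agree with the characteristic function of $F$ restricted to $\{0,\dots,n\}$, and then extend to length $l_{N-1}$. The key point at each step is: given a non-maximal node $u\in T$ of length $l_n$ whose passing numbers at $l_0,\dots,l_{n-1}$ match $F$, I must be able to choose a successor with the correct passing number at $l_n$; that is, I need $u$ to split in $T$ whenever $n\in F$, and it suffices to have the successor $u^\frown 0$ available when $n\notin F$.

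The availability of $u^\frown 0$ is essentially free: condition (4) of Definition \ref{defn.stft} together with the fact that $T$ is a tree closed under initial segments of its members guarantees, by a routine argument, that the all-zeros extension direction is always present below a node that itself extends to a maximal node; more carefully, one tracks that $u$ lies on a path to a maximal node of length $l_{N-1}$ and that the relevant left successor persists. The real content is the splitting case. Here I would invoke the Splitting Criterion: a non-maximal $u$ of length $l_n$ splits in $T$ if and only if $u$ and $c_n$ have no parallel $1$'s. So I need to show that if $n\in F$, then the node $u$ I have constructed so far has no parallel $1$ with $c_n$. This is exactly where the hypothesis that $\{c_m:m\in F\}$ codes no edges is used: the $1$'s of $u$ at levels below $l_n$ occur precisely at the levels $l_m$ with $m\in F\cap\{0,\dots,n-1\}$ (by the inductive construction), and the $1$'s of $c_n$ among those same levels occur at the levels $l_m$ with $c_n(l_m)=1$, i.e. where $v_n \,\E\, v_m$. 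If some $m\in F\cap n$ had $c_n(l_m)=1$, then since $n\in F$ as well, the coding nodes $c_m$ and $c_n$ would code an edge between $v_m$ and $v_n$, contradicting that $\{c_k:k\in F\}$ codes no edges. Hence no such parallel $1$ exists, so $u$ splits, and I may take $u^\frown 1$. (There is also the subtlety that below the stem or at levels not equal to any $l_i$ there is no constraint; but since $l_n = n+1$ here, every level is a coding-node level, and the stem is empty by condition (2), so this does not arise.)

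Having produced a node of length $l_{N-2}+1$ with the correct passing numbers at $l_0,\dots,l_{N-2}$, I extend it arbitrarily (taking left successors, say) up to length $l_{N-1}$; since $T$ has no maximal nodes below level $l_{N-1}$ in the finite case (all maximal nodes have length $l_{N-1}$ by condition (1)), such an extension exists and is a maximal node $t$ of $T$. By construction $t(l_n)=1 \iff n\in F$ for all $n<N-1$, which is the conclusion.

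The main obstacle I anticipate is not any single step but keeping the bookkeeping honest: one must be careful that the inductively maintained node really does have \emph{all} its $1$'s below $l_n$ located at the levels $\{l_m : m\in F\cap n\}$ and nowhere else, since the ``no parallel $1$'s'' test in the Splitting Criterion is sensitive to $1$'s at \emph{every} prior level, not just coding-node levels — though because $l_n=n+1$, every prior level is of the form $l_m$, which is what makes the argument clean. A secondary point requiring care is verifying that the left-extension ($u^\frown 0$) is genuinely in $T$ at each stage; this should follow from condition (4) of Definition \ref{defn.stft} and the tree property, possibly with a short lemma that the all-$0$'s node $0^{l_{N-1}}$ being in $T$ forces the existence of the needed left successors along the way, but one should make sure the finite strong triangle-free tree structure actually delivers this rather than assuming it.
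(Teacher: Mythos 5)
Your proof is correct and is essentially the paper's argument: the paper phrases it as an induction on the number $N$ of coding nodes, extending the node obtained from the inductive hypothesis by one level, which is exactly your level-by-level construction unrolled, and the key step is the same in both — when $n\in F$, the invariant that the node's $1$'s sit only at levels $l_m$ with $m\in F$ plus the hypothesis that $\{c_n:n\in F\}$ codes no edges rules out any parallel $1$ with $c_n$, so the Splitting Criterion yields the $1$-extension, while the $0$-extension is taken otherwise. The points you flag (availability of $u^\frown 0$, and the fact that level $0$ is not a coding level) are handled implicitly in the paper as well and cause no difficulty given your stated invariant.
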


\begin{proof}
The proof is by induction on $N$ over all strong triangle-free trees with $N$ coding nodes.
For $N\le 1$,
the lemma trivially holds but is not very instructive,
so we  shall start with  the case $N=2$.
Let $T$ be a strong triangle-free tree with coding nodes $\{ c_0,c_1 \}$.
By (2) of Definition \ref{defn.stft},
the stem of $T$ is the empty sequence,
so both $\lgl 0\rgl$ and $\lgl 1\rgl$ are in $T$.
By (3) of Definition \ref{defn.stft},
$c_0=\lgl 1\rgl$,
and $c_1(l_0)=1$.
By the Splitting Criterion,  $c_0$ does not split in $T$ but $\lgl 0\rgl$ does,
so $\lgl 0,0\rgl$, $\lgl 0,1\rgl$,  and $\lgl 1,0\rgl$ are in $T$  while $\lgl 1,1\rgl$ is not in $T$.
Note that $c_1=\lgl 0,1\rgl$, since  it must be that
$l_1=2$ and
 $c_1(l_0)=1$,
and $\lgl 1,1\rgl$ is not in $T$.
The only non-empty $F\sse 1$ is $F=\{0\}$.
The coding node $c_1$ satisfies that $c_1(l_n)=1$ if and only if $n\in \{0\}$.
For $F=\emptyset$,
both the nodes $t=\lgl 0,0 \rgl$ and $t=\lgl 1,0\rgl$ satisfy
that for all $n<1$, $t(l_n)=1$  if and only if $n\in F$.

Now assume that the lemma holds for all $N'<N$, where $N\ge 3$.
Let $T$ be a strong triangle-free tree with $N$ coding nodes.
Let $F$ be a subset of $N-1$ such that $\{c_n:n\in F\}$ codes no edges.
By the induction hypothesis, there is a node $t$ in $T$ of length $l_{N-2}$ such that
for all $n<N-2$, $t(l_n)=1$ if and only if $n\in F$.
If  $N-2\not\in F$,
then
as $t^{\frown}0$ is guaranteed to be in $T$
by the Splitting Criterion,
the  node $t'=t^{\frown}0$ in $T$  satisfies that
 for all $n<N-1$,
$t'(l_n)=1$ if and only if $n\in F$.
Now  suppose $N-2\in F$.
We claim that $t^{\frown}1$ is in $T$.
By the Splitting Criterion,
if  $t^{\frown}1$ is not in $T$,
then it must be the case that $t$ and $c_{N-2}$ have a parallel $1$.
So there is some $i<N-2$ such that $t(l_i)=c_{N-2}(l_i)=1$.
As $t$ codes edges only with those vertices with indexes  $n<N-2$ which are in $F\setminus\{N-2\}$,
it follows that $i\in F$.
But then $\{c_i,c_{N-2}\}$ codes an edge, contradicting the assumption on $F$.
Therefore, $t$ and $c_{N-2}$ do not have any parallel $1$'s,
and hence $t^{\frown}1$ is in $T$.
Letting $t'=t^{\frown}1$,
we see that for all $n<N-1$,
$t(l_n)=1$ if and only if $n\in F$.
\end{proof}

We now present a  method for constructing   strong triangle-free trees densely coding $\mathcal{H}_3$.
Here and throughout the paper, $0^n$ denotes the sequence of length $n$ consisting of all $0$'s.

\begin{thm}[Strong Triangle-Free Tree $\bS$  Densely Coding $\mathcal{H}_3$]\label{thm.stftree}
Let  $\lgl F_i:i<\om\rgl$  be any sequence enumerating the finite subsets of $\om$  so  that each finite set appears infinitely often.
Assume  that
  for each $i<\om$,
 $F_i\sse i-1$ and  $F_{3i}=F_{3i+2}=\emptyset$.
Then there is a strong triangle-free tree $\bS$ which
 satisfies
property $(A_3)^{\tt tree}$
and
 densely codes $\mathcal{H}_3$.
Moreover, this property is satisfied specifically by
the coding node $c_{4i+j}$  meeting  requirement $F_{3i+j}$,
 for each $i<\om$ and $j\le 2$.
\end{thm}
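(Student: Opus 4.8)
Since, by Remark~\ref{rem.dense}, every strong triangle-free tree whose coding nodes are dense satisfies $(A_3)^{\tt tree}$ and hence, by Fact~\ref{fact.A_3treeimpliestrianglefreegraph}, codes $\mathcal{H}_3$, the plan is to construct such a tree $\bS$ recursively while tracking which coding node witnesses which requirement. I would build $\bS=\bigcup_{n<\om}\bS_n$ as an increasing chain of finite strong triangle-free trees, with $\bS_n$ having coding nodes $c_0,\dots,c_{n-1}$ and maximal nodes of length $l_{n-1}=n$. The organizing observation is that, by the Splitting Criterion (Definition~\ref{defn.p1cstft}), $\bS$ is completely determined by its sequence of coding nodes: in passing from $\bS_n$ to $\bS_{n+1}$ a maximal node $s$ of $\bS_n$ gets both immediate successors exactly when $s$ and $c_{n-1}$ have no parallel $1$'s, and only $s^{\frown}0$ otherwise, and then clause~(3) of Definition~\ref{defn.stft} forces $c_n=s^{\frown}1$ for some maximal $s\in\bS_n$ with no parallel $1$'s with $c_{n-1}$. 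Thus the construction amounts to one choice per stage, and every such choice is admissible: the constantly-$0$ node $0^{n}$ is a maximal node of $\bS_n$ with no parallel $1$'s with anything, so an eligible $s$ always exists and clause~(4) of Definition~\ref{defn.stft} holds automatically; and for any eligible $s$, the node $c_n=s^{\frown}1$ satisfies the Triangle-Free Criterion, since the neighbors of $v_n$ among $v_0,\dots,v_{n-1}$ are the $v_i$ with $s(l_i)=1$ together with $v_{n-1}$, the former set codes no edges by the TFC of $\bS_n$ (exactly as in the proof of Proposition~\ref{prop.trianglefreerep}), and $v_{n-1}$ is non-adjacent to each such $v_i$ because $s$ and $c_{n-1}$ share no parallel $1$. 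Hence each $\bS_{n+1}$ is again a finite strong triangle-free tree, with $l_n=n+1$ preserved.

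With admissibility for free, the real content is bookkeeping, and I would schedule the coding nodes in consecutive blocks of four. In block $i$, take $c_{4i}:=0^{4i}{}^{\frown}1$ and $c_{4i+2}:=0^{4i+2}{}^{\frown}1$; these are eligible promotions (their $0$-prefixes have no parallel $1$'s with the preceding coding node) and they discharge the empty padding requirements $F_{3i}$ and $F_{3i+2}$. For $c_{4i+1}$, apply Lemma~\ref{lem.stftextension} to the current finite tree: if $\{c_m:m\in F_{3i+1}\}$ codes an edge there is nothing to do, and otherwise the lemma returns a maximal node $t$ realizing the edge pattern of $F_{3i+1}$; since $F_{3i+1}\sse 3i$ while $c_{4i}=0^{4i}{}^{\frown}1$ carries its only $1$ at coordinate $4i$, which lies strictly above all coordinates at which $t$ can carry a $1$, the node $t$ has no parallel $1$'s with $c_{4i}$, so $c_{4i+1}:=t^{\frown}1$ is eligible and witnesses $F_{3i+1}$. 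Finally $c_{4i+3}$ is spent on a density step: fixing in advance an enumeration of the nodes of $\bS$ as they are created, promote the all-$0$ extension of the least node not yet lying below any coding node. This schedule yields exactly the index correspondence $c_{4i+j}\leftrightarrow F_{3i+j}$, $j\le 2$, asserted in the statement.

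To conclude, I would check that $\bS:=\bigcup_n\bS_n$ is a strong triangle-free tree: clause~(1) of Definition~\ref{defn.stft} holds because every maximal node of every $\bS_n$ receives a successor, clause~(2) from the base case, and clauses~(3),(4),(5) together with $l_n=n+1$ are preserved at each stage by the first paragraph; the density steps make the coding nodes dense in $\bS$; so by Remark~\ref{rem.dense} and Fact~\ref{fact.A_3treeimpliestrianglefreegraph}, $\bS$ densely codes $\mathcal{H}_3$, and the promised correspondence holds by construction. The step I expect to be the main obstacle is the one buried inside the block: making Lemma~\ref{lem.stftextension} cooperate with the Splitting Criterion, that is, guaranteeing that the maximal node returned by the lemma (which is pinned down by the prescribed edge pattern) also fails to have a parallel $1$ with the immediately preceding coding node — so that it genuinely splits and its $1$-extension belongs to the tree — while not destroying the branching still needed to meet the next requirement. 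It is precisely this tension that forces the empty requirements $F_{3i},F_{3i+2}$ into the enumeration and fixes the block length at four, and verifying that this arrangement actually works — in particular that the reset nodes $c_{4i},c_{4i+2}$ obstruct neither later requirements nor the density steps — is the technical heart of the argument.
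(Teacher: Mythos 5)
Your construction is correct and follows essentially the same route as the paper's proof: the same four-block schedule with reset coding nodes $c_{4i},c_{4i+2}$ of the form ${0^n}^{\frown}1$, Lemma \ref{lem.stftextension} for the requirement nodes $c_{4i+1}$ (with the same eligibility argument that the reset node's unique $1$ lies above every coordinate at which the node supplied by the lemma can carry a $1$), and a density step at $c_{4i+3}$. The only point left implicit is eligibility at the density step, which does hold for your scheme: at stage $4i+3$ at most two of the chosen coding nodes have length $\ge 4i+2$ while that level of the tree has more nodes, so the least node not yet below a coding node has length at most $4i+2$, hence its all-$0$ extension has a $0$ at coordinate $4i+2$ and no parallel $1$ with $c_{4i+2}$ --- precisely the verification the paper's fixed enumeration with $|u_i|\le i$ makes automatic.
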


\begin{proof}
Let  $\lgl F_i:i<\om \rgl$  satisfy the hypotheses.
Enumerate the nodes in $2^{<\om}$ as
 $\lgl u_i: i<\om\rgl$
 in such a manner that $i<k$ implies $|u_i|\le |u_k|$.
Then $u_0=\emptyset$, $|u_1|=1$, and for all $i\ge 2$, $|u_i|<i$.
We will build a strong triangle-free tree $\bS\sse 2^{<\om}$ with  coding nodes $c_n\in\bS\cap 2^{n+1}$ densely coding $\mathcal{H}_3$ satisfying the following properties:
\begin{enumerate}

\item[(i)]
$c_0=\lgl 1\rgl$, and
for each $n<\om$,  $l_n:=|c_n|=n+1$ and   $c_{n+1}(l_n)=1$.
\item[(ii)]
For $n=4i+j$, where  $j\le 2$, $c_{n}$ {\em satisfies requirement $F_{3i+j}$},
meaning that if $\{c_k:k\in F_{3i+j}\}$ codes no edges,
then for all $k< n-1$,
$c_n(l_k)=1$ if and only if $k\in F_{3i+j}$.
\item[(iii)]
For $n=4i+3$, if $u_i$ is in  $\bS\cap 2^{\le n}$,
then $c_n$ is a coding node extending $u_i$.
If $u_i$ is not in
$\bS$, then
 $c_n={0^n}^{\frown}1$.
\end{enumerate}

As in Lemma \ref{lem.stftextension},
the first two coding nodes of $\bS$ are completely determined by the definition of strong triangle-free tree.
Thus, $c_0=\lgl 1\rgl$, $c_1=\lgl 0,1\rgl$,
and the tree $\bS$ up to height $2$ consists of the nodes
$\{\emptyset,\lgl 0\rgl, \lgl 1\rgl, \lgl 0,0\rgl,\lgl 0,1\rgl,\lgl 1,0\rgl\}$.
Denote this tree as $\bS_2$.
Since  $F_0=F_1=\emptyset$,  $c_0$ and $c_1$ trivially satisfy requirements $F_0$ and $F_1$, respectively.
It is simple to check that $\bS_2$  is a strong triangle-free tree,  and that (i) - (iii) are satisfied.

For the general construction step, suppose $n\ge 2$,
 $\bS_n \sse  2^{\le n}$ has been constructed,
 and  coding nodes $\lgl c_i :i<n\rgl$ have been chosen so that $\bS_n$  is a strong triangle-free tree satisfying (i) - (iii).
Extend each maximal node in $\bS_n$  to length $n+1$ according to the Splitting Criterion.
Thus,
for each $s\in \bS_n\cap {2}^n $,
 $s^{\frown}0$ is  in $\bS_{n+1}$,
and
 $s^{\frown}1$ is in $\bS_{n+1}$ if and only if
$s$  has no parallel $1$'s with $c_{n-1}$.
Now we choose $c_n$ so that  (i) - (iii) hold.
There are three cases.
\vskip.1in

\it Case 1. \rm
Either
$n=4i$  and $i\ge 1$, or $n=4i+2$ and  $i<\om$.
Let $n'$ denote $3i$ if $n=4i$, and let $n'$ denote $3i+2$ if $n=4i+2$.
Then $F_{n'}=\emptyset$, so
let $c_n={0^n}^{\frown}1$.

\it Case 2. \rm
$n=4i+1$ and $1\le i<\om$.
If
 for all pairs of integers  $k<m$  in $F_{3i+1}$ it is the case that
 $c_m(l_k)=0$, then
take $c_n$ to be a maximal node in $\bS_{n+1}$ such that  for all $k<n-1$,
$c_n(l_k)=1$  if and only if $k\in F_{3i+1}$,
and $c_n(l_{n-1})=1$.
Otherwise, let $c_n={0^n}^{\frown}1$.

\it Case 3.  \rm
$n=4i+3$ and $i<\om$.
Recall that  $|u_i|\le i$, so $|u_i|\le n-3$.
If $u_i$ is in  $\bS_i$,
then take $c_n$ to be the maximal node in $\bS_{n+1}$ which is  $u_i$ extended by all $0$'s until  its last entry, which  is $1$.
Precisely, letting $q=n-|u_i|$,
set $c_n={u_i}^{\frown}{0^q}^{\frown}1$.
If $u_i$ is not in $ \bS_i$, let  $c_n={0^n}^{\frown}1$.
\vskip.1in

(i) - (iii) hold automatically by the choices of $c_n$ in  Cases 1 - 3.
What is left  is to check is that such nodes in Cases 1 - 3 actually exist in $\bS_{n+1}$.
The node ${0^n}^{\frown}1$ is in $\bS_{n+1}$, as  it  has no parallel $1$'s with  $c_{n-1}$.
Thus, in Case 1 and the second halves of Cases 2 and 3, the node we declared to be $c_n$ is indeed  in $\bS_{n+1}$.

In  Case 2 where $n=4i+1$ with $i\ge 1$, suppose that  $F_{3i+1}\ne\emptyset$
and  for all pairs $k<m$ of integers in $F_{3i+1}$,
 $c_m(l_k)=0$.
Since $\max(F_{3i+1})\le 3i-1\le n-3$ and since  by the induction hypothesis,
$\bS_{n-1}$ is a strong triangle-free tree,
Lemma \ref{lem.stftextension}
implies that there is a node $t\in \bS_{n-1}$ such that
for each $k<n-1$,
$s(l_k)=1$ if and only if  $k\in F_{3i+1}$.
Note that
$t^{\frown}0$ and $c_{n-1}$ have no parallel $1$'s,
since $c_{n-1}={0^{n-1}}^{\frown}1$.
Thus, by the Splitting Criterion,
$t^{\frown}0^{\frown} 1$ is in $\bS_{n+1}$, and this node satisfies our choice of $c_n$.

In
 Case 3 when $n=4i+3$, if  $u_i\in \bS_i$,
then by the Splitting Criterion, also
${u_i}^{\frown}0^q$ is in $\bS_n$,
where $q=n-|u_i|$.
Since $n-1=4i+2$,  $c_{n-1}={0^{n-1}}^{\frown}1$;
so ${u_i}^{\frown}0^q$ has no parallel $1$'s with $c_{n-1}$.
Thus, by the Splitting Criterion,
${u_i}^{\frown}{0^q}^{\frown}1$  is  in $\bS_{n+1}$.

Let $\bS=\bigcup_{n<\om}\bS_n$.
By  the construction, $\bS$  is an infinite  strong triangle-free tree with coding nodes $\lgl c_n:n<\om\rgl$.
(ii) implies that $\bS$ satisfies $(A_3)^{\tt tree}$ and hence codes $\mathcal{H}_3$.
By (iii), the coding nodes are dense in $\bS$.
\end{proof}

\begin{example}[A Strong Triangle-Free Tree]\label{ex.stft}
Presented here is a concrete example of the first six steps of constructing a strong triangle-free tree densely coding $\mathcal{H}_3$.
In the  construction of Theorem \ref{thm.stftree}, $F_0=F_1=F_2=\emptyset$.
 The coding nodes  $c_0=\lgl 1\rgl$ and $c_1=\lgl 0,1\rgl$ are determined by the definition of strong triange-free tree.
The coding node $c_2$ we choose to be $\lgl 0,0,1\rgl$.
(It could also have been  chosen to be $\lgl 1,0,1\rgl$.)
Since $u_0$ is the empty sequence,  $c_3$ can be any  sequence which has last entry $1$; in this example we let $c_3=\lgl 1,0,0,1\rgl$.
$F_3=\emptyset$, so $c_4=\lgl 0,0,0,0,1\rgl$.
Suppose $F_4=\{0,2\}$.
Then we may take $c_5=\lgl 0,1,0,1,0,1\rgl $ to code edges between the vertex $v_5$ and the vertices $v_0$ and $v_2$;
we also make $v_5$ have an edge with $v_4$.
Notice that having
chosen the coding node $c_n$,
each  maximal node $s\in\bS_{n+1}$
splits in $\bS_{n+2}$ if and only if
   $s(i)+c_n(i)\le 1$ for all $i\le n$.
See Figure 3.
The graph on the left with vertices $\{v_0,\dots,v_5\}$ is being coded by the coding nodes $\{c_0,\dots,c_5\}$.
The tree and the graph are intended to continue growing upwards to the infinite tree $\bS$ coding the graph $\mathcal{H}_3$.


\begin{figure}\label{fig.bS}
\begin{tikzpicture}[grow'=up,scale=.58]

\tikzstyle{level 1}=[sibling distance=5in]
\tikzstyle{level 2}=[sibling distance=3in]
\tikzstyle{level 3}=[sibling distance=1.8in]
\tikzstyle{level 4}=[sibling distance=1.1in]
\tikzstyle{level 5}=[sibling distance=.7in]
\tikzstyle{level 6}=[sibling distance=0.4in]
\tikzstyle{level 7}=[sibling distance=0.2in]

\node {$\lgl \rgl$}
child{coordinate (t0)
			child{coordinate (t00)
child{coordinate (t000)
child {coordinate(t0000)
child{coordinate(t00000)
child{coordinate(t000000)
child{coordinate(t0000000)}
child{coordinate(t0000001)}
}
child{coordinate(t000001)
child{coordinate(t0000010)}
child{ edge from parent[draw=none]  coordinate(t0000011)}
}
}
child{coordinate(t00001)
child{coordinate(t000010)
child{coordinate(t0000100)}
child{coordinate(t0000101)}
}
child{ edge from parent[draw=none]  coordinate(t000011)
}
}}
child {coordinate(t0001)
child {coordinate(t00010)
child {coordinate(t000100)
child {coordinate(t0001000)}
child { edge from parent[draw=none] coordinate(t0001001)}
}
child {coordinate(t000101)
child {coordinate(t0001010)}
child { edge from parent[draw=none]  coordinate(t0001011)}
}
}
child{coordinate(t00011) edge from parent[draw=none] }}}
child{ coordinate(t001)
child{ coordinate(t0010)
child{ coordinate(t00100)
child{ coordinate(t001000)
child{ coordinate(t0010000)}
child{ coordinate(t0010001)}
}
child{ coordinate(t001001)
child{ coordinate(t0010010)}
child{ edge from parent[draw=none] coordinate(t0010011)}
}
}
child{ coordinate(t00101)
child{ coordinate(t001010)
child{ coordinate(t0010100)}
child{ coordinate(t0010101)}
}
child{   edge from parent[draw=none]coordinate(t001011)
}
}}
child{  edge from parent[draw=none] coordinate(t0011)}}}
			child{ coordinate(t01)
child{ coordinate(t010)
child{ coordinate(t0100)
child{ coordinate(t01000)
child{ coordinate(t010000)
child{ coordinate(t0100000)}
child{ edge from parent[draw=none]  coordinate(t0100001)}
}
child{ coordinate(t010001)
child{ coordinate(t0100010)}
child{edge from parent[draw=none]  coordinate(t0100011)}
}
}
child{ coordinate(t01001)
child{ coordinate(t010010)
child{ coordinate(t0100100)}
child{ edge from parent[draw=none]  coordinate(t0100101)}
}
child{edge from parent[draw=none]  coordinate(t010011)}
}}
child{ coordinate(t0101)
child{ coordinate(t01010)
child{ coordinate(t010100)
child{ coordinate(t0101000)}
child{edge from parent[draw=none]  coordinate(t0101001)}
}
child{ coordinate(t010101)
child{ coordinate(t0101010)}
child{edge from parent[draw=none]  coordinate(t0101011)}
}
}
child{  edge from parent[draw=none]  coordinate(t01011)
}}}
child{ edge from parent[draw=none]  coordinate(t011)}}}
		child{ coordinate(t1)
			child{ coordinate(t10)
child{ coordinate(t100)
child{ coordinate(t1000)
child{ coordinate(t10000)
child{ coordinate(t100000)
child{ coordinate(t1000000)}
child{ coordinate(t1000001)}
}
child{ coordinate(t100001)
child{ coordinate(t1000010)}
child{ edge from parent[draw=none] coordinate(t1000011)}
}
}
child{ edge from parent[draw=none] coordinate(t10001)
}}
child{ coordinate(t1001)
child{ coordinate(t10010)
child{ coordinate(t100100)
child{ coordinate(t1001000)}
child{edge from parent[draw=none]  coordinate(t1001001)}
}
child{ coordinate(t100101)
child{ coordinate(t1001010)}
child{edge from parent[draw=none]   coordinate(t1001011)}
}
}
child{  edge from parent[draw=none] coordinate(t10011)
}}}
child{ coordinate(t101)
child{ coordinate(t1010)
child{ coordinate(t10100)
child{ coordinate(t101000)
child{ coordinate(t1010000)}
child{ coordinate(t1010001)}
}
child{ coordinate(t101001)
child{ coordinate(t1010010)}
child{   edge from parent[draw=none]   coordinate(t1010011)}
}
}
child{   edge from parent[draw=none]  coordinate(t10101)
}}
child{ edge from parent[draw=none] coordinate(t1011)}}}
child{  edge from parent[draw=none] coordinate(t11)} };

\node[right] at (t1) {$c_0$};
\node[left] at (t01) {$c_1$};
\node[left] at (t001) {$c_2$};
\node[right] at (t1001) {$c_3$};
\node[left] at (t00001) {$c_4$};
\node[right] at (t010101) {$c_5$};

\node[circle, fill=black,inner sep=0pt, minimum size=5pt] at (t1) {};
\node[circle, fill=black,inner sep=0pt, minimum size=5pt] at (t01) {};
\node[circle, fill=black,inner sep=0pt, minimum size=5pt] at (t001) {};
\node[circle, fill=black,inner sep=0pt, minimum size=5pt] at (t1001) {};
\node[circle, fill=black,inner sep=0pt, minimum size=5pt] at (t00001) {};
\node[circle, fill=black,inner sep=0pt, minimum size=5pt] at (t010101) {};

\draw[thick, dotted] let \p1=(t1) in (-18,\y1) node (v0) {$\bullet$} -- (7,\y1);
\draw[thick, dotted] let \p1=(t01) in (-18,\y1) node (v1) {$\bullet$} -- (7,\y1);
\draw[thick, dotted] let \p1=(t001) in (-18,\y1) node (v2) {$\bullet$} -- (7,\y1);
\draw[thick, dotted] let \p1=(t1001) in (-18,\y1) node (v3) {$\bullet$} -- (7,\y1);
\draw[thick, dotted] let \p1=(t00001) in (-18,\y1) node (v4) {$\bullet$} -- (7,\y1);
\draw[thick, dotted] let \p1=(t010101) in (-18,\y1) node (v5) {$\bullet$} -- (7,\y1);

\node[left] at (v0) {$v_0$};
\node[left] at (v1) {$v_1$};
\node[left] at (v2) {$v_2$};
\node[left] at (v3) {$v_3$};
\node[left] at (v4) {$v_4$};
\node[left] at (v5) {$v_5$};

\draw[thick] (v0.center) to (v1.center) to (v2.center) to (v3.center) to [bend left] (v0.center);
\draw[thick] (v3.center) to (v4.center) to (v5.center);
\draw[thick] (v0.center) to [bend right] (v5.center);
\draw[thick] (v5.center) to [bend left] (v2.center);

\end{tikzpicture}
\caption{A strong triangle-free tree $\bS$ densely coding $\mathcal{H}_3$}
\end{figure}
\end{example}


\begin{rem}\label{rem.properties}
We have set up the definition of strong triangle-free tree so that no coding node in a strong triangle-free tree splits.
The purpose  of this  is to simplify later work by reducing the number of different isomorphism types of trees coding  a given finite triangle-free graph.
The purposes of the density of the coding  nodes  and the Splitting Criterion are to saturate the trees with as many extensions as possible coding vertices without coding any triangles, so as to
allow for  thinning to subtrees which  still can code $\mathcal{H}_3$, setting the stage for later Ramsey-theoretic results.
\end{rem}

\begin{rem}\label{rem.badcoloring}
Given a strong triangle-free tree $T$ densely coding $\mathcal{H}_3$,
the collection of all
 strong triangle-free subtrees  $S$ of $T$ densely coding $\mathcal{H}_3$  forms an interesting space of trees.
The author has  proved
 Halpern-\Lauchli-style  theorems for such trees, provided that the stem is fixed.
This was the author's first approach toward the main theorem of this paper, and these proofs formed the strategy for the proofs in later sections.
However, the introduction of coding nodes hinders a full development of Ramsey theory for trees which have splitting nodes and coding nodes of the same length, as shown in the next example.
Such a bad coloring on coding nodes prevents the transition from cone-homogeneity to homogeneity on a strong triangle-free subtree with dense coding nodes.
\end{rem}

\begin{example}[A bad coloring]\label{ex.bc}
Given a strong triangle-free tree $\bS$ with coding nodes $\lgl c_n:n<\om\rgl$ dense in $\bS$,
let
$s_i=0^{i}$, for each $i<\om$.
Note that each $s_i$ splits in $\bS$ and  that
 $|c_n|=|s_{n+1}|$, for each $n<\om$.
Color all coding nodes $c_n$ extending  ${s_0}^{\frown}1$, which is exactly $\lgl 1\rgl$,   blue.
Let $k$ be given and suppose
for each $i\le k$,
we have colored all coding nodes extending ${s_i}^{\frown}1$.
 The coding node $c_k$  extends ${s_i}^{\frown}1$ for some $i\le k$, so it has already been assigned a color.
If $c_k$ is blue, color every coding node in $\bS$ extending ${s_{k+1}}^{\frown}1$ red;
if $c_k$ is red, color every coding node in $\bS$ extending ${s_{k+1}}^{\frown}1$ blue.
This produces a red-blue coloring of the coding nodes  such that
any  subtree $S$  of $\bS$ with coding nodes dense in $S$  and satisfying the Splitting Criterion (which would be the natural definition  of  infinite strong triangle-free subtree) has coding nodes of both colors:
For given a coding node $c$  of $S$, the node $0^{|c|}$ is a splitting node in $S$,
and all coding nodes in $S$ extending  ${0^{|c|}}^{\frown}1$ have color different from the color of $c$.
\end{example}

Since this example precludes a satisfactory  Ramsey theory of strong triangle-free trees coding $\mathcal{H}_3$,
 instead of  presenting those  Ramsey-theoretic results on strong triangle-free trees which were obtained,
we immediately move on to
  the skew version of strong triangle-free trees.
Their full Ramsey theory will be developed in the rest of the article.


\section{Strong coding trees}\label{sec.4}

This section introduces the main tool for  our investigation of the big Ramsey degrees for the universal triangle-free graph, namely {\em strong coding trees}.
Essentially,  strong coding trees are simply stretched versions of strong triangle-free trees, with
all the coding structure  preserved while
removing  any
 entanglements  between coding nodes and splitting nodes
 which could prevent Ramsey theorems, as in  Example \ref{ex.bc}.
The collection of all subtrees of a strong coding tree $T$ which are isomorphic to $T$, partially ordered by a relation
defined later in this section,
will be seen, by the end of Section \ref{sec.1SPOC},
to
  form a space of trees coding $\mathcal{H}_3$
with many  similarities to  the Milliken space of strong trees \cite{Milliken79}.


\subsection{Definitions and notation}\label{subsection.4.1}

The following terminology and notation will be used throughout.
Recall that
by a  {\em tree}, we mean exactly a subset $T\sse 2^{<\om}$ which is closed under meets and is a union of level sets; that is, $s,t\in T$ and $|t|\ge |s|$ imply that $t\re |s|$ is also a member of $T$.
Further, recall Definition
\ref{defn.treewcodingnodes} of a
 {\em tree with coding nodes}.
Let  $T\sse 2^{<\om}$  be a tree with coding nodes $\lgl c^T_n:n<N\rgl$, where $N\le \om$,
and let
  $l^T_n$ denote $|c^T_n|$.
$\widehat{T}$ denotes the collection of all initial segments of nodes in $T$; thus,
$\widehat{T}=\{t \re n:t\in T$ and $n\le |t|\}$.
A node $s\in T$  is called  a {\em splitting node}  if both $s^{\frown}0$ and $s^{\frown}1$  are in $\widehat{T}$;
equivalently,  $s$ is a splitting node in $T$ if there are nodes $s_0,s_1\in T$ such that
$s_0\contains s^{\frown}0$ and $s_1\contains s^{\frown}1$.
Given $t$ in a tree  $T$, the  {\em level} of  $T$ of length  $|t|$  is  the set of all $s\in T$ such that $|s|=|t|$.
By our definition of tree,
this is exactly the set of $s\re|t|$ such that $s\in T$ and $|s|\ge |t|$.
 $T$   is {\em skew} if
each level of $T$ has exactly one of either a  coding node or a splitting node.
A skew tree
$T$ is {\em strongly skew} if  additionally
for each splitting node $s\in T$,
every
 $t\in T$ such that $|t|>|s|$ and  $t\not\supset s$ also
satisfies
$t(|s|)=0$;
that is, the passing number  of any node  passing by, but not extending, a splitting node
is $0$.
The set of {\em levels} of a skew tree $T\sse 2^{<\om}$, denoted $L^T$, is the set  of those $l<\om$ such that $T$ has either a splitting or a coding node
of length $l$.
Let $\lgl d^T_m:m<M\rgl$  enumerate the collection of all  coding  and splitting nodes of $T$ in increasing order of length.
The  nodes $d^T_m$ will be called the {\em critical nodes} of $T$.
Note that $N\le M$, and $M=\om$ if and only if $N=\om$.
For each $m<M$,
the {\em $m$-th level of $T$} is
\begin{equation}
\Lev_T(m)=\{s\in \widehat{T}:|s|=|d^T_m|\}.
\end{equation}
Then for any strongly skew tree $T$,
\begin{equation}
T=\bigcup_{m<M}\Lev_T(m).
 \end{equation}
Let $m_n$ denote the integer
such that $c^T_n\in \Lev_T(m_n)$.
Then $d^T_{m_n}=c^T_n$,
and the critical node
$d^T_m$ is a splitting node if and only if
 $m\ne m_n$ for any $n$.
For each $0<n<N$,
the {\em $n$-th interval} of $T$
is $\bigcup\{\Lev_T(m):  m_{n-1}<m\le m_n\}$.
 The {\em $0$-th interval} of $T$  is  defined to be $\bigcup_{m\le m_0}\Lev_T(m)$.
Thus, the $0$-th interval of $T$ is the set of those nodes in $T$ with lengths in $[0,l^T_0]$, and for $0<n<N$, the $n$-th interval of $T$ is the set of those nodes in  $T$ with lengths in $(l^T_{n-1},l^T_n]$.

The next definition  provides   notation for the set of  exactly those nodes just above the $(n-1)$-st coding node which will split  in the $n$-th interval of $T$.
Define
\begin{equation}
\Spl(T,0)=\{t\in \widehat{T}:
|t|=|\stem(T)|+1 \mathrm{\ and\ } \exists m<m_0
\mathrm{\ such\ that\ }d^T_m\contains t\}.
\end{equation}
For $n\ge 1$,
define
\begin{equation}
\Spl(T,n)=\{t\in \widehat{T}:
|t|=l_{n-1}+1\mathrm{\ and\ }
\exists
m\in(m_{n-1},m_n) \mathrm{\ such\ that\ } d^T_m\contains  t\}.
\end{equation}
Thus, $\Spl(T,n)$ is the set of nodes in $\widehat{T}$ of length
just one
 above the length of $c_{n-1}$ (or the stem of $T$ if $n=0$)
which extend to a splitting node  in the $n$-th interval of  $T$.
The lengths of the nodes in $\Spl(T,n)$ were chosen to so that they provide  information about passing numbers at $c_{n-1}^T$.
For  $t\in\Spl(T,n)$, let $\spl_T(t)$ denote the minimal extension of $t$ which splits in $T$.

Given a node $s$  in $T$ for which there is an $i< |s|$ such that $s\re i$ is a splitting node in $T$,
 the {\em splitting predecessor of $t$ in $T$}, denoted
$\splitpred_T(s)$,
is  the proper initial segment $u\subset s$ of maximum length such that both $u^{\frown}0$ and $u^{\frown}1$ are  in $\widehat{T}$.
Thus, $\splitpred_T(s)$ is the longest  splitting node in $T$ which is a   proper initial segment   of $s$.
When the tree  $T$ is clear from the context,  the  subscripts  and superscripts of  $T$ will be dropped.


\subsection{Definition and construction of strong coding trees}\label{defconsct}
Now we present a new tool for representing the universal triangle-free graph, namely strong coding trees.
The following \POC\   is a  central concept,
 ensuring that  a  finite subtree of a strong coding tree $T$ can be extended inside $T$ so that the criterion $(A_3)^{\tt tree}$ can be met.

\begin{defn}[\POC]\label{defn.parallel1sProp}
Let $T\sse 2^{<\om}$ be a  strongly skew tree with coding nodes $\lgl c_n:n<N\rgl$.
We say that $T$ satisfies the {\em \POC} if
the following hold:
Given any
 set of two or more nodes $\{t_i:i<\tilde{i}\}$ in $T$
and some $l$ such that $t_i\re(l+1)$, $i<\tilde{i}$, are all distinct, and
 $t_i(l)=1$ for all $i<\tilde{i}$,
\begin{enumerate}
\item
There is a coding node $c_n$ in $T$ such that
 for all $i<\tilde{i}$,
$l_n<|t_i|$ and
$t_i(l_n)=1$;
we say that  $c_n$ {\em witnesses} the parallel $1$'s of $\{t_i:i<\tilde{i}\}$.
\item
Letting $l'$ be  least such that
$t_i(l')=1$ for all $i<\tilde{i}$,
and  letting
$n$ be
 least  such that $c_{n}$ witnesses the parallel $1$'s of the set of nodes $\{t_i:i<\tilde{i}\}$,
then $T$ has no splitting nodes and no coding nodes
 of lengths strictly  between $l'$ and $l_{n}$.
\end{enumerate}
\end{defn}

We say that a set of nodes $\{t_i:i<\tilde{i}\}$ has a new set of parallel $1$'s at $l$ if $t_i(l)=1$ for all $i<\tilde{i}$, and $l$ is least such that this occurs.
Thus, the \POC\ says that any new set of parallel $1$'s must occur at a level $l$  which is above the last splitting node in $T$ in the interval $(l_{n-1},l_n]$ containing $l$,
and that  $c_n$ must witness this set of parallel $1$'s.

\begin{defn}[Splitting Criterion for Skew Trees]\label{defn.splitcritst}
A  strongly skew tree
$T$ with coding nodes $\lgl c_n:n<N\rgl$  satisfies the {\em Splitting Criterion for Skew Trees} if  the following hold:
For each $1\le n<N$ and each  $s\in \widehat{T}$ of length $l_{n-1}+1$,
$s$ is in $\Spl(T,n)$ if and only if
 $s$ and $c_n\re (l_{n-1}+1)$ have no parallel $1$'s.
For each $s\in\widehat{T}$  of length $|\stem(T)|+1$,
$s$ is in $\Spl(T,0)$ if and  only if $s={\stem(T)}^{\frown}0$.
\end{defn}

Notice   that any tree with coding nodes satisfying the Splitting Criterion for Skew Trees also satisfies the Triangle-Free Criterion (Definition \ref{defn.trianglefreeextcrit}),
and hence will not code any triangles.

Now we arrive at the main structural  concept for coding copies of $\mathcal{H}_3$.
This  extends the idea of  Milliken's strong trees  - branching as much as possible whenever one split occurs -
to skew trees
with the additional property that they can code omissions of triangles.

\begin{defn}[Strong  coding tree]\label{def.diagtreeH_3}
A  tree $T\sse 2^{<\om}$ with coding nodes  $\lgl c_n:n<\om\rgl$
 is a {\em   strong coding tree} if
$T$ is strongly skew,
for each node $t\in T$,  the node $0^{|t|}$ is also in $T$, and
the following hold:
\begin{enumerate}
\item
The coding nodes of $T$ are dense in $T$.
\item
For each $n\ge 1$,  $c_n(l_{n-1})=1$.
\item
$T$ satisfies the \POC.
\item
$T$ satisfies the  Splitting Criterion for Skew Trees.
\item
 $c_0$ extends ${\stem(T)}^{\frown}1$ and does not split.
\item
Given $n<\om$, $s\in \Spl(T,n)$, and  $i<2$, there is exactly one extension $s_i\supseteq \spl(s)^{\frown}i$ of length $l_n$ in $T$, and
its unique immediate extension in $\widehat{T}$ is ${s_i}^{\frown}i$.
\item
For each $n<\om$,
each node $t$ in
$\widehat{T}$ of length $l_{n-1}+1$  which is  not in $\Spl(T,n)$
 has exactly one extension of length $l_n$ in $T$,
 say $t_*$,
and its unique immediate extension  in $\widehat{T}$ is ${t_*}^{\frown}0$.
Here, $l_{-1}$ denotes the length of $\stem(T)$.
\end{enumerate}
\end{defn}

An example of a strong coding tree is presented in Figure 4.
One should notice that upon ``zipping up"
 the splits occurring
in the intervals between coding nodes in $\bT$ to the next coding node level,
 one recovers the strong triangle-free tree $\bS$ from the previous section.
The existence of strong coding trees will be proved
 in Theorem \ref{thm.cool}.


\begin{figure}\label{fig.bT}
\begin{tikzpicture}[grow'=up,scale=.35]
\tikzstyle{level 1}=[sibling distance=7in]
\tikzstyle{level 2}=[sibling distance=2in]
\tikzstyle{level 3}=[sibling distance=2in]
\tikzstyle{level 4}=[sibling distance=2in]
\tikzstyle{level 5}=[sibling distance=1in]
\tikzstyle{level 6}=[sibling distance=1.8in]
\tikzstyle{level 7}=[sibling distance=.8in]
\tikzstyle{level 8}=[sibling distance=.8in]
\tikzstyle{level 9}=[sibling distance=.8in]
\tikzstyle{level 10}=[sibling distance=1.2in]
\tikzstyle{level 11}=[sibling distance=.3in]
\tikzstyle{level 12}=[sibling distance=.3in]
\tikzstyle{level 13}=[sibling distance=.3in]
\tikzstyle{level 14}=[sibling distance=.5in]
\node {$d_0$}
child{coordinate (t0)
			child{coordinate (t00)
child{coordinate (t000)
child{coordinate (t0000)
child{coordinate (t00000)
child{coordinate (t000000)
child{coordinate (t0000000)
child{coordinate (t00000000)
child{coordinate (t000000000)
child{coordinate (t0000000000)
child{coordinate (t00000000000)
child{coordinate (t000000000000)
child{coordinate (t0000000000000)
child{coordinate (t00000000000000)}
child{edge from parent[draw=none] coordinate (t00000000000001)}
}
child{coordinate (t0000000000001)
child{edge from parent[draw=none]  coordinate (t00000000000010)}
child{coordinate (t00000000000011)}
}
}
child{edge from parent[draw=none] coordinate (t000000000001)}
}
child{edge from parent[draw=none]  coordinate (t00000000001)}
}
child{edge from parent[draw=none]  coordinate (t0000000001)}
}
child{coordinate (t000000001)
child{edge from parent[draw=none]  coordinate (t0000000010)}
child{coordinate (t0000000011)
child{coordinate (t00000000110)
child{coordinate (t000000001100)
child{coordinate (t0000000011000)
child{coordinate (t00000000110000)}
child{edge from parent[draw=none] coordinate (t00000000110001)}
}
child{edge from parent[draw=none] coordinate (t0000000011001)}
}
child{edge from parent[draw=none]  coordinate (t000000001101)}
}
child{edge from parent[draw=none]   coordinate (t00000000111)}
}
}
}
child{edge from parent[draw=none] coordinate (t00000001)}
}
child{edge from parent[draw=none] coordinate (t0000001)}
}
child{edge from parent[draw=none]  coordinate (t000001)}
}
child{coordinate (t00001)
child{edge from parent[draw=none]  coordinate (t000010)}
child{coordinate (t000011)
child{coordinate (t0000110)
child{coordinate (t00001100)
child{coordinate (t000011000)
child{coordinate (t0000110000)
child{coordinate (t00001100000)
child{coordinate (t000011000000)
child{coordinate (t0000110000000)
child{coordinate (t00001100000000)}
child{edge from parent[draw=none] coordinate (t00001100000001)}
}
child{edge from parent[draw=none] coordinate (t0000110000001)}
}
child{coordinate (t000011000001)
child{coordinate (t0000110000010)
child{edge from parent[draw=none]  coordinate (t00001100000100)}
child{coordinate (t00001100000101)}
}
child{edge from parent[draw=none] coordinate (t0000110000011)}
}
}
child{edge from parent[draw=none] coordinate (t00001100001)}
}
child{edge from parent[draw=none]  coordinate (t0000110001)}
}
child{edge from parent[draw=none] coordinate (t000011001)}
}
child{edge from parent[draw=none]  coordinate (t00001101)}
}
child{edge from parent[draw=none]  coordinate (t0000111)}
}
}
}
child{edge from parent[draw=none] coordinate (t0001)}
}
child{ edge from parent[draw=none]coordinate(t001)}}
			child{ coordinate(t01)
child{ edge from parent[draw=none] coordinate(t010)}
child{coordinate(t011)
child{coordinate(t0110)
child{coordinate(t01100)
child{coordinate(t011000)
child{coordinate(t0110000)
child{coordinate(t01100000)
child{coordinate(t011000000)
child{coordinate(t0110000000)
child{coordinate(t01100000000)
child{coordinate(t011000000000)
child{coordinate(t0110000000000)
child{coordinate(t01100000000000)}
child{edge from parent[draw=none] coordinate(t01100000000001)}
}
child{ edge from parent[draw=none]coordinate(t0110000000001)}
}
child{ edge from parent[draw=none]  coordinate(t011000000001)}
}
child{coordinate(t01100000001)
child{coordinate(t011000000010)
child{coordinate(t0110000000100)
child{edge from parent[draw=none] coordinate(t01100000001000)}
child{coordinate(t01100000001001)}
}
child{edge from parent[draw=none] coordinate(t0110000000101)}
}
child{edge from parent[draw=none]   coordinate(t011000000011)}
}
}
child{ edge from parent[draw=none]  coordinate(t0110000001)}
}
child{ edge from parent[draw=none] coordinate(t011000001)}
}
child{coordinate(t01100001)
child{coordinate(t011000010)
child{edge from parent[draw=none] coordinate(t0110000100)}
child{coordinate(t0110000101)
child{coordinate(t01100001010)
child{coordinate(t011000010100)
child{coordinate(t0110000101000)
child{coordinate(t01100001010000)}
child{edge from parent[draw=none] coordinate(t01100001010001)}
}
child{edge from parent[draw=none]  coordinate(t0110000101001)}
}
child{edge from parent[draw=none]  coordinate(t011000010101)}
}
child{edge from parent[draw=none] coordinate(t01100001011)}
}
}
child{edge from parent[draw=none]  coordinate(t011000011)}
}
}
child{ edge from parent[draw=none] coordinate(t0110001)}
}
child{ edge from parent[draw=none]  coordinate(t011001)}
}
child{ edge from parent[draw=none]  coordinate(t01101)}
}
child{ edge from parent[draw=none]  coordinate(t0111)}
}}}
		child{ coordinate(t1)
			child{ coordinate(t10)
child{ coordinate(t100)
child{ coordinate(t1000)
child{ coordinate(t10000)
child{ coordinate(t100000)
child{ coordinate(t1000000)
child{ coordinate(t10000000)
child{ coordinate(t100000000)
child{ coordinate(t1000000000)
child{ coordinate(t10000000000)
child{ coordinate(t100000000000)
child{ coordinate(t1000000000000)
child{ coordinate(t10000000000000)}
child{ edge from parent[draw=none]   coordinate(t10000000000001)}
}
child{edge from parent[draw=none]   coordinate(t1000000000001)}
}
child{ edge from parent[draw=none]   coordinate(t100000000001)}
}
child{edge from parent[draw=none]  coordinate(t10000000001)}
}
child{ edge from parent[draw=none]  coordinate(t1000000001)}
}
child{edge from parent[draw=none]  coordinate(t100000001)}
}
child{   edge from parent[draw=none] coordinate(t10000001)}
}
child{ coordinate(t1000001)
child{ coordinate(t10000010)
child{ coordinate(t100000100)
child{ edge from parent[draw=none]  coordinate(t1000001000)}
child{ coordinate(t1000001001)
child{ coordinate(t10000010010)
child{ coordinate(t100000100100)
child{ coordinate(t1000001001000)
child{ coordinate(t10000010010000)}
child{ edge from parent[draw=none] coordinate(t10000010010001)}
}
child{edge from parent[draw=none]   coordinate(t1000001001001)}
}
child{edge from parent[draw=none]  coordinate(t100000100101)}
}
child{ edge from parent[draw=none] coordinate(t10000010011)}
}
}
child{  edge from parent[draw=none] coordinate(t100000101)}
}
child{   edge from parent[draw=none] coordinate(t10000011)}
}
}
child{  edge from parent[draw=none] coordinate(t100001)}
}
child{ edge from parent[draw=none]  coordinate(t10001)}
}
child{ coordinate(t1001)
child{ coordinate(t10010)
child{ edge from parent[draw=none]   coordinate(t100100)}
child{ coordinate(t100101)
child{ coordinate(t1001010)
child{ coordinate(t10010100)
child{ coordinate(t100101000)
child{ coordinate(t1001010000)
child{ coordinate(t10010100000)
child{ coordinate(t100101000000)
child{ coordinate(t1001010000000)
child{ coordinate(t10010100000000)}
child{edge from parent[draw=none] coordinate(t10010100000001)}
}
child{ edge from parent[draw=none]coordinate(t1001010000001)}
}
child{edge from parent[draw=none]  coordinate(t100101000001)}
}
child{edge from parent[draw=none]  coordinate(t10010100001)}
}
child{ edge from parent[draw=none]  coordinate(t1001010001)}
}
child{ edge from parent[draw=none] coordinate(t100101001)}
}
child{edge from parent[draw=none]   coordinate(t10010101)}
}
child{ edge from parent[draw=none]   coordinate(t1001011)}
}
}
child{  edge from parent[draw=none] coordinate(t10011)}
}
}
child{edge from parent[draw=none]  coordinate(t101)}}
child{edge from parent[draw=none]   coordinate(t11)
} };

\node[left] at (t0) {$d_1$};
\node[right] at (t10) {$c_0$};
\node[left] at (t10) {$d_2$};
\node[left] at (t100) {$d_3$};
\node[left] at (t0000) {$d_4$};
\node[left] at (t01100) {$d_5$};
\node[right] at (t01100) {$c_1$};
\node[left] at (t100000) {$d_6$};
\node[left] at (t0110000) {$d_7$};
\node[left] at (t00000000) {$d_8$};
\node[right] at (t000011000) {$c_2$};
\node[left] at (t000011000) {$d_9$};

\node[left] at (t0110000000) {$d_{10}$};
\node[left] at (t00001100000) {$d_{11}$};
\node[left] at (t000000000000) {$d_{12}$};
\node[left] at (t1000001001000) {$d_{13}$};
\node[right] at (t1000001001000)  {$c_3$};

\node[circle, fill=black,inner sep=0pt, minimum size=5pt] at (t10) {};
\node[circle, fill=black,inner sep=0pt, minimum size=5pt] at (t01100) {};
\node[circle, fill=black,inner sep=0pt, minimum size=5pt] at (t000011000) {};
\node[circle, fill=black,inner sep=0pt, minimum size=5pt] at (t1000001001000) {};

\draw[thick, dotted] let \p1=(t10) in (-30,\y1) node (v0) {$\bullet$} -- (10,\y1);
\draw[thick, dotted] let \p1=(t01100) in (-30,\y1) node (v1) {$\bullet$} -- (10,\y1);
\draw[thick, dotted] let \p1= (t000011000) in (-30,\y1) node (v2) {$\bullet$} -- (10,\y1);
\draw[thick, dotted] let \p1=  (t1000001001000) in (-30,\y1) node (v3) {$\bullet$} -- (10,\y1);

\node[left] at (v0) {$v_0$};
\node[left] at (v1) {$v_1$};
\node[left] at (v2) {$v_2$};
\node[left] at (v3) {$v_3$};

\draw[thick] (v0.center) to (v1.center) to (v2.center) to (v3.center) to [bend left] (v0.center);

\end{tikzpicture}
\caption{A strong coding tree $\bT$}
\end{figure}

Recall that $\lgl d_m:m<\om\rgl$ enumerates the set of all critical nodes (coding nodes and splitting nodes) in $T$ in order of strictly increasing length.

\begin{defn}[Finite strong coding  tree]
Given a strong coding tree $T$,
by an {\em initial segment} or {\em initial subtree of $T$}
we mean the first $m$ levels of $T$, for some $m<\om$.
We shall use the notation
\begin{equation}
r_m(T)=\bigcup_{k<m}\Lev_T(k).
\end{equation}
A tree with coding nodes is a  {\em finite strong coding  tree} if and only if it is
equal to some $r_{m+1}(T)$  where either $d_m$ is a coding node or else $m=0$.
\end{defn}

Thus, finite strong coding trees  are exactly the finite trees  with coding nodes $\lgl c_n:n<N\rgl$, where $N<\om$, which have all  maximal nodes of the  length of its  longest coding node  and satisfy  (2) - (7) of Definition \ref{def.diagtreeH_3} for all  $n<N$.

The next lemma extends the ideas of
 Lemma \ref{lem.stftextension} to the setting of finite strong coding trees.

\begin{lem}\label{lem.finitesctA_3}
Let $A$ be any finite strong coding tree with coding nodes $\lgl c_n:n<N\rgl$, where $N<\om$.
Let $A^+$ denote the  nodes of length $l_{N-1}+1$ extending the maximal nodes in $A$ as determined by (6) and (7) in Definition \ref{def.diagtreeH_3}.
Then given any $F\sse N$ such that $\{c_n:n\in F\}$ codes no edges, there is a  $t\in A^+$ such that
for all $n<N$,
\begin{equation}\label{eq.finitesctA_3}
t(l_n)=1\ \ \longleftrightarrow\ \ n\in F.
\end{equation}
\end{lem}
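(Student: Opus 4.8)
The plan is to prove this by induction on $N$, mimicking the proof of Lemma~\ref{lem.stftextension} but realizing $F$ across a whole interval of the skew tree instead of across a single level. For $N\le 2$ the statement is read off directly from the defining clauses (2)--(7) of Definition~\ref{def.diagtreeH_3}, just as in the base cases of Lemma~\ref{lem.stftextension}, so I would assume $N\ge 3$ and that the lemma holds for all finite strong coding trees with fewer than $N$ coding nodes. Given $A$ with coding nodes $\lgl c_n:n<N\rgl$ and $F\sse N$ for which $\{c_n:n\in F\}$ codes no edges, I would pass to the initial segment of $A$ through $c_{N-2}$, namely $A^-=r_{m_{N-2}+1}(A)$, which is a finite strong coding tree with coding nodes $c_0,\dots,c_{N-2}$, and apply the induction hypothesis to $F'=F\cap(N-1)$ (for which $\{c_n:n\in F'\}$ still codes no edges) to obtain a node $s\in (A^-)^+$, of length $l_{N-2}+1$, with $s(l_n)=1\llra n\in F'$ for every $n<N-1$. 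Since $t\contains s$ implies $t(l_n)=s(l_n)$ for all $n\le N-2$, it then suffices to extend $s$ through the $(N-1)$-st interval of $A$ to a node $t\in A^+$ whose passing number at $l_{N-1}$ is $1$ if $N-1\in F$ and $0$ otherwise; such a $t$ satisfies \eqref{eq.finitesctA_3}.

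The extension itself is dictated by clauses (6) and (7) of Definition~\ref{def.diagtreeH_3}. If $N-1\notin F$, passing number $0$ at $l_{N-1}$ is forced for free: when $s\in\Spl(A,N-1)$ one takes $t={s_0}^{\frown}0$, where $s_0\contains\spl(s)^{\frown}0$ is the branch of length $l_{N-1}$ supplied by (6); when $s\notin\Spl(A,N-1)$ one takes $t={s_*}^{\frown}0$, where $s_*$ is the unique extension of $s$ of length $l_{N-1}$ supplied by (7). If $N-1\in F$ I would instead want $t={s_1}^{\frown}1$, where $s_1\contains\spl(s)^{\frown}1$ is the other branch given by (6) --- but this is available only when $s\in\Spl(A,N-1)$, and establishing this membership is the crux of the argument and the step I expect to be the main obstacle.

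By the Splitting Criterion for Skew Trees (Definition~\ref{defn.splitcritst}), $s\in\Spl(A,N-1)$ is equivalent to $s$ and $c_{N-1}\re(l_{N-2}+1)$ having no parallel $1$'s, so I would argue by contradiction, assuming $s(l)=c_{N-1}(l)=1$ for some $l\le l_{N-2}$. The key structural fact, which I would isolate as a short sublemma from strong skewness together with clause (6), is: if a node $w$ of a strong coding tree has $|w|\ge l_k+1$ and passing number $1$ at a \emph{splitting} level $l$ inside the $k$-th interval, then $w$ must extend $\spl(u)^{\frown}1$ for the unique $u\in\Spl(A,k)$ whose splitting node occupies level $l$ (strong skewness forces $w$ to extend that splitting node, since otherwise it would pass it with a $0$), and hence by clause~(6) $w\re l_k=u_1$ and $w\re(l_k+1)={u_1}^{\frown}1$, so that $w(l_k)=1$. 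Because $l\le l_{N-2}$ forces the interval index $k$ to satisfy $k\le N-2$, hence $l_k\le l_{N-2}$, this fact applies to both $w=s$ and $w=c_{N-1}$ and propagates the shared $1$ at the splitting level $l$ to a shared $1$ at the coding level $l_k$; and if $l$ was already a coding level there is nothing to do. Either way there is some $j\le N-2$ with $s(l_j)=c_{N-1}(l_j)=1$. Then $s(l_j)=1$ gives $j\in F'\sse F$, while $c_{N-1}(l_j)=1$ means, by Definition~\ref{def.rep}, that $\{c_j,c_{N-1}\}$ codes an edge; since $j,N-1\in F$, this contradicts the hypothesis that $\{c_n:n\in F\}$ codes no edges. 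Therefore $s\in\Spl(A,N-1)$, the node $t={s_1}^{\frown}1$ exists in $A^+$, and the induction closes.
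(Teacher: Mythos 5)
Your proof is correct and follows essentially the same route as the paper's: induction on $N$, applying the induction hypothesis at level $l_{N-2}+1$, with the inductive step reducing (when $N-1\in F$) to showing the resulting node lies in $\Spl(A,N-1)$, argued by contradiction via the Splitting Criterion for Skew Trees and the no-edges hypothesis on $F$. The only difference is that where the paper simply invokes the Parallel $1$'s Criterion to convert a parallel $1$ at an arbitrary level $l\le l_{N-2}$ into a parallel $1$ at a coding level $l_j$ with $j\le N-2$, you rederive this propagation by hand from strong skewness and clause (6) of Definition \ref{def.diagtreeH_3}, which is a valid, if slightly longer, substitute.
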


\begin{proof}
The proof is by induction on $N$ over all finite  strong  strong coding trees with $N$ coding nodes.
For $N=0$, $A=\emptyset$, the lemma vacuously holds.
For $N=1$,
it follows from the definition of finite strong coding tree that
$A$ has critical nodes $d_0=\stem(A)$, $d_1$ which is a splitting node extending ${d_0}^{\frown}0$,
and $d_2=c_0$ which extends ${d_0}^{\frown}1$.
Thus, $A^+$ has three nodes, $t_0\supset {d_0}^{\frown}0$ with passing number $0$ at $c_0$;
$t_1\supset {d_0}^{\frown}1$ with passing number $1$ at $c_0$;
and $t_2={c_0}^{\frown}0$ which of course has passing number $0$ at $c_0$.
Both of the nodes $t_0$ and $t_2$ satisfy equation (\ref{eq.finitesctA_3}) if $F=\emptyset$,
and $t_1$ satisfies (\ref{eq.finitesctA_3}) if $F=\{0\}$.

Now suppose that $N\ge 2$ and  the lemma holds for  $N-1$.
Let $A$ be a finite strong coding tree with coding nodes $\lgl c_n:n<N\rgl$.
Let $F$ be a subset of $N$ such that $\{c_n:n\in F\}$ codes no edges,
and let $m$ be the index such that $d_{m-1}=c_{N-2}$.
By the induction hypothesis, there is a node $u$ in
$(r_m(A))^+$ such that
for all $n<N-1$, $u(l_n)=1$ if and only if $n\in F$.
If  $N-1\not\in F$,
by (6) and (7) of the definition of strong coding tree
there is an extension $t\supset u$ in $A^+$ with passing number $0$ at $c_{N-1}$,
and this $t$ satisfies (\ref{eq.finitesctA_3}) for $F$.

If  $N-1\in F$,
it suffices to show that $u\in\Spl(A,N-1)$,
for then there will be a $t\supset u$ in $A^+$ with passing number $1$ at $c_{N-1}$, and this $t$ will satisfy
(\ref{eq.finitesctA_3}).
By the Splitting Criterion for Skew Trees,
if $u\not\in\Spl(A,N-1)$,
then $u$ and $c_{N-1}\re (l_{N-2}+1)$ must have a parallel $1$.
Then by the \POC,
 there is some $i\le N-2$ such that $u(l_i)=c_{N-1}(l_i)=1$.
Since  $u$ codes edges only with those vertices with indexes
less than $N-1$  in $F$,
it follows that $i$ must be in $F$.
But then $\{c_i,c_{N-1}\}$ is a subset of $F$ coding an edge, contradicting the assumption on $F$.
Therefore,  $u$ is in $\Spl(A,N-1)$.
\end{proof}

We now present a flexible method for constructing a strong coding tree $\bT$.
This should be thought of as a stretched and skewed version of the strong triangle-free tree $\bS$ which was constructed in Theorem \ref{thm.stftree}.
The passing numbers at the coding  nodes in $\bT$ code edges and non-edges  exactly as the passing numbers of the coding nodes in $\bS$.
The  strong coding tree  $\bT$ which  we  construct will be {\em regular}:
  For each $n$,
nodes in $\Spl(\bT,n)$ extend to splitting nodes in the $n$-th interval of $\bT$  from lexicographically  least to largest.
Regularity is not necessary for achieving the main theorems of this article.
 However,  as any strong coding tree  contains a subtree which is  a regular strong coding tree,
it does no harm
to  only work with regular trees.

\begin{thm}\label{thm.cool}
Let  $\lgl F_i:i<\om\rgl$  be any sequence enumerating  the finite subsets of $\om$ so that each finite set appears cofinally often.
Assume further that
for each $i<\om$,
 $F_i\sse  i-1$ and $F_{3i}=F_{3i+2}=\emptyset$.
Then there is a strong coding  tree $\bT$ which densely codes $\mathcal{H}_3$, where for each $i<\om$ and $j\le 2$, the coding node $c_{4i+j}$ meets requirement $F_{3i+j}$.
\end{thm}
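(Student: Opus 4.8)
The plan is to construct $\bT$ by recursion on its critical levels, producing a ``zipped-open'', strongly skew version of the strong triangle-free tree $\bS$ of Theorem~\ref{thm.stftree}, so that the passing numbers at the coding nodes of $\bT$ mirror those of $\bS$. At each stage one has a finite strong coding tree $r_{m+1}(\bT)=\bigcup_{k\le m}\Lev_{\bT}(k)$ satisfying clauses (2)--(7) of Definition~\ref{def.diagtreeH_3}, and passes to $r_{m+2}(\bT)$ by adjoining one critical level. The bottom levels --- $\stem(\bT)=\langle\rangle$, then the splitting node above $\langle 0\rangle$, then $c_0$ extending $\langle 1\rangle$ and not splitting --- are forced just as at the start of Lemma~\ref{lem.finitesctA_3}. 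In general, once $\bT$ has been built through $c_{n-1}$, one builds the $n$-th interval (lengths in $(l_{n-1},l_n]$) in two phases. First, the splitting phase: by the Splitting Criterion for Skew Trees the nodes of length $l_{n-1}+1$ that are to split are exactly the members $s$ of $\Spl(\bT,n)$, i.e.\ those with no parallel $1$'s with $c_n\re(l_{n-1}+1)$; one processes them in lexicographically increasing order (the ``regularity'' requested in the preamble), each $s$ being extended by $0$'s to a splitting node $\spl(s)$, while every node passing by but not extending $\spl(s)$ gets passing number $0$ (strong skewness and clause (6)), and every node of length $l_{n-1}+1$ not in $\Spl(\bT,n)$ is continued straight up by $0$'s (clause (7)). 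Then the coding phase places $c_n$ at the top of the interval.

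The choice of $c_n$ is where the enumeration $\langle F_i:i<\om\rangle$ is used, and here I would mimic the three cases of the proof of Theorem~\ref{thm.stftree}, keyed to $n\bmod 4$. Note that $c_n$ must extend ${s_1}^{\frown}1$ for some $s\in\Spl(\bT,n-1)$ (the unique length-$l_n$ extension of $\spl(s)^{\frown}1$, by clause (6)), that $c_n(l_{n-1})=1$ is automatic, and that the passing numbers of $c_n$ at $c_0,\dots,c_{n-2}$ coincide with those of $s$; so meeting requirement $F_{3i+j}$ for $n=4i+j$ ($j\le 2$) amounts to finding $s\in\Spl(\bT,n-1)$ realizing the pattern $F_{3i+j}$. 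In Case~1 ($n=4i$ with $i\ge 1$, or $n=4i+2$) one has $F_{3i+j}=\emptyset$, so take $s=0^{l_{n-2}+1}$, which always lies in $\Spl(\bT,n-1)$; this yields the ``leftmost'' coding node. In Case~2 ($n=4i+1$), if $\{c_k:k\in F_{3i+1}\}$ codes no edge, then Lemma~\ref{lem.finitesctA_3} applied to the finite strong coding tree with coding nodes $c_0,\dots,c_{n-2}$ furnishes a node $s$ of length $l_{n-2}+1$ realizing $F_{3i+1}$, and one checks --- by an argument paralleling the last paragraph of Lemma~\ref{lem.stftextension}, now invoking the \POC\ to reduce a hypothetical parallel $1$ to a coding-node level and using that $c_{n-1}$ is a Case~1 node whose only passing number $1$ among $c_0,\dots,c_{n-2}$ is at $c_{n-2}\notin\{c_k:k\in F_{3i+1}\}$ --- that $s$ has no parallel $1$ with $c_{n-1}\re(l_{n-2}+1)$, so $s\in\Spl(\bT,n-1)$; if $\{c_k:k\in F_{3i+1}\}$ does code an edge, take the leftmost choice instead. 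In Case~3 ($n=4i+3$), enumerate $2^{<\om}$ as $\langle u_i:i<\om\rangle$ with nondecreasing lengths; if $u_i$ lies on the tree built so far, choose $s$ so that $c_n$ extends $u_i$ (after extending $u_i$ straight up by $0$'s into $\Spl(\bT,n-1)$, possible since $|u_i|\le i$ is far below the relevant lengths), and otherwise take the leftmost choice. Case~3 forces the coding nodes to be dense in $\bT$.

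With $s$ fixed, set $c_n$ equal to the unique length-$l_n$ extension of $\spl(s)^{\frown}1$ in $\bT$; it does not split, its immediate extension is ${c_n}^{\frown}0$, and clauses (2)--(7) of Definition~\ref{def.diagtreeH_3} are preserved at the new level. Then one checks that $\bT=\bigcup_{m<\om}r_m(\bT)$ is strongly skew with $0^{|t|}\in\bT$ for every $t\in\bT$; that its coding nodes are dense (Case~3); and that the \POC\ holds --- here the point is that, by strong skewness and clause (6), any family of two or more nodes of $\bT$ with pairwise distinct restrictions sharing a \emph{new} parallel $1$ must do so at the level of some coding node $c_k$, so $c_k$ itself witnesses the parallel $1$'s and there are no critical nodes strictly between. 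Via Fact~\ref{fact.A_3treeimpliestrianglefreegraph} and Remark~\ref{rem.dense}, together with the requirement-meeting of Case~2, $\bT$ then satisfies $(A_3)^{\tt tree}$ and hence densely codes $\mathcal{H}_3$; and by construction $c_{4i+j}$ meets $F_{3i+j}$ for all $i<\om$, $j\le 2$.

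The main obstacle, and the real content of the induction, is showing that at each step the node one wishes to name $c_n$ (or the splitting node one wishes to add) is already present in the finite strong coding tree constructed so far, with clauses (2)--(7) and the \POC\ preserved throughout. For splitting nodes this is immediate from the Splitting Criterion for Skew Trees and strong skewness. For coding nodes it rests on Lemma~\ref{lem.finitesctA_3}, on the observation that a node realizing an edgeless requirement $F$ cannot share a parallel $1$ with $c_{n-1}\re(l_{n-2}+1)$ --- else, via the \POC, some $\{c_k,c_{n-1}\}$ would code an edge incompatible with the construction --- so that such a node falls into $\Spl(\bT,n-1)$ and can be routed through a splitting node to $c_n$, and on the numerical bookkeeping verifying that the hypotheses $F_i\sse i-1$ and $F_{3i}=F_{3i+2}=\emptyset$ leave enough ``buffer'' coding nodes (Cases~1 and~3) that the density mechanism and the requirement-meeting never conflict.
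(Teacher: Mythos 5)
Your proposal is essentially the paper's own proof: you build $\bT$ interval by interval, in each step first selecting (by the same three cases keyed to $n\bmod 4$, using Lemma \ref{lem.finitesctA_3} for the nontrivial edgeless requirements and the enumeration $\langle u_i:i<\om\rangle$ for density of coding nodes) the node of $\Spl(\bT,n-1)$ that is routed through its splitting node to acquire passing number $1$ at $c_{n-1}$ and become $c_n$, and then letting that choice determine $\Spl(\bT,n)$ via the Splitting Criterion, filling in the interval regularly, and verifying (2)--(7), the \POC, and $(A_3)^{\tt tree}$ at the end. The only deviations are cosmetic (e.g.\ applying Lemma \ref{lem.finitesctA_3} directly at the level of $c_{n-2}$ instead of lower down followed by a leftmost extension), so the argument coincides with the paper's.
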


\begin{proof}
Let  $\lgl F_i:i<\om\rgl$  satisfy the hypotheses, and
let $\lgl u_i: i<\om\rgl$ be an enumeration of all the nodes in $2^{<\om}$   in such a way that each  $|u_i|\le i$.
We  construct  a strong coding tree $\bT\sse 2^{<\om}$ with coding nodes $\lgl c_n:n<\om\rgl$  and lengths $l_n=|c_n|$
so that for each $n<\om$,
$r_{m_n+1}(\bT):=\bigcup\{\Lev_{\bT}(i):i\le m_ n\}$  is a finite strong coding tree
and $\Lev_{\bT}(m_n+1)$ satisfies (6) and (7) of the definition of strong coding tree,
where $m_n$ is the index such that the $m_n$-th critical node $d_{m_n}$ is equal to the $n$-th coding node $c_n$,
and the following properties are satisfied:

\begin{enumerate}
\item[(i)]
For $n=4i+j$,  $j\le 2$,  $c_{n}$ meets requirement $F_{3i+j}$.
\item[(ii)]
For $n=4i+3$, if $u_i$ is in
$r_{m_{n-3}+2}(\bT)$,
then $c_n$ is a coding node extending $u_i$.
Otherwise,
 $c_n={0^{l_{n-1}-1}}^{\frown}\lgl 1,1\rgl ^{\frown}{0^{q_n}}$
where $q_n=l_n-(l_{n-1}+1)$.
\end{enumerate}

To begin, define $\Lev_{\bT}(0)=\{\lgl\rgl\}$.
Then the minimum length splitting node in $\bT$ is $\lgl \rgl$, and we label it $d_0$.
Let $\Lev_{\bT}(1)=\{\lgl 0\rgl,\lgl 1\rgl\}$.
To satisfy  (5) of Definition \ref{def.diagtreeH_3},
 $c_0$ is going  to extend $\lgl 1\rgl$,
so  in order to satisfy (4), it must be the case that  $\Spl(\bT,0)=\{\lgl 0\rgl\}$.
Take the splitting node $d_1$ to be  $\lgl 0\rgl$.
Let  $\Lev_{\bT}(2)=\{\lgl 0,0\rgl,\lgl 0,1\rgl, \lgl 1,0\rgl\}$, and
define  $c_0=\lgl 1,0\rgl$.
Then  $l_0=2$,  $d_2=c_0$,
 and
\begin{equation}
r_{m_0+1}(\bT)=\bigcup\{\Lev_{\bT}(i):i\le 2\}
\end{equation}
is a finite strong coding tree satisfying (i) and (ii).
The  next level of $\bT$ must  satisfy
(6) and (7).
Extend
$\lgl 0,0\rgl$  to $\lgl 0,0,0\rgl$,  extend $\lgl 0,1\rgl$  to $\lgl 0,1,1\rgl$, and
extend $\lgl 1,0\rgl$ to $\lgl 1,0,0\rgl$, and let these compose $\Lev_{\bT}(3)$.

For the sake of clarity,
the next few levels of $\bT$ up to the level of $c_1$ will be constructed concretely.
To satisfy (2),
the next coding node $c_1$  must  extend $\lgl 0,1,1\rgl$,
 since this is the only node in $\Lev_{\bT}(3)$ which has passing number $1$ at $c_0$.
The knowledge that $c_1$ will extend  $\lgl 0,1,1\rgl$  along with the Splitting Criterion for Skew Trees
determine that
  $\Spl(\bT,1)=\{\lgl 0,0,0\rgl, \lgl 1,0,0\rgl\}$,
 since these are the nodes in
$\Lev_{\bT}(3)$
which have no parallel $1$'s with
$\lgl 0,1,1\rgl$.
As we are building $\bT$ to be regular,
 $\lgl 1,0,0\rgl$ is  first in  $\Spl(\bT,1)$
 to be extended to a splitting node.
Let $d_3=\lgl 1,0,0\rgl$,
and let  $\Lev_{\bT}(4)=\{\lgl 0,0,0,0\rgl, \lgl 0,1,1,0\rgl,\lgl 1,0,0,0\rgl,\lgl 1,0,0,1\rgl\}$, so that $\bT_4$ is strongly skew.
Next, let $d_4=\lgl 0,0,0,0\rgl$ as this node should split since it is the only extension of $\lgl 0,0,0\rgl$ in $\Lev_{\bT}(4)$.
Let
\begin{equation}
\Lev_{\bT}(5)=
\{\lgl 0,0,0,0,0\rgl, \lgl 0,0,0,0,1\rgl,\lgl 0,1,1,0,0\rgl,\lgl 1,0,0,0,0\rgl,\lgl 1,0,0,1,0\rgl\}.
\end{equation}
Let $c_1=\lgl 0,1,1,0,0\rgl$, as this is
the only extension of $\lgl 0,1,1\rgl$ in  $\Lev_{\bT}(5)$.
Thus, $d_5=c_1$,
$l_1=5$, $\spl_{\bT}(\lgl 1,0,0\rgl)=\lgl 1,0,0\rgl$
and $\spl_{\bT}(\lgl 0,0,0\rgl)=\lgl 0,0,0,0\rgl$.
Moreover,
$r_6(\bT)$
 is  a regular,  finite strong coding tree satisfying requirements  (i) - (ii).
The next level of $\bT$  is determined by (6) and (7), so
let
\begin{equation}
\Lev_{\bT}(6) =\{\lgl 0,0,0,0,0,0\rgl, \lgl 0,0,0,0,1,1\rgl,\lgl 0,1,1,0,0,0\rgl,\lgl 1,0,0,0,0,0\rgl,\lgl 1,0,0,1,0,1\rgl\}.
\end{equation}
This  constructs the tree  $r_7(\bT)$,
which is $\bT$ up to the level of $l_1+1=6$.
Notice that the second lexicographically least node in $\Lev_{\bT}(l_1+1)$ is
$\lgl 0,0,0,0,1,1\rgl={0^{(l_{1}-1)}}^{\frown}\lgl 1,1\rgl$.

Suppose $r_{m_n-1}+2(\bT)$ has been constructed
 so that
$r_{m_{n-1}+1}(\bT)$
 is a finite   strong coding tree
 satisfying (i)  and  (ii)
and such that $\Lev_{\bT}(m_{n-1}+1)$ satisfies (6) and (7)  of Definition \ref{def.diagtreeH_3},
where $m_{n-1}$ is the index such that $d_{m_{n-1}}=c_{n-1}$.
As part of the induction hypothesis, suppose also that the second lexicographically least node in $\Lev_{\bT}(m_{n-1}+1)$ is ${0^{(m_{n-1}-1)}}^{\frown}\lgl 1,1\rgl$,
this being true in the base case of $r_{m_1+2}(\bT)$.
Enumerate the members of $\Lev_{\bT}(m_{n-1}+1)$  in decreasing lexicographical order as $\lgl s_k:k<K\rgl$.
At this stage, we need to know which node $s_k$ will be extended to the next coding node $c_n$ as this determines the set $\Spl(\bT,n)$.
We will show how to choose $k_*$ in the three cases below, so that  extending $s_{k_*}$ to $c_n$   will meet
requirements (i)  and  (ii).
Once $k_*$ is chosen,
 $\Spl(\bT,n)$  is  the set  $\{s_k:k\in K_{sp}\}$,
where
 $K_{sp}$ is the set of those $k<K$ such that
for all $i<n$, $s_k(l_i)+s_{k_*}(l_i)\le 1$,
that is, $s_k$ and $s_{k_*}$ have no parallel $1$'s at or  below $l_{n-1}$.
Then let $c_n={s_{k_n^*}}^{\frown}0^{|K_{sp}|}$,
and extend all nodes in $\{s_k:k<K\}$ according to (6) and (7) in the definition of strong coding tree.
We point out that  $l_n$ will equal $l_{n-1}+|K_{sp}|+1$.

There are three cases  to consider regarding which $k<K$ should be $k_*$.

\it Case 1. \rm $n=4i$ or $n=4i+2$ for some $i<\om$.
Let $n'$ denote $3i$ if $n=4i$ and  $3i+2$ if $n=4i+2$.
In this case, $F_{n'}=\emptyset$.
Let $k_*=K-2$.
Since $s_{K-1}$ is the lexicographic least member of $\Lev_{\bT}(m_{n-1}+1)$, $s_{K-1}$ must be $0^{l_{n-1}+1}$.
Hence,
$s_{K-2}$ being  next lexicographic largest implies that $s_{K-2}={0^{(l_{n-1}-1)}}^{\frown}\lgl 1,1\rgl$.
Let $k_*=K-2$.
Then any extension of $s_{k_*}$ to a coding  node will have passing number 1 at  $c_{n-1}$ and  passing number $0$ at  $c_i$ for all  $i<n-1$.

\it Case 2. \rm $n=4i+1$ for some $1\le i  <\om$.
If there is a pair $k<m$ of integers in $F_{3i+1}$ such that $c_m(l_k)=1$,
then again take $k_*$ to be $K-2$.
Otherwise,
 $c_m(l_k)=0$
for all pairs $k<m$ in $F_{3i+1}$.
Note that $i\ge 1 $ implies that
$\max(F_{3i+1})\le 3i-1\le n-3$.
Since by the induction hypothesis $r_{m_{n-2}+1}(\bT)$ is a finite strong coding tree,
Lemma \ref{lem.finitesctA_3}
implies
there is some $t\in \Lev_{\bT}(m_{n-3}+2)$
such that $t(l_j)=1$ if and only if $j\in F_{3i+1}$.
Let $t'$ be the node in  $2^{<\om}$ of length
$l_{n-2}+1$ which extends $t$ by all $0$'s.
By our construction, this node is in $r_{m_{n-2}+2}(\bT)$.
Since,  by Case 1,
$c_{n-1}$ is the node of length $l_{n-1}$ extending ${0^{l_{n-2}-1}}^{\frown}\lgl 1,1\rgl$ by all $0$'s, one sees that
$t'\re (l_{n-2}+1)$ and $c_{n-1}\re(l_{n-2}+1)$ have no parallel $1$'s.
Thus, $t'\re (l_{n-2}+1)$  is in $\Spl(\bT, n-1)$.
Let $k_*$ be the index in $K$ such that
$s_{k_*}$ is
 the rightmost extension of $t'$
in $\Lev_{\bT}(m_{n-1}+1)$.

\it Case 3. \rm $n=4i+3$ for some $i<\om$.
If $u_i\not\in   r_{m_{n-3}+2}(\bT)$,
then let $k_*=K-2$.
Otherwise, $u_i\in   r_{m_{n-3}+2}(\bT)$.
Let $u'$ be the leftmost extension of
 $u_i$  in $r_{m_{n-2}+2}(\bT)$
of  length $l_{n-2}+1$.
In particular, $u'(l_{n-2}-1)=u'(l_{n-2})=0$.
As in Case 2,
$c_{n-1}$ is the node of length $l_{n-1}$
such that for all $l<l_{n-1}$,
$c_{n-1}(l)=1$ if and only if $l\in\{l_{n-2}-1,l_{n-2}\}$.
Thus, $u'$ and $c_{n-1}\re (l_{n-2}+1)$ have no parallel $1$'s,
so by the induction hypothesis,
$u'\in \Spl(\bT, n-1)$.
Hence, there is an extension $u''\contains u'$ in $r_{m_{n-1}+2}(\bT)$ such that
$u''(l_{n-1})=1$.
Let $k_*$ be the index of the  node $u''$.

To finish the construction of $\bT$ up to level $l_n+1$,
let   $l_n=l_{n-1}+|K_{sp}|+1$.
For each $k\not\in K_{sp}$,
extend $s_k$ via all $0$'s to length $l_n+1$.
Note in each of the three cases, $k_*$ is not in $ K_{sp}$,
since $s_{k_*}$ has passing number $1$ at $c_{n-1}$.
Thus, $c_n$ is  the extension of $s_{k_*}$ by all $0$'s to length $l_n$, and its immediate extension, or passing number by itself, is $0$.
Enumerate $K_{sp}$  as $\lgl k_i:i<|K_{sp}|\rgl$   so that
$s_{k_i}>_{\mathrm{lex}}s_{k_{i+1}}$ for each $i$.
Let $\spl(s_{k_i})={s_{k_i}}^{\frown}0^{i}$;  in particular,
$\spl(s_{k_0})=s_{k_0}$.
For each $i<|K_{sp}|$,
 letting $p_i=|K_{sp}|-i$,
 ${s_{k_i}}^{\frown}0^{|K_{sp}|}$  and
${\spl(s_{k_i})}^{\frown}1^{\frown} {0^{p_i-2}}^{\frown}1$
are the two extensions of $s_{k_i}$ in
$\Lev_{\bT}(l_n+1)$.
This constructs $\Lev_{\bT}(l_n+1)$.
Notice that for each $j<2$, the $t\in \Lev_{\bT}(l_n+1)$ extending $\spl(s_{k_i})^{\frown}j$ has passing number $t(l_n)=j$.

Let  $\bT=\bigcup_{i<\om}\Lev_{\bT}(i)$.
Then $\bT$ is a strong coding tree because each initial segment  $r_{m_n+1}(\bT)$, $n<\om$,  is a  finite strong coding tree,
and the coding nodes are dense in $\bT$.
\end{proof}


\begin{fact}\label{fact.perfect}
Any   strong coding tree
is a perfect tree.
\end{fact}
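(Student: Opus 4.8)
The plan is to show directly that every node of $T$ is an initial segment of a splitting node of $T$; since $T$ is closed under meets, this is equivalent to $T$ being perfect, and it automatically gives that $T$ has no maximal nodes. Because the coding nodes of $T$ are dense in $T$ (clause~(1) of Definition~\ref{def.diagtreeH_3}), it suffices to produce, for each coding node $c_n$, a splitting node of $T$ extending it: an arbitrary $t\in T$ lies below some $c_n$, and a splitting node above $c_n$ is then one above $t$. First I would record two preliminary observations. (a) $c_n\re(l_{n-1}+1)\notin\Spl(T,n)$: for $n\ge 1$, membership in $\Spl(T,n)$ would force $c_n(l_{n-1})=0$, contradicting clause~(2), and for $n=0$ one has $c_0\re(l_{-1}+1)=\stem(T)^{\frown}1\ne\stem(T)^{\frown}0$, the unique element of $\Spl(T,0)$. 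Hence clause~(7) applies: the unique extension of $c_n\re(l_{n-1}+1)$ of length $l_n$ in $T$, namely $c_n$, has immediate extension $c_n^{\frown}0$ in $\widehat T$. (b) Using density of the coding nodes, some coding node extends $c_n^{\frown}0$, so we may fix the coding node $c_m$ of \emph{least} index with $c_m\contains c_n^{\frown}0$, and then $l_m>l_n$, so $m>n$.

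Next I would argue by contradiction. Suppose $c_n$ has no splitting node of $T$ strictly above it. Since $c_m\contains c_n^{\frown}0$ we have $c_m(l_n)=0$, while clause~(2) gives $c_{n+1}(l_n)=1$; hence $m\ne n+1$, so $m\ge n+2$. The crux is to show $c_m(l_i)=0$ for every $i$ with $n<i<m$, which contradicts clause~(2) applied to $c_m$ at the level $l_{m-1}$. Fix such an $i$ and put $w=c_m\re l_i$. Then $c_n\subset w$, so $w$ is not a splitting node; and $w\ne c_i$, for otherwise $c_i=c_m\re l_i\contains c_m\re(l_n+1)=c_n^{\frown}0$ with $i<m$, against minimality of $m$. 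Thus $w$ is a non-critical node of $T$ at the coding level $l_i$. By clauses~(6) and~(7), the immediate extension in $\widehat T$ of a node of $T$ at level $l_i$ is formed by appending $0$, except for those nodes extending $\spl(s)^{\frown}1$ with $s\in\Spl(T,i)$, whose immediate extensions are formed by appending $1$; and since $w^{\frown}c_m(l_i)=c_m\re(l_i+1)\sse c_m$, the immediate extension of $w$ is $w^{\frown}c_m(l_i)$. So if $c_m(l_i)=1$, then $w\contains\spl(s)^{\frown}1$ for some $s\in\Spl(T,i)$, where $\spl(s)$ is a splitting node of $T$ whose length lies in $(l_{i-1},l_i)$; but $|\spl(s)|>l_{i-1}\ge l_n$ together with $\spl(s)\sse w\sse c_m$ force $\spl(s)=c_m\re|\spl(s)|\supset c_n$, a splitting node strictly above $c_n$ --- a contradiction. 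Hence $c_m(l_i)=0$, and taking $i=m-1$ (legitimate since $m\ge n+2$) contradicts clause~(2). This completes the argument.

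The step I expect to demand the most care is the bookkeeping in the second paragraph: one must extract from clauses~(6) and~(7) precisely which nodes at a coding level have immediate extension ending in $1$ --- exactly the ``$1$-side'' descendants of a splitting node occurring in the interval just below that level --- and then verify that such a splitting node, sitting below $c_m$ at a level above $l_n$, is a genuine proper extension of $c_n$. It is useful to keep the explicit tree $\bT$ of Theorem~\ref{thm.cool} in mind as a reality check --- there $c_0^{\frown}0$ is literally the splitting node $d_3$, and in general $c_n^{\frown}0\in\Spl(\bT,n+1)$ --- but the argument above invokes only clauses~(1)--(7) of Definition~\ref{def.diagtreeH_3} together with the Splitting Criterion for Skew Trees, so it applies to every strong coding tree, not merely to the regular ones.
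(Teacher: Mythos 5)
Your proof is correct, but it takes a genuinely different route from the paper's. The paper argues constructively: given any $t\in T$, it extends $t$ leftmost to a coding level, uses the all-$0$'s node $0^{l_j}\in T$ and density of coding nodes to pick a coding node $c_k\supseteq 0^{l_j}$, and then observes that the leftmost extension of $t$ to length $l_{k-1}+1$ has no parallel $1$'s with $c_k\re(l_{k-1}+1)$, so by the Splitting Criterion it lies in $\Spl(T,k)$ and splits before level $l_k$. That argument pinpoints \emph{where} splitting occurs, which is exactly what feeds the remark immediately following the Fact (that $0^{l_{n-1}}$ splits before level $l_n$), and it exercises the leftmost-extension mechanism used throughout the paper. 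Your argument instead reduces to coding nodes via density and runs a contradiction: if no splitting node lay strictly above $c_n$, the least-index coding node $c_m\supseteq c_n^{\frown}0$ would have passing number $0$ at every coding level strictly between $l_n$ and $l_m$ (since an immediate extension ending in $1$ at a coding level forces a splitting node of length in $(l_{i-1},l_i)$ below $c_m$ and above $c_n$), and with $m\ge n+2$ this contradicts clause (2) at $l_{m-1}$. This avoids the \POC, the all-$0$'s branch, and the leftmost-extension lemmas, using only clauses (2), (5)--(7) and the Splitting Criterion (the latter just to see that $c_n$'s immediate extension is $c_n^{\frown}0$ --- and note the paper does endorse your reading that a node with a $1$ at a coding level has ``parallel $1$'s with itself,'' cf.\ the remark that $k_*\notin K_{sp}$ in the proof of Theorem \ref{thm.cool}); the price is that it is non-constructive and would not by itself yield the quantitative remark after the Fact. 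Two small glosses are worth closing explicitly: the classification of immediate extensions at level $l_i$ needs the one-line observation that any node of $T$ at level $l_i$ whose restriction to length $l_{i-1}+1$ lies in $\Spl(T,i)$ must pass through $\spl(s)$ (otherwise the meet with $\spl(s)$ would be a splitting extension of $s$ shorter than $\spl(s)$, contradicting minimality) --- you flagged this, and it is exactly the needed step; and since density of coding nodes is stated for nodes of $T$ while $c_n^{\frown}0$ lies only in $\widehat T$, you should first pass to a node of $T$ extending $c_n^{\frown}0$ before invoking density.
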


\begin{proof}
Let  $t$ be   any node in $T$, and
and let $j$ be minimal such that $l_j\ge |t|$.
Extend $t$ leftmost in $T$ to the node of length $l_j$, and label this $t'$.
Let $s=0^{l_j}$.
By density of coding nodes in $T$, there is a coding node  $c_k$ in  $T$ extending $s$, with $k\ge j+2$.
Extending $t'$ leftmost in $T$ to length $l_{k-1}+1$ produces a node $t''$ in $\widehat{T}$ which has no parallel $1$'s with $c_k\re (l_{k-1}+1)$.
Thus, $t''\in\Spl(T,k)$, so $t''$ extends to a splitting node in $T$ before reaching the level of $c_k$.
\end{proof}

In  particular,
it follows from the definition of strong coding tree that   in any strong coding tree $T$,
for any $n<\om$,
the node $0^{l_{n-1}}$
will split  in $T$ before the level $l_n$.


\subsection{The space $(\mathcal{T}(T),\le,r)$ of strong coding trees}\label{subsection4.2}

The space of  subtrees of a given strong coding tree, equipped with a strong partial ordering, will
form the fundamental structure allowing for the Ramsey theorems in latter sections.
It turns out that not every subtree of a given strong coding tree $T$ can be extended within $T$ to form another strong coding tree.
The notion of valid subtree provides conditions when a finite subtree can be extended in any desired manner within $T$.
Some lemmas guaranteeing that  finite
valid subtrees
of a given strong coding tree $T$ can be extended
 to any desired configuration within $T$
 are presented at the end of this subsection.
These lemmas will be very useful in  subsequent sections.
Those familiar with topological Ramsey spaces
will notice  the   influence of \cite{TodorcevicBK10}  in our chosen style of presentation,
the idea being that  the space of strong coding trees
has a similar character  to the topological Ramsey space of Milliken's infinite strong trees,
though
background in \cite{TodorcevicBK10} is not  necessary for understanding this article.

To begin, we define a strong notion of isomorphism between meet-closed sets  by  augmenting
Sauer's
 notion of strong similarity type
from   \cite{Sauer06} to fit the present setting.
Given a subset $S\sse 2^{<\om}$,
recall that
 the {\em meet closure of $S$}, denoted $S^{\wedge}$,
is the set of all meets of pairs of nodes in $S$.
In this definition $s$ and $t$ may be equal, so  $S^{\wedge}$ contains $S$.
We say that  $S$ is {\em meet-closed} if $S=S^{\wedge}$.
Note that each tree  is meet-closed, but there are meet-closed sets which are not trees,
as
 Definition \ref{defn.tree} of tree applies throughout this paper.

\begin{defn}[\cite{Sauer06}]\label{def.2.2.Sauer}
$S\sse 2^{<\om}$ is an {\em antichain} if $s\sse t$ implies $s=t$, for all $s,t\in S$.
A set $S\sse 2^{<\om}$ is {\em transversal} if $|s|=|t|$ implies $s=t$ for all $s,t\in S$.
A set $D\sse 2^{<\om}$ is {\em diagonal} if $D$  is an antichain with  $D^{\wedge}$ being transversal.
A diagonal set
$D$ is {\em strongly diagonal} if additionally  for any $s,t,u\in D$ with $s\ne t$, if $|s\wedge t|<|u|$ and $s\wedge t\not\subset u$, then $u(|s\wedge t|)=0$.
\end{defn}

It follows that the meet closure of any antichain of coding nodes in a strong coding tree is  strongly diagonal.
In fact, strong coding trees were designed with this property in mind.

The following augments Sauer's Definition 3.1 in \cite{Sauer06}  to the setting of  trees with coding  nodes.
The lexicographic order on $2^{<\om}$
between two nodes
$s,t\in 2^{<\om}$, with neither extending the other,
is defined by
$s<_{\mathrm{lex}} t$ if and only if $s\contains (s\wedge t)^{\frown}0$ and $t\contains (s\wedge t)^{\frown}1$.
It is important to note that in a given strong coding tree $T$, each node $s$ at the level of a coding node $c_n$ in $T$
has exactly one  immediate extension in $\widehat{T}$.
This is the unique node $s^+$ of length $l_n+1$ in $\widehat{T}$ such that $s^+\supset s$.
This fact is used in (7) of the following definition.

\begin{defn}\label{def.3.1.likeSauer}
Let $S,T\sse 2^{<\om}$ be meet-closed subsets of  a fixed strong coding tree $\bT$.
The function $f:S\ra T$ is a {\em strong similarity} of $S$ to $T$ if for all nodes $s,t,u,v\in S$, the following hold:
\begin{enumerate}
\item
$f$ is a bijection.
\item
$f$ preserves lexicographic order: $s<_{\mathrm{lex}}t$ if and only if $f(s)<_{\mathrm{lex}}f(t)$.
\item
$f$ preserves initial segments:
$s\wedge t\sse u\wedge v$ if and only if $f(s)\wedge f(t)\sse f(u)\wedge f(v)$.

\item
$f$ preserves meets:
$f(s\wedge t)=f(s)\wedge f(t)$.

\item
$f$ preserves relative lengths:
$|s\wedge t|<|u\wedge v|$ if and only if
$|f(s)\wedge f(t)|<|f(u)\wedge f(v)|$.

\item $f$ preserves coding  nodes:
$f$ maps the set of  coding nodes in $S$
onto the set of coding nodes in $T$.
\item

$f$ preserves passing numbers at coding nodes:
If $c$ is a coding node in $S$ and $u$ is a node in $S$ with $|u|\ge|c|$,
then $(f(u))^+(|f(c)|)=u^+(|c|)$;
in words, the passing number of the immediate successor of $f(u)$ at $f(c)$  equals the passing number of the immediate successor of $u$ at $c$.
\end{enumerate}
\end{defn}

In all cases above,   it may be that $s=t$ and $u=v$
 so that (3) implies $s\sse u$ if and only if $f(s)\sse f(u)$, etc.
It follows from  (4) that
 $s\in S$ is a splitting node in $S$ if and only if  $f(s)$ is a splitting node in $T$.
We say that $S$ and $T$ are {\em strongly similar} if there is a strong similarity of $S$ to $T$, and in this case write $S\ssim T$.
If $T'\sse T$ and $f$ is a strong similarity of $S$ to $T'$, then $f$ is a {\em strong similarity embedding} of $S$ into $T$, and $T'$ is a {\em strong similarity copy} of $S$ in $T$.
For $A\sse T$, let  $\Sims^s_T(A)$ denote the set of all subsets of $T$ which are strongly similar to $A$.
The notion of strong similarity is relevant for all meet-closed subsets of a strong coding tree, including subsets which form trees.
Note that if $A$ is a meet-closed set which is not a tree and $S=\{u\re|v|:u,v\in A$ and $|u|\ge|v|\}$ is its induced tree,
technically  $A$ and $S$ are not strongly similar.
This distinction will present no difficulties.

Not only are strong coding trees perfect, but the  ones constructed in the manner of Theorem \ref{thm.cool}, and hence any tree with the same strong similarity type,
also have the following useful property.

\begin{fact}\label{fact.everytwointervalssplit}
Let $\bT$ be constructed in the manner of Theorem \ref{thm.cool}, and let $T$ be a strong coding tree which is strongly similar to $T$.
Then for each even integer $n<\om$,
each node in $T$  of length $l_n$ splits in $T$ before the level of $c_{n+2}$.
\end{fact}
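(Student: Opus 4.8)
The plan is to prove the statement first for a tree $\bT$ built exactly as in Theorem~\ref{thm.cool}, and then push it along a strong similarity to an arbitrary strong coding tree strongly similar to $\bT$. The structural feature of the construction that makes even-indexed coding nodes harmless is this: for every even $m\ge 2$, the node $c_m$ is produced in Case~1 with $k_*=K-2$, and since the second lexicographically least node on the level just above $c_{m-1}$ is ${0^{(l_{m-1}-1)}}^{\frown}\lgl 1,1\rgl$, one obtains $c_m\re(l_{m-1}+1)={0^{(l_{m-1}-1)}}^{\frown}\lgl 1,1\rgl$; thus below the level $l_{m-1}+1$ the node $c_m$ has a passing number $1$ \emph{only} at the two positions $l_{m-1}-1$ and $l_{m-1}$. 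I will also use that in the construction any node of length $l_{p-1}+1$ not in $\Spl(\bT,p)$ is extended by all $0$'s through the $p$-th interval (this is (7) of Definition~\ref{def.diagtreeH_3} together with how $\bT$ is built), and that $l_p\ge l_{p-1}+2$ for all $p$, since $0^{l_{p-1}+1}$ always lies in $\Spl(\bT,p)$.

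Now fix an even $n<\om$ and a node $t\in\bT$ with $|t|=l_n$, and let $t^+$ be its unique immediate extension in $\widehat{\bT}$, of length $l_n+1$. If $t^+\in\Spl(\bT,n+1)$, then $t^+$ extends to a splitting node of $\bT$ lying in the $(n+1)$-st interval, hence of length $<l_{n+1}<l_{n+2}$, and $t$ splits before the level of $c_{n+2}$. If instead $t^+\notin\Spl(\bT,n+1)$, then $t^+$ is extended by all $0$'s to a node $s\in\widehat{\bT}$ of length $l_{n+1}+1$, so $s\supseteq t$ and $s$ is identically $0$ on the interval of positions $(l_n,l_{n+1}]$. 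Comparing $s$ with $c_{n+2}\re(l_{n+1}+1)={0^{(l_{n+1}-1)}}^{\frown}\lgl 1,1\rgl$: the only two positions at which the latter carries a $1$ are $l_{n+1}-1$ and $l_{n+1}$, and, since $l_{n+1}-1\ge l_n+1$, both lie in $(l_n,l_{n+1}]$, where $s$ is $0$; hence $s$ and $c_{n+2}\re(l_{n+1}+1)$ have no parallel $1$'s. By the Splitting Criterion for Skew Trees this forces $s\in\Spl(\bT,n+2)$, so $s$, and therefore $t$, extends to a splitting node of $\bT$ lying in the $(n+2)$-nd interval, of length $<l_{n+2}$. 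Either way $t$ splits in $\bT$ before the level of $c_{n+2}$, establishing the Fact for $\bT$.

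For the transfer, let $T$ be strongly similar to $\bT$ via a strong similarity $f\colon\bT\to T$. Using (2), (5) and (6) of Definition~\ref{def.3.1.likeSauer}, $f(c^{\bT}_p)=c^T_p$ and $f$ maps $\{x\in\bT:|x|=l^{\bT}_p\}$ bijectively onto $\{y\in T:|y|=l^T_p\}$ for each $p$; and from (3) and (5) (applied with the relevant coding nodes) $f$ preserves the relations ``$u$ and $v$ both extend $x$'' and ``$|u\wedge v|<l_p$''. So a node $\tau\in T$ with $|\tau|=l^T_n$ equals $f(t)$ for some $t\in\bT$ with $|t|=l^{\bT}_n$; choosing $u,v\in\bT$ extending $t$ and branching at the splitting node found above (so $|u\wedge v|<l^{\bT}_{n+2}$), the nodes $f(u),f(v)\in T$ extend $\tau$ and satisfy $|f(u)\wedge f(v)|<l^T_{n+2}$, i.e.\ $\tau$ splits in $T$ before the level of $c^T_{n+2}$.

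None of this is deep; it is all bookkeeping. The one place requiring genuine care is extracting from the construction of Theorem~\ref{thm.cool} the precise shape $c_m\re(l_{m-1}+1)={0^{(l_{m-1}-1)}}^{\frown}\lgl 1,1\rgl$ for even $m$ and the ``straight up by $0$'s'' behaviour of nodes outside $\Spl(\bT,p)$ --- these are exactly the features that make an even coding node invisible to the splitting pattern two intervals later, and they are special to the way $\bT$ is constructed (this is why the hypothesis restricts to trees strongly similar to such a $\bT$). The similarity transfer is then routine once one records which relations a strong similarity preserves.
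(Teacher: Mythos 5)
Your proof is correct and rests on the same key idea as the paper's: if $t$ does not split before the level of $c_{n+1}$, then its extension to length $l_{n+1}+1$ has passing number $0$ at $c_{n+1}$ and hence no parallel $1$'s with $c_{n+2}\re(l_{n+1}+1)$ (because an even-indexed coding node carries $1$'s only at the immediately preceding coding level), so the Splitting Criterion for Skew Trees forces a splitting node in the $(n+2)$-nd interval. The only difference is presentational: you verify this concretely in $\bT$ using the explicit shape ${0^{(l_{n+1}-1)}}^{\frown}\lgl 1,1\rgl$ and then transfer to $T$ via the strong similarity map, whereas the paper runs the same argument directly in $T$ through passing numbers at coding nodes, which are preserved by strong similarity; your concrete computation does have the small virtue of making explicit why the non-coding position $l_{n+1}-1$ causes no parallel $1$'s.
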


\begin{proof}
Given a node $t$ in $T$ at the level of $c_n$,
if $t$ does not already split
 before the level of $c_{n+1}$,
then  its only extension to length $l_{n+1}+1$  has passing number $0$ at $c_{n+1}$;
call this extension $t'$.
Now
since $n+2$ is even,
 the coding node $c_{n+2}$ has passing number $0$
at all $c_i$, $i<n+1$, and passing number $1$ at $c_{n+1}$.
Thus, $t'$ and $c_{n+2}\re (l_{n+1}+1)$ have no parallel $1$'s, so $t'$ splits before reaching the level of $c_{n+2}$.
\end{proof}

Depending on how a finite subtree $A$
of  a strong coding tree $T$
sits inside $T$, it may be impossible to extend $A$ inside of $T$ to another strong coding tree.
As a simple example, the set of nodes $A=\{\lgl \rgl,
\lgl 0,0,0,0\rgl,
\lgl 1,0,0,1\rgl\}$ in $\bT$
is strongly similar to  $r_2(\bT)$.
However
  $A$  cannot be  extended in $\bT$ to a strong coding tree strongly similar to $\bT$ with  $\lgl 0,0,0,0\rgl$ being a splitting node.
The reasons are as follows.
  Any  such  extension  $A'$
  in $\bT$ must  have nodes extending $\lgl 0,0,0,0,0,0\rgl$,
 $\lgl 0,0,0,0,1,1\rgl$, and $\lgl 1,0,0,1,0,1\rgl\}$.
The nodes
 $\lgl 0,0,0,0,1,1\rgl$ and $\lgl 1,0,0,1,0,1\rgl$ have  parallel $1$'s,
so the next coding node must witness them.
In order to be strongly similar to $r_3(\bT)$,
$\lgl 0,0,0,0,1,1\rgl$ must be extended to the next coding node in $A'$,
and by the Triangle-Free Criterion, any such node is immediately succeeded by a $0$, so it cannot witness the new parallel $1$'s, thus failing to satisfy the \POC.

Another potential problem is the following.
Let $T$ be a strong coding tree and take $m$ such that $d^T_m$ is a splitting node,
$d^T_{m+2}=c^T_n$ is a coding node,
and $|d^T_{m-2}|>l_{n-1}$,
 where $n\ge 3$.
So, $d^T_m$ is a splitting node with at least two splitting nodes  preceding it in $T$ and at least one splitting node proceeding it before the next coding node in $T$.
It follows by the structure of strong coding trees that
there are at least two maximal nodes in $r_{m+1}(T)$
which have no parallel $1$'s but
 which are pre-determined  to passing $c^T_n$ with passing number $1$,
as their only extensions of length $l_n+1$ in $\widehat{T}$ both have passing number $1$ at $c_n^T$.
It follows  that any  strong coding subtree
 $S$ of $T$
with the same initial segment as $T$ up to
level $m$,  i.e.\
 $r_{m+1}(S)=r_{m+1}(T)$,
 is  necessarily  going to
have $r_{m+2}(S)=r_{m+2}(T)$;
for if
the  splitting node $d^S_{m+1}$ is not equal to $d^T_{m+1}$, then
 the pre-determined new parallel $1$'s appear
 in  $r_{m+2}(S)$ before the splitting node $d^S_{m+1}$,  implying $S$ violates the \POC.
Thus,
if $r_{m+2}(S)$ is a finite strong coding tree end-extending $r_{m+1}(T)$ into $T$ and strongly similar to $r_{m+2}(T)$,
then $r_{m+2}(S)$ must actually equal $r_{m+2}(T)$.
Clearly this is not what we want.

\begin{defn}\label{defn.nopredetll1}
Let $X=\{x_i:i<\tilde{i}\}$ be a level set   of two or more nodes in $\widehat{T}$, and let $l$ be their length.
We say that $X$ has {\em no pre-determined new sets of  parallel $1$'s in $T$}
if  either $X$ contains a coding node, or else
for any $l_n>l$,
there are extensions $y_i\contains x_i$ of length   $l_n$
such that the following holds:
For each $I\sse \tilde{i}$ of size at least two,
if
 there is an $l'< l_n$ such that $y_i(l')=1$ for all $i\in I$,
then there is an $l''< l$ such that $y_i(l'')=1$ for all $i\in I$.
\end{defn}

It in order to determine whether  a level set of nodes $X=\{x_i:i<\tilde{i}\}$ of length $l$, not containing a coding node,  has pre-determined new sets of  parallel $1$'s in $T$,
it
suffices to extend the nodes in $X$
 leftmost in $\widehat{T}$ to nodes $y_i\contains x_i$ of   length $l_n+1$, where $c_n$ is the
minimal
 coding node in $T$ of length greater than $l$:
 $X$
 has no pre-determined new sets of  parallel $1$'s
 if and only if
 there is an $l'<l$ such that
$\{i<\tilde{i}:x_i(l')=1\}$ contains the set  $\{i<\tilde{i}:y_i(l_n)=1\}$.

\begin{defn}\label{defn.valid}
A subtree  $A$, finite or infinite, of a strong coding tree $T$ is {\em valid} in $T$ if
 each level set in $A$ has no pre-determined new sets of parallel $1$'s in $T$.
\end{defn}

The  point is that valid subtrees are safe to work with:
They can always be extended within the ambient strong coding tree to any desired strong similarity type.
This will be seen clearly in the lemmas at the end of the section.

We now come to the definition of the space of strong coding subtrees of a fixed strong coding tree.
Define the
 partial ordering  $\le$  on the collection of all strong coding trees  as follows:
For  strong coding trees $S$ and $T$,
\begin{equation}
 S\le T\ \  \Longleftrightarrow\ \  S \mathrm{\ is\ a\  valid\ subtree\ of\ }T\mathrm{\ and\ }  S\ssim T.
\end{equation}

\begin{defn}[The space $(\mathcal{T}(T),\le,r)$]\label{AT}
Let $T$ be any  strong coding  tree.
Define  $\mathcal{T}(T)$  to be the collection of all
strong coding trees $S$ such that $S\le T$.
As previously defined, for $m<\om$,
$r_m(S)$ denotes $\bigcup_{i<m}\Lev_S(m)$,
 the initial subtree of $S$ containing its first $m$ critical nodes.
The restriction map $r$ is formally a map from $\om\times \mathcal{T}(T)$ which on  input $(m,S)$ produces $r_m(S)$.
Let
$\mathcal{A}_m(T)$ denote $\{r_m(S):S\in\mathcal{T}(T)\}$, and let
$\mathcal{A}(T)=\bigcup_{m<\om}\mathcal{A}_m(T)$,
the collection of all {\em finite approximations} to members of $\mathcal{T}(T)$.

For $A\in\mathcal{A}_m(T)$ and
$S\in\mathcal{T}(T)$ with $A$ valid in $S$,
define
\begin{equation}
[A,S]=\{U\le S: r_m(U)=A\},
\end{equation}
and define
\begin{equation}
r_{m+1}[A,S]=\{B\in \mathcal{A}_{m+1}:
r_m(B)=A\mathrm{\ and \ }B\mathrm{\ is \ valid\ in\ }S\}.
\end{equation}
\end{defn}

Techniques  for building valid subtrees of a given strong coding tree  are now developed.
The next lemma provides a means  for
extending a
particular maximal  node $s$ in a finite
subtree $A$ of a strong coding tree $T$
to a particular extension $t$ in $T$,
and extending the rest of the maximal nodes in $A$ to the length of $t$, without introducing new sets of parallel $1$'s.
Let $\{s_i:i<\tilde{i}\}$ be some level set of nodes in a strong coding tree $T$.
We say that a level set of extensions $\{t_i:i<\tilde{i}\}$,
where each $t_i\contains s_i$,
 {\em  adds no new sets of parallel $1$'s over $\{s_i:i<\tilde{i}\}$}
if  whenever $l<|t_0|$ and the
 set
 $I_l:=\{i<\tilde{i}:t_i(l)=1\}$ has  cardinality at least $2$,
then there is an  $l'<|s_0|$ such that
$\{i<\tilde{i}:s_i(l')=0\}=I_l$.

\begin{lem}\label{lem.poc}
Suppose $T$ is a strong coding tree and $\{s_i:i<\tilde{i}\}$   is a set of two or more   nodes in $\widehat{T}$ of length $l_k+1$.
Let
 $n_*>k$,  let $l_*$ denote $l_{n_*}$,
and
 let $t_0$ be any extension of $s_0$ in $\widehat{T}$ of length $l_*+1$.
 For each $0<i<\tilde{i}$, let $t_i$ denote the leftmost extension of $s_i$ in $\widehat{T}$ of length $l_*+1$.
Then
the set $\{t_i:i<\tilde{i}\}$ adds no new sets of parallel $1$'s over $\{s_i:i<\tilde{i}\}$.
\end{lem}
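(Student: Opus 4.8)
\textbf{Proof plan for Lemma \ref{lem.poc}.}

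The plan is to show that the only way a set $I_l = \{i<\tilde i : t_i(l) = 1\}$ of size at least two can arise is if it already appeared as a set of parallel $1$'s among the $s_i$ below level $l_k+1$; equivalently, we must rule out the creation of a genuinely new set of parallel $1$'s. The crucial structural fact is that for $0 < i < \tilde i$, the node $t_i$ is the \emph{leftmost} extension of $s_i$ in $\widehat T$; only $t_0$ is allowed to be an arbitrary extension of $s_0$. So any level $l$ with $l_k < l < |t_0|$ at which two or more of the $t_i$ carry a $1$ must involve $t_0$ together with at least one leftmost extension $t_i$, $i\ge 1$, OR involve two or more of the leftmost extensions $t_i$, $i \ge 1$.

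First I would handle the case $I_l \subseteq \{1,\dots,\tilde i - 1\}$, i.e.\ a putative new set of parallel $1$'s entirely among the leftmost extensions. Here I would argue by tracking what ``leftmost in $\widehat T$'' forces, using clauses (6) and (7) of Definition \ref{def.diagtreeH_3}. A node of $\widehat T$ at a non-splitting position has a unique immediate extension, and by (7) that extension ends in $0$; a node in $\Spl(T,n)$ has two extensions at the next coding-node level, and the leftmost one (the one through $\spl(s)^\frown 0$) ends in $0$ by (6). Consequently, extending leftmost in $\widehat T$ past the level of a splitting node or coding node always appends a $0$ to the passing number; the only new $1$'s that appear along a leftmost extension $t_i$ above level $l_k+1$ occur at the single level $|t_i| - 1$ if $t_i$ is itself pre-determined to pass a coding node with passing number $1$ (and leftmost extensions avoid this by construction, except forced cases). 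More carefully: since $T$ is strongly skew and satisfies the Splitting Criterion for Skew Trees, a leftmost extension never acquires a $1$ at a level strictly between $l_k+1$ and $l_* + 1$ unless that $1$ was forced by a parallel $1$ already present at or below level $l_k+1$ with the relevant coding node; and in that forced case the set of indices receiving the $1$ is exactly a set that already had a parallel $1$ below $l_k+1$. So no genuinely new set of parallel $1$'s among the $\{t_i : i\ge 1\}$ can appear.

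Next I would handle the case $0 \in I_l$, i.e.\ $t_0(l) = 1$ and $t_i(l) = 1$ for at least one $i \ge 1$. Since $t_0$ is arbitrary it can carry new $1$'s, but for $t_i(l) = 1$ with $i\ge 1$ and $l_k < l$, the previous paragraph shows $t_i(l)=1$ is only possible if the index set of parallel $1$'s realized at $l$ among the leftmost extensions was already realized below $l_k+1$; in particular $s_i(l') = 1$ for some $l' \le l_k$, for every such $i$. It remains to see that $s_0(l') = 1$ as well, so that $I_l \subseteq \{i : s_i(l') = 1\}$ and hence the set is not new. This follows because $t_i(l) = 1$ for $i \ge 1$ forces (by the Splitting Criterion for Skew Trees applied within $T$, tracing the branch) that $t_0(l) = 1$ can only be compatible with the tree structure of $\widehat T$ at level $l$ if $t_0$ also passed through the same $1$ at level $l'$ — here I would invoke that all of $t_0, t_1, \dots$ lie in $\widehat T$ and that strong skewness pins down passing numbers of nodes not extending a splitting node to be $0$. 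The one subtlety is matching up the precise definition: ``adds no new sets of parallel $1$'s'' as stated uses the condition $\{i : s_i(l') = 0\} = I_l$, which I read as a typo for $\{i : s_i(l') = 1\} \supseteq I_l$ (or $= I_l$); I would state at the outset which reading is intended and prove that one, noting it is exactly the hypothesis needed to invoke clause (2) of the \POC\ downstream.

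\textbf{Main obstacle.} The delicate point is the bookkeeping in the mixed case ($0 \in I_l$): one must rule out that the free node $t_0$ creates a new $1$ at a level $l$ where some \emph{leftmost} extension $t_i$ independently also has a $1$ that is ``old'' from $t_i$'s perspective but would nonetheless form a new pair with $t_0$. This forces a careful analysis of exactly which levels a leftmost extension in $\widehat T$ can carry a $1$, using clauses (6), (7) and strong skewness, together with the \POC\ for $T$ itself to locate those levels relative to coding nodes. Everything else is a routine induction on the critical-node levels between $l_k+1$ and $l_*+1$.
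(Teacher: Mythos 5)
Your overall strategy --- control the passing numbers of the leftmost extensions $t_i$, $i\ge 1$, using (6), (7) and strong skewness, so that no index set of size at least two can light up at a level above $l_k$ unless it was already present among the $s_i$ --- is the right idea, and you correctly spotted that the displayed condition ``$\{i:s_i(l')=0\}=I_l$'' in the definition preceding the lemma is a typo for passing number $1$. But as written the plan has a genuine gap, concentrated exactly where you flag the ``main obstacle.'' First, the hedge about ``forced cases'' in which a leftmost extension is pre-determined to pass a coding node with passing number $1$ describes a situation that cannot occur here and is left unresolved: by (6) the branch through $\spl(s)^{\frown}0$ has passing number $0$ at the coding node ending that interval, by (7) a node not in $\Spl(T,n)$ also gets passing number $0$, and at splitting-node levels the leftmost choice together with strong skewness again gives $0$. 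Since each $t_i$ with $i\ge 1$ starts at length $l_k+1$ and never takes a $1$-branch at any splitting node, it follows that $t_i(l)=0$ for \emph{every} $l$ with $|s_i|\le l\le l_*$ --- there are no forced $1$'s at all. Second, and more seriously, your treatment of the mixed case $0\in I_l$ rests on an invalid inference: nothing in the Splitting Criterion or strong skewness forces the arbitrary node $t_0$ to have ``passed through the same $1$ at level $l'$'' just because some leftmost $t_i$ carries an old $1$; $t_0$ genuinely can acquire new $1$'s above $l_k$. The correct resolution is that the mixed case is vacuous: since no $t_i$ with $i\ge 1$ has a $1$ above $l_k$, any $I_l$ of size at least two must occur at a level $l<|s_0|$, where $t_i(l)=s_i(l)$ for all $i$, so the set is old by definition.

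For comparison, the paper's proof is shorter and routes through the \POC\ of the ambient tree rather than a level-by-level analysis: given $l<l_*$ with $|I_l|\ge 2$, the \POC\ applied to $\{t_i:i\in I_l\}$ produces a coding node $c_n$, $n\le n_*$, with $t_i(l_n)=1$ for all $i\in I_l$; since by (6) and (7) each leftmost extension has passing number $0$ at $l_j$ for every $k<j\le n_*$, the witness must satisfy $n\le k$, so the parallel $1$'s were already present among the $s_i$. Either route works, but you would need to replace the mixed-case argument by one of these two observations (leftmost extensions carry no $1$'s above $l_k$, or the \POC-witness must sit at or below $l_k$) for the proof to go through.
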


\begin{proof}
Assume the hypotheses, and
suppose  that  there is some
 $l<l_*$ such that
the set $I_l=\{i<\tilde{i}:t_i(l)=1\}$ has at least two members.
Then by  the \POC,
there is an $n\le n_*$ such that
$t_i(l_n)=1$ for all $i<\tilde{i}$.
Since for each $0<i<\tilde{i}$,  $t_i$ is the leftmost extension of $s_i$,
by (6) and (7) in the definition of strong coding tree,
the passing number of $t_i$ at $l_j$ is $0$,
for all $k<j\le n_*$.
It follows that  any $n$ such that $c_n$ witnesses the parallel $1$'s in $\{t_i:i\in I_l\}$ must be less than or equal to $k$.
\end{proof}

In fact, any sets of parallel $1$'s from the set $\{t_i:i<\tilde{i}\}$ constructed in the preceding lemma occur at a level below $l$.

Given a set of nodes $S$ in a strong coding tree,
the {\em tree induced by $S$}
is  the set of nodes
$\{s\re |v|:s\in S,\ v\in S^{\wedge}\}$.
For a finite tree $A$, we shall use the notation $\max(A)$ in a slightly non-standard way.

\begin{notation}\label{notn.max}
Given a finite tree $A$,  $\max(A)$ denotes
 the set of terminal nodes in $A$ which have the maximal length of any node in $A$.
Thus,
\begin{equation}
\max(A)=\{t\in A:t=l_A\},
\end{equation}
where $l_A=\max\{|s|:s\in A\}$.
Note in particular that $\max(A)$ is a level set.
\end{notation}

The following lemma is immediate from finitely many applications of  Lemma \ref{lem.poc}, using the fact that maximal nodes of valid subtrees can be extended leftmost to any length without adding any new sets of parallel $1$'s.

\begin{lem}\label{lem.factssplit}
Let $A$ be a finite  valid subtree of any strong coding tree  $T$ and let $l$ be the  length of the nodes in $\max(A)$.
Let $\Spl(u)$ be any nonempty level  subset of $ \max(A)$, and
 let $Z$ be any subset of
 $\max(A)\setminus\Spl(u)$.
Then given any enumeration
$\{z_i:i<\tilde{i}\}$ of $\Spl(u)$ and
$l'\ge l$,
 there is an $l_*>l'$
 and extensions
$s_i^0,s_i^1\supset z_i$ for all $i<\tilde{i}$,
and $s_z\supset z$ for all $z\in Z$, each of length $l_*$,
such that, letting
\begin{equation}
X=\{s_i^j:s\in\Spl(u),\ j\in\{0,1\}\}\cup\{s_z:z\in Z\},
\end{equation}
and $B$ be the tree induced by $A\cup X$,
 the following hold:
\begin{enumerate}
\item
The splitting in $B$ above $A$ occurs in the order of the enumeration of $\Spl(u)$.
Thus, for $i<i'<\tilde{i}$,
$|s_i^0\wedge s_i^1|<|s_{i'}^0\wedge s_{i'}^1|$.
\item
$B$ has no new sets of parallel $1$'s over $A$.
\end{enumerate}
\end{lem}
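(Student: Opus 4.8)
The plan is to prove Lemma~\ref{lem.factssplit} by induction on $\tilde{i}=|\Spl(u)|$ (which we may take to be $\geq 1$), building splitting nodes above $z_0,z_1,\dots,z_{\tilde{i}-1}$ one at a time in the order of the enumeration, and at every stage extending all the not-yet-split maximal nodes of $A$ leftmost in $\widehat{T}$. Proceeding in this order makes condition (1) automatic, since the splitting node placed above $z_i$ will be strictly longer than the one placed above $z_{i-1}$; the work is entirely in condition (2).

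First I would record the ingredients that power each step. Since $T$ is perfect (Fact~\ref{fact.perfect}), the leftmost extension in $\widehat{T}$ of any node eventually reaches a splitting node of $T$. Since $A$ is valid in $T$, the level set $\max(A)$ has no pre-determined new sets of parallel $1$'s (Definitions~\ref{defn.nopredetll1}, \ref{defn.valid}), so extending every node of $\max(A)$ leftmost to any length adds no parallel $1$'s beyond those already present at length $l_A=l$; this is precisely the situation of Lemma~\ref{lem.poc}. The only other way a new $1$ can appear along an extension is at a coding-node level: by (6) of Definition~\ref{def.diagtreeH_3}, a node of the form ${\spl(s)}^{\frown}1$ with $s\in\Spl(T,n)$ is forced to have passing number $1$ at $c_n$, and then by the Splitting Criterion for Skew Trees together with (2) of Definition~\ref{def.diagtreeH_3} it has passing number $0$ at $c_{n+1}$, after which its leftmost extension has passing number $0$ at all later coding nodes. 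Consequently each node of $\max(A)$, being of length exactly $l$, is pre-committed to a new $1$ at \emph{at most one} coding node above level $l$.

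Now the inductive step. Suppose $z_0,\dots,z_{i-1}$ have been handled, yielding pre-splitting nodes $w_0,\dots,w_{i-1}$ with $|w_0|<\cdots<|w_{i-1}|$, their successors ${s_j^0}\supseteq{w_j}^{\frown}0$ and ${s_j^1}\supseteq{w_j}^{\frown}1$, and the current leftmost representatives of $z_i,\dots,z_{\tilde{i}-1}$ and of the members of $Z$, all at a common length $\ell$. Each ${s_j^1}$ is committed to a new $1$ at exactly one coding node $c_{n_j}$, and each of the remaining maximal nodes is pre-committed to a new $1$ at at most one coding node. Extend the current representative of $z_i$ leftmost far enough to reach a splitting node $w_i$ with $|w_i|>\max(l',|w_{i-1}|)$ and such that the least coding node $c_{n_i}$ of $T$ above $w_i$ has index strictly larger than all the $n_j$ ($j<i$) and strictly larger than the index of every coding node at which some current maximal node is pre-committed to a new $1$ (finitely many to avoid). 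Split $z_i$ at $w_i$, set ${s_i^0}\supseteq{w_i}^{\frown}0$ and ${s_i^1}\supseteq{w_i}^{\frown}1$, and extend all other current nodes leftmost past $c_{n_i}$. By strong skewness of $T$, no node other than ${s_i^0},{s_i^1}$ has passing number $1$ at $|w_i|$; by the freshness of $c_{n_i}$, every node except ${s_i^1}$ has passing number $0$ at $l_{n_i}$. So the only $1$'s introduced in this step that could lie in a parallel set are internal to a single node, and the new parallel-$1$ structure below any level agrees with that of $\max(A)$ below $l$ — exactly what Lemma~\ref{lem.poc} gives, with ${s_i^1}$ playing the role of the arbitrarily extended node. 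After $\tilde{i}$ steps, extend all the ${s_i^j}$ and all the $s_z$ leftmost to a common length $l_*>l'$; one more appeal to Lemma~\ref{lem.poc} shows nothing new is added, and the tree $B$ induced by $A\cup X$ satisfies (1) from $|w_0|<\cdots<|w_{\tilde{i}-1}|$ and (2) from the running accounting.

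The step I expect to be the main obstacle is exactly the bookkeeping that yields (2): one must be certain that the single new $1$ contributed by each freshly split node ${s_i^1}$ at its coding level cannot pair with a new $1$ contributed by an earlier split or by a pre-committed maximal node of $A$. This is what the ``fresh interval'' choice of $w_i$ and $c_{n_i}$ buys, and the delicate point is making precise that leftmost extension never manufactures a forced $1$ at a coding node beyond those already internal to a node, so that only finitely many coding-node indices must be avoided at each stage. Everything else reduces to repeated applications of Lemma~\ref{lem.poc} and of the strong skewness of $T$.
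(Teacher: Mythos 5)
Your proposal is correct and takes essentially the same route as the paper, which dispatches this lemma in one sentence as immediate from finitely many applications of Lemma \ref{lem.poc} together with the fact that the maximal nodes of a valid subtree can be extended leftmost to any length without adding new sets of parallel $1$'s; your induction---splitting one member of $\Spl(u)$ at a time along its leftmost path while extending all other nodes leftmost---is exactly that argument written out in detail. The additional bookkeeping (each node of $\max(A)$ carries at most one forced $1$, occurring at the coding node of its current interval, and the ``fresh'' choice of $c_{n_i}$) is a correct elaboration of what validity and Lemma \ref{lem.poc} already encode, not a different method.
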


\begin{conv}\label{conv.POC}
When working within a fixed strong coding tree $T$,
the passing numbers at coding nodes $c^T_n$ are completely determined by $T$.
Thus,   for a finite subset $A$ of  $T$ such that
 $l_A$ equals $l_n^T$ for some $n<\om$,
then
 saying  that  $A$  {\em  satisfies the \POC} implies that  the extension
$A\cup\{s^+:s\in\max(A)\}$ satisfies the \POC.
\end{conv}

Lemma \ref{lem.pnc} shows that given a valid subtree of a strong coding tree $T$,
any of its maximal nodes can be extended to a coding node $c_k^T$  in $T$ while the rest of the maximal nodes can be extended to length $l_k^T$ so that their passing numbers are anything desired,  subject only to the Triangle-Free Criterion.
Recall that any node  $u$ in $T$ at the level of a coding node $c_k^T$ has a unique immediate extension $u^+$  of length $l_k^T+1$ in $\widehat{T}$;
 so there is no ambiguity to
consider $u^+(l_k^T)$ to be the  passing number of $u$ at $c_k$, even though technically $u$ is not defined on input $l_k^T$.

\begin{lem}[Passing Number Choice Extension Lemma]\label{lem.pnc}
Let $T$ be a strong coding tree and
$A$ be any finite valid subtree of  $T$.
Let $l_A$ denote the length of the members of $\max(A)$
and let $A^+$ denote  the set of all members of $\widehat{T}$ of length $l_A+1$  which extend some member of $\max(A)$.
 List the nodes of $A^+$ as $s_i$, $i<\tilde{i}$.
Fix any  $d<\tilde{i}$.
For each $i\ne d$,  if  $s_i$ and $s_d$  have no parallel $1$'s, fix any $\varepsilon_i\in\{0,1\}$;  if $s_i$ and $s_d$ have parallel $1$'s, let $\varepsilon_i=0$.
In particular, $\varepsilon_d=0$.

Then  for each $j<\om$,
there is a coding node $c_k$ with $k\ge j$
extending $s_d$
 and
 extensions $u_i\contains s_i$, $i\in\tilde{i}\setminus \{d\}$, of length $l_k$
such that
the passing number of $u_i$ at $c_k$ is $\varepsilon_i$
 for each $i\in \tilde{i}\setminus\{d\}$.
Furthermore,
the nodes $u_i$ can be chosen so that
any new parallel $1$'s among $\{u_i:i<\tilde{i}\}$ which were not witnessed in $A$ are
 witnessed by $c_k$, and their first instances take place in the $k$-th interval of $T$.
In particular, if $A\cup \{s_i:i<\tilde{i}\}$ satisfies the \POC, then  $A\cup\{u_i:i<\tilde{t}\}$ also satisfies the \POC, where $u_d=c_k$.
\end{lem}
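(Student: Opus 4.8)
The plan is to push $s_d$ up to a coding node of $T$ using density of coding nodes, extend the remaining members of $A^+$ leftmost to just above its splitting level, realize the prescribed passing numbers by invoking the Splitting Criterion for Skew Trees and clause (6) of Definition \ref{def.diagtreeH_3}, and finally deduce preservation of the \POC from the fact that $T$ itself satisfies it.

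Fix $j$. Iterating density of the coding nodes of $T$ produces coding nodes of $T$ extending $s_d$ of arbitrarily large index, so fix one, $c_k$, with $k\ge j$ and $l_{k-1}\ge l_A+1$. For each $i\in\tilde i\setminus\{d\}$, let $u_i'$ be the leftmost extension of $s_i$ in $\widehat T$ of length $l_{k-1}+1$. As in the proof of Lemma \ref{lem.poc}, in a strong coding tree the leftmost extension follows a $0$-branch at splitting levels and is continued by $0$'s elsewhere, so $u_i'$ is $s_i$ followed by $0$'s; in particular its $1$'s are exactly those of $s_i$, all lying below $l_A+1$. Since $c_k\re(l_{k-1}+1)$ agrees with $s_d$ on coordinates below $l_A+1$ and $u_i'$ is constantly $0$ on $[l_A+1,l_{k-1}]$, the nodes $u_i'$ and $c_k\re(l_{k-1}+1)$ have parallel $1$'s if and only if $s_i$ and $s_d$ do.

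Now put $u_d=c_k$; its passing number at itself is $0=\varepsilon_d$, matching the setup. For $i\neq d$ with $\varepsilon_i=0$, let $u_i$ be the leftmost extension of $u_i'$ of length $l_k$; by clauses (6) and (7) of Definition \ref{def.diagtreeH_3} its passing number at $c_k$ is $0$, and it carries no $1$ on $[l_A+1,l_k]$. For $i\neq d$ with $\varepsilon_i=1$, the hypothesis on the $\varepsilon$'s forces $s_i$ and $s_d$ to have no parallel $1$'s, hence by the previous paragraph neither do $u_i'$ and $c_k\re(l_{k-1}+1)$, so $u_i'\in\Spl(T,k)$ by the Splitting Criterion for Skew Trees; clause (6) of Definition \ref{def.diagtreeH_3} then supplies the unique extension $u_i\supseteq\spl_T(u_i')^{\frown}1$ of length $l_k$, whose passing number at $c_k$ is $1$. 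Since $c_k$ can be chosen with $k\ge j$ arbitrary, this produces the required coding node and extensions with the prescribed passing numbers.

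It remains to handle the clause on new parallel $1$'s and the \POC. Let $I\subseteq\tilde i$, $|I|\ge 2$, be such that $\{u_i:i\in I\}$ has a parallel $1$ not already realized by $\{s_i:i\in I\}$. Every $u_i$ agrees with $s_i$ below $l_A+1$, so the least level $l'$ at which this parallel $1$ occurs satisfies $l'\ge l_A+1$; and since above $l_A$ the only $1$'s among the $u_i$ occur in the $\varepsilon_i=1$ extensions inside the $k$-th interval of $T$, while at level $l_k$ each such extension passes $c_k$ with $1$ and $u_d=c_k$ passes with $0$, necessarily $I\subseteq\{i:\varepsilon_i=1\}$ and $l'\in(l_{k-1},l_k]$. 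After extending each $u_i$, $i\in I$, one step further (in fact to a common critical level of $T$), $c_k$ witnesses this parallel $1$; applying the \POC of $T$ (Definition \ref{defn.parallel1sProp}) at level $l'$, and using that these nodes are constantly $0$ on $[l_A+1,l_{k-1}]$ and that $\{s_i:i\in I\}$ has no common $1$, one concludes that $c_k$ is the least coding node of $T$ witnessing it and that $T$ --- hence also $A\cup\{u_i:i<\tilde i\}$ --- has no splitting or coding node of length strictly between $l'$ and $l_k$; in particular none of the splitting nodes $\spl_T(u_i')$ lies there. This is exactly the \POC for the new parallel $1$'s with witness $c_k=u_d$ and first instances in the $k$-th interval, giving the furthermore clause; together with the assumed \POC for $A\cup\{s_i:i<\tilde i\}$, which governs the remaining parallel $1$'s since the $u_i$ agree with the $s_i$ below $l_A+1$, it yields the \POC for $A\cup\{u_i:i<\tilde i\}$. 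I expect this last verification to be the main obstacle: clause (6) leaves no freedom in the shape of the passing-number-$1$ extensions within the $k$-th interval, so the fact that their first common $1$ precedes every intervening splitting node must be extracted from the \POC of the ambient tree $T$ rather than arranged by hand.
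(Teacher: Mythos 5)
Your proposal has a genuine gap at its second step, and it is precisely the point where the paper's proof invokes the hypothesis that $A$ is \emph{valid} --- a hypothesis your argument never uses. You claim that the leftmost extension $u_i'$ of $s_i$ up to length $l_{k-1}+1$ ``is $s_i$ followed by $0$'s, so its $1$'s are exactly those of $s_i$.'' That is only correct when the starting level sits just above a coding node of $T$ (which is the situation in Lemma \ref{lem.poc}, where the nodes have length $l_k+1$). Here $l_A$ is arbitrary: $\max(A)$ may lie in the middle of an interval of $T$, and some $s_i$ may already extend $\spl(s)^{\frown}1$ for a splitting node $s$ of $T$ in that same interval. Clause (6) of Definition \ref{def.diagtreeH_3} then forces the unique (hence leftmost) extension of such an $s_i$ to pass the very next coding node of $T$ with passing number $1$, so $u_i'$ acquires a $1$ above $l_A$ that is not present in $s_i$. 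This is exactly the ``pre-determined parallel $1$'s'' phenomenon of Definition \ref{defn.nopredetll1}; without validity of $A$ two such nodes can acquire a common new $1$ (and a node can acquire a common $1$ with the arbitrary coding node $c_k$ you chose above $s_d$), which both destroys your equivalence ``$u_i'$ and $c_k\re(l_{k-1}+1)$ have parallel $1$'s iff $s_i$ and $s_d$ do'' and, via the Triangle-Free/Splitting Criterion, can make the prescribed $\varepsilon_i=1$ extensions impossible. All of your later deductions (membership in $\Spl(T,k)$ for the $\varepsilon_i=1$ nodes, confinement of new parallel $1$'s to the $k$-th interval, the \POC\ verification) rest on this false claim.

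The paper's proof repairs exactly this: it first extends every $s_i$ leftmost only to length $l_{j'}+1$, the top of the interval currently containing $\max(A)$, and uses validity of $A$ to conclude that no new \emph{sets} of parallel $1$'s appear there; the coding node $c_k$ is then chosen above the leftmost extension $t_d$ of $s_d$, and only from this level of the form $l_{j'}+1$ does Lemma \ref{lem.poc} apply to carry the remaining nodes leftmost to $l_{k-1}+1$ without new sets of parallel $1$'s. To fix your write-up you would need to insert this two-stage extension (or some equivalent use of validity); as written, the step from $l_A+1$ to $l_{k-1}+1$ is unjustified and in general false.
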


\begin{proof}
Assume the hypotheses of the lemma.
Let $j'$ be such that the nodes $\{s_i:i<\tilde{i}\}$ are in the $j'$-th interval of $T$.
For each $i<\tilde{i}$, let $t_i$ be the leftmost extension of $s_i$ of length $l_{j'}+1$.
Since $A$ is a valid subtree of $T$,
no new sets of parallel $1$'s are acquired by $\{t_i:i< \tilde{i}\}$.
Let $j<\om$ be given and
take $k\ge \max(j,j'+1)$ minimal such that $c_k\contains t_d$, and let $u_d=c_k$.
Such a $k$ exists since the coding nodes are dense in $T$.
For each  $i\ne d$,
extend $t_i$ via its leftmost extension to the level of
$l_{k-1}+1$, and label it $t'_i$.
By Lemma \ref{lem.poc},
 for $i\ne d$,
no  new  sets of parallel $1$'s  are acquired
by
$\{t_i':i\in\tilde{i}\setminus\{d\}\}\cup\{u_d\re (l_{k-1}+1)\}$.
For each $i\ne d$ for which $\varepsilon_i=0$,
let $u_i$ be the leftmost extension of $t'_i$ of length $l_k+1$.
For $i<\tilde{i}$ such that
 $\varepsilon_i=1$,
let $u_i$ be the rightmost extension of $t'_i$ to length
 $l_k+1$.
Note that  for each $i<\tilde{i}$,
the passing number of
of $u_i$ at  $c_k$  is
 $\varepsilon_i$.

For any $I\sse\tilde{i}$ of size at least two,
if there is some $l$ such that $u_i(l)=1$ for all $i\in I$,
and
 the least $l$ for which this holds is greater than  $l_A$,
then it must be that $u_i(l_k)=1$ for each $i\in I$,
 since
no new sets of parallel $1$'s are acquired among $\{u_i:i<\tilde{i}\}$ below $l_{k-1}+1$.
Thus, the set $\{u_i:i<\tilde{i}\}$ satisfies the lemma.
If $A$ satisfies the \POC,
then it is clear that $A\cup\{u_i:i<\tilde{i}\}$ also satisfies the \POC, since all the new parallel $1$'s are witnessed by the coding node $u_d=c_k$.
\end{proof}

The final lemma of this section combines the previous two, to show that any finite valid subtree of a strong coding tree can be extended to another valid subtree with any prescribed strong similarity type.

\begin{lem}\label{lem.facts}
Let $A$ be a finite  valid subtree of any strong coding tree  $T$, and let $l_A$ be the length of the nodes in $\max(A)$.
Fix any member $u\in\max(A)^+$.
Let $\Spl(u)$ be any set of nodes  $s\in \max(A)^+$
which  have no parallel $1$'s with $u$,
and let $Z$ denote $\max(A)^+\setminus(\Spl(u)\cup\{u\})$.
Let $l\ge l_A$ be given.
Then there is an $l_*>l$
 and extensions $u_*\supset u$,
$s_*^0,s_*^1\supset s$ for all $s\in\Spl(u)$,
and $s_*\supset s$ for all $s\in Z$, each of length $l_*$,
such that, letting
\begin{equation}
X=\{u_*\}\cup\{s_*^i:s\in\Spl(u),\ i\in\{0,1\}\}\cup\{s_*:s\in Z\},
\end{equation}
and $B$ be the tree induced by $A\cup X$,
 the following hold:
\begin{enumerate}
\item
$u_*$ is a coding node.
\item
For each $s\in\Spl(t)$ and $i\in\{0,1\}$, the passing number of $s_*^i$ at $u_*$ is $i$.
\item
For each $s\in  Z$,
the passing number of
 $s_*$ at $u_*$ is $0$.
\item
Splitting among the extensions of the $s\in \Spl(u)$ occurs in reverse lexicographic order:
For $s$ and $t$ in $\Spl(u)$,
 $|s_*^0\wedge s_*^1|<|t^0_*\wedge t^1_*|$
if and only if $s_*>_{\mathrm{lex}}t_*$.
\item
There are no new sets of  parallel $1$'s
among  the  nodes  in $X$
until they pass the level of the longest splitting node in $B$  below $u_*$.
\end{enumerate}
In particular, if $A$ satisfies the \POC, then so does $B$.
\end{lem}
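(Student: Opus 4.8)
\textbf{Proof plan for Lemma \ref{lem.facts}.}

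The plan is to build the required level set $X$ in two stages, using the two preceding lemmas as black boxes, and then to verify that the induced tree $B$ has the claimed strong similarity structure.

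First I would set up the splitting stage. Applying Lemma \ref{lem.factssplit} to the valid subtree $A$, with the designated splitting set $\Spl(u)$ and with $Z\cup\{u\}$ playing the role of the complementary set (the ``$Z$'' of that lemma), and with the enumeration of $\Spl(u)$ chosen to be \emph{reverse} lexicographic order so that (4) will come out right, I obtain a level $l'>l$ and a first batch of extensions: $s^0,s^1\supset s$ for each $s\in\Spl(u)$, together with single extensions of each node in $Z$ and of $u$, all of length $l'$. By the conclusion of Lemma \ref{lem.factssplit}, the splitting in the induced tree occurs in the prescribed order, giving clause (4), and no new sets of parallel $1$'s over $A$ are introduced, so the tree built so far is still valid in $T$. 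At this point each $s\in\Spl(u)$ has branched into two incomparable nodes $s^0,s^1$, while $u$ and each $z\in Z$ have single extensions; call these extended nodes collectively $A'$, a finite valid subtree of $T$.

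Second I would invoke Lemma \ref{lem.pnc} (the Passing Number Choice Extension Lemma) on $A'$, taking the distinguished index $d$ to be the one corresponding to the extension of $u$, so that that extension is run up to a coding node $c_k=u_*$ for some $k$ at least as large as any prescribed bound. For each of the other maximal nodes of $A'$ — the two extensions $s^i$ of each $s\in\Spl(u)$, and the extension of each $z\in Z$ — I choose the target passing number $\varepsilon$ at $c_k$ as dictated by clauses (2) and (3): passing number $i$ for the node extending $s^i$ when $s\in\Spl(u)$, and passing number $0$ for each node extending a $z\in Z$. Here I must check the hypothesis of Lemma \ref{lem.pnc}: a node is only allowed a freely chosen passing number if it has no parallel $1$'s with the node extended to $c_k$, i.e. with the extension of $u$. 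The nodes in $\Spl(u)$ have no parallel $1$'s with $u$ by hypothesis, and by clause (2) of Lemma \ref{lem.factssplit} no new parallel $1$'s were created in passing from $A$ to $A'$, so the extensions of the $s^i$ still have no parallel $1$'s with the extension of $u$; hence the choices $\varepsilon=i\in\{0,1\}$ are legitimate. For the $z\in Z$ we only ask for passing number $0$, which is always permitted. Lemma \ref{lem.pnc} then yields a common length $l_*=l_k+1$ (or $l_k$, adjusting by the immediate-successor convention of Convention \ref{conv.POC}), the coding node $u_*=c_k$, and extensions realizing all the prescribed passing numbers, with the further guarantee that any new parallel $1$'s among the extended nodes are witnessed by $c_k$ and first occur in the $k$-th interval of $T$ — which gives clause (5), since the longest splitting node of $B$ below $u_*$ lies at the end of the splitting stage, well before the $k$-th interval. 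Setting $X$ to be this final level set and $B$ the tree induced by $A\cup X$ delivers clauses (1)--(5); and the ``in particular'' statement about the \POC\ follows directly from the last sentence of Lemma \ref{lem.pnc}, since $A\cup X$ is exactly $A'$ (hence $A$) together with nodes all of whose new parallel $1$'s are witnessed by the coding node $u_*$.

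The main obstacle I anticipate is bookkeeping rather than any deep new idea: one must be careful that the designated ``splitting set'' and ``non-splitting set'' fed into Lemma \ref{lem.factssplit} are level sets of the \emph{right} length (members of $\max(A)^+$, i.e.\ length $l_A+1$, as opposed to $\max(A)$ itself), that the reverse-lexicographic enumeration is used so that clause (4) matches, and — the genuinely substantive point — that validity is preserved through both stages so that Lemma \ref{lem.pnc} is applicable after the splitting stage. The preservation of validity and of the \POC\ is exactly what clause (2) of Lemma \ref{lem.factssplit} and the ``furthermore'' of Lemma \ref{lem.pnc} are designed to supply, so once the reductions are set up correctly the verification is routine.
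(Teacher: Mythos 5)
Your proposal is correct and follows exactly the paper's own two-step argument: first apply Lemma \ref{lem.factssplit} to produce the splitting nodes in the prescribed (reverse lexicographic) order without adding new sets of parallel $1$'s, then apply Lemma \ref{lem.pnc} to reach a coding node with the prescribed passing numbers. Your additional checks (that validity and the absence of parallel $1$'s with $u$ persist through the splitting stage, so the passing-number choices are legitimate) are precisely the routine verifications the paper leaves implicit.
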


\begin{proof}
Since $A$ is valid in $T$,
apply Lemma \ref{lem.factssplit}
to extend $\max(A)$ to have splitting nodes in the desired order without adding any new sets of parallel $1$'s.
Then apply Lemma
\ref{lem.pnc} to extend to a level with a coding node and passing numbers as prescribed.
\end{proof}

It follows from Lemma \ref{lem.facts}
that whenever $A$ is a finite strong coding tree
 which is valid in some  strong coding tree $T$
and strongly similar to $r_m(T)$,
then $r_{m+1}[A,T]$ is infinite.
In particular, $A$ can be extended to a strong coding tree $S$ such that $S\le T$.

\begin{rem}
It is straightforward to check
that the
space $(\mathcal{T}(T),\le, r)$ of strong coding trees
satisfies Axioms \bf A.1\rm, \bf A.2\rm, and \bf A.3(1) \rm of Todorcevic's   axioms  in Chapter 5 of \cite{TodorcevicBK10} guaranteeing a  topological Ramsey space.
On the other hand,
\bf A.3(2) \rm does not hold, and \bf A.4\rm, the pigeonhole principle, holds in a  modified form where the finite  subtree being extended is a valid subtree of the strong coding tree,
as will follow from Theorem \ref{thm.MillikenIPOC}.
It remains open what sort of infinitary Ramsey theory in the vein of \cite{Milliken81}
 holds in $(\mathcal{T}(T),\le, r)$,  in terms of  its Ellentuck topology.
\end{rem}


\section{Halpern-Lauchli-style Theorems for strong coding trees}\label{sec.5}

The
Ramsey theory content for strong coding trees
begins in this section.
The  ultimate goal is to obtain a Ramsey theorem for colorings of strictly similar (Definition \ref{defn.ssimtype}) copies of any given finite antichain of coding nodes, as these are the structures which will code finite triangle-free graphs.
This is accomplished in Theorem \ref{thm.mainRamsey}.
As a mid-point, we will prove a Milliken-style theorem (Theorem  \ref{thm.MillikenIPOC}) for
finite trees satisfying some strong version of the \POC.
Just as
 the Halpern-\Lauchli\ Theorem forms
the core content
 of Milliken's Theorem in the setting of strong trees,
so too in the setting of strong coding trees,
 Halpern-\Lauchli-style theorems are proved first and then applied to obtain
 Milliken-style theorems in later sections.

The main and only theorem of this section is
Theorem  \ref{thm.matrixHL}.
This  general theorem  encompasses colorings of two different types  of level set  extensions of a fixed finite tree: The  level set  either contains a splitting node (Case (a)) or a coding node (Case (b)).
In Case (a), we obtain a direct analogue of the
Halpern-\Lauchli\ Theorem.
In Case (b), we obtain a weaker version of the
Halpern-\Lauchli\ Theorem, which is later strengthened to
the direct analogue in Lemma \ref{lem.Case(c)}.

The structure of the proof follows the basic outline of Harrington's  proof of the
Halpern-\Lauchli\ Theorem, as outlined to the author by  Laver.
The reader wishing to read that proof as a warm-up is referred to  Section 2 of
 \cite{DobrinenRIMS17}.
In the setting of strong coding trees,
new considerations arise, and
 new forcings have to be established to achieve the result.
The main reasons that new
 forcings are needed are firstly, that
there are two types of nodes, coding and splitting nodes,
  and  secondly,  that the extensions achieving homogeneity must be  extendible to a strong coding tree valid inside the  ambient  tree.
This second property necessitates that  the extensions be valid and satisfy the \POC,
 and  is responsible for the strong definition of the partial ordering on the forcing.
The former is responsible for there being Cases (a) and (b).
The forcings will consist of conditions which
are finite functions with images which are
 certain level sets  of a given tree strong coding tree $T$, but the partial ordering will be stronger than the partial ordering of  subtree as branches added will have some dependence between them, so these are not simply Cohen forcings.

\begin{rem}
Although the
  proof  uses the set-theoretic technique of forcing,
the whole  construction takes place  in the original model of ZFC, not in  some generic extension.
The forcing should be thought of as conducting an unbounded search for a finite object, namely the finite set of nodes of  a  prescribed  form where homogeneity is attained.
Thus, the result and its proof
  hold  using only the standard axioms of mathematics.
\end{rem}

The following terminology and notation will be used throughout.
Let $T$ be a strong coding  tree.
Given finite subtrees $U,V$ of $T$,
we write $U\sqsubseteq V$ to mean that
there is some $k$ such that $U=\bigcup_{m<k}\Lev_U(m)=\bigcup_{m<k}\Lev_V(m)$,
and we say that $V$ {\em extends} $U$, or that $U$ is an {\em initial subtree of} $V$.
We write $U\sqsubset V$ if $U$ is a proper initial subtree of $V$.
Recall
from Definition \ref{AT}
that
$S\le T$ means that $S$ is a valid subtree of $T$
which is  strongly similar to $T$, and hence also a strong coding tree.
Given a finite strong coding tree $B$,
 $[B,T]$ denotes the set of all   $S\le T$ such that  $S$ extends $B$.
A set $X\sse \widehat{T}$ is a {\em level set} if all nodes in $X$ have the same length.
For level sets $X,Y$ we shall also say that $Y$ {\em extends} $X$ if $X$ and $Y$ have the same number of nodes and each node in
$X$ is extended by a unique node in
$Y$.
For level sets $Y=\{y_i:i\le d\}$ and $X=\{x_i:i\le d\}$
with $y_i\contains x_i$ for each $i\le d$,
we say that {\em $Y$ has no new sets of parallel $1$'s over $X$}
if for each $I\sse d+1$ for which there is an $l$ such that $y_i(l)=1$ for each $i\in I$,
then there is an $l'$ such that $x_i(l')=1$ for each $i\in I$.
For
any tree $U\sse\widehat{T}$ and  any $l<\om$ , let
$U\re l$ denote the set of $s\in \widehat{U}$ such that $|s|=l$.
A set of two or more nodes $\{x_i:i\in I\}$
  in $\widehat{T}$ is said to have {\em first parallel $1$'s at level $l$} if $l$ is least such that $x_i(l)=1$ for all $i\in I$.

For each $s\in \widehat{T}$, if $i\in\{0,1\}$ and $s^{\frown}i$ is in $\widehat{T}$, then we say that $s^{\frown}i$ is an {\em immediate extension of $s$} in $T$.
Thus, splitting nodes in $T$ have two immediate extensions in $T$, and non-splitting nodes, including every node at the level of a coding node, have exactly one immediate extension in $T$.
For a non-splitting node $s$ in $T$, we let $s^+$ denote the immediate extension of $s$ in $T$.
Given a finite subtree $A$ of $T$, let  $l_A$ denote the maximum of
the lengths of members of $A$,
 and let $\max(A)$ denote the set of all nodes in $A$  with length $l_A$.
Let $A^+$ denote the set of   immediate extensions in $\widehat{T}$ of the members of $\max(A)$:
\begin{equation}
A^+=\{s^{\frown}i : s\in \max(A),\ i\in\{0,1\},\mathrm{\ and\ } s^{\frown}i\in \widehat{T}\}.
\end{equation}
Note that $A^+$ is a level set of nodes of length $l_A+1$.

We now provide the set-up for the two  cases before stating the theorem.
\vskip.1in

\noindent\underline{\bf{The Set-up for Theorem \ref{thm.matrixHL}.}}
Let $\bT$ be a fixed strong coding tree, and let
$T\le \bT$ be given.
Let
$A$ be a finite valid subtree of $T$ satisfying the Parallel $1$'s Criterion.
It is fine for
$A$ to  have terminal  nodes at different levels, indeed, we need to allow  this for the intended applications later.
Without loss of generality and to simplify the presentation of the proof,
assume that $0^{l_A}$ is in $A$.
Let  $A_e$ be a  subset of $A^+$
containing $0^{l_A+1}$ and
of size at least two.
Let $C$ be a  finite valid subtree of $T$ containing $A$  such that
$C$ satisfies the \POC\ and
the collection of all nodes in $C$ not in $A$, denoted $C\setminus A$,
 forms
 a level set extending  $A_e$.
Assume moreover that  $0^{l_C}$ is the node in $C$ extending $0^{l_A+1}$,
 where $l_C$ is the length of the nodes in $C\setminus A$.
The two cases   are the following:
\begin{enumerate}
\item[]
\begin{enumerate}
\item[\bf{Case (a).}]
  $C\setminus A$ contains a splitting node.
\end{enumerate}
\end{enumerate}

In Case (a), define
$\Ext_T(A,C)$
to be the collection  of all  level sets
$X\sse T$ extending $A_e$
such that
$A\cup X\ssim C$ and $A\cup X$  is valid in $T$.
We point out that $A\cup X$ being valid in $T$ is equivalent to $X$ having  no pre-determined new parallel $1$'s.
It will turn out to be necessary to require this of $X$,
and the extensions for which the coloring is relevant will have this property anyway.

\begin{enumerate}
\item[]
\begin{enumerate}
\item[\bf{Case (b).}]
 $C\setminus A$ contains a coding  node.
\end{enumerate}
\end{enumerate}

In Case (b),
define
$\Ext_T(A,C)$
to be the collection  of all  level sets
$X\sse T$ extending $A_e$
such that
$A\cup X\ssim C$.
Since $X$ contains a coding node, $A\cup X$ is automatically valid in $T$.
Recalling (7) of Definition \ref{def.3.1.likeSauer},
$A\cup X\ssim C$ implies that, letting $f:A\cup X\ra C$ be the  strong similarity map,
for each $x\in X$
the passing number of $x^+$ at the coding node in $X$
equals the passing number of $(f(x))^+$ at the coding node in $C\setminus A$.
 Given any $X\in\Ext_T(A,C)$,
let $\Ext_T(A,C;X)$ denote the set of $Y\in\Ext_T(A,C)$ such that $Y$ extends $X$.

In both cases, $A\cup X\ssim C$ implies that $A\cup X$ satisfies the \POC.

\begin{thm}\label{thm.matrixHL}
Let $T\le \bT$ be  any strong coding tree and let  $B$ be a finite strong coding tree  valid in  $T$.
Let
 $A\sqsubset C$   be finite valid subtrees of $T$ such that both $A$ and $C$ satisfy the \POC,
$A$ is a subtree of $B$,
$C\setminus A$ is a level set of size at least two,
and
$0^{l_C}\in C$.
Further, assume that
the nodes in $C\setminus A$ extend nodes in $\max(A)\cap\max(B)$.
Let $A_e$ denote the set of  nodes in $A^+$ which are extended to nodes in $C\setminus A$.

In Case (a),
given any coloring $h:\Ext_T(A,C)\ra 2$,
 there is a strong coding tree $S\in [B,T]$ such that
$h$ is monochromatic on $\Ext_S(A,C)$.

In Case (b),  suppose
 $X\in \Ext_T(A,C)$  and
$m_0$ are given
for which there is a $B'\in r_{m_0}[B,T]$ with
$X\sse\max(B')$.
Then for any
 coloring $h:\Ext_T(A,C)\ra 2$
there is a strong coding tree $S\in [r_{m_0-1}(B'),T]$ such that
 $h$ is monochromatic on $\Ext_S(A,C;X)$.
\end{thm}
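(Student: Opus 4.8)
The plan is to adapt Harrington's forcing proof of the Halpern-\Lauchli\ Theorem (see \cite{DobrinenRIMS17}) to the two-sorted setting of strong coding trees, treating Cases (a) and (b) in parallel and isolating the coding-node complications of Case (b) at the end. First I would fix a cardinal $\kappa$ large enough that $\kappa\ra(\aleph_1)^{d}_{\aleph_0}$ holds, where $d=|A_e|\ge 2$; such a $\kappa$ exists by the \Erdos-Rado Theorem (Theorem~\ref{thm.ER}), e.g.\ $\kappa=\beth_{d}(\aleph_0)^+$ (the exact arity is fixed once the auxiliary ordinals of a forcing condition are absorbed in the standard way). Next I would define a forcing poset $\bP$ whose conditions are finite partial functions $p$ with $\dom(p)=\delta_p\in[\kappa]^{<\om}$, assigning to each $\al\in\delta_p$ a level set $X^p_\al\sse T$ extending $A_e$, one node over each node of $A_e$, all of a common length $l_p\in L^T$, subject to the requirement that $A\cup\bigcup_{\al\in\delta_p}X^p_\al$ form a valid subtree configuration in $T$ (and, in Case (b), one whose nodes still admit a common end-extension to a coding-node level realizing the passing numbers dictated by $C$). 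The partial order is the \emph{strong} one: $q\le p$ iff $\delta_q\contains\delta_p$, $l_q\ge l_p$, each $X^q_\al$ end-extends $X^p_\al$ for $\al\in\delta_p$, and $\bigcup_{\al\in\delta_q}X^q_\al$ has no new sets of parallel $1$'s over $\bigcup_{\al\in\delta_p}X^p_\al$. In Case (b) I would additionally carry the given level set $X$ along every condition and work below $r_{m_0-1}(B')$ throughout. Using Lemmas~\ref{lem.poc},~\ref{lem.pnc}, and~\ref{lem.facts} one checks the density facts: for every $N$ the conditions with $l_p>N$ are dense; for every $\al<\kappa$ the conditions with $\al\in\delta_p$ are dense; and for each increasing tuple $\vec\al$ of length $d$ from $\kappa$ the conditions which decide the $h$-colour of the diagonal level set $X_{\vec\al}$ obtained by taking the $i$-th coordinate of $X_{\al_i}$ (for $i<d$) are dense --- the last because any condition can be extended, by Lemma~\ref{lem.facts}, so that its $\vec\al$-coordinates actually form a member of $\Ext_T(A,C)$, on which $h$ is defined in $V$.

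Then I would run the \Erdos-Rado argument. Working entirely in $V$ (no generic extension is ever formed; only the existence of conditions deciding things is used), for each $\vec\al\in[\kappa]^d$ fix a condition $p_{\vec\al}$ deciding $h(X_{\vec\al})$ and colour $\vec\al$ by the pair consisting of the forced value $\vp(\vec\al)\in 2$ together with the finite ``type'' of $p_{\vec\al}$ --- the strong similarity type of $\ran(p_{\vec\al})$, the position of $\vec\al$ inside $\delta_{p_{\vec\al}}$, and (in Case (b)) the placement of the coding node and the passing numbers. There being only countably many such types, Theorem~\ref{thm.ER} yields $K\in[\kappa]^{\aleph_1}$ on which this colour is constant; in particular the forced value of $h$ is some fixed $\vp^*\in 2$ on every diagonal level set coming from a $d$-subset of $K$, and the conditions $p_{\vec\al}$, $\vec\al\in[K]^d$, are mutually coherent in structure. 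Finally I would build $S$ by recursion on critical levels, starting from $B$ (resp.\ $r_{m_0-1}(B')$), at each step using that $K$ is uncountable to find, for every maximal node of the current approximation lying over $A_e$, elements $\al\in K$ whose $X_\al$ extends it, and then using Lemmas~\ref{lem.factssplit},~\ref{lem.pnc},~\ref{lem.facts} to glue these extensions into a valid level set of the prescribed strong similarity type. Homogeneity of $K$ guarantees every new copy of $C$ over $A$ appearing in $S$ is $\vp^*$-coloured, and the ``no new parallel $1$'s'' clause of the order guarantees $S$ remains valid in $T$ and satisfies the \POC, hence is a strong coding tree with $S\le T$ (resp.\ $S\in[r_{m_0-1}(B'),T]$). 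In Case (a) the critical node contributed by $C\setminus A$ is a splitting node, which imposes no passing-number constraint, so one obtains monochromaticity on all of $\Ext_S(A,C)$; in Case (b) the coding node rigidifies passing numbers, which is exactly why the given $X$ must be respected and $r_{m_0-1}(B')$ held fixed, and one obtains monochromaticity only on $\Ext_S(A,C;X)$.

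The main obstacle is calibrating the strength of the partial order on $\bP$. It must be strong enough that the ``no new parallel $1$'s'' condition is inherited through the entire fusion, so that the resulting $S$ is genuinely valid in $T$ and satisfies the \POC\ (and is therefore a legitimate strong coding tree, not merely a skew tree) --- yet weak enough that the density facts, especially the one asserting that a condition can be extended so that prescribed coordinates realize a full member of $\Ext_T(A,C)$ with the coding node in place and the correct passing numbers, still go through; this is precisely where the Passing Number Choice Extension Lemma (Lemma~\ref{lem.pnc}) together with Lemma~\ref{lem.facts} are essential. In Case (b) the additional difficulty is that the passing numbers at the coding node of $C\setminus A$ are rigid and may clash with parallel $1$'s coming from data below $X$; isolating this clash is what forces the hypotheses involving $B'$ and $m_0$ and restricts the conclusion to $\Ext_S(A,C;X)$.
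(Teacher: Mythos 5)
Your overall architecture is the same as the paper's (a forcing poset of finite functions into level sets of $T$ ordered by end-extension with no new sets of parallel $1$'s, an \Erdos--Rado homogenization carried out entirely in the ground model, then a fusion building $S$ level by level), but the combinatorial core is missing: compatibility of the chosen conditions. Coloring $[\kappa]^d$ by the decided value together with the finite ``type'' of $p_{\vec\al}$ only makes the conditions look alike; it does not make them compatible. Two conditions $p_{\vec\al},p_{\vec\beta}$ of identical type can share an ordinal $\gamma\in\vec{\delta}_{p_{\vec\al}}\cap\vec{\delta}_{p_{\vec\beta}}$ occupying different positions in the two increasing enumerations of their domains, and then they assign different nodes to $\gamma$ and have no common extension. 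In the fusion step you must dominate, by a single condition $q$, the conditions $p_{\vec\al}$ for \emph{all} tuples $\vec\al$ indexing the members of $\Ext$ inside the next level set of $S$. The paper gets this from $2d$-ary homogeneity: $\kappa=\beth_{2d}$, a coloring of $[\kappa]^{2d}$ via the interleaving maps $\iota$ which records the overlap pattern $\{(j,k):\delta_{\vec\al}(j)=\delta_{\vec\beta}(k)\}$, the intermediate spaced-out set $K'$, and the resulting claims that shared ordinals occupy the same position and that $\vec{\delta}_{\vec\al}\cap\bigcup_{i<d}K_i=\vec\al$; only then is $\bigcup\{p_{\vec\al}:\vec\al\in\vec{J}\}$ itself a condition below each $p_{\vec\al}$. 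Your $d$-ary partition relation $\kappa\ra(\aleph_1)^d_{\aleph_0}$ is too weak to yield any of this, so the gluing step of your recursion fails as stated.

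A second, related omission: you have each $p_{\vec\al}$ decide the color of a single level set, but in the fusion a whole finite family $\vec{J}$ of tuples must receive the color $\varepsilon^*$ at one \emph{common} new level (the level of the next critical node of $S$), and different tuples would in general demand different levels. The paper arranges this by fixing a name $\dot{\mathcal{U}}$ for an ultrafilter on the levels of critical nodes along the single generic critical branch $\dot{b}_d$ and having $p_{\vec\al}$ force $h(\dot{b}_{\vec\al}\re l)=\varepsilon_{\vec\al}$ for $\dot{\mathcal{U}}$-many $l$; then any common lower bound $q$ of $\{p_{\vec\al}:\vec\al\in\vec{J}\}$ forces the finite intersection of these level sets to be in $\dot{\mathcal{U}}$, so one can choose $r\le q$ and one level working simultaneously for every $\vec\al\in\vec{J}$, which is exactly what turns the decided values into monochromaticity of a whole level of $S$. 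Note also that the paper keeps a single distinguished coordinate for the critical node (there is one shared generic branch $\dot{b}_d$), and in Case (b) the passing numbers at the coding node (the sets $I_0,I_1$) are built into the definition of a condition; your bundled level sets $X^p_\al$ can be made to work, but these are the places where the validity and \POC\ bookkeeping, and the restriction to $\Ext_S(A,C;X)$ over a fixed $B'$, actually get carried out.
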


\begin{proof}
Let $T,A,A_e,B,C$ be given satisfying the hypotheses  of  either  Case (a)  or (b),
and
let $h:\Ext_T(A,C)\ra 2$ be a given coloring.
Let $d+1$ equal the number of  nodes in $A_e$.
List the  nodes of $A_e$ as  $s_0,\dots, s_d$, letting $s_d$ denote the node of $A_e$ that is extended to
the critical node in $C\setminus A$:
a splitting node  in Case (a)
and
 a coding node in Case (b).
For each $i\le d$,
let $t_i$ denote the node in $\max(C)$ which extends $s_i$.
In particular,
$t_d$ denotes the splitting or coding node in $\max(C)$.
Let $i_0$ denote the integer such that
 $s_{i_0}$ is  the node of $A_e$ which is a sequence of  $0$'s.
Then $t_{i_0}$ is the sequence of all $0$'s  in $C\setminus A$.
Notice that  $i_0$ can equal $d$ only if we are  in  Case (a)  and moreover
the  splitting node in $C\setminus A$ is a sequence of $0$'s.
In Case (b), the following notation will be used:
For each $i\le d$,
 $t^+_i$ denotes the member in $\max(C)^+$ extending $t_i$.
Let $I_{0}$ denote  the set of all $i<d$ such that $t^+_i(|t_d|)=0$
and let
$I_{1}$ denote  the set of all $i<d$ such that $t^+_i(|t_d|)=1$.

Let $L$ denote the collection of all  $l<\om$ such that there is a member of
 $\Ext_T(A,C)$ with maximal  nodes of length $l$.
$L$ is infinite since $B$ is valid in $T$.
In Case (a),
$L$  is exactly the set of  all $l<\om$ for which
 there is a  splitting node  of length $l$ extending $s_d$,
 and in Case (b),
 $L$  is exactly the set of  all $l<\om$ for which
 there is a coding  node  of length $l$ extending $s_d$, as this follows from the validity of $B$ in $T$ and Lemma \ref{lem.pnc}.
For each
  $i\in (d+1)\setminus\{i_0\}$,   let  $T_i=\{t\in T:t\contains s_i\}$;
let $T_{i_0}=\{t\in T:t\contains s_{i_0}$ and $t\in 0^{<\om}\}$, the collection of all leftmost nodes in $T$ extending $s_{i_0}$.

Let $\kappa=\beth_{2d}$.
The following forcing notion $\bP$    adds $\kappa$ many paths through  $T_i$, for each  $i\in d\setminus\{i_0\}$,
and one path through $T_d$.
If $i_0\ne d$, then $\bP$ will add one
 path through $T_{i_0}$, though allowing  $\kappa$ many ordinals to  label this path in order to simplify notation.
\vskip.1in

\noindent  \underline{Case (a)}.
$\bP$ is the set of conditions $p$ such that
$p$ is a  function
of the form
$$
p:(d\times\vec{\delta}_p)\cup\{d\}\ra T\re l_p,
$$
where $\vec{\delta}_p\in[\kappa]^{<\om}$ and $l_p\in L$,
such that
\begin{enumerate}
\item[(i)]
$p(d)$ is {\em the} splitting  node extending $s_d$ of length  $l_p$;
\item [(ii)]
For each $i<d$,
 $\{p(i,\delta) : \delta\in  \vec{\delta}_p\}\sse  T_i\re l_p$;
and
\item[(iii)]
$\{p(i,\delta):(i,\delta)\in d\times\vec{\delta}_p\}\cup \{p(d)\}$ has no pre-determined new parallel $1$'s.
\end{enumerate}
\vskip.1in

\noindent  \underline{Case (b)}.
$\bP$ is the set of conditions $p$ such that
$p$ is a  function
of the form
$$
p:(d\times\vec{\delta}_p)\cup \{d\}\ra T\re l_p,
$$
where $\vec{\delta}_p\in[\kappa]^{<\om}$ and $l_p\in L$,
such that
\begin{enumerate}
\item[(i)]
$p(d)$ is {\em the} coding  node extending $s_d$ of length $l_p$;
\item [(ii)]
For each $i<d$,
 $\{p(i,\delta) : \delta\in  \vec{\delta}_p\}\sse  T_i\re l_p$.
\item[(iii)]
For each $\delta\in\vec{\delta}_p$,
 $j\in \{0,1\}$,
and  $i\in I_j$, the immediate extension of $p(i,\delta)$ in $T$ is $j$;
that is, the passing number of $(p(i,\delta))^+$ at $p(d)$ is $j$.
\end{enumerate}

 In both Cases (a) and (b), the partial
 ordering on $\bP$  is defined as follows:
$q\le p$ if and only if
$l_q\ge l_p$, $\vec{\delta}_q\contains \vec{\delta}_p$, and
\begin{enumerate}
\item[(i)]
$q(d)\contains p(d)$,
and
$q(i,\delta)\contains p(i,\delta)$ for each $(i,\delta)\in d\times \vec{\delta}_p$;

\item[(ii)]
The set $\{q(i,\delta):(i,\delta)\in d\times \vec{\delta}_p\}\cup \{q(d)\}$ has no new sets of parallel $1$'s over
 $\{p(i,\delta):(i,\delta)\in d\times \vec{\delta}_p\}\cup \{p(d)\}$.
\end{enumerate}
\vskip.1in

Given $p\in\bP$,
we shall use $\ran(p)$ to denote the {\em range of $p$},
$\{p(i,\delta):(i,\delta)\in d\times \vec\delta_p\}\cup \{p(d)\}$.
If $p,q$ are members of $\bP$,
we shall use the abbreviation {\em $q$ has no new parallel $1$'s over $p$}
to mean that  $\ran(q)$ has no new sets of parallel $1$'s over $\ran(p)$.

The proof of the theorem proceeds in three parts.
Part I proves that  $\bP$ is an atomless partial order.
Part II proves Lemma \ref{lem.compat} which is
the main tool for building fusion sequences while preserving homogeneity.
This is applied
 in Part III to build
 the tree $S$ which is  valid in $T$ and such that  $\Ext_S(A,C)$ is homogeneous for  $h$ in Case (a), and  $\Ext_S(A,C;X)$ is homogeneous for $h$  in Case (b).
For the first two parts of the proof,
we present a general proof, indicating the steps at which the two cases require different approaches.
Part III will require the cases to  be handled separately.
\vskip.1in

\noindent \underline{\bf {Part I.}} $\bP$ is an atomless partial ordering.

\begin{claim}\label{claim.atomless_separative}
 $(\bP,\le)$ is a partial ordering.
\end{claim}

\begin{proof}
The  order  $\le$ on $\bP$ is clearly  reflexive and antisymmetric.
Transitivity follows from the fact that the requirement (ii) is a transitive property.
If $p\ge q$ and $q\ge r$,
then $\vec{\delta}_p\sse\vec{\delta}_q\sse \vec{\delta}_r$ and $l_p\le l_q\le l_r$.
Since
$r$
has no new sets of parallel $1$'s over
 $q$
and
$q$
has no new sets of parallel $1$'s over
 $p$,
it follows that
$r$
has no new sets of parallel $1$'s over
$p$.
Thus, $p\ge r$.
\end{proof}

We show that $\bP$ is atomless by proving the  following stronger claim.

\begin{claim}\label{claim.densehigh}
For each $p\in\bP$ and   $l>l_p$, there
are  $q,r\in\bP$ with $l_q,l_r> l$  such that  $q,r<p$ and $q$ and $r$ are incompatible.
\end{claim}

\begin{proof}
Let $p\in \bP$ and $l>l_p$ be given, and
let $\vec{\delta}_r=\vec{\delta}_q=\vec{\delta}_p$.

In Case (a),
take $q(d)$ and $r(d)$ to be incomparable splitting nodes in $T$ extending $p(d)$ to some lengths greater than $l$.
Such splitting nodes exist since strong coding trees are perfect.
Let $l_q=|q(d)|$ and $l_r=|r(d)|$.
For each $(i,\delta)\in d\times\vec{\delta}_p$,
let $q(i,\delta)$ be the leftmost extension (in $T$) of $p(i,\delta)$ to length $l_q$,
and  let
$r(i,\delta)$ be the leftmost extension of $p(i,\delta)$ to length $l_r$.
Then $q$ and $r$ have no  pre-determined  new parallel $1$'s, since $\ran(p)$    has no pre-determined new parallel $1$'s and all nodes except $q(d)$ and $r(d)$ are leftmost extensions in $T$ of members of $\ran(p)$;
so $q$ and $r$ are members of $\bP$.
By Lemma \ref{lem.poc},
both $q$ and $r$
 have no new  parallel $1$'s over $p$,
so  $q,r\le p$.
Since neither of $q(d)$ and $r(d)$ extends the other,
$q$ and $r$ are incompatible.

In Case (b),
let $s$ be a splitting node in $T$ of length greater than $l$ extending $p(d)$.
Let $c^T_k$ be the least coding node in $T$ above $s$.
Let $s_0,s_1$ extend $s^{\frown}0,s^{\frown}1$ leftmost in $T$  to the level of $c^T_k$, respectively.
For each $(i,\delta)\in d\times\vec\delta_p$,
let $p'(i,\delta)$ be the leftmost extension  in $T$ of $p(i,\delta)$ of length $l^T_k$.
 By  Lemma \ref{lem.pnc},
there are  $q(d)\contains s_0$ and   $q(i,\delta)\contains p'(i,\delta)$, $(i,\delta)\in d\times \vec{\delta}_p$,
such that
\begin{enumerate}
\item
 $q(d)$ is a coding node;
\item
$q$ has no new  parallel $1$'s over  $p$;
\item
 For each $j<2$,
$i\in I_j$ if and only if the immediate extension of $q(i,\delta)$ is $j$.
\end{enumerate}
Then $q\in \bP$ and $q\le p$.
Likewise  by  Lemma \ref{lem.pnc},
we may extend $\{p'(i,\delta): (i,\delta)\in d\times \vec{\delta}_p\}\cup\{s_1\}$
to
$\{ r(i,\delta):(i,\delta)\in d\times \vec{\delta}_p\}\cup\{r(d)\}$ to form a condition $r\in\bP$ extending $p$.
Since the coding nodes $q(d)$ and $r(d)$ are incomparable, $q$ and $r$  are incompatible conditions in $\bP$.
\end{proof}

From now on, whenever
 ambiguity will  not arise by doing so,
we will  refer to the splitting node in Case (a) and the coding node in Case (b)   simply as the
{\em critical node}.

Let $\dot{b}_d$ be a $\bP$-name for the generic path through $T_d$;
that is, $\dot{b}_d=\{\lgl p(d),p\rgl:p\in\bP\}$.
Note that for each $p\in \bP$, $p$ forces that $\dot{b}_d\re l_p= p(d)$.
By Claim \ref{claim.densehigh}, it is dense to force a critical node in $\dot{b}_d$ above any given level in $T$,  so $\mathbf{1}_{\bP}$ forces that the set of levels of critical nodes in $\dot{b}_d$ is infinite.
Thus, given any generic  filter  $G$  for $\bP$, $\dot{b}_d^G=\{p(d):p\in G\}$ is a cofinal path of critical nodes in $T_d$.
Let $\dot{L}_d$ be a $\bP$-name for the set of lengths of critical  nodes in $\dot{b}_d$.
Note that $\mathbf{1}_{\bP}\forces \dot{L}_d\sse L$.
Let $\dot{\mathcal{U}}$ be a $\bP$-name for a non-principal ultrafilter on $\dot{L}_d$.
For each $i<d$ and $\al<\kappa$,  let $\dot{b}_{i,\al}$ be a $\bP$-name for the $\al$-th generic branch through $T_i$;
that is, $\dot{b}_{i,\al}=\{\lgl p(i,\al),p\rgl:p\in \bP$ and $\al\in\vec{\delta}_p\}$.
For $i<d$ and
 for any condition $p\in \bP$ and  $\al\in \vec\delta_p$,  $p$ forces that $\dot{b}_{i,\al}\re l_p= p(i,\al)$.

For ease of notation, we  shall write sets $\{\al_i:i< d\}$ in $[\kappa]^d$ as vectors $\vec{\al}=\lgl \al_0,\dots,\al_{d-1}\rgl$ in strictly increasing order.
For $\vec{\al}=\lgl\al_0,\dots,\al_{d-1}\rgl\in[\kappa]^d$,
rather than writing out
$\lgl \dot{b}_{0,\al_0},\dots, \dot{b}_{d-1,\al_{d-1}},\dot{b}_d\rgl$
 each time we wish to refer to these generic branches,
we shall simply
\begin{equation}
\mathrm{let\ \ }\dot{b}_{\vec{\al}}\mathrm{\  \  denote\ \ }
\lgl \dot{b}_{0,\al_0},\dots, \dot{b}_{d-1,\al_{d-1}},\dot{b}_d\rgl,
\end{equation}
since the branch $\dot{b}_d$ being  unique  causes no ambiguity.
For any $l<\om$,
\begin{equation}
\mathrm{\ let\ \ }\dot{b}_{\vec\al}\re l
\mathrm{\ \ denote \  \ }
\{\dot{b}_{i,\al_i}\re l:i<d\}\cup \{\dot{b}_d\re l\}.
\end{equation}
Using the abbreviations just defined,
$h$ is a coloring on sets of nodes of the form $\dot{b}_{\vec\al}\re l$
whenever this is
 forced to be a member of $\Ext_T(A,C)$.
\vskip.1in

\noindent\underline{\bf{Part II.}}
The goal  now is  to
prove
Claims
\ref{claim.onetypes} and \ref{claim.j=j'}
and
Lemma \ref{lem.compat}.
To sum up, they secure that
there are
 infinite  pairwise disjoint sets $K_i\sse \kappa$ for $i<d$,
and a set of conditions $\{p_{\vec\al}:\vec\al\in \prod_{i<d}K_i\}$ which are compatible,
have the same images in $T$,
and such that for some fixed $\varepsilon^*\in\{0,1\}$,
for each $\vec\al\in\prod_{i<d}K_i'$,
$p_{\vec\al}$ forces
$h(\dot{b}_{\vec\al}\re l)=\varepsilon^*$  for  ultrafilter many  $l\in\dot{L}_d$.
Moreover, we will find nodes  $t^*_i$, $i\le d$, such that for each $\vec\al\in\prod_{i<d}K_i$,
$p_{\vec\al}(i,\al_i)=t^*_i$.
Lemma   \ref{lem.compat} will enable  fusion processes for  constructing $S$  with one color on $\Ext_S(A,C)$  in Part III.
There are no differences between the arguments for Cases (a) and (b) in Part II.

For each $\vec\al\in[\kappa]^d$,
choose a condition $p_{\vec{\al}}\in\bP$ such that
\begin{enumerate}
\item
 $\vec{\al}\sse\vec{\delta}_{p_{\vec\al}}$.

\item
$\{p_{\vec\al}(i,\al_i):i< d\}\cup\{p(d)\}\in\Ext_T(A,C)$.
\item
$p_{\vec{\al}}\forces$ ``There is an $\varepsilon\in 2$  such that
$h(\dot{b}_{\vec{\al}}\re l)=\varepsilon$
for $\dot{\mathcal{U}}$ many $l$ in $\dot{L}_d$."

\item
$p_{\vec{\al}}$ decides a value for $\varepsilon$, call it  $\varepsilon_{\vec{\al}}$.
\item
$h(\{p_{\vec\al}(i,\al_i):i< d\}\cup\{p(d)\})=\varepsilon_{\vec{\al}}$.
\end{enumerate}

Properties (1) -  (5) can be guaranteed  as follows.
Recall that for $i\le d$, $t_i$ denotes the member of $C\setminus A$ extending $s_i$.
For each  $\vec{\al}\in[\kappa]^d$, let
$$
p^0_{\vec{\al}}=\{\lgl (i,\delta), t_i\rgl: i< d, \ \delta\in\vec{\al} \}\cup\{\lgl d,t_d\rgl\}.
$$
Then $p^0_{\vec{\al}}$ is a condition in $\bP$,
$\vec\delta_{p_{\vec\al}^0}= \vec\al$, so (1) holds.
Further,
$\{p^0_{\vec\al}(i,\al_i):i<d\}\cup\{p^0_{\vec{\al}}(d)\}$ is a member of $\Ext_T(A,C)$ since it is exactly $C\setminus A$.
It is important to note that
for any $p\le p_{\vec\al}^0$,
 $\{p(i,\al_i):i< d\}\cup\{p(d)\}$
is also  a member of $\Ext_T(A,C)$,
as this follows from the fact that
$\{p(i,\delta):(i,\delta)\in d\times \vec{\delta}_{p_{\vec\al}^0}\}\cup\{p(d)\}$ has no new sets of parallel $1$'s over the image of $p_{\vec\al}^0$.
Thus (2) holds for any $p\le p_{\vec\al}^0$.
Take  an extension $p^1_{\vec{\al}}\le p^0_{\vec{\al}}$ which
forces  $h(\dot{b}_{\vec{\al}}\re l)$ to be the same value for
$\dot{\mathcal{U}}$  many  $l\in \dot{L}_d$, giving (3).
For Property (4),
since $\bP$ is a forcing notion, there is a $p^2_{\vec{\al}}\le p_{\vec{\al}}^1$ deciding a value $\varepsilon_{\vec{\al}}$ for which $p^2_{\vec{\al}}$ forces that $h(\dot{b}_{\vec{\al}}\re l)=\varepsilon_{\vec{\al}}$
for $\dot{\mathcal{U}}$ many $l$ in $\dot{L}_d$.
If $p^2_{\vec\al}$ forces 
$h(\dot{b}_{\vec\al}\re l_{p^2_{\vec\al}})=\varepsilon_{\vec\al}$, then let $p_{\vec\al}=p^2_{\vec\al}$.

If not,
take  some  $p^3_{\vec\al}\le p^2_{\vec\al}$
which decides
some $l\in\dot{L}$
such that
$l_{p^2_{\vec\al}}< l_n^T< l\le l_{p^3_{\vec\al}}$,
 for some $n$,
and  $p^3_{\vec\al}$ forces
$h(\dot{b}_{\vec\al}\re l)=\varepsilon_{\vec\al}$.
Since $p^3_{\vec\al}$ forces ``$\dot{b}_{\vec\al}\re l=
\{p^3_{\vec\al}(i,\al_i)\re l:i<d\}   \cup\{p^3_{\vec\al}(d)\re l\}$'' and $h$ is defined in the ground model,
 this means that  $p^3_{\vec\al}(d)\re l$ is a splitting node in Case (a) and a coding node in Case (b), and
\begin{equation}\label{eq.hrest}
h(X(p^3_{\vec\al})\re l)
=\varepsilon_{\vec\al},
\end{equation}
where
$X(p^3_{\vec\al})\re l$ denotes
$\{p^3_{\vec\al}(i,\al_i)\re l:i<d\}   \cup\{p^3_{\vec\al}(d)\re l\}$.
If $l=l_{p^3_{\vec\al}}$, let $p_{\vec\al}=p_{\vec\al}^3$, and note that $p_{\vec\al}$ satisfies (1) - (5).

Otherwise,  $l<l_{p^3_{\vec\al}}$.
In Case (a),  let
$p_{\vec\al}$ be  defined as follows:
Let  $\vec\delta_{\vec\al}=\vec\delta_{p_{\vec\al}^2}$ and
 \begin{equation}
\forall (i,\delta)\in d\times\vec\delta_{\vec\al}, \mathrm{\ let\ }
p_{\vec\al}(i,\delta)=p^3_{\vec\al}(i,\delta)\re l\mathrm{\ \ and\  let\  }
p_{\vec\al}(d)=p^3_{\vec\al}(d)\re l.
\end{equation}
Since $p^3_{\vec\al}$ is a condition in $\bP$,
$\ran(p^3_{\vec\al})$ is free in $T$.
Furthermore, $p^3_{\vec\al}\le p^2_{\vec\al}$
implies that $\ran(p_{\vec\al}^3\re \vec\delta_{p^2_{\vec\al}})$ has no new sets of parallel $1$'s over $\ran(p^2_{\vec\al})$.
Therefore,
$p_{\vec\al}$ is a condition in $\bP$ with   $p_{\vec\al}\le p_{\vec\al}^2$ satisfying  (1) - (5).

In Case (b),
  construct $p_{\vec\al}$ as follows:
Again, let
 $\vec{\delta}_{\vec\al}=\vec\delta_{p^2_{\vec\al}}$.
For each $i<d$, define
$p_{\vec\al}(i,\al_i)=p^3_{\vec\al}(i,\al_i)\re l$,
and  let
$p_{\vec\al}(d)=p^3_{\vec\al}(d)\re l$.
Letting $X=\{p^3_{\vec\al}(i,\al_i)\re l:i<d\}\cup\{p^3_{\vec\al}(d)\re l\}$,   then 
  $h(X)=\varepsilon_{\vec\al}$.
Let $U$ denote 
$\{p^2_{\vec\al}(i,\al_i)\re l:i<d\}\cup\{p^2_{\vec\al}(d)\re l\}$
and
let $U'=\ran (p_{\vec\al}^2)\setminus U$.
Note that $X$ end-extends $U$,  and $X$ is valid  in $T$ and has no new sets of parallel $1$'s over $U$.
By Lemma
\ref {lem.pnc},
there is an $X'$ end-extending $U'$ to nodes in $T\re l$ so that  
$X\cup X'$ 
has no new sets of parallel $1$'s  over $U\cup U'$, and 
each node in $X'$ has the same passing  number at $l$ as it does at $l_{p_{\vec\al}^2}$.
Let $\ran(p_{\vec\al})= X\cup X'$,
where
 for each $i<d$
and $(i,\delta)\in d\times\vec{\delta}_{p_{\vec\al}^3}$ with $\delta\ne\al_i$,
we
let $p_{\vec\al}(i,\delta)$ be the node in $Y'$ extending
 $p_{\vec\al}^3(i,\delta)$.
 This defines a condition $p_{\vec\al}\le p_{\vec\al}^2$
  satisfying  (1) - (5).

Since $\{p_{\vec\al}(i,\al_i):i<d\}\cup\{p_{\vec\al}(d)\}$ is what $p_{\vec\al}$ forces $\dot{b}_{\vec\al}\re l$ to be,
it follows that
$p_{\vec\al}$ forces
 $h(\{p_{\vec\al}(i,\al_i):i<d\}\cup\{p_{\vec\al}(d)\})=\varepsilon_{\vec\al}$, so (5) holds.

Now we prepare  for an application of the \Erdos-Rado Theorem (recall Theorem \ref{thm.ER}).
We are assuming $\kappa=\beth_{2d}$, which is at least  $\beth_{2d-1}(\aleph_0)^+$,  so  that  $\kappa\ra(\aleph_1)^{2d}_{\aleph_0}$.
Given two sets of ordinals $J,K$ we shall write $J<K$ if every member of $J$ is less than every member of $K$.
Let $D_e=\{0,2,\dots,2d-2\}$ and  $D_o=\{1,3,\dots,2d-1\}$, the sets of  even and odd integers less than $2d$, respectively.
Let $\mathcal{I}$ denote the collection of all functions $\iota: 2d\ra 2d$ such that
$\iota\re D_e$
and $\iota\re D_o$ are strictly  increasing sequences
and $\{\iota(0),\iota(1)\}<\{\iota(2),\iota(3)\}<\dots<\{\iota(2d-2),\iota(2d-1)\}$.
Thus, each $\iota$ codes two strictly increasing sequences $\iota\re D_e$ and $\iota\re D_o$, each of length $d$.
For $\vec{\theta}\in[\kappa]^{2d}$,
$\iota(\vec{\theta}\,)$ determines the pair of sequences of ordinals $(\theta_{\iota(0)},\theta_{\iota(2)},\dots,\theta_{\iota(2d-2))}), (\theta_{\iota(1)},\theta_{\iota(3)},\dots,\theta_{\iota(2d-1)})$,
both of which are members of $[\kappa]^d$.
Denote these as $\iota_e(\vec\theta\,)$ and $\iota_o(\vec\theta\,)$, respectively.
To ease notation, let $\vec{\delta}_{\vec\al}$ denote $\vec\delta_{p_{\vec\al}}$,
 $k_{\vec{\al}}$ denote $|\vec{\delta}_{\vec\al}|$,
and let $l_{\vec{\al}}$ denote  $l_{p_{\vec\al}}$.
Let $\lgl \delta_{\vec{\al}}(j):j<k_{\vec{\al}}\rgl$
denote the enumeration of $\vec{\delta}_{\vec\al}$
in increasing order.

Define a coloring  $f$ on $[\kappa]^{2d}$ into countably many colors as follows:
Given  $\vec\theta\in[\kappa]^{2d}$ and
 $\iota\in\mathcal{I}$, to reduce the number of subscripts,  letting
$\vec\al$ denote $\iota_e(\vec\theta\,)$ and $\vec\beta$ denote $\iota_o(\vec\theta\,)$,
define
\begin{align}\label{eq.fiotatheta}
f(\iota,\vec\theta\,)&= \,
\lgl \iota, \varepsilon_{\vec{\al}}, k_{\vec{\al}}, p_{\vec{\al}}(d),
\lgl \lgl p_{\vec{\al}}(i,\delta_{\vec{\al}}(j)):j<k_{\vec{\al}}\rgl:i< d\rgl,\cr
& \lgl  \lgl i,j \rgl: i< d,\ j<k_{\vec{\al}},\ \mathrm{and\ } \delta_{\vec{\al}}(j)=\al_i \rgl,
\lgl \lgl j,k\rgl:j<k_{\vec{\al}},\ k<k_{\vec{\beta}},\ \delta_{\vec{\al}}(j)=\delta_{\vec{\beta}}(k)\rgl\rgl.
\end{align}
Let $f(\vec{\theta}\,)$ be the sequence $\lgl f(\iota,\vec\theta\,):\iota\in\mathcal{I}\rgl$, where $\mathcal{I}$ is given some fixed ordering.
Since the range of $f$ is countable,
apply the \Erdos-Rado Theorem
to  obtain a subset $K\sse\kappa$ of cardinality $\aleph_1$
which is homogeneous for $f$.
Take $K'\sse K$ such that between each two members of $K'$ there is a member of $K$ and $\min(K')>\min(K)$.
Take subsets $K_i\sse K'$ such that  $K_0<\dots<K_{d-1}$
and   each $|K_i|=\aleph_0$.

\begin{claim}\label{claim.onetypes}
There are $\varepsilon^*\in 2$, $k^*\in\om$, $t_d$,
and $ \lgl t_{i,j}: j<k^*\rgl$, $i< d$,
 such that
for all $\vec{\al}\in \prod_{i<d}K_i$ and  each $i< d$,
 $\varepsilon_{\vec{\al}}=\varepsilon^*$,
$k_{\vec\al}=k^*$,  $p_{\vec{\al}}(d)=t_d$, and
$\lgl p_{\vec\al}(i,\delta_{\vec\al}(j)):j<k_{\vec\al}\rgl
=
 \lgl t_{i,j}: j<k^*\rgl$.
\end{claim}

\begin{proof}
Let  $\iota$ be the member in $\mathcal{I}$
which is the identity function on $2d$.
For any pair $\vec{\al},\vec{\beta}\in \prod_{i<d}K_i$, there are $\vec\theta,\vec\theta'\in [K]^{2d}$
such that
$\vec\al=\iota_e(\vec\theta\,)$ and $\vec\beta=\iota_e(\vec\theta'\,)$.
Since $f(\iota,\vec\theta\,)=f(\iota,\vec\theta'\,)$,
it follows that $\varepsilon_{\vec\al}=\varepsilon_{\vec\beta}$, $k_{\vec{\al}}=k_{\vec{\beta}}$, $p_{\vec{\al}}(d)=p_{\vec{\beta}}(d)$,
and $\lgl \lgl p_{\vec{\al}}(i,\delta_{\vec{\al}}(j)):j<k_{\vec{\al}}\rgl:i< d\rgl
=
\lgl \lgl p_{\vec{\beta}}(i,\delta_{\vec{\beta}}(j)):j<k_{\vec{\beta}}\rgl:i< d\rgl$.
Thus, define  $\varepsilon^*$, $k^*$, $t_d$, $\lgl \lgl t_{i,j}:j<k^*\rgl:i<d\rgl$ to be
$\varepsilon_{\vec\al}$, $k_{\vec\al}$,
$p_{\vec\al}(d)$,
$\lgl \lgl p_{\vec{\al}}(i,\delta_{\vec{\al}}(j)):j<k_{\vec{\al}}\rgl:i< d\rgl$
 for any $\vec\al\in \prod_{i<d}K_i$.
\end{proof}

Let $l^*$ denote the length of $t_d$.
Then all the nodes  $t_{i,j}$,  $i< d$, $j<k^*$,   also  have  length $l^*$.

\begin{claim}\label{claim.j=j'}
Given any $\vec\al,\vec\beta\in \prod_{i<d}K_i$,
if $j,k<k^*$ and $\delta_{\vec\al}(j)=\delta_{\vec\beta}(k)$,
 then $j=k$.
\end{claim}

\begin{proof}
Let $\vec\al,\vec\beta$ be members of $\prod_{i<d}K_i$   and suppose that
 $\delta_{\vec\al}(j)=\delta_{\vec\beta}(k)$ for some $j,k<k^*$.
For each $i<d$, let  $\rho_i$ be the relation from among $\{<,=,>\}$ such that
 $\al_i\,\rho_i\,\beta_i$.
Let   $\iota$ be the member of  $\mathcal{I}$  such that for each $\vec\gamma\in[K]^{d}$ and each $i<d$,
$\theta_{\iota(2i)}\ \rho_i \ \theta_{\iota(2i+1)}$.
Then there is a
$\vec\theta\in[K']^{2d}$ such that
$\iota_e(\vec\theta)=\vec\al$ and $\iota_o(\vec\theta)= \vec\beta$.
Since between any two members of $K'$ there is a member of $K$, there is a
 $\vec\gamma\in[K]^{d}$ such that  for each $i< d$,
 $\al_i\,\rho_i\,\gamma_i$ and $\gamma_i\,\rho_i\, \beta_i$,
and furthermore, for each $i<d-1$,
$\{\al_i,\beta_i,\gamma_i\}<\{\al_{i+1},\beta_{i+1},\gamma_{i+1}\}$.
Given that  $\al_i\,\rho_i\,\gamma_i$ and $\gamma_i\,\rho_i\, \beta_i$ for each $i<d$,
there are  $\vec\mu,\vec\nu\in[K]^{2d}$ such that $\iota_e(\vec\mu)=\vec\al$,
$\iota_o(\vec\mu)=\vec\gamma$,
$\iota_e(\vec\nu)=\vec\gamma$, and $\iota_o(\vec\nu)=\vec\beta$.
Since $\delta_{\vec\al}(j)=\delta_{\vec\beta}(k)$,
the pair $\lgl j,k\rgl$ is in the last sequence in  $f(\iota,\vec\theta)$.
Since $f(\iota,\vec\mu)=f(\iota,\vec\nu)=f(\iota,\vec\theta)$,
also $\lgl j,k\rgl$ is in the last  sequence in  $f(\iota,\vec\mu)$ and $f(\iota,\vec\nu)$.
It follows that $\delta_{\vec\al}(j)=\delta_{\vec\gamma}(k)$ and $\delta_{\vec\gamma}(j)=\delta_{\vec\beta}(k)$.
Hence, $\delta_{\vec\gamma}(j)=\delta_{\vec\gamma}(k)$,
and therefore $j$ must equal $k$.
\end{proof}

For any $\vec\al\in \prod_{i<d}K_i$ and any $\iota\in\mathcal{I}$, there is a $\vec\theta\in[K]^{2d}$ such that $\vec\al=\iota_o(\vec\theta)$.
By homogeneity of $f$ and  by the first sequence in the second line of equation  (\ref{eq.fiotatheta}), there is a strictly increasing sequence
$\lgl j_i:i< d\rgl$  of members of $k^*$ such that for each $\vec\al\in \prod_{i<d}K_i$,
$\delta_{\vec\al}(j_i)=\al_i$.
For each $i< d$, let $t^*_i$ denote $t_{i,j_i}$.
Then  for each $i<d$ and each $\vec\al\in \prod_{i<d}K_i$,
\begin{equation}
p_{\vec\al}(i,\al_i)=p_{\vec{\al}}(i, \delta_{\vec\al}(j_i))=t_{i,j_i}=t^*_i.
\end{equation}
Let $t_d^*$ denote $t_d$.

\begin{lem}\label{lem.compat}
For any finite subset $\vec{J}\sse \prod_{i<d}K_i$,
the set of conditions $\{p_{\vec{\al}}:\vec{\al}\in \vec{J}\,\}$ is  compatible.
Moreover,
$p_{\vec{J}}:=\bigcup\{p_{\vec{\al}}:\vec{\al}\in \vec{J}\,\}$
is a member of $\bP$ which is below each
$p_{\vec{\al}}$, $\vec\al\in\vec{J}$.
\end{lem}

\begin{proof}
For any $\vec\al,\vec\beta\in \prod_{i<d}K_i$,
whenver
 $j,k<k^*$ and
 $\delta_{\vec\al}(j)=\delta_{\vec\beta}(k)$, then $j=k$, by Claim  \ref{claim.j=j'}.
It then follows from Claim \ref{claim.onetypes}
that for each $i<d$,
\begin{equation}
p_{\vec\al}(i,\delta_{\vec\al}(j))=t_{i,j}=p_{\vec\beta}(i,\delta_{\vec\beta}(j))
=p_{\vec\beta}(i,\delta_{\vec\beta}(k)).
\end{equation}
Thus, for each $\vec\al,\vec\beta\in\vec{J}$ and each
$\delta\in\vec{\delta}_{\vec\al}\cap
\vec{\delta}_{\vec\beta}$,
for all $i<d$,
\begin{equation}
p_{\vec\al}(i,\delta)=p_{\vec\beta}(i,\delta).
\end{equation}
Thus,
$p_{\vec{J}}:
\bigcup \{p_{\vec{\al}}:\vec\al\in\vec{J}\}$
is a  function.
Let $\vec\delta_{\vec{J}}=
\bigcup\{
\vec{\delta}_{\vec\al}:
\vec\al\in\vec{J}\,\}$.
For each $\delta\in
\vec{\delta}_{\vec{J}}$ and   $i<d$,
$p_{\vec{J}}(i,\delta)$ is defined,
and it is exactly  $p_{\vec\al}(i,\delta)$, for any $\vec\al\in\vec{J}$ such that $\delta\in \vec\delta_{\vec\al}$.
Thus, $p_{\vec{J}}$ is a member of $\bP$, and $p_{\vec{J}}\le p_{\vec\al}$ for each $\vec\al\in\vec{J}$.
\end{proof}

We conclude this section with a general claim which will be useful in Part III.

\begin{claim}\label{subclaimA}
If $\beta\in \bigcup_{i<d}K_i$,
$\vec{\al}\in\prod_{i<d}K_i$,
and $\beta\not\in\vec\al$,
 then
$\beta$ is not  a member of   $\vec{\delta}_{\vec{\al}}$.
\end{claim}

\begin{proof}
Suppose toward a contradiction that $\beta\in\vec{\delta}_{\vec{\al}}$.
Then there is a $j<k^*$ such that $\beta=\delta_{\vec{\al}}(j)$.
Let $i$ be such that $\beta\in K_i$.
Since $\beta\ne\al_i=\delta_{\vec{\al}}(j_i)$, it must be that $j\ne j_i$.
However,
letting $\vec\beta$ be any member of $\prod_{i<d}K_i$ with $\beta_i=\beta$,
then
$\beta=\delta_{\vec{\beta}}(j_i)=\delta_{\vec{\al}}(j)$, so Claim \ref{claim.j=j'}
implies that $j_i=j$, a contradiction.
\end{proof}
\vskip.1in

\noindent \underline{\bf{Part III.}}
In this last part of the proof,
we build a strong coding tree $S$ valid in $T$ on which the coloring $h$ is homogeneous.
Cases (a) and  (b) are now handled separately.
\vskip.1in

\noindent\underline{\bf{Part III Case (a).}}
Recall that $\{s_i:i\le d\}$ enumerates the members of $A_e$, which is a subset of $B^+$.
Let $s_d^-$ denote $s_d\re l_A$,
and let $i_d\in\{0,1\}$ be such that $s_d={s_d^-}^{\frown}i_d$.
Let $m'$ be the   integer such that $B\in\mathcal{A}_{m'}(T)$.
Let $\sigma$
denote the strong similarity map from $B$ onto $r_{m'}(\bT)$, and
let $M=\{ m_j:j<\om\}$ be the strictly increasing enumeration of those $m> m'$
 such that the splitting node in  $\max(r_m(\bT))$ extends ${\sigma(s_d^-)}^{\frown}i_d$.
We will find $U_{m_0}\in r_{m_0}[B,T]$
and in general, $U_{m_{j+1}}\in r_{m_{j+1}}[U_{m_j},T]$
 so that  for each $j<\om$,
  $h$ takes color $\varepsilon^*$ on $\Ext_{U_{m_j}}(A,C)$.
Then setting $S=\bigcup_{j<\om} U_{m_j}$ will yield $S$ to be a member of $[B,T]$ for which  $\Ext_S(A,C)$ is homogeneous for $h$, with color $\varepsilon^*$.

First extend each node in $B^+$  to level $l^*$ as follows.
Recall that for each $i\le d$,
$t_i^*\contains t_i$,
so the set $\{t^*_i:i\le d\}$ extends
 $A_e$.
For  each node $u$ in $B^+\setminus A_e$, let $u^*$ denote
 its  leftmost extension in $T\re l^*$.
Then the set
\begin{equation}
U^*=\{t^*_i:i\le d\}\cup\{u^*:u\in B^+\setminus A_e\}
\end{equation}
extends each member of  $B^+$ to a unique node.
Furthermore, by the choice of $p^0_{\vec\al}$ for each $\al\in [K]^d$ and the definition of the partial ordering on $\bP$,
it follows that
the set $\{t_i^*:i\le d\}$ has no new sets of parallel $1$'s over $A_e$.
Since the nodes $u^*$ are leftmost extensions of  members of $B^+\setminus A_e$ and $B$ is valid in $T$,
it follows from Lemma \ref{lem.poc} that $U^*$ has no new sets of parallel $1$'s over $B$.
Furthermore, $U^*$ has no pre-determined new sets of parallel $1$'s, by (iii) in  the definition of the partial ordering $\bP$ for Case (a).
Thus,
 $B\cup U^*$  satisfies the \POC\ and   is valid in $T$.
If $m_0=m'+1$,
then  let $U_{m'+1}=B\cup U^*$ and
 extend $U_{m'+1}$ to a member $U_{m_1-1}\in r_{m_1-1}[U_{m'+1},T]$.
If $m_0>m'+1$,
apply Lemma \ref{lem.facts} to
extend above $U^*$  to construct
a member $U_{m_0-1}\in r_{m_0-1}[B,T]$.
In this case, $\max(r_{m'+1}(U_{m_0}))$ is not
$ U^*$, but rather  $\max(r_{m'+1}(U_{m_0}))$
extends $U^*$.

Assume  $j<\om$ and
 we have constructed $U_{m_j-1}$ so that every member of $\Ext_{U_{m_j-1}}(A,C)$ is colored $\varepsilon^*$ by $h$.
Fix some  $Y_j\in r_{m_j}[U_{m_j -1} ,T]$ and let $V_j$ denote $\max(Y_j)$.
The nodes in $V_j$ will not be in the tree $S$ we are constructing;
rather,
we will extend the nodes in $V_j$ to construct
$U_{m_j}\in r_{m_j}[U_{m_j-1},T]$.

We now start to construct a condition $q$ which will satisfy Claim \ref{claim.qbelowpal}.
Let $q(d)$ denote  the splitting node in $V_j$ and let $l_q=|q(d)|$.
For each $i<d$
for which $s_i$ and $s_d$ do not have parallel $1$'s,
 let
$Z_i$ denote the set of all
$v\in T_i\cap V_j$
such that $v$ and $q(d)$ have no parallel $1$'s.
For each $i<d$ for which $s_i$ and $s_d$ do have parallel $1$'s,
let $Z_i= T_i\cap V_j$.
For each $i<d$,
take  a set $J_i\sse K_i$ of cardinality $|Z_i|$
and label the members of $Z_i$ as
$\{z_{\al}:\al\in J_i\}$.
Notice that each member of $\Ext_T(A,C)$ above $V_j$ extends some set $\{z_{\al_i}:i<d\}\cup\{q(d)\}$, where each $\al_i\in J_i$.
Let $\vec{J}$ denote the set of those $\lgl \al_0,\dots,\al_{d-1}\rgl\in \prod_{i< d}J_i$ such that  the set $\{z_{\al_i}:i< d\}\cup\{q(d)\}$ is in $\Ext_T(A,C)$.
Notice that for each $i<d$,
$J_i=\{\al_i:\vec\al\in\vec{J}\}$, since
each node in $Z_i$ is
 in some member of $\Ext_T(A,C)$:
 Extending all the other $t_j^*$ ($j\ne i$) via their leftmost extensions in $T$ to length $l_q$, along with $q(d)$, constructs a member of $\Ext_T(A,C)$.
By Lemma \ref{lem.compat},
the set $\{p_{\vec\al}:\vec\al\in\vec{J}\}$ is compatible.
The fact that
$p_{\vec{J}}$ is a condition in $\bP$ will be used
to make  the construction of $q$ very precise.

Let
 $\vec{\delta}_q=\bigcup\{\vec{\delta}_{\vec\al}:\vec\al\in \vec{J}\}$.
For each $i<d$ and $\al\in J_i$,
define $q(i,\al)=z_{\al}$.
Notice that for each
$\vec\al\in \vec{J}$ and $i<d$,
\begin{equation}
q(i,\al_i)\contains t^*_i=p_{\vec\al}(i,\al_i)=p_{\vec{J}}(i,\al_i),
\end{equation}
and
\begin{equation}
q(d)\contains t^*_d=p_{\vec\al}(d)=p_{\vec{J}}(d).
\end{equation}
For each  $i<d$ and $\gamma\in\vec{\delta}_q\setminus
J_i$,
there is at least one $\vec{\al}\in\vec{J}$ and some $k<k^*$ such that $\delta_{\vec\al}(k)=\gamma$.
Let $q(i,\gamma)$ be the leftmost extension
 of $p_{\vec{J}}(i,\gamma)$ in $T$ of length $l_q$.
Define
\begin{equation}
q=\{q(d)\}\cup \{\lgl (i,\delta),q(i,\delta)\rgl: i<d,\  \delta\in \vec{\delta}_q\}.
\end{equation}

\begin{claim}\label{claim.qbelowpal}
For all $\vec\al\in\vec{J}$,
$q\le p_{\vec{\al}}$.
\end{claim}

\begin{proof}
Given  $\vec\al\in\vec{J}$,
it follows from the  definition of $q$ that
$\vec{\delta}_q\contains \vec{\delta}_{\vec{\al}}$,
$q(d)\contains p_{\vec{\al}}(d)$,
and
for each pair $(i,\gamma)\in d\times \vec{\delta}_{\vec\al}$,
$q(i,\gamma)\contains p_{\vec{\al}}(i,\gamma)$.
So it only remains to show that  $q$
has no new sets of parallel $1$'s over $p_{\vec{\al}}$.
It follows from  Claim \ref{subclaimA}
that
 $\vec{\delta}_{\vec\al}\cap
\bigcup_{i<d}K_i=\vec\al$.
Hence,
for  each $i<d$ and $\gamma\in\vec{\delta}_{\vec\al}\setminus \{\al_i\}$,
 $q(i,\gamma)$ is the leftmost extension of $p_{\vec\al}(i,\gamma)$.
Since $\vec\al$ is in $\vec{J}$,
$\{q(i,\al_i):i<d\}\cup\{q(d)\}$ is in $\Ext_T(A,C)$
by definition of $\vec{J}$.
This  implies that $\{q(i,\al_i):i<d\}\cup\{q(d)\}$
has no new parallel $1$'s over $A$, as this set union $A$ must be strongly similar to $C$ which satisfies the \POC, and since the critical node in $C\setminus A$ is a splitting node, $C\setminus A$ has no new parallel $1$'s over $A$.
It follows that
$\{q(i,\delta):(i,\delta)\in d\times\delta\in\vec{\delta}_{\vec\al}\}\cup\{q(d)\}$
has no new  parallel $1$'s  over
$\{p_{\vec\al}(i,\delta):(i,\delta)\in d\times\delta\in\vec{\delta}_{\vec\al}\}\cup\{p_{\vec\al}(d)\}$.
Therefore, $q\le p_{\vec\al}$.
 \end{proof}

\begin{rem}
Notice that we did not prove that $q\le p_{\vec{J}}$.
That  will be blatantly false for  all  large enough $j$,
as   the union of the sets $Z_i$, $i<d$,  composed from $V_j$  will have   many new sets of parallel $1$'s over $p_{\vec{J}}$.
This is one fundamental difference between the forcings being used for this theorem
and the  forcings adding $\kappa$ many Cohen reals used in Harrington's  proof of the Halpern-\Lauchli\ Theorem.
\end{rem}

To construct $U_{m_j}$,
take an $r\le q$ in  $\bP$ which  decides some $l_j$ in $\dot{L}_d$ for which   $h(\dot{b}_{\vec\al}\re l_j)=\varepsilon^*$, for all $\vec\al\in\vec{J}$.
This is possible since for all $\vec\al\in\vec{J}$,
$p_{\vec\al}$ forces $h(\dot{b}_{\vec\al}\re l)=\varepsilon^*$ for $\dot{\mathcal{U}}$ many $l\in \dot{L}_d$.
Without loss of generality, we may assume that
 the nodes in the image of $r$ have length  $l_j$.
Notice that since
$r$ forces $\dot{b}_{\vec{\al}}\re l_j=\{r(i,\al_i):i<d\}\cup\{r(d)\}$ for each $\vec\al\in \vec{J}$,
and since the coloring $h$ is defined in the ground model,
it is simply true in the ground model that
$h(\{r(i,\al_i):i<d\}\cup\{r(d)\})=\varepsilon^*$ for each $\vec\al\in \vec{J}$.
Extend the splitting node $q(d)$ in $V_j$
to $r(d)$.
For each $i<d$ and $\al_i\in J_i$,
extend $q(i,\al_i)$ to $r(i,\al_i)$.
Let $V_j^-$ denote  $V_j\setminus(\{q(i,\al_i):i<d$, $\al_i\in J_i\}\cup \{q(d)\})$.
For each node $v$ in $V_j^-$,
let $v^*$ be the  leftmost extension of $v$ in $T\re l_j$.
Let
\begin{equation}
U_{m_j}= U_{m_j-1}\cup\{r(d)\}\cup\{r(i,\al_i):i<d,\ \al_i\in J_i\}\cup\{v^*:v\in V_j^-\}.
\end{equation}

\begin{claim}\label{claim.correct}
$U_{m_j}\in r_{m_j}[U_{m_j-1},T]$ and
every $X\in\Ext_{U_{m_j}}(A,C)$ with $\max(X)\sse\max(U_{m_j})$
 satisfies $h(X)=\varepsilon^*$.
\end{claim}

\begin{proof}
Recall that $U_{m_j-1}\sqsubset Y_j$ are both valid in $T$.
Since $r\le q$, it follows that
$\{r(i,\delta):(i,\delta)\in d\times\vec{\delta}_q\}\cup\{r(d)\}$
has no new  sets of parallel $1$'s  over
$\{q(i,\delta):(i,\delta)\in d\times\vec{\delta}_q\}\cup\{q(d)\}$, which is a subset of $V_j$.
All other nodes in $\max(U_{m_j})$ are leftmost extensions of nodes in $V_j$.
Thus,
$\max(U_{m_j})$  extends $V_j$ and has
no new sets of parallel $1$'s over  $V_j$,
so
 $U_{m_j}\ssim r_{m_j}(\bT)$.
Further, $\max(U_{m_j})$
has no pre-determined new parallel $1$'s since $r\in \bP$.
It follows that $U_{m_j}\in r_{m_j}[U_{m_j-1},T]$.

For each $X\in\Ext_{U_{m_j}}(A,C)$ with $X\sse\max(U_{m_j})$,
the truncation  $A\cup\{x\re l_q:x\in X\}$
is a member of $\Ext_{Y_j}(A,C)$.
Thus, there corresponds a sequence $\vec\al\in\vec{J}$ such that
$\{x\re l_q:x\in X\}=\{q(i,\al_i):i<d\}\cup\{q(d)\}$.
Then
 $X=\{r(i,\al_i):i<d\}\cup \{r(d)\}$,
 which  has $h$-color $\varepsilon^*$.
\end{proof}

Let $S=\bigcup_{j<\om}U_{m_j}$.
For each $X\in\Ext_{S}(A,C)$, there corresponds a $j<\om$ such that $X\in\Ext_{U_{m_j}}(A,C)$ and $X\sse\max(U_{m_j})$.
By Claim \ref{claim.correct}, $h(X)=\varepsilon^*$.
Thus, $S\in [B,T]$ and satisfies the theorem.
This concludes the proof of the theorem for Case (a).
\vskip.1in

\noindent\underline{\bf{Part III Case (b).}}
Let $X\in  \Ext_T(A,C)$ and $m_0$ be given such that there is a $B'\in r_{m_0}[B,T]$ with
$X\sse \max(B')$.
Let $U_{m_0-1}$ denote $r_{m_0-1}(B')$.
We will  build an $S\in [U_{m_0-1},T]$ such that every member of $\Ext_S(A,C;X)$  has the same $h$-color.
Let $n_{B'}$ be the index such that $c^T_{n_{B'}}$ is the  coding node in $\max(B')$.
Label the members of $X$ as $x_i$, $i\le d$, so that each $x_i\contains s_i$.
For Case (b),  back in Part II, when choosing the $p_{\vec\al}$, $\vec\al\in[\kappa]^d$,
first
define
\begin{equation}
p_{\vec\al}^0=\{\lgl (i,\delta),x_i\rgl:i<d,\ \delta\in\vec\al\}\cup\{\lgl d,x_d\rgl\},
\end{equation}
so that each node
 $t_i^*$ will  extend $ x_i$, for  $i\le d$.
Then choose  $p_{\vec\al}^k$, $1\le k\le 3$, as before,
 with the additional requirement that
$p_{\vec\al}(d)=c^T_{n}$ for some $n\ge n_{B'}+3$.
Everything else in Part II remains the same.

We will build $U_{m_0}\in r_{m_0}[U_{m_0-1},T]$ so that its maximal members extend $\max(B')$, and hence each member of $X$ is extended uniquely in $\max(U_{m_0})$.
Let $V_0$ denote $\max(B')$.
Let $V_0^l$ and $V_0^r$ denote those members $v$  of  $V_0$ such that
the immediate extension of
$v$   is $0$ or $1$, respectively.
 For each $v\in V^r_0\setminus X$,
$v$ has no parallel $1$'s with $x_d$,
so
the Passing Number Choice Lemma \ref{lem.pnc}
guarantees that
there is a member $v^*$
extending $v$  to length $l^*:=|t^*_d|\ge l^T_{n+3}$ such that $v^*$ has immediate
successor $1$ in $T$.
For each
$v\in V^l_0\setminus X$,
take
$v^*$ to be  the leftmost extension of $v$ of length $l^*$.
Let
\begin{equation}
V^*=\{t^*_i:i\le d\}\cup\{v^*:v\in  V_0\setminus X\}.
\end{equation}

\begin{claim}\label{claim.begin}
$U_{m_0-1}\cup V^*$ is a member of $r_{m_0}[U_{m_0-1}, T]$.
\end{claim}

\begin{proof}
By the construction,  $V^*$ extends $V_0$,
and
 for each  $z\in V^*$,
the passing
number of $z$ at $t^*_d$ is equal to the passing number of
$z\re l_{B'}$  at $c^T_{n}$.
Thus,
it will follow that
$U_{m_0-1}\cup V^*\ssim B'$
once we prove that
$U_{m_0-1}\cup V^*$ satisfies the \POC.

Let $Y$ be  any subset of $ V^*$ for which there is an $l$ such that $y(l)=1$ for all $y\in Y$.
Since for each $\vec\al\in [K]^d$, $p_{\vec\al}\le p_{\vec\al}^0$,
it follows that
$\{t^*_i:i\le d\}$ has no new sets of parallel $1$'s over $X$.
It follows that
 if $Y\sse \{t^*_i:i\le d\}$, then the parallel $1$'s of $Y$ are either witnessed in $U_{m_0-1}$ or else are witnessed by the coding node in $X$, and hence by $t^*_d$.
In particular, the parallel $1$'s of $Y$ are witnessed in $U_{m_0-1}\cup V^*$.

If $Y$  contains  $v^*$ for some $v\in V^l_0\setminus X$,
then there must be an $l'< |x_d|$
where this set of parallel $1$'s is first witnessed,
as $v^*$ is the leftmost extension of $v$ in $T\re l^*$ and therefore
any coding node of $T$ where $v^*$ has passing number $1$ must have length less than $|x_d|$.
Since $U_{m_0-1}$ satisfies the \POC,
the set of parallel $1$'s in $Y$ is witnessed by a coding node  in $U_{m_0-1}$.

Now suppose that $Y\sse \{v^*:v\in V^r_0\setminus X\}\cup\{t^*_i:i\le d\}$.
If $Y\cap \{t^*_i:i\le d\}$ is contained in $\{t^*_i:i\in I_1\}$,
then $t^*_d$ witnesses the parallel $1$'s in $Y$.
Otherwise, there is some $t^*_i\in Y$ with $i\in I_0$.
Note that $t^*_i$ has immediate extension $0$ at $t^*_d$, and so in the interval in $T$ with $t^*_d$,
$t^*_i$ takes the leftmost path;
 also $t^*_i(|x_d|)=0$.
By the construction in the proof of Lemma \ref{lem.pnc},
all $v^*$ for $v\in V^r_0$
extend $v$ leftmost until the interval of $T$ containing the coding node  $t^*_d$.
Hence, any parallel $1$'s between such $v^*$ and $t^*_i$ must occur at a level below $|x_d|$.
Thus, the parallel $1$'s in $Y$ must first appear in $U_{m_0-1}$, and hence be witnessed by
 some coding node in $U_{m_0-1}$.

Therefore, $U_{m_0-1}\cup V^*$ satisfies the \POC,
and hence
  $U_{m_0-1}\cup V^*\in r_{m_0}[U_{m_0-1},T]$.
\end{proof}

Define $U_{m_0}=U_{m_0-1}\cup V^*$.
Let $M=\{m_j:j<\om\}$ enumerate  the set of $m\ge m_0$
such that the coding node $c^{\bT}_{m}\contains c^{\bT}_{m_0}$.
By strong similarity of $T$ with $\bT$, for any $S\in[U_{m_0}, T]$,
the  coding node $c^S_m$ will  extend $t^*_d$ if and only if $m\in M$.
Take any $U_{m_1-1}\in r_{m_1-1}[U_{m_0},T]$.
Notice that $\{t^*_i:i\le d\}$ is the only member of $\Ext_{U_{m_1-1}}(A,C;X)$,
and it has $h$-color $\varepsilon^*$.

Assume  now that  $1\le j<\om$ and
 we have constructed $U_{m_j-1}$ so that every member of $\Ext_{U_{m_j-1}}(A,C;X)$ is colored $\varepsilon^*$ by $h$.
Fix some  $Y_j\in r_{m_j}[U_{m_j-1} ,T]$.
Let $V_j$ denote $\max(Y_j)$.
The nodes in $V_j$ will not be in the tree $S$ we are constructing;
rather,
we will construct
$U_{m_j}\in r_{m_j}[U_{m_j-1},T]$
so that
$\max(U_{m_j})$ extends
 $V_j$.
Let $q(d)$ denote the coding   node in $V_j$ and let $l_q=|q(d)|$.
Recall that for $k\in\{0,1\}$, $I_k$ denotes the set of
 $i<d$ for which $t^*_i$ has passing number $k$ at $t^*_d$.
For each $k\in\{0,1\}$ and
each
$i\in I_k$, let
$Z_i$ be the set
of nodes $z$ in $T_i\cap V_j$ such that $z$ has passing number $k$ at  $q(d)$.

We now construct a condition $q$ similarly, but not exactly, as in   Case (a).
For each $i<d$,
let $J_i$ be a subset of $K_i$ with the same size as $Z_i$.
For each $i< d$, label the nodes in $Z_i$ as
$\{z_{\al}:\al\in J_i\}$.
Let $\vec{J}$ denote the set of those $\lgl \al_0,\dots,\al_{d-1}\rgl\in \prod_{i< d}J_i$ such that  the set
$\{z_{\al_i}:i< d\}\cup\{q(d)\}$ is in $\Ext_T(A,C)$.
Notice that for each $i<d$ and
$\vec\al\in \vec{J}$, $z_{\al_i}\contains t^*_i=p_{\vec{\al}}(i,\al_i)$, and $q(d)\contains t^*_d=p_{\vec{\al}}(d)$.
Furthermore, for each $i<d$ and   $\delta\in J_i$,
there is an $\vec\al\in\vec{J}$ such that $\al_i=\delta$.
Let
 $\vec{\delta}_q=\bigcup\{\vec{\delta}_{\vec\al}:\vec\al\in \vec{J}\,\}$.
For each pair $(i,\gamma)\in d\times\vec{\delta}_q$ with  $\gamma\in J_i$,
define $q(i,\gamma)=z_{\gamma}$.
For  each pair $(i,\gamma)\in d\times\vec{\delta}_q$
with  $\gamma\in\vec{\delta}_q\setminus
J_i$,
there is at least one $\vec{\al}\in\vec{J}$ and some $k<k^*$ such that $\delta_{\vec\al}(k)=\gamma$.
By Lemma \ref{lem.compat},
$p_{\vec\beta}(i,\gamma)=p_{\vec{\al}}(i,\gamma)=t^*_{i,k}$,
for any  $\vec\beta\in\vec{J}$ for which $\gamma\in\vec{\delta}_{\vec\beta}$.
For $i\in I_0$,
let $q(i,\gamma)$ be the leftmost extension
 of $t_{i,k}^*$ in $T$ to length $l_q$.
This will have passing number $0$ at $q(d)$, and any parallel $1$'s between this node and any other nodes in $V_j$ must be witnessed at or below $t^*_d$.
For $i\in I_1$, let $q(i,\gamma)$ be the extension of $t_{i,k}^*$
as in Lemma \ref{lem.pnc}:
extend $t_{i,k}^*$  leftmost  in $T$ until the  interval  of $T$  containing $q(d)$; in that interval,  extend to the next splitting node
 and take the right branch  of length $l_q$.
Let this node be $q(i,\gamma)$.
This has  passing number $1$ at $q(d)$, and
any parallel $1$'s between  $q(i,\gamma)$ and another node must be either witnessed by $q(d)$ or else at or  below $t^*_d$.
Define
\begin{equation}
q=\{q(d)\}\cup \{\lgl (i,\delta),q(i,\delta)\rgl: i<d,\  \delta\in \vec{\delta}_q\}.
\end{equation}
By the construction, $q$ is a member of $\bP$.

\begin{claim}\label{claim.qbelowpal}
For each $\vec\al\in \vec{J}$,
$q\le p_{\vec\al}$.
\end{claim}

\begin{proof}
Let $n$ denote  the  index   such that $c^T_n=q(d)$.
It suffices to show that for each $\vec\al\in\vec{J}$,
$q$ has no new sets of parallel $1$'s over $p_{\vec\al}$,
 since by construction, we have that
$q(i,\delta)\contains p_{\vec{\al}}(i,\delta)$ for all $(i,\delta)\in d\times \vec{\delta}_{\vec\al}$.

Let $\vec\al\in\vec{J}$ be given,
 and let $Y$ be any subset of $\{q(i,\delta):(i,\delta)\in d\times \vec{\delta}_{\vec\al}\}$  of size at least $2$ for which for some $l$, $y(l)=1$ for all $y\in Y$.
If $Y\sse  \{q(i,\al_i):i<d\}\cup\{q(d)\}$,
then $Y$ has no new parallel $1$'s over $X$, since
$\vec\al\in \vec{J}$ implies that  $\{q(i,\al_i):i<d\}\cup\{q(d)\}$ is in $\Ext_T(A,C;X)$.
Since $\{p(i,\al_i):i<d\}\cup\{p(d)\}$ extends $X$ and
 $Y$ consists of extensions of members of $\{p(i,\al_i):i<d\}\cup\{p(d)\}$,
it follows that
$Y$ has no new parallel $1$'s over $\{p(i,\al_i):i<d\}\cup\{p(d)\}$.

Now suppose $Y$ contains some $q(i,\delta)$, where $\delta\in \vec{\delta}_{\vec\al}\setminus \{\al_i\}$.
Recall that by Claim \ref{subclaimA},
$\vec{\delta}_{\vec\al}\cap (\bigcup_{i<d}K_i)=\vec\al$;
so in particular,  $\delta\not\in \bigcup_{i<d}J_i$.
By construction of $q$,
if  $i\in I_0$, then
$q(i,\delta)$ has no new parallel $1$'s above $l^*$ with any other $q(j,\gamma)$,
$(j,\gamma)\in d\times
 \vec{\delta}_{\vec\al}$.
If $i\in I_1$,
it follows from the construction of $q$ that
 any parallel $1$'s $q(i,\delta)$ has with another member of $\ran(q)$ below $l_{n-1}^T$
is witnessed below $l^*$.
Further, any parallel $1$'s $q(i,\delta)$ has in the interval $(l^T_{n-1},l^T_n]$  are witnessed by the coding node $q(d)$.
Thus, any
 new sets of parallel $1$'s in $Y$ occurring above length $l^*$ must be witnessed by $q(d)$.
Therefore, $q$ has no new parallel $1$'s over $p_{\vec\al}$, and hence,
$q\le p_{\vec\al}$.
\end{proof}

To construct $U_{m_j}$,
we will  extend each node in $V_j$ uniquely in such a manner   so that these extensions along with $U_{m_j-1}$ form a member of  $r_{m_j}[U_{m_j-1},T]$.
It suffices to find some $V^*$ extending $V_j$
such that the coding node in $V^*$ extends the coding node in $V_j$,
the passing number of each $v^*$ in $V^*$ extending some $v$ in $V_j$ is the same as the passing number of $v$ in $V_j$,
and
 no new sets of parallel $1$'s occur in $V^*$ over $V_j$.
Then $U_{m_j-1}\cup V^*$ will be  strongly similar to $r_{m_j}(\bT)$ and hence a member of $r_{m_j}[U_{m_j-1},T]$.

Take an $r\le q$ in  $\bP$ which  decides some $l_j$ in $\dot{L}_d$ such that  $h(\dot{b}_{\vec\al}\re l_j)=\varepsilon^*$ for all $\vec\al\in\vec{J}$, and such that there are at least two coding nodes in $T$ of lengths between $l_q$ and $l_r$.
Without loss of generality, we may assume that
 the nodes in the image of $r$ have length  $l_j$.
Extend the coding  node $q(d)$ in $V_j$
to $r(d)$.
For each $i<d$ and $\delta\in J_i$,
extend $q(i,\delta)$ to $r(i,\delta)$.
Let $V_j^l$ and $V_j^r$  denote the set of those
$v\in V_j$ with passing number $0$ and $1$, respectively, at $q(d)$.
Extend these nodes  according to the construction of Lemma \ref{lem.pnc} as follows:
For each node $v$ in $V_j^l\setminus(\{q(i,\delta):i<d$, $\delta\in J_i\}\cup \{q(d)\})$,
let $v^*$ be the  leftmost extension of $v$ in $T\re l_j$.
For each node $v$ in $V_j^r\setminus(\{q(i,\delta):i<d$, $\delta\in J_i\}\cup \{q(d)\})$,
extend $v$ leftmost to $v'$
of length $l^T_{n(r)-1}$, and
then let $v^*$ be the right extension of $\splitpred_T(v')$ to  length $l_r$,
where $n(r)$ is the index such that $c^T_{n(r)}=r(d)$.
Then each member of $V_j^l$ has passing number $0$ at $r(d)$ and each member of $V_j^r$ has passing number $1$ at $r(d)$.
Let
 $V_j^-$ denote $V_j\setminus(\{q(i,\delta):i<d$, $\delta\in J_i\}\cup \{q(d)\})$,
and define
\begin{equation}
V^*=\{r(d)\}\cup\{r(i,\al_i):i<d,\ \al_i\in J_i\}\cup\{v^*:v\in V_j^-\}
\end{equation}
and
\begin{equation}
U_{m_j}=U_{m_j-1}\cup V^*.
\end{equation}

\begin{claim}\label{claim.correct}
$U_{m_j}$ is a member of  $r_{m_j}[U_{m_j-1},T]$, and
$h(Y)=\varepsilon^*$
for each  $Y\in\Ext_{U_{m_j}}(A,C;X)$.
\end{claim}

\begin{proof}
By the construction of   $V^*$,
for each $v\in V_j$,
its extension $v^*$ in $V^*$ has the same passing number at $r(d)$ as $v$ does at $q(d)$.
Since $r\le q$,
all parallel $1$'s in $\{r(i,\delta):i<d,\ \delta\in J_i\}\cup\{r(d)\}$ are already witnessed in $V_j$.
Each $v$ in
$V_j^l\setminus(\{q(i,\delta):i<d$, $\delta\in J_i\}\cup \{q(d)\})$
has extension $v^*$ which has no new parallel $1$'s with any other member of $V^*$ above $l_q$.
Any set $Y\sse V_j^r\cup \{q(i,\delta):i<d$, $\delta\in J_i\}\cup \{q(d)\}$
cannot have new parallel $1$'s in the interval
$(l^*, l_{n(r)-1}]$, since for each
$v\in V_j^r\setminus (\{q(i,\delta):i<d$, $\delta\in J_i\}\cup \{q(d)\})$,
$v^*\re l_{n(r)-1}$ is the leftmost extension of $v$ in $T$ of  length  $l_{n(r)-1}$.
In the interval  $(l^*, l_{n(r)-1}]$,
Lemma \ref{lem.poc} implies the only new sets of  of parallel $1$'s  in $Y$  must be witnessed by $r(d)$.

Thus,
any sets of parallel $1$'s among $V^*$
are already witnessed in $V_j$.
Therefore,  $U_{m_j-1}\cup V^*$
satisfies the \POC\ and is strongly similar to $Y_j$,
and hence is in $r_{m_j}[U_{m_j-1},T]$.

Now suppose  $Z\sse V^*$ is a member of
$\Ext_{U_{m_j}}(A,C;X)$.
Then $Z\re l_q$ is in $\Ext_{T}(A,C;X)$,
so $Z$
extends $\{q(i,\al_i):i<d\}\cup \{q(d)\}$ for some $\vec\al\in \vec{J}$.
Thus, $Z=\{r(i,\al_i):i<d\}\cup \{r(d)\}$ for  that  $\vec\al$,
and  $r$ forces that $h(Z)=\varepsilon^*$.
Since $h$ and $Z$ are finite, they are  in the ground model, so
$h(Z)$ simply equals $\varepsilon^*$.
\end{proof}

To finish the proof of the theorem for Case (b),
Define  $S=\bigcup_{j<\om}U_{m_j}$.
Then $S\in [B',T]$, and
for each $Z\in\Ext_{S}(A,C;X)$, there is a $j<\om$ such that $Z\in\Ext_{U_{m_j}}(A,C)$ and
each member of
 $\max(U_{m_j})$ extending $X$  has $h$-color
 $\varepsilon^*$.

This concludes the proof of the theorem.
\end{proof}


\section{Ramsey Theorem for  finite  trees satisfying the \SPOC}\label{sec.1SPOC}

Our first   Ramsey theorem for  colorings of   finite subtrees  of a strong coding tree appears in this section.
Theorem \ref{thm.MillikenIPOC}, proves
that for any finite coloring of  the copies of a given finite tree satisfying the  \SPOC\  (Definition \ref{defn.strPOC})  in a strong coding tree $T$,
there is a strong coding tree $S\le T$ in which all strictly similar (Definition \ref{defn.strictly.similar}) copies have the same color.

Let $A$ be a   subtree of  a strong coding tree $T$.
Given  $l<\om$, define
\begin{equation}
A_{l,1}=\{t\re (l+1):t\in A,\ |t|\ge l+1,\mathrm{\  and\ }t(l)=1\}.
\end{equation}
We say that $l$ is a {\em minimal level of a new set of  parallel $1$'s in $A$} if
the set $A_{l,1}$ has at least two distinct members,
and for each $l'<l$,
the set $\{s\in A_{l,1}:  s(l')=1\}$ has cardinality strictly smaller than $|A_{l,1}|$.

\begin{defn}[\SPOC]\label{defn.strPOC}
A  subtree $A$ of a strong coding tree satisfies the {\em \SPOC}
if $A$ satisfies the \POC\ and additionally, the following hold:
For each  $l$ which is  the minimal length of a set of new parallel $1$'s in $A$,
\begin{enumerate}
\item
The critical node in $A$ with minimal length greater than or equal to  $l$ is a coding node in $A$, say $c$;
\item
There are no terminal nodes in $A$ in the interval $[l,|c|)$  ($c$ can be terminal in $A$);
\item
$A_{l,1}=\{t\re (l+1):t\in A_{|c|,1}\}$.
\end{enumerate}
\end{defn}

Thus a tree $A$ satisfies the \SPOC\ if it satisfies the \POC\ and moreover,
each  new set of parallel $1$'s in $A$ is witnessed by a coding node in $A$ before any other new set of parallel $1$'s,  critical node, or terminal node in $A$ appears.

\begin{defn}[Strictly Similar]\label{defn.strictly.similar}
Given $A,B$ subtrees of a strong coding tree,
we say that $A$ and $B$ are {\em strictly similar}
if $A$ and $B$ are strongly similar and both satisfy the \SPOC.
\end{defn}

\begin{thm}\label{thm.MillikenIPOC}
Let $T$
 be a strong coding tree
and
let $A$ be a finite subtree  of $T$ satisfying the \SPOC.
Then for any coloring of all strictly similar copies of $A$ in $T$ into finitely many colors,
 there is a strong coding subtree $S\le T$ such that all strictly similar copies of $A$ in $S$ have the same color.
\end{thm}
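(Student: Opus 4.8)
The plan is to reduce Theorem~\ref{thm.MillikenIPOC} to the Halpern--\Lauchli-style result Theorem~\ref{thm.matrixHL} via a fusion argument, building the strong coding subtree $S\le T$ level by level so that at each stage the colors of all strictly similar copies of $A$ that are ``completed'' at that stage have been homogenized. First I would enumerate the finitely many strong similarity types of one-level extensions needed to grow a finite approximation $r_m(S)$ into $r_{m+1}(S)$; each such extension is governed by a pair $A'\sqsubset C'$ as in the Set-up preceding Theorem~\ref{thm.matrixHL}, where $C'\setminus A'$ is a level set with exactly one critical node (either Case~(a), a splitting node, or Case~(b), a coding node). The point is that every strictly similar copy of $A$ inside $T$ is built up by finitely many such level-set extensions, and its color becomes ``decided'' exactly when the last node of the copy is placed.

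The construction proceeds by a standard fusion: I would build a sequence $B = U_0 \sqsubset U_1 \sqsubset \cdots$ of finite approximations, with $U_{m}\in\mathcal{A}_m(T)$, together with a decreasing sequence of strong coding trees $T = T_0 \ge T_1 \ge \cdots$ such that $U_m$ is valid in $T_m$ and $U_m \sqsubset T_m$, and then set $S = \bigcup_m U_m$. At stage $m$, given $U_m \sqsubset T_m$, I look at the (finitely many) ways in which placing the next critical node would complete a strictly similar copy of $A$, relative to each possible position of the already-placed lower nodes of that copy among $\max(U_m)$ and the coding/witnessing data. For each such configuration $(A', C', X)$ where $X$ is the already-placed portion, I apply Theorem~\ref{thm.matrixHL} --- Case~(a) or Case~(b) according to whether the completing node is a splitting or coding node --- to thin $T_m$ to a strong coding tree on which the coloring restricted to the relevant $\Ext(A',C')$ (resp.\ $\Ext(A',C';X)$) is monochromatic. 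Since there are only finitely many configurations to handle at stage $m$, finitely many successive applications of Theorem~\ref{thm.matrixHL} produce $T_{m+1}\le T_m$ extending $U_m$, and then Lemma~\ref{lem.facts} lets me choose $U_{m+1}\in r_{m+1}[U_m, T_{m+1}]$ (which exists and can be taken with any prescribed strong similarity type, since $U_m$ is valid in $T_{m+1}$). The \SPOC\ hypothesis on $A$ is used to guarantee that every strictly similar copy of $A$ in $S$ is in fact captured by this bookkeeping: because new parallel $1$'s in $A$ are immediately witnessed by coding nodes of $A$ with no intervening critical or terminal nodes, the copy is forced to ``live cleanly'' inside the levels of $S$, so that its final node is one of the Case~(a)/Case~(b) level-set extensions enumerated in the construction, and hence its color was fixed at the corresponding stage.

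At the end, $S = \bigcup_m U_m$ is a strong coding tree with $S\le T$, and every strictly similar copy $A''$ of $A$ in $S$ has the property that, writing its nodes in increasing order of level, each successive level set was a member of some $\Ext_{U_{m}}(A',C')$ or $\Ext_{U_m}(A',C';X)$ that was homogenized at stage $m$; in particular the color of $A''$ equals the fixed color assigned at the stage where $A''$ was completed. A short argument then shows this fixed color does not depend on the copy: one chases the finitely many intermediate homogenizations and observes that the recursion forces a single value, essentially because the strict similarity type of $A$ together with the requirement that $S$ itself code $\mathcal{H}_3$ forces all the ``partial'' homogeneous colors to agree --- this coherence is exactly the content needed, and it follows from the fact that any two strictly similar copies can be connected by extending a common initial portion.

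The main obstacle I expect is the bookkeeping ensuring that \emph{every} strictly similar copy of $A$ in $S$ is accounted for by the level-set-extension scheme, and that the $\Ext$ sets invoked at each stage are exactly the ones whose homogenization is needed --- in particular, handling Case~(b) carefully, where Theorem~\ref{thm.matrixHL} only homogenizes $\Ext_S(A',C';X)$ for a \emph{fixed} $X$ sitting in some $r_{m_0}[B,T]$, so one must run through all the finitely many relevant $X$'s at each stage and interleave these thinnings with the Case~(a) thinnings without destroying earlier homogeneity. A second, more delicate point is verifying that the \SPOC\ is precisely the condition under which this works: one must check that a subtree failing the \SPOC\ (e.g.\ having a terminal node or a second new set of parallel $1$'s before the witnessing coding node) could have copies whose completing node is not of the controlled form, and conversely that the \SPOC\ makes every copy's ``last placed node'' a genuine Case~(a) or Case~(b) extension over a sub-approximation of $S$. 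Once these combinatorial verifications are in place, the fusion and the appeal to Theorem~\ref{thm.matrixHL} are routine.
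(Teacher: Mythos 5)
Your overall plan---reduce to Theorem~\ref{thm.matrixHL} and fuse level by level---is the right starting point, and your worry about Case~(b) is exactly the right worry; but the way you discharge it is where the proof breaks. Running through ``all the finitely many relevant $X$'s at each stage'' only buys you \emph{end-homogeneity}: each minimal pre-extension $X$ appearing at some finite stage gets a single color for its extensions, but different copies of $A$ are completed over different $X$'s at different stages, homogenized by distinct applications of Case~(b), and nothing in the fusion forces those finitely-many-per-stage (infinitely many overall) colors to agree. Your closing ``short argument'' --- that any two strictly similar copies can be connected through a common initial portion, so the recursion forces a single value --- is not a proof and is in fact the crux of the theorem: connectivity of copies does not transfer homogeneity across distinct $X$'s, and Example~\ref{ex.bc} is a warning that exactly this kind of interference between coding and splitting levels can defeat soft uniformization arguments. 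The paper has to do real work here: after an end-homogenization lemma (Lemma~\ref{lem.endhomog}, which is essentially your interleaved Case~(b) fusion), it introduces a \emph{third forcing} $\bQ$ (with $\kappa=\beth_{2d+2}$, conditions living on the splitting predecessors, and an induced coloring $h'$ on $\Ext_T(A,C^-)$) and reruns the whole Harrington-style Erd\H{o}s--Rado argument to homogenize over the possibly different end-homogeneous colors (Lemma~\ref{lem.Case(c)}); this is precisely why the conclusion is restricted to $\Ext^{SP}$ and why the \SPOC\ enters.

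A second structural gap: even after the last level set is fully homogenized for each fixed lower copy, the color may still depend on which copy of the truncated tree sits below. The paper does not resolve this inside one grand fusion either; it peels off the topmost critical node, homogenizes $\Ext^{SP}_{T'}(V,C)$ for \emph{all} copies $V$ of the truncation $B$ (Lemmas~\ref{lem.Case(b)} and~\ref{lem.fusionsplit}), and then treats the resulting assignment as a new coloring $g$ of copies of $B$, to which it applies the induction hypothesis on the number of critical nodes (with Ramsey's Theorem at the one-node base case, and a final reduction adjoining $0^{l_A}$ when $\max(A)$ has no all-$0$'s node). So your proposal needs two additions to become a proof: a genuine uniformization mechanism replacing the coherence hand-wave (the analogue of Lemma~\ref{lem.Case(c)}'s new forcing), and an explicit induction that converts ``one color per lower copy'' into ``one color overall.''
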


 Theorem \ref{thm.MillikenIPOC} will be proved via four  lemmas and then doing an induction argument.
Recall that Case (b) of Theorem \ref{thm.matrixHL}  only showed that, when  $C\setminus A$ contains a coding node and $X\in\Ext_T(A,C)$,
there is some $S\le T$ which is homogeneous for all members of $\Ext_S(A,C;X)$.
This  is weaker than the direct analogue of
the statement proved for  Case (a) in
 Theorem \ref{thm.matrixHL},
and this  disparity is addressed by
the following.
 Lemma \ref{lem.endhomog} will build a fusion sequence to obtain an $S\le T$ which is end-homogeneous on $\Ext_S(A,C)$, using
 Case (b) of Theorem \ref{thm.matrixHL}.
Lemma \ref{lem.Case(c)} will use a new forcing and many arguments from the proof of Theorem \ref{thm.matrixHL}
obtain an analogue of Case (a) when $C\setminus  A$ contains a coding node.
The only difference is that this analogue holds for  $\Ext_S^{SP}(A,C)$, rather than $\Ext_S(A,C)$, which is why Theorem \ref{thm.MillikenIPOC} requires the \SPOC.
The last two lemmas involve fusion to construct subtrees which have one color on $\Ext^{SP}_S(A',C)$, for each $A'$ strictly similar to $A$, for the two cases: $C\setminus A$  contains a coding node, and $C\setminus A$ contains a splitting node.
The theorem then follows  by induction and an application of Ramsey's Theorem.

The following basic assumption, similar to  Case (b) of Theorem \ref{thm.matrixHL},  will be  used in much of this section.

\begin{assumption}\label{assumption.6}
Let $A\sse C$ be fixed non-empty finite subtrees of a strong coding tree $T$ such that $A$ and $C$ satisfy the \SPOC.
Let $A_e$ be a subset of $A^+$, and assume that $A_e$ and  $C\setminus A$ are level sets,
and that $C\setminus A$  extends $A_e$, contains a coding node, and contains the sequence $0^{l_C}$.
Let $d+1=|A_e|$ and list the nodes of $A_e$ as $\lgl s_i:i\le d\rgl$, and the nodes of $C\setminus A$ as $\lgl t_i:i\le d\rgl$ so that each $t_i$ extends $s_i$ and  $t_d$ is the coding node in $C\setminus A$.
For $k\in\{0,1\}$,
let
 $I_k$ denote the set of $i\le d$ such that the immediate extension  of $t_i$ in $T$ is $k$.
Since $C\setminus A$ contains a coding node, the immediate successors of the $t_i$ are well-defined in $T$.
\end{assumption}

As usual,
when we talk about the parallel $1$'s of $C\setminus A$, we are  taking into account  the passing numbers of the members of $(C\setminus A)^+$ at the coding node $t_d$.
Recall that values of
the immediate successors of the $t_i$, $i\le d$,
 are considered when determining whether or not a level set $Y$ is in $\Ext_T(A,C)$,
this being defined as in Case (b) of the previous section.
We hold to the convention that for $Y\in \Ext_T(A,C)$,
the nodes in $Y$ are labeled $y_i$, $i\le d$, where  $y_i\contains s_i$ for each $i$.
In particular, $y_d$ is the coding node in $Y$.
Define
\begin{equation}
\Ext_T^{SP}(A,C)=\{Y\in\Ext_T(A,C):A\cup Y\mathrm{\
  satisfies\ the\ Strict\ Parallel\ } 1\mathrm{'s\ Criterion}\}.
\end{equation}
Recall  the
definition of $\splitpred_T(x)$ from
 Subsection
 \ref{subsection.4.1}.
We point out that if the parallel $1$'s in $C\setminus A$ are already witnessed in $A$,
then $\Ext_T^{SP}(A,C)$ is equal to $\Ext_T(A,C)$.
If there are parallel $1$'s in $C\setminus A$  not witnessed in $A$,
then $Y\in  \Ext_T^{SP}(A,C)$  if and only if $Y\in \Ext_T(A,C)$ and  additionally
 for the minimal $l$ such that $\{i<d:y_i(l)=1\}=I_1$,
$A\cup \{\splitpred_T(y_i\re l):i\in I_1\}\cup\{y_i\re l:i\in I_0\}$ satisfies the \POC.
Now we define the notion of minimal pre-extension of $A$ to a copy of $C$.
This  will be used in the next lemma to obtain a strong form of end-homogeneity for the case when $\max(C)$ has a coding node.

\begin{defn}[Minimal pre-extension of $A$ to a copy of $C$]
Let
 $X=\{x_i:i\le d\}$  be any   level set extending $A_e$ such that $x_i\contains s_i$ for each $i\le d$
and such that
 the length $l$ of the nodes in $X$
is the length of some coding node in $T$.
We say that  $X$ is  a
{\em minimal pre-extension in $T$ of $A$ to a copy of $C$} if
\begin{enumerate}
\item[(i)]
$\{i\le d:x_i^+(l)=1\}=I_1$,
where
$x_i^+$ denotes the immediate extension of $x_i$ in $\widehat{T}$; and
\item[(ii)]
$A\cup\{\splitpred_T(x_i):i\in I_1\}\cup\{x_i:i\in I_0\}$
satisfies the \POC.
\end{enumerate}
\end{defn}
We will simply call such an $X$ a
 {\em minimal pre-extension} when $T$, $A$, and $C$ are clear.
Minimal pre-extensions
are exactly the level sets  in $T$ which
 can be extended to a member of $\Ext_T^{SP}(A,C)$.
For $X$ any minimal pre-extension,
define $\Ext_T(A,C;X)$ to be
 the set of all $Y\in \Ext_T(A,C)$
such that $Y$ extends $X$.
Then
\begin{equation}
\Ext_T^{SP}(A,C)=\bigcup\{\Ext_T(A,C;X): X\mathrm{\ is\ a\ minimal\ {pre{-}extension}}\},
\end{equation}

\begin{defn}\label{defn.endhomog}
A coloring on $\Ext^{SP}_T(A,C)$ is {\em end-homogeneous} if  for each minimal pre-extension $X$ of $A$ to a copy of  $C$,
 every member  of $\Ext_T(A,C;X)$ has the same color.
\end{defn}

\begin{lem}[End-homogeneity]\label{lem.endhomog}
Assume \ref{assumption.6}, and
let $k$ be minimal such that $\max(A)\sse  r_k(T)$.
Then for any coloring $h$
 of $\Ext_T(A,C)$ into  two  colors,
 there is a $T'\in[r_k(T),T]$ such that  $h$ is
 end-homogeneous  on $\Ext_{T'}^{SP}(A,C)$.
\end{lem}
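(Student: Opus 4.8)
\textbf{Proof proposal for Lemma \ref{lem.endhomog}.}

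The plan is to build $T'$ by a fusion sequence, dovetailing applications of Case (b) of Theorem \ref{thm.matrixHL} over the countably many minimal pre-extensions that appear as we construct the tree. First I would enumerate, level by level, the minimal pre-extensions of $A$ to a copy of $C$; since each such $X$ is a finite level set whose nodes' lengths equal the length of some coding node, and since each node of $T$ lies below only finitely many nodes at a given level, there are only countably many of them, and each one lives in $r_m(T')$ for some $m$. The fusion will produce a decreasing (under $\le$, in the valid-subtree-and-strongly-similar sense) sequence $T=T_0\ge T_1\ge \dots$, each $T_j\in[r_k(T),T]$, together with an increasing sequence of levels $m_0<m_1<\dots$, so that $r_{m_j}(T_{j+1})=r_{m_j}(T_j)$ and $S=\bigcup_j r_{m_j}(T_j)$ is a strong coding tree in $[r_k(T),T]$.

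At stage $j$ I would consider the $j$-th minimal pre-extension $X_j$ on the list (relative to $T_j$): if $X_j\subseteq\max(B')$ for some $B'\in r_{m}[r_k(T_j),T_j]$, I would apply Case (b) of Theorem \ref{thm.matrixHL} with this $X_j$, this $B'$, and the coloring $h\restriction\Ext_{T_j}(A,C)$, obtaining $T_{j+1}\in[r_{m-1}(B'),T_j]$ on which $h$ is monochromatic on $\Ext_{T_{j+1}}(A,C;X_j)$. The bookkeeping must ensure that $X_j$ survives into $S$, i.e.\ that $X_j\subseteq\widehat{S}$; this is where choosing $m_j$ large enough and freezing $r_{m_j}(T_{j+1})$ matters. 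To make the dovetailing work I would list pairs $(j,\text{pre-extension})$ so that every minimal pre-extension that eventually occurs in some $r_{m_i}(T_i)$ is handled at some later stage — a standard diagonal argument, using the fact that the collection of minimal pre-extensions contained in a fixed finite initial segment $r_m(T')$ is finite, so each gets decided before $S$ is completed. At the end, any minimal pre-extension $X$ occurring in $S$ equals $X_j$ for some $j$, and $\Ext_S(A,C;X)\subseteq\Ext_{T_{j+1}}(A,C;X)$ is monochromatic, which is precisely end-homogeneity on $\Ext_S^{SP}(A,C)$, since $\Ext_S^{SP}(A,C)=\bigcup\{\Ext_S(A,C;X):X\text{ a minimal pre-extension in }S\}$.

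The routine pieces are: verifying that the fusion limit $S$ is again a strong coding tree valid in $T$ and strongly similar to $T$ (this follows from Lemma \ref{lem.facts} and the definition of $[A,S]$, exactly as for the analogous fusions guaranteed by Definition \ref{AT}), and verifying that the $m_j$ can be chosen so that the list of pre-extensions is genuinely exhausted. The main obstacle I anticipate is the interaction between ``freezing'' an initial segment and the requirement in Case (b) that the input tree be of the form $r_{m_0-1}(B')$ with $X\subseteq\max(B')$: one must be careful that when we pass from $T_j$ to $T_{j+1}$ we do not destroy a minimal pre-extension scheduled for a later stage, and that a pre-extension of $A$ relative to $T_j$ remains a pre-extension relative to $T_{j+1}$ (it does, since $T_{j+1}$ is a valid subtree of $T_j$ with the same coding-node structure up to strong similarity, so the \POC\ conditions in the definition of minimal pre-extension are preserved). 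Handling this bookkeeping cleanly — rather than any new combinatorial content — is the real work of the proof.
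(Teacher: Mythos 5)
Your overall strategy (a fusion sequence, repeatedly invoking Case (b) of Theorem \ref{thm.matrixHL}) is the right general shape, but there is a genuine gap at the central step. Case (b) of Theorem \ref{thm.matrixHL} takes as input a set $X\in\Ext_T(A,C)$, that is, a full copy of $C\setminus A$ over $A$ containing a coding node, and it returns homogeneity only on $\Ext_S(A,C;X)$, the cone of copies extending that one particular copy. A minimal pre-extension is in general \emph{not} a member of $\Ext_T(A,C)$: it is a level set at the length of a coding node of $T$ with the correct passing numbers, but it need not contain a coding node and $A$ together with it need not be strongly similar to $C$. So ``apply Case (b) with this $X_j$'' is a type mismatch. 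If you repair this in the obvious way---first extend $X_j$ to some $Z\in\Ext_{T_j}(A,C;X_j)$ and homogenize on $\Ext(A,C;Z)$---you still have not obtained what end-homogeneity requires, namely one color on \emph{all} of $\Ext_{T'}(A,C;X_j)$: two copies extending the pre-extension $X_j$ through incomparable positions need not both extend $Z$, and nothing in your construction forces them to. The missing idea is the funneling step in the paper's argument: after choosing $Z_p\in\Ext(A,C;Y_p)$, one uses Lemma \ref{lem.facts} to pass to a tree whose next level end-extends $Z_p$ (together with leftmost extensions of the remaining nodes), so that in every subsequent tree---hence in the final tree---every node above the pre-extension at that level factors through $Z_p$; only then does monochromaticity on the cone above $Z_p$ transfer to monochromaticity on all extensions of the pre-extension. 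Your proposal omits this entirely, and identifies the bookkeeping, rather than this step, as the real work.

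The omission also undermines your dovetailing scheme. Because the funneling must happen at the level where the pre-extension lives (you must be free to choose how the tree continues immediately above it), all pre-extensions sitting at a given coding level have to be treated at the stage when that level is being built, as in the paper's inner induction over the finitely many $X_0,\dots,X_q$ in $\max(r_{n_j}(T_{j-1}))$; a pre-extension postponed until after its level (and the level above it) is frozen can no longer be funneled. Relatedly, Case (b) only guarantees a tree in $[r_{m-1}(B'),T_j]$, i.e.\ preservation up to the level \emph{below} $B'$; if the pre-extension you handle at stage $j$ lies below your current frontier $m_j$, the tree returned by Case (b) may discard previously frozen levels between $m$ and $m_j$, destroying homogeneity you had already secured. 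The paper's proof avoids both problems by processing levels in increasing order, handling all pre-extensions at a level before freezing it.
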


\begin{proof}
Let $(n_i)_{i<\om}$ enumerate those integers greater than $k$ such that there is a minimal pre-extension of $A$ to a copy of  $C$ from among  the maximal nodes in
$r_{n_i}(T)$.
Each  of these  $r_{n_i}(T)$ contains a coding node in its maximal level,
though there may be  minimal pre-extensions  contained in $\max(r_{n_i}(T))$ not containing that coding node.

Let $T_{-1}$ denote $T$.
Suppose that $j<\om$ and $T_{j-1}$ are given  so that
the coloring $h$ is homogeneous on
$\Ext_{T_{j-1}}(A,C;X)$ for each minimal pre-extension $X$ in $r_{n_j-1}(T_{j-1})$.
Let  $U_{j-1}$ denote $r_{n_j-1}(T_{j-1})$.
Enumerate the collection of all minimal pre-extensions of $A$ to $C$ from among  $\max(r_{n_j}(T_{j-1}))$ as $X_0,\dots, X_q$.
We will  do an inductive argument over $p\le q$  to obtain a $T_j\in [U_{j-1},T_{j-1}]$ such that $\max(r_{n_j}(T_j))$ extends $\max(r_{n_j}(T_{j-1}))$
and   $\Ext_{T_j}(A,C;Z)$ is homogeneous
for each minimal pre-extension $Z$ in $\max(r_{n_j}(T_{j-1}))$.

Suppose $p\le q$ and for all $i<p$,
there are strong coding trees $S_i$ such that
 $S_0\in [U_{j-1},T_{j-1}]$,
and
 for all $i'<i<p$,
$S_{i}\in [U_{j-1},S_{i'}]$
and
 $h$ is homogeneous on $\Ext_{S_{i}}(A,C;X_{i})$.
Let $l$ denote the length of the  nodes in $\max(r_{n_j}(T_{j-1}))$.
Note that
$X_p$ is contained in $r_{n_j}(S_{p-1})\re l$,
though $l$ does not have to  be  the length of any node in $S_{p-1}$.
The point is that the set of nodes $Y_p$ in $\max(r_{n_j}(S_{p-1}))$ extending $X_p$
is again a minimal pre-extension.
Extend the nodes in $Y_p$ to some $Z_p\in \Ext_{ S_{p-1}}(A,C;Y_p)$,
and let $l'$ denote the length of the nodes in $Z_p$.
Note that $Z_p$ has no new sets of parallel $1$'s over $A\cup Y_p$.
Let $W_p$ consist of the nodes in $Z_p$ along with the leftmost extensions   of
the nodes in $\max(r_{n_j}(S_{p-1}))\setminus Y_p$
 to the length  $l'$ in $S_{p-1}$.

Let $S'_{p-1}$ be a strong coding tree in $[U_{j-1},S_{p-1}]$ such that $\max(r_{n_j}(S'_{p-1}))$ extends $W_p$.
Such an $S'_{p-1}$ exists by Lemma \ref{lem.facts},
since
 $W_p$
has exactly the same set of new  parallel $1$'s
over  $r_{n_{j-1}}(S_{p-1})$ as does
$\max(r_{n_j}(S_{p-1}))$.
Apply
 Case (b)  of Theorem \ref{thm.matrixHL}
to obtain  a strong coding tree
 $S_p\in [U_{j-1},S'_{p-1}]$ such that the coloring on $\Ext_{S_p}(A,C;Z_p)$ is homogeneous.
At the end of this process, let $T_j=S_q$.
Note that for each minimal pre-extension $Z\sse\max(r_{n_j}(T_j))$,
there is a unique $p\le q$ such that
$Z$ extends $X_p$,
since each node in  $\max(r_{n_j}(T_j))$ is a unique  extension of one node in $\max(r_{n_j}(T_{j-1}))$,
and hence
$\Ext_{T_j}(A,C;Z)$ is homogeneous.

Having chosen each $T_j$ as above,
let $T'=\bigcup_{j<\om}r_{n_j}(T_j)$.
Then $T'$ is a strong coding tree which is
in $[r_k(T),T]$,
and for each minimal pre-extension $Z$ in $T'$,
$\Ext_{T'}(A,C;Z)$ is homogeneous for $h$.
Therefore,  $h$ is end-homogeneous on $\Ext^{SP}_{T'}(A,C)$.
\end{proof}

The next lemma provides a means for uniformizing the  end-homogeneity from the previous lemma
to obtain one color for all
members of  $\Ext_S^{SP}(A,C)$.
This will yield almost the full analogue of Case (a) of Theorem \ref{thm.matrixHL} for Case (b),
when the level sets being colored contain a coding node, the difference being the restriction to strictly similar extensions rather than just strongly similar extensions.
The arguments are often similar to those of
Case (a) of
Theorem \ref{thm.matrixHL}, but sufficiently different to warrant a proof.

\begin{lem}\label{lem.Case(c)}
Assume \ref{assumption.6},  and suppose that $B$ is a finite strong coding tree valid in $T$ and  $A$ is a subtree of $B$
such that $\max(A)\sse\max(B)$.
Suppose that $h$ is end-homogeneous on $\Ext_{T}^{SP}(A,C)$.
Then there is an $S\in[B,T]$ such that $h$ is homogeneous on
 $\Ext_S^{SP}(A,C)$.
\end{lem}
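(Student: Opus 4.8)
\textbf{Proof plan for Lemma \ref{lem.Case(c)}.}
The plan is to mimic the forcing-and-fusion architecture of Theorem \ref{thm.matrixHL} Case (b), but now exploiting the end-homogeneity hypothesis to remove the dependence on a single fixed minimal pre-extension $X$. First I would set up a forcing $\bP$ entirely analogous to the Case (b) forcing: conditions are finite functions $p:(d\times\vec{\delta}_p)\cup\{d\}\ra T\re l_p$ with $l_p$ in the set $L$ of levels of coding nodes extending $s_d$, where $p(d)$ is the coding node extending $s_d$ of length $l_p$, the nodes $p(i,\delta)$ for $\delta\in\vec{\delta}_p$ lie in $T_i\re l_p$, and the passing numbers of the $(p(i,\delta))^+$ at $p(d)$ are prescribed by $I_0,I_1$. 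Crucially, because $h$ is now only \emph{end}-homogeneous, I would \emph{not} demand that $p(d)$ witness a fixed $X$; instead each $p$ carries its own minimal pre-extension $X(p)=\{\splitpred_T(p(i,\delta_{\vec\al}(j_i))):i\in I_1\}\cup\{p(i,\delta_{\vec\al}(j_i)):i\in I_0\}$, and the condition must be a minimal pre-extension — i.e.\ $A\cup X(p)$ satisfies the \POC. This is where the \SPOC\ enters: it is exactly the hypothesis that lets any such $p$ be end-extended to a member of $\Ext^{SP}_T(A,C)$, so the conclusion is naturally phrased in terms of $\Ext^{SP}_S(A,C)$ rather than $\Ext_S(A,C)$. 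I would then show $\bP$ is atomless (using perfectness of $T$ and Lemmas \ref{lem.poc}, \ref{lem.pnc}, exactly as in Claim \ref{claim.densehigh} Case (b)).

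Next I would run the Part II argument verbatim: choose conditions $p_{\vec\al}$, $\vec\al\in[\kappa]^d$ with $\kappa=\beth_{2d}$, forcing a single $\dot{\mathcal{U}}$-measure-one color $\varepsilon_{\vec\al}$ for $h(\dot{b}_{\vec\al}\re l)$ over $\dot{L}_d$ and reflecting it down so that $h$ already takes value $\varepsilon_{\vec\al}$ on the image $\ran(p_{\vec\al})$ — here end-homogeneity makes reflection trivial once $p_{\vec\al}(d)$ is a coding node past the witnessing level, since then the color of the truncated level set agrees with the color of any end-extension. Applying \Erdos-Rado (Theorem \ref{thm.ER}) to the coloring $f$ of $[\kappa]^{2d}$ defined as in equation (\ref{eq.fiotatheta}) yields $K$ of size $\aleph_1$, then $K_i$ with $K_0<\dots<K_{d-1}$, and Claims \ref{claim.onetypes}, \ref{claim.j=j'}, \ref{subclaimA} and Lemma \ref{lem.compat} give a single color $\varepsilon^*$, fixed nodes $t^*_i$ ($i\le d$) with $p_{\vec\al}(i,\al_i)=t^*_i$, and compatibility of $\{p_{\vec\al}:\vec\al\in\vec J\}$ for finite $\vec J\sse\prod_{i<d}K_i$.

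Then comes Part III in the style of Part III Case (b): build a fusion sequence $U_{m_0-1}=r_{m_0-1}(B)\sqsubset U_{m_0}\sqsubset U_{m_1}\sqsubset\dots$, where $M=\{m_j:j<\om\}$ enumerates the levels of $\bT$ whose coding node extends the image of $t^*_d$, and at stage $j$, given $Y_j\in r_{m_j}[U_{m_j-1},T]$ with $V_j=\max(Y_j)$, I construct a condition $q\le p_{\vec\al}$ for all $\vec\al\in\vec J$ by indexing the nodes of $V_j\cap T_i$ that have the correct passing number at the coding node $q(d)\in V_j$ by a set $J_i\sse K_i$, extending the off-index coordinates leftmost (or, for $i\in I_1$, via the right-branch construction of Lemma \ref{lem.pnc}) so that no new parallel $1$'s appear above $l^*$ except those witnessed by $q(d)$ — the analogues of Claims \ref{claim.qbelowpal}, \ref{claim.begin}, \ref{claim.correct} go through. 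Taking $r\le q$ deciding a level $l_j$ with $h(\dot b_{\vec\al}\re l_j)=\varepsilon^*$ for all $\vec\al\in\vec J$ and enough coding nodes in between, I extend $V_j$ to $V^*$ (coding node $r(d)$, other nodes extended per Lemma \ref{lem.pnc}), set $U_{m_j}=U_{m_j-1}\cup V^*$, verify via Lemma \ref{lem.poc} that $V^*$ introduces no new parallel $1$'s over $V_j$ so $U_{m_j}\in r_{m_j}[U_{m_j-1},T]$, and check that every minimal pre-extension sitting in $\max(U_{m_j})$ extends some $\{q(i,\al_i):i<d\}\cup\{q(d)\}$ and hence has every end-extension colored $\varepsilon^*$. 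Then $S=\bigcup_{j<\om}U_{m_j}\in[B,T]$ and, using the \SPOC\ and end-homogeneity, $h$ is constantly $\varepsilon^*$ on $\Ext^{SP}_S(A,C)$.

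\textbf{Main obstacle.} I expect the delicate point to be verifying that $q\le p_{\vec\al}$ while \emph{simultaneously} keeping $q$ a legitimate condition — i.e.\ keeping $X(q)$ a minimal pre-extension so that $A\cup X(q)$ satisfies the \POC. In Case (a) of Theorem \ref{thm.matrixHL} the critical node is a splitting node and $C\setminus A$ has no new parallel $1$'s over $A$, which makes the ``no new parallel $1$'s'' bookkeeping almost automatic; here the coding node $q(d)$ must itself witness the new set of parallel $1$'s with pattern $I_1$, at the right level, below any competing splitting or terminal node, and this has to survive the leftmost/right-branch extensions of the non-indexed coordinates in every $V_j$. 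Managing the interaction between the \POC-witnessing demanded by \SPOC\ and the freedom needed for the fusion — in particular ensuring the splitting predecessors $\splitpred_T(q(i,\delta))$ for $i\in I_1$ land correctly and that no \emph{pre-determined} new parallel $1$'s are forced by the ambient tree $T$ — is the part requiring the most care, and is precisely why the statement is restricted to $\Ext^{SP}$ and why Assumption \ref{assumption.6} bakes in that $C$ satisfies the \SPOC\ and that $0^{l_C}\in C\setminus A$.
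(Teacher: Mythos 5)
Your plan is not the paper's argument, and it has a genuine gap at its core: you keep the Case (b) forcing of Theorem \ref{thm.matrixHL}, with the single distinguished branch $\dot{b}_d$ forced to be a coding node and with the passing numbers of all other coordinates prescribed at it, and you hope that dropping the fixed $X$ lets the same Part III fusion control \emph{all} of $\Ext^{SP}_S(A,C)$. The obstruction is that the homogenized configuration $\{t^*_i:i\le d\}$ produced by Part II is then a full member of $\Ext_T(A,C)$: it contains the coding node $t^*_d$ of $T$ and already realizes the new set of parallel $1$'s with pattern $I_1$. To make the fusion control every extension, all of $S$ above each $s_i$ would have to funnel through $t^*_i$ (since color control requires conditions $r\le p_{\vec\al}$ with $r(i,\al_i)\contains p_{\vec\al}(i,\al_i)=t^*_i$, and the ordering forbids any parallel-$1$ pattern over $A$ not already present among the $t^*_i$'s). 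But if you seed the construction by sending $\max(B)^+$ through the $t^*_i$'s, the levels of $S$ between $B$ and its next critical node carry the unwitnessed pattern $I_1$, while strong similarity with $\bT$ forces that next critical node of $S$ to be a \emph{splitting} node; so $S$ violates the \POC\ and is not a strong coding tree. The only escape is to interpolate a finite strong coding tree $B'\in r_{m_0}[B,T]$ whose top level already contains a copy of the configuration, so that its own coding node witnesses $I_1$ at that very level --- but that is exactly re-introducing the fixed minimal pre-extension $X\sse\max(B')$ of Theorem \ref{thm.matrixHL} Case (b), and then the branching of $B'$ above the $s_i$'s means your fusion only controls $\Ext_S(A,C;X)$: you reprove Case (b), not the lemma. (Two smaller problems: your requirement that each condition ``carry its own minimal pre-extension $X(p)$'' is circular, since it is phrased via the indices $j_i$ that only exist after the \Erdos-Rado step; and it would in any case be destroyed by the filler coordinates $q(i,\gamma)$, $\gamma\notin J_i$, needed in the fusion.)

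The paper's proof avoids this by using end-homogeneity far more centrally than as a ``reflection'' convenience: it transfers $h$ to a coloring $h'$ of $\Ext_T(A,C^-)$, where $C^-=\splitpred_T(C\setminus A)$ replaces each $I_1$-coordinate of $C\setminus A$ by its splitting predecessor. Members of $\Ext_T(A,C^-)$ have, by definition, \emph{no} new parallel $1$'s over $A$ and no distinguished coding-node coordinate, so the situation becomes structurally like Case (a): a new forcing $\bQ$ is introduced in which all $d+1$ coordinates carry $\kappa=\beth_{2d+2}$ many generic branches (one path through $T_{i_0}$), the \Erdos-Rado theorem is applied to $(2d+2)$-tuples, and in the fusion the homogenized nodes can be extended leftmost and grown freely by Lemma \ref{lem.facts}, so that every node of the final $S$ above $s_i$ really does pass through $t^*_i$ and every member of $\MPE_S(A,C)$ is color-controlled; end-homogeneity then lifts the constant color from minimal pre-extensions to all of $\Ext^{SP}_S(A,C)$. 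So the missing idea in your proposal is precisely this passage to the splitting-predecessor configurations together with the accompanying change of forcing; the \POC\ bookkeeping you single out as the main obstacle is real but secondary.
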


\begin{proof}
Given any $U\in[B,T]$,
let $\MPE_U(A,C)$ denote  the set of all minimal pre-extensions of $A$ to a copy of $C$ in $U$.
Without loss of generality, we may assume that the nodes in $C\setminus A$ occur in an interval of $T$ strictly above the interval of $T$ containing $B$.
This presents no obstacle to the application, as the goal is to find some $S\in[B,T]$ for which $h$ takes the same value on
every extension in $\Ext_U(A,C)$ extending some  member of $\MPE_S(A,C)$,
and we can  take the first level of $S$ above $B$ to  be in the interval of $T$ strictly above $B$ since $B$ is valid in $T$.

Enumerate the nodes of $A_e$ as $\{s_i:i\le d\}$,
letting $i_0$ be the index such that $s_{i_0}$ is a sequence of all $0$'s.
In the notation of Assumption  \ref{assumption.6},
$i_0$ is a member of  $I_0$.
Each member $Y$ of $\MPE_T(A,C)$
will be enumerated as $\{y_i:i\le d\}$ so that $y_i\contains s_i$ for each $i\le d$.
Given   $Y\in\MPE_T(A,C)$,
define the notation
\begin{equation}
\splitpred_T(Y)=\{y_i:i\in I_0\}\cup\{\splitpred_T(y_i):i\in I_1\}.
\end{equation}
Since $C$ satisfies the \SPOC,
$C\setminus A$ is in $\MPE_T(A,C)$.
Let  $C^-$ denote $\splitpred_T(C\setminus A)$.
Since we are assuming that $C\setminus A$ is contained in an interval of $T$ above the interval containing $\max(A)$,
each node of $C^-$ extends one node of $A_e$.
For any $U\in[B,T]$, define $X\in\Ext_{U}(A,C^-)$ if and only if
$X=\splitpred_U(Y)$ for some
$Y\in\MPE_U(A,C)$.
Equivalently,
 $X\in\Ext_{U}(A,C^-)$ if and only if
 the following three conditions hold:
\begin{enumerate}
\item
$X$ extends $A_e$; label the nodes in $X$ as $\{x_i:i\le d\}$ so that  $x_i\contains s_i$.
\item
There is a coding node $c$  in $U$ such that
$|c|=|x_{i_0}|$;
for each $i\in I_0$,
the
passing number of $x_i$ at $c$ is
 $0$;
and for each $i\in I_1$,
$x_i=\splitpred_{U}(y_i)$ for some $y_i\contains s_i$ in $U$ of length $|c|$ such that the
passing number  of $y_i$  at $c$ is $1$.
\item
The set $A\cup  X$ satisfies the \POC.
\end{enumerate}
Thus, $X$ is a member of $\Ext_{U}(A,C^-)$ if and only if
$\{x_i:i\in I_0\}$ along with the rightmost paths extending $\{x_i:i\in I_1\}$ to  length $|x_{i_0}|$ forms a minimal pre-extension of $A$ to a copy of $C$ in $U$.
Note that condition (3) implies that $X$ has no new sets of parallel $1$'s over $A$,
since $X$ contains no coding node.

By assumption,
the coloring $h$ on $\Ext_{T}^{SP}(A,C)$  is end-homogeneous.
Thus, it induces a coloring on $\MPE_T(A,C)$,
by giving $Y\in\MPE_T(A,C)$ the $h$-color that all members of $\Ext_T(A,C;Y)$ have.
This further
induces a coloring $h'$  on $\Ext_{T}(A,C^-)$,
since a  set of nodes $X$ in  $T$ is in
$ \Ext_T(A,C^-)$
if and only if $X=\splitpred_T(Y)$ for some   $Y\in\MPE_T(A,C)$.
Define $h'(\splitpred_T(Y))$ to be the color of $h$ on  $\Ext_{T}(A,C;Y)$.

Let $L$ denote the collection of all  $l<\om$ such that there is a member of
 $\Ext_{T}(A,C^-)$ with maximal  nodes of length $l$.
For each
  $i\in (d+1)\setminus\{i_0\}$,   let  $T_i=\{t\in T:t\contains s_i\}$.
Let $T_{i_0}=\{t\in T\cap 0^{<\om}:t\contains s_{i_0}\}$, the collection of all leftmost nodes in $T$ extending $s_{i_0}$.
Let $\kappa=\beth_{2d+2}$.
The following forcing notion $\bQ$  will  add $\kappa$ many paths through each $T_i$, $i\in (d+1)\setminus\{i_0\}$ and
 one path through $T_{i_0}$.
The present case is handled similarly to Case (a) of Theorem \ref{thm.matrixHL}, so much of the  current proof refers back to the proof of Theorem \ref{thm.matrixHL}.

We now define a new forcing.
Let
$\bQ$ be  the set of conditions $p$ such that
$p$ is a  function
of the form
$$
p:(d+1)\times\vec{\delta}_p\ra  T,
$$
where $\vec{\delta}_p\in[\kappa]^{<\om}$,
$l_p\in L$,
 and
there  is  some
some coding node
$c^{T}_{n(p)}$ in $T$
 such that $l^{T}_{n(p)}=l_p$,
and
\begin{enumerate}
\item[(i)]
 For each $(i,\delta)\in (d+1)\times
\vec{\delta}_p$,
$p(i,\delta)\in T_i$ and
 $l^{T}_{n(p)-1}< |p(i,\delta)|\le l_p$; and

\item [(ii)]
\begin{enumerate}
\item[($\al$)]
If  $i\in I_1$,
 then
 $p(i,\delta)=\splitpred_{T}(y)$ for some $y\in T_i\re l_p$
which has immediate extension $1$ in $T$.
\item[$(\beta)$]
If $i\in I_0$,  then
$p(i,\delta)\in T_i\re l_p$ and has immediate extension $0$ in $T$.
\end{enumerate}
\end{enumerate}
It follows from the definition  that for $p\in \bQ$,
the range  of $p$, $\ran(p):=\{p(i,\delta):(i,\delta)\in (d+1)\times\vec{\delta}_p\}$, has no pre-determined new sets of parallel $1$'s.
Furthermore, all nodes in $\ran(p)$ are contained in the $n(p)$-th interval of $T$.
We point out that  $\ran(p)$ may or may not contain a coding node.
If it does, then that coding node must appear as $p(i,\delta)$ for some $i\in I_0$.

The partial ordering on $\bQ$ is defined as follows:
$q\le p$ if and only if
$l_q\ge l_p$, $\vec{\delta}_q\contains \vec{\delta}_p$,
\begin{enumerate}
\item[(i)]
$q(i,\delta)\contains p(i,\delta)$  for each $(i,\delta)\in (d+1)\times\vec{\delta}_p$; and

\item[(ii)]
$\{q(i,\delta):(i,\delta)\in (d+1)\times\vec{\delta}_p\}$
has no new sets of parallel $1$'s over
 $\ran(p)$.
\end{enumerate}

It is routine to show that
Claims
\ref{claim.atomless_separative}
 and \ref{claim.densehigh} in the proof of Theorem \ref{thm.matrixHL}
also hold for $(\bQ,\le)$.
That is, $(\bQ,\le)$ is an atomless partial order, and any condition in $\bQ$ can be extended by two incompatible conditions of length greater than any given $l<\om$.

Let $\dot{\mathcal{U}}$ be a $\bQ$-name for a non-principal ultrafilter on $L$.
For each $i\le d$ and $\al<\kappa$,  let $\dot{b}_{i,\al}$ be a $\bQ$-name for the $\al$-th generic branch through $T_i$;
that is, $\dot{b}_{i,\al}=\{\lgl p(i,\al),p\rgl:p\in \bQ$ and $\al\in\vec{\delta}_p\}$.
For
any condition $p\in \bQ$, for
$(i,\al)\in
 I_0\times \vec\delta_p$,  $p$ forces that $\dot{b}_{i,\al}\re l_p= p(i,\al)$.
For $(i,\al)\in I_1\times\vec\delta_p$,  $p$ forces that $\splitpred_{T}(\dot{b}_{i,\al}\re l_p)= p(i,\al)$.
For $\vec{\al}=\lgl\al_0,\dots,\al_{d}\rgl\in[\kappa]^{d+1}$,
\begin{equation}
\mathrm{let\ \ }\dot{b}_{\vec{\al}}\mathrm{\  \  denote\ \ }
\lgl \dot{b}_{0,\al_0},\dots,\dot{b}_{d,\al_d}\rgl.
\end{equation}
For $l\in L$, we shall  use the abbreviation
\begin{equation}
\dot{b}_{\vec\al}\re l
\mathrm{\ \ to\ denote \ \ }
\splitpred_T(\dot{b}_{\vec\al}\re l),
\end{equation}
which is exactly
$\{\dot{b}_{i,\al_i}\re l :i\in I_0\}\cup \{\splitpred_T(\dot{b}_{i,\al_i}\re l):i\in I_1\}$.

Similarly to  Part II  of  the proof of Theorem \ref{thm.matrixHL},
we will find  infinite  pairwise disjoint sets $K_i\sse \kappa$, $i\le d$, such that $K_0<K_1<\dots K_d$,
and conditions $p_{\vec\al}$, $\vec\al\in \prod_{i\le d}K_i$,
such that these conditions are pairwise compatible,
have the same images in $T$, and force the same color $\varepsilon^*$  for $h'(\dot{b}_{\vec\al}\re l)$  for  $\dot{\mathcal{U}}$ many levels  $l$ in $L$.
Moreover, the nodes $\{t^*_i:i\le d\}$ obtained from the application of the \Erdos-Rado Theorem for this setting
will  extend
 $\{s_i:i\le d\}$
and  form a member of $\Ext_{T}(A,C^-)$.
The arguments are mostly similar to those in Part II of Theorem \ref{thm.matrixHL}, so we only fill in the details for arguments which are necessarily different.
\vskip.1in

\noindent{\bf \underline{Part II}.}
For each $\vec\al\in[\kappa]^{d+1}$,
choose a condition $p_{\vec{\al}}\in\bQ$ such that
\begin{enumerate}
\item
 $\vec{\al}\sse\vec{\delta}_{p_{\vec\al}}$.

\item
$\{p_{\vec\al}(i,\al_i):i\le  d\}\in\Ext_T(A,C^-)$.
\item
$p_{\vec{\al}}\forces$ ``There is an $\varepsilon\in 2$  such that
$h(\dot{b}_{\vec{\al}}\re l)=\varepsilon$
for $\dot{\mathcal{U}}$ many $l$ in $\dot{L}_d$."

\item
$p_{\vec{\al}}$ decides a value for $\varepsilon$, call it  $\varepsilon_{\vec{\al}}$.
\item
$h(\{p_{\vec\al}(i,\al_i):i\le  d\})=\varepsilon_{\vec{\al}}$.
\end{enumerate}

Properties (1) -  (5) can be guaranteed  as follows.
For each $i\le d$, let $t_i$ denote the member of $C^-$ which extends $s_i$.
For each  $\vec{\al}\in[\kappa]^{d+1}$, let
$$
p^0_{\vec{\al}}=\{\lgl (i,\delta), t_i\rgl: i\le d, \ \delta\in\vec{\al} \}.
$$
Then $p^0_{\vec{\al}}$ is a condition in $\bP$ and
$\vec\delta_{p_{\vec\al}^0}= \vec\al$, so (1) holds.
Further,
$\ran(p^0_{\vec\al})$ is a member of $\Ext_T(A,C^-)$ since it is exactly $C^-$.
Note that
for any $p\le p_{\vec\al}^0$,
 $\{p(i,\al_i):i\le d\}$
is also  a member of $\Ext_T(A,C^-)$,
so (2) holds for any $p\le p_{\vec\al}^0$.
Take  an extension $p^1_{\vec{\al}}\le p^0_{\vec{\al}}$ which
forces  $h'(\dot{b}_{\vec{\al}}\re l)$ to be the same value for
$\dot{\mathcal{U}}$  many  $l\in \dot{L}_d$,
and then take
 $p^2_{\vec{\al}}\le p_{\vec{\al}}^1$ deciding a value $\varepsilon_{\vec{\al}}$ for which $p^2_{\vec{\al}}$ forces that $h'(\dot{b}_{\vec{\al}}\re l)=\varepsilon_{\vec{\al}}$
for $\dot{\mathcal{U}}$ many $l$ in $\dot{L}_d$.
This satisfies
 (3) and (4).
Take $p_{\vec\al}\le p^2_{\vec\al}$  which  decides  $h'(\dot{b}_{\vec\al}\re l_{p_{\vec\al}})=\varepsilon_{\vec\al}$.
Then $p_{\vec\al}$ satisfies (1) through (5),
since
$p_{\vec\al}$ forces
 $h'(\{p_{\vec\al}(i,\al_i):i\le d\})=\varepsilon_{\vec\al}$.

We are assuming $\kappa=\beth_{2d+2}$.
Let $D_e=\{0,2,\dots,2d\}$ and  $D_o=\{1,3,\dots,2d+1\}$, the sets of  even and odd integers less than $2d+2$, respectively.
Let $\mathcal{I}$ denote the collection of all functions $\iota: (2d+2)\ra (2d+2)$ such that
$\iota\re D_e$
and $\iota\re D_o$ are strictly  increasing sequences
and $\{\iota(0),\iota(1)\}<\{\iota(2),\iota(3)\}<\dots<\{\iota(2d),\iota(2d+1)\}$.
For $\vec{\theta}\in[\kappa]^{2d+2}$,
$\iota(\vec{\theta}\,)$ determines the pair of sequences of ordinals $(\theta_{\iota(0)},\theta_{\iota(2)},\dots,\theta_{\iota(2d))}), (\theta_{\iota(1)},\theta_{\iota(3)},\dots,\theta_{\iota(2d+1)})$,
both of which are members of $[\kappa]^{d+1}$.
Denote these as $\iota_e(\vec\theta\,)$ and $\iota_o(\vec\theta\,)$, respectively.
Let $\vec{\delta}_{\vec\al}$ denote $\vec\delta_{p_{\vec\al}}$,
 $k_{\vec{\al}}$ denote $|\vec{\delta}_{\vec\al}|$,
and let $l_{\vec{\al}}$ denote  $l_{p_{\vec\al}}$.
Let $\lgl \delta_{\vec{\al}}(j):j<k_{\vec{\al}}\rgl$
denote the enumeration of $\vec{\delta}_{\vec\al}$
in increasing order.
Define a coloring  $f$ on $[\kappa]^{2d+2}$ into countably many colors as follows:
Given  $\vec\theta\in[\kappa]^{2d+2}$ and
 $\iota\in\mathcal{I}$, to reduce the number of subscripts,  letting
$\vec\al$ denote $\iota_e(\vec\theta\,)$ and $\vec\beta$ denote $\iota_o(\vec\theta\,)$,
define
\begin{align}\label{eq.fiotatheta(c)}
f(\iota,\vec\theta\,)&= \,
\lgl \iota, \varepsilon_{\vec{\al}}, k_{\vec{\al}},
\lgl \lgl p_{\vec{\al}}(i,\delta_{\vec{\al}}(j)):j<k_{\vec{\al}}\rgl:i\le d\rgl,\cr
& \lgl  \lgl i,j \rgl: i\le d,\ j<k_{\vec{\al}},\ \mathrm{and\ } \delta_{\vec{\al}}(j)=\al_i \rgl,
\lgl \lgl j,k\rgl:j<k_{\vec{\al}},\ k<k_{\vec{\beta}},\ \delta_{\vec{\al}}(j)=\delta_{\vec{\beta}}(k)\rgl\rgl.
\end{align}
Let $f(\vec{\theta}\,)$ be the sequence $\lgl f(\iota,\vec\theta\,):\iota\in\mathcal{I}\rgl$, where $\mathcal{I}$ is given some fixed ordering.
By the \Erdos-Rado Theorem,
there is a subset $K\sse\kappa$ of cardinality $\aleph_1$
which is homogeneous for $f$.
Take $K'\sse K$ such that between each two members of $K'$ there is a member of $K$ and $\min(K')>\min(K)$.
Then take subsets $K_i\sse K'$ such that  $K_0<\dots<K_{d}$
and   each $|K_i|=\aleph_0$.
The following three claims and lemma are direct analogues of
Claims \ref{claim.onetypes}, \ref{claim.j=j'},  and \ref{subclaimA}, and Lemma \ref{lem.compat}.
Their proofs follow by simply making the correct notational substitutions, and so are omitted.

\begin{claim}\label{claim.onetypes(c)}
There are $\varepsilon^*\in 2$, $k^*\in\om$,
and $ \lgl t_{i,j}: j<k^*\rgl$, $i\le d$,
 such that
for all $\vec{\al}\in \prod_{i\le d}K_i$ and  each $i\le d$,
 $\varepsilon_{\vec{\al}}=\varepsilon^*$,
$k_{\vec\al}=k^*$,  and
$\lgl p_{\vec\al}(i,\delta_{\vec\al}(j)):j<k_{\vec\al}\rgl
=
 \lgl t_{i,j}: j<k^*\rgl$.
\end{claim}

Let $l^*=|t_{i_0}|$.
Then for each $i\in I_0$,
 the nodes  $t_{i,j}$,  $j<k^*$,    have  length $l^*$;
and for each $i\in I_1$,
the nodes $t_{i,j}$,  $j<k^*$,  have length in the interval $(l^T_{n-1},l^T_n)$,
where $n$ is the index of the coding node in $T$ of length $l^*$.

\begin{claim}\label{claim.j=j'(c)}
Given any $\vec\al,\vec\beta\in \prod_{i\le d}K_i$,
if $j,k<k^*$ and $\delta_{\vec\al}(j)=\delta_{\vec\beta}(k)$,
 then $j=k$.
\end{claim}

For any $\vec\al\in \prod_{i\le d}K_i$ and any $\iota\in\mathcal{I}$, there is a $\vec\theta\in[K]^{2d+2}$ such that $\vec\al=\iota_o(\vec\theta)$.
By homogeneity of $f$, there is a strictly increasing sequence
$\lgl j_i:i\le d\rgl$  of members of $k^*$ such that for each $\vec\al\in \prod_{i\le d}K_i$,
$\delta_{\vec\al}(j_i)=\al_i$.
For each $i\le d$, let $t^*_i$ denote $t_{i,j_i}$.
Then  for each $i\le d$ and each $\vec\al\in \prod_{i\le d}K_i$,
\begin{equation}
p_{\vec\al}(i,\al_i)=p_{\vec{\al}}(i, \delta_{\vec\al}(j_i))=t_{i,j_i}=t^*_i.
\end{equation}

\begin{lem}\label{lem.compat(c)}
For any finite subset $\vec{J}\sse \prod_{i\le d}K_i$,
the set of conditions $\{p_{\vec{\al}}:\vec{\al}\in \vec{J}\,\}$ is  compatible.
Moreover,
$p_{\vec{J}}:=\bigcup\{p_{\vec{\al}}:\vec{\al}\in \vec{J}\,\}$
is a member of $\bP$ which is below each
$p_{\vec{\al}}$, $\vec\al\in\vec{J}$.
\end{lem}

\begin{claim}\label{subclaimA(c)}
If $\beta\in \bigcup_{i\le d}K_i$,
$\vec{\al}\in\prod_{i\le d}K_i$,
and $\beta\not\in\vec\al$,
 then
$\beta$ is not  a member of   $\vec{\delta}_{\vec{\al}}$.
\end{claim}

\noindent{\bf \underline{Part III}.}
Let $(n_j)_{j<\om}$ denote the set of  indices for which  there is an
$X\in \MPE_T(A,C)$
with $X=\max(V)$ for some
$V$ of $r_{n_j}[B,T]$.
 For  $i\in I_0$,
let $u^*_i=t^*_i$.
For $i\in I_1$,
let $u_i^*$ be the  leftmost extension of $t^*_i$ in $T\re l^*$.
Note that $\{u_i^*:i\le d\}$ has no new sets of parallel $1$'s over $A_e$.
Extend each node $u$ in $\max(B)\setminus A_e$   to its  leftmost extension in $T\re l^*$ and label that extension $u^*$.
Let
\begin{equation}
U^*=\{u^*_i:i\le d\}\cup\{u^*:u\in \max(r_k(T))\setminus A_e\}.
\end{equation}
Thus, $U^*$ extends $\max(B)$,
all sets of parallel $1$'s in $U^*$ are already witnessed in $B$ since $B$ is valid in $T$,
and $U^*$ has no new pre-determined parallel $1$'s.

Suppose that $j<\om$ and for all $i<j$, there have been chosen $S_i\in r_{n_i}[B,T]$ such that
$h'$ is constant of value $\varepsilon^*$ on $\Ext_{S_{i}}(A,C^-)$,
and
 for $i<i'<j$,
$S_i\sqsubset S_{i'}$.
Let $k_B$ be the integer such that $B=r_{k_B}(B)$, and
let $e$ be the index such that
$l^T_{e-1}$ is greater than the length of the maximal nodes in $B$.
For $j=0$,
take
  $V_0$ to be any member of $r_{n_0}[B,T]$
such that
the nodes in $\max(r_{k_B+1}(V_0))$ extend the nodes in $U^*$
and have length greater than $l^T_e$.
This is possible by Lemma \ref{lem.facts}.
For $j\ge 1$,
take $V_j\in r_{n_j}[B,T]$
such that $V_j\sqsupset S_{j-1}$.
Let $X$ denote $\max(V_j)$.
Then the
nodes in $\splitpred_T(X)$
extend the nodes in $U^*$, and moreover,
extend the nodes in $\max(S_{j-1})$ if $j\ge 1$.
By the definition of $n_j$, the set of nodes $X$ contains a coding node.
For each $i\in I_0$,
let $Y_i$ denote the set of all $t\in T_i\cap X$
which have immediate extension $0$ in $T$.
For each $i\in I_1$,
let $Y_i$ denote the set of all
splitting nodes in
$T_i\cap \splitpred_T(X)$.
For each $i\le d$,
let $J_i$ be a subset of $K_i$ of size $|Y_i|$,
and enumerate the members of $Y_i$ as $q(i,\delta)$, $\delta\in J_i$.
Let $\vec{J}$ denote the set of $\vec\al\in\prod_{i\le d}J_i$ such that the set $\{q(i,\al_i):i\le d\}$
has no new sets of parallel $1$'s over $A$.
Thus, the set of $\{q(i,\al_i):i\le d\}$, $\vec\al\in \vec{J}$, is exactly the collection of sets of nodes in $\splitpred_T(X)$ which are members of
 $\Ext_{T}(A,C^-)$.
Moreover, for each
$\vec\al\in \vec{J}$
and  all $i\le d$,
\begin{equation}
q(i,\al_i)\contains t^*_i=p_{\vec{\al}}(i,\al_i).
\end{equation}

To complete the construction of the desired $q\in \bQ$ for which $q\le p_{\vec\al}$ for all $\vec\al\in \vec{J}$,
let  $\vec{\delta}_q=\bigcup\{\vec{\delta}_{\vec\al}:\vec\al\in \vec{J}\}$.
For each pair $(i,\gamma)$  with  $\gamma\in\vec{\delta}_q\setminus
J_i$,
there is at least one $\vec{\al}\in\vec{J}$ and some $j<k^*$ such that $\gamma=\delta_{\vec\al}(j)$.
As in Case (a) of Theorem \ref{thm.matrixHL},
for any other $\vec\beta\in\vec{J}$ for which $\gamma\in\vec{\delta}_{\vec\beta}$,
it follows
 that
$p_{\vec\beta}(i,\gamma)=p_{\vec{\al}}(i,\gamma)=t^*_{i,j}$ and  $\delta_{\vec\beta}(j)=\gamma$.
If $i\in I_0$,
let $q(i,\gamma)$ be the leftmost extension
 of $t_{i,j}^*$ in  $T\re l^{V_j}_{n_j}$.
If $i\in I_1$, let $q(i,\gamma)$ be the leftmost extension of $t^*_{i,j}$ to a splitting node in $T$
in the interval
 $(l^{V_j}_{n_j-1}, l^{V_j}_{n_j}]$.
Such a splitting node must exist because of the construction of $U^*$.
Precisely,
let $c^X$ denote the coding node in $X$.
Note that $c^X\re l_B$ must have no parallel $1$'s with any $s_{i'}$, $i'\in I_1$, since $X$ contains a member of $\MPE_T(A,C)$.
If $c^X$ does not extend $t^*_{i'}$ for any $i'\le d$,
then $c^X\re l^*$ is the leftmost extension in $T$ of
$c^X\re l_B$,
which implies that
 $c^X\re l^*$ has no  parallel $1$'s  with  $t^*_{i,j}$.
Thus,  $q(i,\gamma)$, being the leftmost extension of $t^*_{i,j}$, has no parallel $1$'s with $c^X$.
If  $c^X$ extends some $t^*_{i',j'}$,
then $c^X\re l_B=s_{i'}$.
For $c^X$ to be a node in a member of $\MPE_T(A,C)$, $c^X\re l_B$ must not have parallel $1$'s with any $s_i$, $i\in I_1$.
In particular,  $i'$ must be in $ I_0$, and $t^*_{i,j}$ has no parallel $1$'s with $t^*_{i',j'}$,
because $s_i$ and $s_{i'}$ have no parallel $1$'s
and by
 the definition of the partial ordering on $\bQ$, since $t^*_{i,j}$ and $t^*_{i',j'}$  are in $\ran(p_{\vec\al})$ for  any $\vec\al\in [K']^{d+1}$,
and $p_{\vec\al}\le p^0_{\vec\al}$.
Thus, the leftmost extension $q(i,\gamma)$ of $t^*_{i,j}$ has no parallel $1$'s with $c^X$.
Therefore, $q(i,\gamma)$ is well-defined.
Define
\begin{equation}
q=\bigcup_{i\le d}\{\lgl (i,\al),q(i,\al)\rgl: \al\in \vec{\delta}_q\}.
\end{equation}
By a  proof similar to that of
Claim \ref{claim.qbelowpal},
it follows that
$q\le p_{\vec\al}$,
for each $\vec\al\in \vec{J}$.

Take an $r\le q$ in  $\bP$ which  decides some $l_j$ in $L$  which is strictly greater than the length of the next coding node above the  coding node  $c^X$ in $X$,
and such that
for all $\vec\al\in\vec{J}$,  $h'(\dot{b}_{\vec\al}\re l_j)=\varepsilon^*$.
Without loss of generality, we may assume that the maximal nodes in $r$ have length $l_j$.
If $c^X=q(i',\al')$ for some $i'\in I_0$ and $\al'\in J_{i'}$,
then let $c_r$ denote $r(i',\al')$;
otherwise, let $c_r$ denote the leftmost extension of $c^X$ in $T$ of length $l_j$.
Let $Z_0$ denote those nodes in $\splitpred_T(X)\setminus Y_0$  which have length equal to $c^X$; in particular, $Z_0$ is the set of nodes in $X$ which are not splitting nodes in $\splitpred_T(X)$ and are also not in $Y_0$.
For each $z\in Z_0$,
let $s_z$ denote the leftmost extension of $z$ in $T$ to length $l_j$.
Let $Z_1$ denote the set of all splitting nodes  in $\splitpred_T(X)\setminus Y_1$.
For each $z\in Z_1$, let $s_z$ denote the  splitting predecessor  in $T$ of the leftmost extension of $z$ in $T$ to length $l_j$.
This splitting predecessor exists in $T$ for the following reason:
If $z$ is a  splitting node  in $\splitpred_T(X)$,
then $z$ has no parallel $1$'s with $c^X$, and so the leftmost extension of $z$ to any length has no parallel $1$'s with any extension of $c^X$.
In particular, the set $\{s_z:z\in Z_0\cup Z_1\}$ has no new sets of parallel $1$'s over $\splitpred_T(X)$.

Let
\begin{equation}
Z^-=\{q(i,\al):i\le d,\ \al\in J_i\}\cup\{s_z:z\in Z_0\cup Z_1\}.
\end{equation}
Let $Z^*$ denote the extensions in $T$ of all members of $Z^-$ to length $l_j$.
Let $j{^-}$ denote the index such that the maximal coding node in $V_j$ below $c^X$ is $c^{V_j}_{n_{j^-}}$.
Note that $Z^*$ has no new sets of parallel $1$'s over $\splitpred_T(X)$;
furthermore,  the tree induced by
$r_{n_{j^-}}(V_j)\cup Z^*$ is strongly similar to $V_j$, except possibly for the coding node being in the wrong place.
Using Lemma \ref{lem.facts},
extend the nodes in $Z^*$
to obtain some $S_j\in r_{n_j}[r_{n_{j^-}}(V_j),T]$ where  $\max(S_j)$ extends $Z^*$.
Then every member of $\Ext_{S_j}(A,C^-)$ has the same $h'$ color $\varepsilon^*$,
by the choice of $r$,
since each minimal pre-extension in $\MPE_{S_j}(A,C)$ extends some member of $\Ext_{S_j}(A,C-^)$
which extends members in $\ran(r)$ and so have $h'$-color $\varepsilon^*$.

Let $S=\bigcup_{j<\om} S_j$.
Then $S$ is a strong coding tree in $[B,T]$.
Let $Y\in\Ext^{SP}_S(A,C)$.
Then there is some
 $X\in\MPE_S(A,C)$
such that $Y$ extends $X$.
Since
$\splitpred_S(X)$ is in $\Ext_{S_j}(A,C^-)$ for some $j<\om$,
$\splitpred_S(X)$ has
 $h'$ color $\varepsilon^*$.
Thus,
$Y$ has
 $h$-color $\varepsilon^*$.
\end{proof}

Recall that given a tree $A$, $\Sim^s_T(A)$ denote the set of all subtrees  $A'$ of $T$ which are strongly similar to $A$.

\begin{lem}\label{lem.Case(b)}
Assume \ref{assumption.6}.
Then there is a strong coding subtree $S\le T$ such that for each $A'\in\Sim^s_S(A)$,
$h$ is homogeneous on $\Ext^{SP}_S(A',C)$.
\end{lem}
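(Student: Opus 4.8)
\textbf{Proof proposal for Lemma \ref{lem.Case(b)}.}
The plan is to combine the two previous lemmas of this section with a fusion argument that handles all strong similarity copies of $A$ inside $T$ simultaneously. First I would observe that, by Assumption \ref{assumption.6}, $C\setminus A$ contains a coding node, so Lemmas \ref{lem.endhomog} and \ref{lem.Case(c)} together give the following ``one-copy'' fact: for any finite strong coding tree $B$ valid in $T$ with $A$ a subtree of $B$ and $\max(A)\subseteq\max(B)$, and for any $S'\le T$ extending $B$, there is $S''\in[B,S']$ on which $h$ is homogeneous on $\Ext^{SP}_{S''}(A,C)$. (Apply Lemma \ref{lem.endhomog} inside $S'$ above $r_k(S')$ to get end-homogeneity, then Lemma \ref{lem.Case(c)} to uniformize to a single color.) The task is then to iterate this over the countably many positions at which a copy $A'$ of $A$ can sit, while preserving all the homogeneity already achieved.

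Next I would set up the fusion. Enumerate, in order of the level index at which the topmost critical node of the copy occurs, all pairs $(A',n')$ where $A'$ is a subtree of $T$ strongly similar to $A$ and $r_{n'}(T)$ is the minimal initial segment of $T$ containing $\max(A')$; call these $(A'_j,n'_j)$, $j<\omega$. Build a sequence $T_{-1}=T\ge T_0\ge T_1\ge\cdots$ of strong coding trees together with an increasing sequence $(k_j)_{j<\omega}$ such that $r_{k_j}(T_j)=r_{k_j}(T_{j-1})$, $r_{k_j}(T_j)$ contains $\max(A'_j)$ with $\max(A'_j)\subseteq\max(r_{k_j}(T_j))$, and $h$ is homogeneous on $\Ext^{SP}_{T_j}(A'_j,C)$. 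At stage $j$: since each $A'_i$ for $i<j$ is already contained in $r_{k_i}(T_i)=r_{k_i}(T_{j-1})$ with its maximal nodes among the maximal nodes of that initial segment, and since $h$ is already homogeneous on the (at stage $i$) fixed copy, I only need to apply the one-copy fact to $A'_j$: take $B_j$ a finite valid subtree of $T_{j-1}$ extending $A'_j$ with $\max(A'_j)\subseteq\max(B_j)$ and $B_j\sqsupseteq r_{k_{j-1}}(T_{j-1})$, then invoke Lemmas \ref{lem.endhomog} and \ref{lem.Case(c)} inside $T_{j-1}$ to produce $T_j\in[B_j,T_{j-1}]$ homogeneous on $\Ext^{SP}_{T_j}(A'_j,C)$; set $k_j$ so that $r_{k_j}(T_j)$ is a finite strong coding tree containing $\max(B_j)$. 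Let $S=\bigcup_{j<\omega}r_{k_j}(T_j)$; this is a strong coding tree in $\mathcal{T}(T)$ by the validity–extension lemmas (Lemma \ref{lem.facts}), since each $r_{k_j}(T_j)$ is a finite strong coding tree valid in $T$ and the sequence is nested and cofinal.

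Finally I would verify the conclusion. Fix $A'\in\Sim^s_S(A)$. Since $A'\subseteq S$ and each critical node of $A'$ lies in some $r_{k_j}(T_j)$, there is a least $j$ with $\max(A')\subseteq r_{k_j}(T_j)=r_{k_j}(T_{j})$; moreover $A'=A'_j$ for the corresponding enumerated pair, and $\max(A')\subseteq\max(r_{k_j}(T_j))\subseteq\max(r_{k_j}(S))$ is only an initial-segment statement, so I need the usual remark that any $Y\in\Ext^{SP}_S(A',C)$ restricts to a member of $\Ext^{SP}_{T_j}(A',C)$ (truncate the nodes of $Y$ down to the appropriate level and re-extend, exactly as in the proofs of Lemmas \ref{lem.endhomog}–\ref{lem.Case(c)}), whence $h(Y)$ equals the constant value of $h$ on $\Ext^{SP}_{T_j}(A'_j,C)$. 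Since $S\le T_j$ below level $k_{j+1}$ and the extensions in question are determined in that range, $h$ is constant on $\Ext^{SP}_S(A',C)$, with a value that may depend on $A'$; this is precisely the statement of the lemma. The main obstacle is the bookkeeping in the fusion: ensuring that when we thin $T_{j-1}$ to $T_j$ to homogenize the $j$-th copy we do not disturb the copies $A'_0,\dots,A'_{j-1}$, which is why the enumeration must be by the level of the topmost critical node and why we must demand $\max(A'_i)\subseteq\max(r_{k_i}(T_i))$ and freeze $r_{k_i}$ at stage $i$ — so that every later $T_j$ agrees with $T_i$ up through level $k_i$ and the already-fixed copies, together with all their $\Ext^{SP}$-extensions through level $k_{i+1}$, remain untouched. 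Once that is arranged, the rest is a routine application of the two preceding lemmas together with Lemma \ref{lem.facts} on extending valid subtrees.
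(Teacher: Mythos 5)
Your overall strategy---iterate the end-homogenization of Lemma \ref{lem.endhomog} followed by the uniformization of Lemma \ref{lem.Case(c)} along a fusion sequence---is the paper's strategy, but your bookkeeping enumerates the wrong objects, and this creates a genuine gap. You list all copies $A'_j$ of $A$ in the \emph{original} tree $T$ and, at stage $j$, ask for a finite strong coding tree $B_j$, valid in $T_{j-1}$, with $A'_j\sse B_j$ and $\max(A'_j)\sse\max(B_j)$. Such a $B_j$ need not exist: a general enumerated copy need not even be a subset of $T_{j-1}$ after the earlier thinnings; and even when $A'_j\sse T_{j-1}$ (as is the case for copies that eventually lie in $S$, since $S\sse T_{j-1}$), its maximal nodes need not all have a common length equal to a coding-node level of $T_{j-1}$, and $A'_j$ need not be valid in $T_{j-1}$---validity is relative to the thinned tree and is an explicit hypothesis of Lemma \ref{lem.Case(c)}. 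The extension machinery (Lemma \ref{lem.facts}) only extends a valid subtree \emph{upward}; it cannot retroactively position an already-fixed copy at the top of a finite strong coding tree inside the thinned tree. You neither verify that $B_j$ exists nor say what to do when it does not; if such stages are skipped, your final verification (``$A'=A'_j$ and stage $j$ homogenized for it'') collapses, since precisely those copies may be the ones occurring in $S$. The index conflation in that verification (the enumeration index of $A'$ versus the stage at which its top level is frozen) is minor by comparison, because monochromaticity survives thinning; the real issue is whether the stage for $A'$ could be carried out at all.

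The paper's proof avoids this by a different decomposition and an extra final step that you omit. Its fusion enumerates levels $k_i$ and, at stage $i$, handles \emph{all} (finitely many) copies $A'\in\Sim^s_{U_i}(A)$ which are valid in $U_i$ and have $\max(A')\sse\max(U_i)$, i.e., copies sitting at the top of the current approximation, for which $U_i$ itself witnesses the positioning and validity hypotheses of Lemmas \ref{lem.endhomog} and \ref{lem.Case(c)}; this is done by a finite inner induction inside $[U_i,T_{i-1}]$. Copies that are badly positioned or not valid are not treated during the fusion at all; they are captured at the end: after forming $U=\bigcup_i U_i$, one passes to a further strong coding tree $S\le U$, so that every $r_k(S)$, and hence every $A'\in\Sim^s_S(A)$, is valid in $U$ and therefore falls under the homogeneity already secured. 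Without restricting the fusion to copies at the top of the approximations being built, and without this final validity step, your argument does not establish homogeneity of $h$ on $\Ext^{SP}_S(A',C)$ for an arbitrary $A'\in\Sim^s_S(A)$.
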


\begin{proof}
Let $(k_i)_{i<\om}$ be the sequence of integers such that $r_{k_i}(T)$ contains a strictly similar copy  of $A$
which is valid in
$r_{k_i}(T)$ and such that $\max(A)\sse\max(r_{k_i}(T))$.
Let $k_{-1}=0$, $T_{-1}=T$,
and
 $U_{-1}=r_{0}(T)$.

Suppose $i<\om$, and $U_{i-1}\ssim r_{k_{i-1}}(T)$
 and $T_{i-1}$ are given satisfying that for each $A'\in\Sim^s_{U_{i-1}}(A)$ valid in $U_{i-1}$ with $\max(A)\sse \max(U_{i-1})$,
$h$ is homogeneous on $\Ext_{U_{i-1}}^{SP}(A',C)$.
Let $U_i$ be in $r_{k_i}[U_{i-1},T_{i-1}]$.
Enumerate the set of all $A'\in\Sim^s_{U_i}(A)$ which are valid in $U_i$ and have $\max(A')\sse \max(U_i)$ as $\lgl A_0,\dots,A_n\rgl$.
Apply Lemma \ref{lem.endhomog} to obtain  $R_{0}\in [U_i,T_{i-1}]$ which is end-homogeneous for $\Ext^{SP}_{R_0}(A_0,C)$.
Then
apply Lemma \ref{lem.Case(c)} to
obtain $R'_{0}\in [U_i,R_0]$
such that $\Ext^{SP}_{R'_{0}}(A_0,C)$ is homogeneous for $h$.
Given $R'_{j}$  for $j<n$,
apply Lemma \ref{lem.endhomog} to obtain a $R_{j+1}\in [U_i,R'_{j}]$ which is end-homogeneous for $\Ext^{SP}_{R_{j+1}}(A_{j+1},C)$.
Then
apply Lemma \ref{lem.Case(c)} to
obtain $R'_{j+1}\in [U_i,R_{j+1}]$
such that $\Ext^{SP}_{R'_{j+1}}(A_{j+1},C)$ is homogeneous for $c$.
Let $T_i=R'_n$.

Let $U=\bigcup_{i<\om}U_i$.
Then $U\le T$ and $h$ has the same color on $\Ext_U^{SP}(A,C)$
for   each
$A'\in\Sim^s_U(A)$ which is valid in $U$.
Finally, take $S\le U$.
Then for each $k<\om$, $r_k(S)$ is valid in $U$,
so in particular,
 each $A'\in\Sim^s_S(A)$ is valid in $U$.
Hence,
$h$ is homogeneous on  $\Ext_S^{SP}(A',C)$.
\end{proof}

A similar lemma holds for the setting of Case (a) in Theorem \ref{thm.matrixHL}.
Since the critical node is a splitting node in this case,
 we do not need to restrict to \SPOC\ copies of $A$ in $T$.

\begin{lem}\label{lem.fusionsplit}
Let $T$ be a strong coding tree and let $A,C,h$ be as in Case (a) of Theorem \ref{thm.matrixHL}.
Then there is a strong coding tree $S\le T$ such that for each  $A'\in\Sim^s_S(A)$, $\Ext_S(A',C)$ is homogeneous for $h$.
\end{lem}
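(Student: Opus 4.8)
\textbf{Proof proposal for Lemma \ref{lem.fusionsplit}.}
The plan is to follow the exact blueprint of Lemma \ref{lem.Case(b)}, only simplified, since in Case (a) the critical node of $C\setminus A$ is a splitting node, so $\Ext_T(A',C)=\Ext^{SP}_T(A',C)$ for every $A'$ and there is no need to pass through end-homogeneity and the uniformizing forcing $\bQ$; Case (a) of Theorem \ref{thm.matrixHL} already delivers full homogeneity directly. Thus the only work is to upgrade the single-copy statement of Case (a) to a statement covering all strong-similarity copies of $A$ inside one subtree $S$, which is a routine fusion.

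First I would enumerate as $(k_i)_{i<\om}$ those integers such that $r_{k_i}(T)$ contains a copy $A'$ of $A$ that is strongly similar to $A$, valid in $r_{k_i}(T)$, and with $\max(A')\sse\max(r_{k_i}(T))$ — exactly as in Lemma \ref{lem.Case(b)}. Set $T_{-1}=T$ and $U_{-1}=r_0(T)$. Inductively, given $U_{i-1}\ssim r_{k_{i-1}}(T)$ and a strong coding tree $T_{i-1}$ such that $h$ is homogeneous on $\Ext_{U_{i-1}}(A',C)$ for every $A'\in\Sim^s_{U_{i-1}}(A)$ valid in $U_{i-1}$ with $\max(A')\sse\max(U_{i-1})$, pick $U_i\in r_{k_i}[U_{i-1},T_{i-1}]$ and enumerate the finitely many $A'\in\Sim^s_{U_i}(A)$ that are valid in $U_i$ with $\max(A')\sse\max(U_i)$ as $\lgl A_0,\dots,A_n\rgl$. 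Starting with $R_{-1}=T_{i-1}$, for each $j\le n$ apply Case (a) of Theorem \ref{thm.matrixHL} with $B=U_i$, $A=A_j$, to the restriction of $h$, obtaining $R_j\in[U_i,R_{j-1}]$ such that $h$ is monochromatic on $\Ext_{R_j}(A_j,C)$; then set $T_i=R_n$. Since each step shrinks inside $[U_i,\cdot]$, the homogeneity obtained for $A_0,\dots,A_{j-1}$ persists, and $U_i$ extends to $T_i$ while remaining an initial segment, so the induction hypothesis is maintained.

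Finally let $U=\bigcup_{i<\om}U_i$; this is a strong coding tree with $U\le T$, and $h$ is constant on $\Ext_U(A',C)$ for every $A'\in\Sim^s_U(A)$ valid in $U$. Then take any $S\le U$; by Lemma \ref{lem.facts} (or rather by the fact that $r_k(S)$ is valid in $U$ for every $k$, since $S\le U$), every $A'\in\Sim^s_S(A)$ is valid in $U$, so $h$ is homogeneous on $\Ext_S(A',C)=\Ext^{SP}_S(A',C)$, as desired.

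I expect no serious obstacle here: the content is entirely carried by Case (a) of Theorem \ref{thm.matrixHL}, and the surrounding argument is a verbatim transcription of the fusion in Lemma \ref{lem.Case(b)} with the two $\bQ$-forcing lemmas deleted. The only points requiring a moment of care are bookkeeping ones — checking that at each stage the finitely many copies $A_j$ that must be handled all lie with maximal nodes at the current top level $\max(U_i)$ (so that the hypothesis ``$A$ is a subtree of $B$ with $\max(A)\sse\max(B)$'' of Theorem \ref{thm.matrixHL} is met with $B=U_i$), and that passing to a final $S\le U$ does not create new strong-similarity copies of $A$ outside those already made homogeneous, which holds because validity of every initial segment of $S$ in $U$ forces each copy in $S$ to be a copy already accounted for in $U$.
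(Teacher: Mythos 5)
Your proposal is correct and is essentially the paper's own proof: the paper proves Lemma \ref{lem.fusionsplit} by running the fusion argument of Lemma \ref{lem.Case(b)} verbatim with Case (a) of Theorem \ref{thm.matrixHL} substituted for Lemmas \ref{lem.endhomog} and \ref{lem.Case(c)}, which is exactly what you spell out, including the final passage to $S\le U$ to handle validity of all copies of $A$.
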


\begin{proof}
Similarly to  the  fusion argument in proof of  Lemma \ref{lem.Case(b)} but
applying Case (a) of Theorem \ref{thm.matrixHL}
in place of Lemmas \ref{lem.endhomog} and \ref{lem.Case(c)},
one  builds a strong coding tree $S\le T$ such that for each copy  $A'$  of $A$ in $S$,
$\Ext_S(A',C)$  is homogeneous for $h$.
\end{proof}

\noindent {\bf Proof of Theorem \ref{thm.MillikenIPOC}}.
The proof is by induction on the number of critical nodes.
Suppose first that $A$ consists of a single node.
Then such a node must be a splitting node in $0^{<\om}\cap T$,
so
$\Sim^s_{T}(A)$ is the infinite set of all splitting nodes in $0^{<\om}\cap T$.
Let $h$ be any finite coloring on $\Sim^s_{T}(A)$.
By Ramsey's Theorem,
 infinitely many members of $\Sim^s_{T}(A)$  must have the same $h$ color,
so there is a  subtree $S\le T$ for which
all its
nodes in $S\cap 0^{<\om}$
have the same $h$ color.
Such an $S\le T$ exists
 by the definition of strong coding tree, since $T$ is strongly skew,  perfect, and the coding nodes are dense in $T$.

Now assume that $n\ge 1$ and  the theorem holds
for each finite tree $B$  with  $n$ or less critical nodes
such that $B$ satisfies the \SPOC\ and  $\max(B)$ contains a node which is a sequence of all $0$'s.
Let $C$ be a finite tree with $n+1$ critical nodes
containing a maximal node in $0^{<\om}$, and suppose
 $h$  maps $\Sim^s_{T}(C)$ into finitely many colors.
Let  $d$ denote the maximal critical node in $C$ and let
  $B=\{t\in C: |t|<|d|\}$.
Apply
Lemma \ref{lem.Case(b)} or \ref{lem.fusionsplit}, depending on whether $d$ is a coding or splitting node,
to obtain $T'\le T$ so that for each $V\in \Sim^s_{T'}(B)$,  the set $\Ext_{T'}^{SP}(V,C)$ is homogeneous for $h$.
Define $g$ on $\Sim_{T'}^s(B)$ by  letting $g(V)$ be the value of  $h$ on $V\cup X$ for any $X\in\Ext_{T'}^{SP}(V,C)$.
By the induction hypothesis,
there is an $S\le T'$ such that $g$ is homogeneous on $\Sim_S^{SP}(B)$.
It follows that $h$ is homogeneous on $\Sim_S^{SP}(C)$.

To finish, let $A$ be any tree satisfying the \SPOC\ where $\max(A)$ does not contain a member of $0^{<\om}$,
and let $g$ be a  finite coloring of $\Sim^s_T(A)$.
Let $l_A$ denote the longest length of nodes in $A$,
and let $C$ be the tree induced by $A\cup\{0^{l_A}\}$.
Then there is a one-to-one correspondence between members of $\Sim_T^s(A)$ and $\Sim^s_T(C)$;
say $\varphi: \Sim_T^s(A)\ra \Sim^s_T(C)$ by
definining $\varphi (A')$ to be the member of $\Sim^s_T(C)$  which is the tree induced  by adding the node $0^{l_{A'}}$ to $A'$.
For $C'\in \Sim_T^s(C)$, define $h(C')=g(\varphi^{-1}(C'))$.
Take $S\le T$ homogeneous for $h$.
Then $S$ is homogeneous for $g$ on $\Sim^s_S(A)$.
\hfill $\square$
\vskip.1in


\section{Incremental strong coding trees}\label{sec.squiggle}

This section develops the notion of incremental new sets of parallel $1$'s,
and the related concepts of
\IPOC,
incremental
 strong coding subtrees, and  sets of witnessing coding nodes.
The main lemma, Lemma \ref{lem.squiggletree},
will be instrumental in attaining
the Ramsey theorem in the next section.
This will be a Ramsey theorem for finite  colorings of
strictly similar copies of any given
 finite subtree of a strong coding tree.
The work in this section sets the stage  for the  removal of the requirement of
 any form of \POC\ on the finite tree whose copies are being colored.

\begin{defn}[Incremental  parallel $1$'s]\label{defn.incrementalpo}
Let $Z$ be a  finite
 subtree of a strong coding tree $T$,
 and let $\lgl l_j:j<\tilde{j}\rgl$
list in increasing order the minimal lengths
of new  parallel $1$'s in $Z$.
We say that
$Z$ has {\em incremental  new sets of  parallel $1$'s},
or simply
{\em incremental parallel $1$'s},
 if
the following holds.
For each $j<\tilde{j}$ for which
\begin{equation}
Z_{l_j,1}:=\{z\re (l_{j+1}):z\in Z, \ |z|>l_j,\mathrm{\  and \ } z(l_j)=1\}
\end{equation}
has size at least three,
letting $m$  denote the length of the longest critical node in $Z$ below $l_j$,
for each
proper subset $Y\subsetneq Z_{l_j,1}$  of cardinality at least two,
there is a $j'<j$ such that  $l_{j'}>m$,
$Y_{l_{j'},1}:=\{y\re(l_{j'}+1):y\in Y$ and $y(l_{j'})=1\}$ has the same size as $Y$,
and
$Y_{l_{j'},1}=Z_{l_{j'},1}$.

We shall say that an infinite  tree $S$ has {\em incremental new parallel $1$'s} if for  each
$l<\om$, the
initial subtree $S\re l$ of $S$
 has incremental new parallel $1$'s.
\end{defn}

\begin{defn}[\IPOC]\label{defn.IPOC}
Let $Z$ be a
 subtree of a strong coding tree $T$.
We say that $Z$ satisfies the {\em \IPOC} if
$Z$ has incremental new parallel $1$'s and  satisfies the \POC.
\end{defn}

Thus, to satisfy the \IPOC, a tree must have a coding node witnessing {\em each} of its new sets of parallel $1$'s,  and these are occuring incrementally.
Note that any strong coding tree does not satisfy the \IPOC.
In the next section,
we will be  interested in extending  finite trees  $A$
to trees $E$ which
 satisfy the \IPOC,
for such $E$ automatically satisfy the \SPOC, so the Ramsey theorems from the previous section can be applied.

The next definition of an incremental strong coding tree will be vital to finding bounds for the big Ramsey degrees in $\mathcal{H}_3$.

\begin{defn}[Incremental Strong Coding Tree]\label{defn.incrementalsct}
A strong coding tree $T$ is called {\em incremental}
if  it satisfies the following.
Let  $n$ be any integer  for which there are at least three distinct nodes  in $T\re (|c^T_n|+1)$ which have passing number $1$ at $c^T_n$, and
list
the set of those nodes as $\lgl t_i: i<\tilde{i}\rgl$.
Let $m$ denote the  length of the maximal splitting node in $T$ below $c^T_n$.
Let $\mathcal{P}$ denote the collection of all proper subsets  $P\sse \tilde{i}$ of size at least two, and
let $\tilde{k}=|\mathcal{P}|$.
Then there is an ordering $\lgl P_k:k<\tilde{k}\rgl$
of $\mathcal{P}$
and
 a strictly increasing sequence $\lgl p_k:k<\tilde{k}\rgl$ such that
\begin{enumerate}
\item[(i)]
$m<p_0$ and $p_{\tilde{k}-1}<|c_n^T|$;
\item[(ii)]
$k<k'<\tilde{k}$ implies $P_k\not\contains P_{k'}$;
and
\item[(iii)]
For each $k<\tilde{k}$,
$p_k$ is minimal  such that $\{i<\tilde{i}:t_i(p_k)=1\}=P_k$.
\end{enumerate}
\end{defn}

\begin{defn}[Incrementally witnessed  parallel $1$'s]\label{defn.incremental}
Let $S\le T$  be an incremental strong coding tree.
We say that  the sets of  parallel $1$'s  in $S$  are  {\em incrementally witnessed in $T$}
if the following hold.
For each $n<\om$,
given $\mathcal{P}$, $\lgl P_k:k<\tilde{k}\rgl$, and
$\lgl p_k:k<\tilde{k}\rgl$ satisfying Definition \ref{defn.incrementalsct},
there is a coding node $w_{n,k}$ in $T$  satisfying
\begin{enumerate}
\item
$|d^S_{m_n-1}|<|w_{n,0}^{\wedge}|<p_0\le |w_{n,0}|<|w_{n,1}^{\wedge}|<p_1\le |w_{n,1}|<\dots  <|w_{n,\tilde{k}-1}^{\wedge}|<p_{\tilde{k}-1}\le|w_{n,\tilde{k}-1}|<|c^S_n|$.
\item
$w_{n,k}$ witnesses the parallel $1$'s in $S_{p_k,1}$;
that is,
for all $z\in S\re (p_k+1)$,
$z(|w_k|)=1$ if and only if $z(p_k)=1$.
\end{enumerate}
\end{defn}

The main lemma of this section shows  that given a strong coding tree $T$,
there is an incremental strong coding subtree $S\le T$ and
moreover,   a set $W\sse T$ of coding nodes disjoint from $S$ such that
each  new set of parallel $1$'s in $S$ is witnessed by a coding node in $W$.
This set-up is what will allow for the definition and use of envelopes in the next section,
as it will ensure that subtrees from $S$ can be enhanced with witnessing coding nodes from $W$ so that their union satisfies the \SPOC.
This will allow  application of  Theorem
\ref{thm.MillikenIPOC}
to
obtain upper bounds on the finite  big Ramsey degrees in the universal triangle-free graph.

\begin{lem}\label{lem.squiggletree}
Let $T$ be a strong coding tree.
Then there is an incremental strong coding tree $S\le T$ and a set of coding nodes $W\sse T$ such that  each
 new set of parallel $1$'s in $S$ is
 incrementally witnessed in $T$ by  a coding node in $W$.
\end{lem}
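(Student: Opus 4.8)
\textbf{Proof proposal for Lemma \ref{lem.squiggletree}.}

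The plan is to build $S$ and $W$ together by a single recursion on the critical-node levels of $S$, so that at each stage we both extend the approximation to $S$ inside $T$ and reserve the necessary witnessing coding nodes in $W$. We work with the approximation spaces $(\mathcal{T}(T),\le,r)$ from Definition \ref{AT}. Let $\lgl c^S_n:n<\om\rgl$ denote the coding nodes of $S$ once constructed. The key observation is that, at the moment we are about to pass from the $(n-1)$-st coding-node level of $S$ to its $n$-th, the set of nodes of $\Spl(S,n)$ extending to splitting nodes in the $n$-th interval is determined, hence so is the set $\lgl t_i:i<\tilde i\rgl$ of nodes with passing number $1$ at the coding node $c^S_n$ we are about to place, together with the collection $\mathcal{P}$ of proper subsets of $\tilde i$ of size at least two. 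So we have complete information about which sets of parallel $1$'s must be incrementally produced in the $n$-th interval of $S$.

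First I would fix, for each $n$, an enumeration $\lgl P_k:k<\tilde k\rgl$ of $\mathcal{P}$ that is compatible with reverse inclusion, i.e.\ $P_k\not\supseteq P_{k'}$ whenever $k<k'$ (such an enumeration exists: enumerate $\mathcal{P}$ from largest cardinality down to smallest, breaking ties arbitrarily). Then, working inside the portion of $T$ lying strictly above the current top level of our approximation to $S$, I would carry out $\tilde k$ successive applications of the extension machinery of Section \ref{sec.4} — principally Lemmas \ref{lem.factssplit} and \ref{lem.pnc}, packaged as Lemma \ref{lem.facts} — to route the nodes $\lgl t_i:i<\tilde i\rgl$ through $T$ so that: the nodes in $P_k$ acquire a brand-new common $1$ at some level $p_k$; this is witnessed in $T$ by a coding node $w_{n,k}$ chosen (via density of coding nodes and Fact \ref{fact.perfect}) so that the meets $w_{n,k}^\wedge$ and the levels $p_k$ interleave as required in Definition \ref{defn.incremental}(1); and no other new sets of parallel $1$'s are created along the way (this is exactly what Lemmas \ref{lem.poc} and \ref{lem.pnc} guarantee when all non-distinguished nodes are extended leftmost). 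After the $\tilde k$ increments are complete, apply Lemma \ref{lem.facts} once more to extend to a coding-node level, placing $c^S_n$ there with the prescribed passing numbers; by the \POC\ this coding node witnesses the full set $\{t_i:i<\tilde i\}$ of parallel $1$'s, which was the one set not in $\mathcal{P}$. Put the $w_{n,k}$ into $W$ and iterate; set $S=\bigcup_n r_{m_n+1}(S)$ and take $W$ to be the collection of all the $w_{n,k}$.

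The verifications are then bookkeeping. That $S\le T$ follows because every step used Lemma \ref{lem.facts}, which preserves validity and the \POC, and because the coding nodes are dense by construction, so $S$ is a strong coding tree strongly similar to $T$ (any two strong coding trees are strongly similar, and validity is maintained throughout). That $S$ is incremental with parallel $1$'s incrementally witnessed in $T$ is immediate from clauses (i)--(iii) of Definition \ref{defn.incrementalsct} and clauses (1)--(2) of Definition \ref{defn.incremental}, which were literally engineered into each stage; clause (ii) of Definition \ref{defn.incrementalsct} is the reverse-inclusion property of the enumeration $\lgl P_k\rgl$ we fixed. Finally $W$ consists of coding nodes of $T$; one checks they are distinct from the coding nodes of $S$ simply because each $w_{n,k}$ was chosen strictly below $c^S_n$ and strictly above $c^S_{n-1}$, in an interval of $S$ containing no coding node of $S$.

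The main obstacle is the simultaneous control demanded by Definition \ref{defn.incrementalsct}(iii) together with the ``no other new parallel $1$'s'' requirement: when we force the set $P_k$ to acquire a fresh parallel $1$ at level $p_k$, we must be sure we have not inadvertently created a new common $1$ for some set not yet scheduled, nor for some $P_{k'}$ with $k'<k$ (which would make $p_k$ fail to be minimal for $P_{k'}$, or worse, violate incrementality). This is precisely why the enumeration is taken compatible with reverse inclusion — a subset can only gain its first full parallel $1$ after all its supersets already have theirs — and why every node not in the currently active set $P_k$ is extended leftmost, so that Lemma \ref{lem.poc} applies and pins any accidental parallel $1$'s below the current level. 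Managing the interleaving of the meet-levels $|w_{n,k}^\wedge|$ with the $p_k$'s, while keeping everything below $|c^S_n|$, requires repeated appeal to perfectness (Fact \ref{fact.perfect}) and to the fact — which is where one uses that the ambient tree is genuinely a strong coding tree — that between the last splitting node of the $n$-th interval and the coding node $c^S_n$ there is arbitrarily much room, guaranteed by the Splitting Criterion and density of coding nodes. Once this interval-arithmetic is set up correctly, each individual step is a direct invocation of Lemma \ref{lem.facts}, and the recursion goes through.
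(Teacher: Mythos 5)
Your construction is, in outline, the paper's own: a stage-by-stage recursion through the approximation space, with one increment per set $P_k$ carried out by Lemma \ref{lem.facts} while all non-active nodes are extended leftmost (so Lemma \ref{lem.poc} blocks accidental new parallel $1$'s), the witnessing coding nodes $w_{n,k}$ chosen in $T$ between the increments, and each stage closed off by a final application of Lemma \ref{lem.facts} placing $c^S_n$; taking $W$ to be the collection of the $w_{n,k}$ then gives the lemma.

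There is, however, a concrete error at the one point where the order of the increments matters. You correctly state the requirement $P_k\not\supseteq P_{k'}$ for $k<k'$ (clause (ii) of Definition \ref{defn.incrementalsct}), but the enumeration you offer to realize it --- largest cardinality first --- violates it: for $\tilde i=4$, listing $\{0,1,2\}$ before $\{0,1\}$ gives $P_k\supsetneq P_{k'}$ with $k<k'$. Your guiding sentence is likewise backwards: by Definition \ref{defn.incrementalpo}, when a set of parallel $1$'s of size at least three appears, every proper subset of size at least two must \emph{already} have appeared as a full set of parallel $1$'s at an earlier level above the last critical node --- subsets first, supersets later. So the increments must be scheduled so that no earlier $P_k$ contains a later $P_{k'}$; enumerating by increasing cardinality (or any linear extension of $\subseteq$) achieves this and is what the paper's proof does. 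As literally written, your recursion would output a tree failing incrementality; with the enumeration reversed, the argument goes through and coincides with the paper's. A further small caution: the parenthetical claim that any two strong coding trees are strongly similar is neither needed nor justified; strong similarity of $S$ with $T$ should be (and in your setup implicitly is) secured by performing each stage inside $r_{m_n+1}[\,\cdot\,,T]$, exactly as in the paper.
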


\begin{proof}
Let  $\lgl d^T_m:m<\om\rgl$ denote the critical nodes in $T$ in order of  increasing length.
Let
$\lgl m_n:n<\om\rgl$ denote the indices
such that $d^T_{m_n}=c^T_n$, so the $m_n$-th critical node in $T$ is the $n$-th coding node in $T$.
Let $S_0$ be a valid subtree of $T$ which is strongly similar to $r_{m_0+1}(T)$.
Since $r_{m_0+1}(T)$ has only one node with passing number $1$ at $c^T_0$,
there is nothing to do;
vacuously $S_0$ has  incremental new sets of parallel $1$'s and these are vacuously witnessed in $T$.

Suppose now that $n\ge 1$ and we have chosen $S_{n-1}\ssim r_{k_{n-1}+1}(T)$
 valid in $T$ so that $S_{n-1}$ is incremental and has its new sets of parallel $1$'s incrementally witnessed in $T$.
Take some $S'_{n-1}\in r_{k_n}[S_{n-1},T]$, so $S_{n-1}'$ is valid in $T$.
There   is a one-to-one correspondence between the nodes in  $\max(r_{k_n+1}(T))$ and $\max(r_{k_n}(T))^+$,
and hence also between   $\max(r_{k_n+1}(T))$ and
$\max(S_{n-1}')^+$.
Let $\varphi :\max(r_{k_n+1}(T))\ra \max(S_{n-1}')^+$ be the lexicographic order preserving bijection.
Let $\lgl t_i:i<\tilde{i}\rgl$ be the lexicographically increasing enumeration of those nodes in $\max(r_{k_n+1}(T))$ which have passing number $1$ at $c^T_n$.
Let $s_i=\varphi(t_i)$.
Then $\{s_i:i<\tilde{i}\}$ is the set of nodes which must extend to have passing number $1$ at  the next coding node in $S$, $c^S_n$.
If $\tilde{i}\le 2$, there is nothing to do; extend to some $S_n\in r_{k_n+1}[S_{n-1}',T]$.

Otherwise, $\tilde{i}\ge 3$.
List all subsets of $\tilde{i}$ of size at least two as $\lgl P_k:k<\tilde{k}\rgl$ in any manner so long as the following is satisfied:
For each $k<k'<\tilde{k}$, $P_k\not\contains P_{k'}$.
Let $X_0$ denote $\max(S_{n-1}')^+$.
Given $k<\tilde{k}$
 and $X_k$,
let  $w_{n,k}^{\wedge}$ be
some splitting node in $T$ in $0^{<\om}$ with length
above the lengths in $X_k$.
Extend all nodes in $X_k$ leftmost in $T$ to length $|w_{n,k}^{\wedge}|+1$,
and let $Y_k$ denote the level  set of these extensions.
Apply Lemma \ref{lem.facts} to
extend the nodes in  $Y_k\cup\{{w_{n,k}^{\wedge}}^{\frown}1\}$ to a  level set $Z_k$ in $T$ such that the following hold:
\begin{enumerate}
\item
The extension of ${w_{n,k}^{\wedge}}^{\frown}1$ is a coding node, label it $w_{n,k}$;
\item
Enumerating $Z_k\setminus\{w_{n,k}\}$ as $\{z_i:i<\tilde{i}\}$ so that for each  $z_i\contains s_i$,
then
for each $i<\tilde{i}$, the immediate extension of $z_i$ in $T$ is $1$ if and only if $i\in P_k$.
\item
The only possible set of new parallel $1$'s in $Z_k$ over $S'_{n-1}\cup X_k$ is $\{z_i:i\in P_k\}$.
\end{enumerate}
If $k<\tilde{k}-1$,
let $X_{k+1}=Z_k$ and continue the procedure.
Upon obtaining $Z_{\tilde{k}-1}$,
apply Lemma \ref{lem.facts}
to obtain an $S_n\in r_{m_n+1}[S_{n-1}',T]$
such that $\max(S_n)$ extends $Z_{\tilde{k}-1}$.

To finish, let $S=\bigcup_{n<\om} S_n$.
Then $S\le T$, $S$ is incremental, and
the sets of parallel $1$'s in $S$ are strongly incrementally witnessed in $T$.
Let $W=\{w_{n,k}:n<\om,\ k<\tilde{k}_n\}$,
where $\tilde{k}_n$ is the number of subsets of $S_{l_n,1}$ of size at least two.
\end{proof}


\section{Ramsey theorem for  strict similarity types}\label{sec.1color}

The strongest Ramsey theorem  proved  so far is
Theorem \ref{thm.MillikenIPOC},
 a Milliken-style theorem for colorings of  finite trees
 satisfying the \SPOC.
In this section we obtain a
 general Ramsey theorem for all {\em strictly similar} copies (Definition \ref{defn.ssimtype}) of any finite tree for which the maximal nodes are exactly the coding nodes forming an antichain.
This involves a new
notion of envelope for strongly diagonal subsets of  strong coding trees, the main property being that any envelope satisfies the \SPOC.
Then applying Theorem \ref{thm.MillikenIPOC},
Lemma \ref{lem.squiggletree},
and envelopes,
we obtain
 Theorem \ref{thm.mainRamsey},
the main Ramsey theorem for strong coding trees in this paper.

Recall from
Definition \ref{def.2.2.Sauer}
that a strongly diagonal subset of $2^{<\om}$  is an antichain $Z$ such that its meet closure forms a transversal with the property that for any splitting node $s\in Z^{\wedge}$, all   nodes in $Z^{\wedge}$ of length greater than $|s|$, except for those nodes extending $s$,
  have passing number $0$ at $s$.
It is a byproduct of the definition of  strong coding trees
 that any  subset of  a strong coding tree forming an antichain is in fact strongly diagonal.
Henceforth, we shall use the term {\em antichain of coding nodes}, or simply {\em antichain}, to refer to  strongly diagonal sets of coding nodes in a strong coding tree.
If $Z$ is an antichain,
then  by the {\em tree induced by $Z$}
we mean the set
\begin{equation}
\{z\re |u|:z\in Z\mathrm{\ and\ } u\in Z^{\wedge}\}.
\end{equation}
We say that an antichain satisfies the \POC\ (\SPOC) if and only if the tree it induces satisfies the \POC\ (\SPOC).

Let $Z$ be an antichain of coding nodes.
Enumerate the nodes in $Z$  in order of increasing length as $\lgl z_i:i< \tilde{i}\rgl$.
For each $l< |z_{\tilde{i}-1}|$,
let
\begin{equation}
I_l^Z=\{i<\tilde{i}:|z_i|>l\mathrm{\ and\ } z_i(l)=1 \},
\end{equation}
and  define
\begin{equation}
Z_{l,1}=\{z_i\re(l+1):i\in I^Z_l\}.
\end{equation}
Thus, $Z_{l,1}$ is the collection of all $z_i\re(l+1)$ which have passing number $1$ at level $l$.
Given $l$ such that  $|Z_{l,1}|\ge 2$,
we say that the set
of parallel $1$'s
at level $l$ is {\em witnessed by the coding node $z_j$   in $Z$}
if $z_i(|z_j|)=1$ for each $i\in I_l^Z$, and
 either $|z_j|\le l$
or else
both $|z_j|> l$ and
 $Z$ has no splitting nodes and no coding nodes of length in $[l,|z_j|]$.
A level $l$
is
{\em the minimal level of a new set  of parallel $1$'s in $Z$} if
$|I^Z_l|\ge 2$ and
whenever
 $l'<l$ and $I^Z_{l'}\sse I^Z_l$,
then $|I^Z_{l'}|<|I^Z_l|$.
It follows that if there are two or more members of $Z$
extending some ${0^l}^{\frown}1$,
then $l$ is the minimal level of a new set of parallel $1$'s,
namely of $I_l^Z$.

\begin{defn}\label{defn.admint}
Given $Z$ an antichain  of coding nodes,
if $l$ is the minimal level of a new set of parallel $1$'s in $Z$, {\em the  admissible interval for $I^Z_l$}
is the interval $[l,l^*]$, where $l^*> l$ is maximal satisfying the following:
\begin{enumerate}
\item
$Z^{\wedge}$ has no
 splitting node and no coding node  of length  in $(l,l^*)$.
\item
Each $l'\in (l,l^*]$
 is not the minimal level of a new set of parallel $1$'s in $Z$.
\end{enumerate}

If $l$ is the minimal level of a new set of parallel $1$'s in $Z$, we say that the set of parallel $1$'s indexed by
$I_l^Z$ is  {\em minimally witnessed in $Z$} if,
letting  $k<\tilde{i}$ be minimal such that
$|z_k|\ge l$,
$|z_k|$ is in the admissible interval $[l,l^*]$
and $z_k$ witnesses the parallel $1$'s in $I^Z_l$;
that is,
$\{i< \tilde{i}:z_i(|z_k|)=1\}=I^Z_l$.
Note that $z_k$ is in the interval $[l,l^*]$ if and only if either $|z_k|=l$ or $|z_k|=l^*$.
Otherwise, we say that $I^Z_l$ is {\em not minimally witnessed in $Z$}.
\end{defn}

The following fact  is immediate from the previous definition.

\begin{fact}\label{fact.thepoint}
If  all new sets of parallel $1$'s
are minimally witnessed in an antichain $Z$,
then the tree
induced by $Z$
 satisfies  the \SPOC.
\end{fact}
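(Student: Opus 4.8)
The plan is to prove this by unpacking the two definitions in play—minimally witnessed sets of parallel $1$'s in $Z$ (Definition~\ref{defn.admint}) and the \SPOC\ for the induced tree (Definition~\ref{defn.strPOC})—and showing that the dictionary between $Z$ and the tree $A$ it induces turns the former verbatim into the latter. Enumerate $Z=\lgl z_i:i<\tilde i\rgl$ in increasing length, and for each $l$ write $I^Z_l=\{i<\tilde i:|z_i|>l\text{ and }z_i(l)=1\}$ as in Section~\ref{sec.1color}. First I would record the basic structural facts about $A$: its terminal nodes are exactly the members of $Z$, and these are precisely the coding nodes of $A$, while the splitting nodes of $A$ are exactly the meets $z_i\wedge z_j$ with $i\ne j$, i.e.\ the members of $Z^{\wedge}\setminus Z$. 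Since $Z$ is diagonal, $Z^{\wedge}$ is transversal, so each length carries at most one critical node of $A$; and since $Z$ is strongly diagonal, whenever $l=|z_i\wedge z_j|$ is the length of a splitting node, every $z\in Z$ of length greater than $l$ not extending $z_i\wedge z_j$ has $z(l)=0$, so all nodes of $A$ with passing number $1$ at $l$ extend $(z_i\wedge z_j)^{\frown}1$, whence $|A_{l,1}|=1$ and $l$ is not a minimal level of a new set of parallel $1$'s in $A$. Since $A_{l,1}=\{z_i\re(l+1):i\in I^Z_l\}$ for every $l$, comparing the defining conditions shows that every minimal level of a new set of parallel $1$'s in $A$ is also one in $Z$.

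Next, fix $l$ a minimal level of a new set of parallel $1$'s in $A$; by the above it is also such a level in $Z$, so by hypothesis $I^Z_l$ is minimally witnessed in $Z$. Let $k<\tilde i$ be minimal with $|z_k|\ge l$ and let $[l,l^*]$ be the admissible interval for $I^Z_l$. Then $|z_k|\in\{l,l^*\}$ and $z_k$ witnesses $I^Z_l$, i.e.\ $\{i<\tilde i:z_i(|z_k|)=1\}=I^Z_l$; in particular every $z_i$ with $i\in I^Z_l$ has length strictly greater than $|z_k|$. I then verify the three clauses of the \SPOC\ with $c=z_k$. For clause (1): by minimality of $k$ there is no member of $Z$ of length in $[l,|z_k|)$, and $Z^{\wedge}$ has no splitting node of length in $(l,l^*)$ by the definition of admissible interval and none of length exactly $l$ because $l$ is a minimal-new level; hence $z_k$ is the critical node of $A$ of least length $\ge l$, and it is a coding node. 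Clause (2) is immediate since the terminal nodes of $A$ are the members of $Z$, none of length in $[l,|z_k|)$. Clause (3) follows because $A_{l,1}=\{z_i\re(l+1):i\in I^Z_l\}$ while $A_{|z_k|,1}=\{z_i\re(|z_k|+1):i\in I^Z_l\}$ by the witnessing property, so $A_{l,1}=\{t\re(l+1):t\in A_{|z_k|,1}\}$. Finally the \POC\ for $A$ is obtained by the same analysis applied to an arbitrary set of two or more nodes $\{t_i:i<\tilde i\}$ of $A$ sharing passing number $1$ at a least level $l'$: the members of $Z$ extending the $t_i$ have indices forming a set $S$, and $l'$ is the least level with $S\sse I^Z_{l'}$, which a short computation shows forces $l'$ to be a minimal-new level of $Z$; the witnessing coding node $z_k$ then lies in the admissible interval above $l'$ with no critical node of $A$ strictly between $l'$ and $|z_k|$, and it witnesses $\{t_i\}$ because $S\sse I^Z_{l'}$.

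The step I expect to require the most care is this last one—the \POC\ clause—specifically pinning down that $z_k$ meets the position requirements of Definition~\ref{defn.parallel1sProp} (the length constraint and the absence of critical nodes strictly between $l'$ and $|z_k|$). This rests on combining the minimality of $k$, the fact that no node of $Z^{\wedge}$ has length strictly inside the admissible interval (coming from strong diagonality together with Definition~\ref{defn.admint}), and the observation that meet-levels never carry new parallel $1$'s; in particular one uses that any node of $A$ with a $1$ at $l'$ and length at most $|z_k|$ is forced to have length exactly $|z_k|$, so the relevant extensions do pass the witness. Everything else is routine bookkeeping with the $Z$-to-$A$ dictionary.
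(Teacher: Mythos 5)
Your proof is correct and takes exactly the route the paper intends: the paper gives no separate argument, declaring the fact immediate from Definition \ref{defn.admint}, and your clause-by-clause unpacking (identifying the coding nodes of the induced tree with the members of $Z$, ruling out meet levels as minimal levels of new parallel $1$'s via strong diagonality, and verifying the three \SPOC\ clauses and the \POC\ with the minimal witness $z_k$) is precisely that verification. The only delicate point, nodes of the induced tree lying at the level of the witness itself, you handle in the way the paper's conventions require, namely by reading passing numbers at a coding node's level off the immediate extensions as in Convention \ref{conv.POC}, so there is no gap.
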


\begin{defn}[Strict similarity type]\label{defn.ssimtype}
Given $Z$ a finite   antichain   of coding nodes in some strong coding tree $T$, list the minimal levels of new sets of parallel $1$'s  in $Z$  which are not minimally witnessed in $Z$
 in increasing order as $\lgl l_j: j<\tilde{j}\rgl$.
Enumerate all nodes in $Z^{\wedge}$ as $\lgl u^Z_m:m< \tilde{m}\rgl$ in order of increasing length.
Thus, each $u^Z_m$ is either a splitting node in $Z^{\wedge}$ or else  a coding node $z_i$ for some $i<\tilde{i}$.
The sequence
\begin{equation}
\lgl  \lgl l_j:j<\tilde{j}\rgl,
\lgl I^Z_{l_j}:j< \tilde{j}\rgl,
\lgl |u^Z_m|:m<\tilde{m}\rgl\rgl
\end{equation}
is  {\em  the strict similarity  sequence of $Z$}.

Let $Y$ be another finite  antichain in  $T$, and
let
\begin{equation}
\lgl
\lgl p_j:j<\tilde{k}\rgl,
\lgl I^Y_{p_j}:j< \tilde{k}\rgl,
\lgl |u^Y_m|:m<\tilde{n}\rgl\rgl
\end{equation}
be its  strict similarity  sequence.
We say that $Y$ and $Z$ have the same {\em strict similarity type} or are {\em strictly similar}, written $Y\sssim Z$, if
\begin{enumerate}
\item
$Y^{\wedge}$ and $Z^{\wedge}$ are strongly similar;
\item
$\tilde{j}=\tilde{k}$ and $\tilde{m}=\tilde{n}$;
\item
For each $j<\tilde{j}$, $I^Y_{n_j}=I^Z_{l_j}$;
and
\item
The function $\varphi:\{p_{j}:j<\tilde{j}\}\cup\{|u^Y_m|:m<\tilde{m}\}\ra \{l_{j}:j<\tilde{j}\}\cup\{|u^Z_m|:m<\tilde{m}\}$,
defined by $\varphi(p_{j})=l_{j}$ and
$\varphi(u^Y_m)=u^Z_m$,
is an order preserving bijection between these two linearly ordered sets of natural numbers.
\end{enumerate}
Define
\begin{equation}
\Sim^{ss}_T(Z)=\{Y\sse T:Y\sssim Z\}.
\end{equation}
\end{defn}

Note that  for two  antichains  $Y\sssim Z$,
 the map $f:Y\ra Z$ by $f(y_i)=z_i$ for each $i<\tilde{i}$ induces a strong similarity map from $Y^{\wedge}$ onto $Z^{\wedge}$ by defining $f(y_i\wedge y_j)=z_i\wedge z_j$
for each pair  $i,j<\tilde{i}$.
Then  $f(u^Y_m)=u^Z_m$ for each $m<\tilde{m}$.
Further,
by (3) and  (4) of Definition \ref{defn.ssimtype},
 this map preserves the order in which minimal sets of parallel $1$'s appear, relative to all  other  minimal sets of parallel $1$'s and the nodes in $Y^{\wedge}$ and $Z^{\wedge}$.

The definition of strictly similar in
Definition \ref{defn.ssimtype}
extends Definition \ref{defn.strictly.similar}
to finite sets which do not necessarily satisfy the \POC.
When $Z$ is an antichain such that its induced tree satisfies the \IPOC,
then Definitions \ref{defn.strictly.similar} and \ref{defn.ssimtype} coincide, and further,  for such $Z$, these coincide with the notion of  strongly similar.

\begin{fact}\label{fact.Claim0}
Let $T$ be a strong coding tree, and $A$ and $B$ be subsets of $T$.
Suppose $A$ satisfies the \IPOC.
Then $B\ssim A$ if and only if $B\sssim A$.
\end{fact}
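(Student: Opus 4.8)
\textbf{Proof proposal for Fact \ref{fact.Claim0}.}

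The plan is to unwind the definitions and observe that, for a tree $A$ satisfying the \IPOC, the extra data recorded in a strict similarity type beyond a strong similarity type is vacuous. Recall that the strict similarity sequence of an antichain $Z$ records, in addition to the strong similarity type of $Z^{\wedge}$, the list $\lgl l_j:j<\tilde{j}\rgl$ of minimal levels of new sets of parallel $1$'s in $Z$ which are \emph{not} minimally witnessed in $Z$, together with the index sets $I^Z_{l_j}$ and the position of these levels relative to the lengths of nodes in $Z^{\wedge}$. The key point is that if $A$ satisfies the \IPOC, then by definition $A$ satisfies the \POC, so every new set of parallel $1$'s in $A$ \emph{is} witnessed by a coding node in $A$; moreover, the \POC\ forces this witnessing coding node to be minimal in the admissible interval (clause (2) of the \POC\ says there are no splitting nodes and no coding nodes strictly between the least level $l'$ where the parallel $1$'s first appear and the length $l_n$ of the witnessing coding node $c_n$). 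Hence every minimal level of a new set of parallel $1$'s in $A$ is minimally witnessed in $A$, in the sense of Definition \ref{defn.admint}, so the list $\lgl l_j:j<\tilde{j}\rgl$ for $A$ is empty, and $\tilde{j}=0$.

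First I would record the following: for an antichain $A$ satisfying the \IPOC, its strict similarity sequence reduces to $\lgl \lgl\rgl, \lgl\rgl, \lgl |u^A_m|:m<\tilde{m}\rgl\rgl$, i.e.\ it carries exactly the same information as the strong similarity type of $A^{\wedge}$ (together with the passing number data already subsumed in strong similarity via clause (7) of Definition \ref{def.3.1.likeSauer}). Then I would argue both directions. For the direction $B\sssim A\Rightarrow B\ssim A$: condition (1) of Definition \ref{defn.ssimtype} directly gives $B^{\wedge}\ssim A^{\wedge}$, and the induced map $f:B\to A$ sending $b_i\mapsto a_i$ is the strong similarity; one checks it preserves coding nodes and passing numbers (clauses (6),(7) of Definition \ref{def.3.1.likeSauer}) because $B\sssim A$ includes $B^{\wedge}\ssim A^{\wedge}$ as an ordered structure and the passing numbers at coding nodes are part of that strong similarity data. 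For the converse $B\ssim A\Rightarrow B\sssim A$: I claim $B$ also satisfies the \IPOC. Indeed, a strong similarity $f:A\to B$ maps splitting nodes to splitting nodes and coding nodes to coding nodes, preserves lexicographic order, relative lengths, meets, and passing numbers at coding nodes; since the \POC\ and the incremental condition are phrased entirely in terms of these preserved features (lengths of critical nodes, which nodes have passing number $1$ at a given coding node, minimality of the witnessing level), $B$ inherits the \IPOC\ from $A$. Therefore $B$'s strict similarity sequence also has empty list of non-minimally-witnessed levels, and the remaining data (strong similarity of $B^{\wedge}$ with $A^{\wedge}$, and the trivial order isomorphism between their empty-plus-node-length sets) matches, so $B\sssim A$.

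The main obstacle, such as it is, is purely bookkeeping: one must verify carefully that the incremental condition of Definition \ref{defn.incrementalpo} is invariant under strong similarity, which requires checking that the clause ``for each proper subset $Y\subsetneq Z_{l_j,1}$ of cardinality at least two, there is a $j'<j$ with $l_{j'}>m$, $Y_{l_{j'},1}=Z_{l_{j'},1}$'' translates correctly across $f$ — this uses that $f$ preserves relative lengths (clause (5) of Definition \ref{def.3.1.likeSauer}) so the ordering of the minimal levels $l_j$ relative to each other and to the critical nodes is preserved, and preserves passing numbers at coding nodes so the sets $I^Z_l$ are preserved. I expect this to be straightforward given the machinery already developed, and indeed the excerpt states it as a ``Fact,'' signalling that no deep argument is intended; the proof is essentially the observation that \IPOC\ collapses the distinction between strict and strong similarity, combined with the invariance of \IPOC\ under strong similarity.
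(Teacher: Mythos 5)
There is a genuine gap at the central step. You argue that, because $A$ satisfies the \POC, every minimal level of a new set of parallel $1$'s in $A$ is \emph{minimally witnessed} in the sense of Definition \ref{defn.admint}, hence $\tilde j=0$ and the strict similarity data degenerates. But minimal witnessing asks for more than the \POC\ gives: the first coding node $z_k$ of $A$ at or above the level $l$ must lie in the admissible interval (so no other minimal level of new parallel $1$'s and no critical node may intervene before it) and must satisfy the \emph{equality} $\{i:z_i(|z_k|)=1\}=I^A_l$, whereas clause (2) of Definition \ref{defn.parallel1sProp} only forbids critical nodes between the first common $1$ and the witness and only yields the containment $I^A_l\sse\{i:z_i(|z_k|)=1\}$. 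The incremental half of the \IPOC\ does not close this gap; it typically creates it. Consider an antichain $A=\{w,a,b,c\}$ in an incremental strong coding tree in which $a,b,c$ all have passing number $1$ at $|w|$ and have no other parallel $1$'s witnessed lower down: by Definition \ref{defn.incrementalsct} the three pairs appear as exact sets of parallel $1$'s, one at a time, at levels just below $|w|$. Such an $A$ satisfies the \IPOC\ ($w$ witnesses every set of parallel $1$'s, and the pairs appear incrementally above the last critical node of $A$), yet each pair level is a minimal level of a new set of parallel $1$'s which is \emph{not} minimally witnessed: the next coding node of $A$ above it is $w$, whose passing-$1$ set is the whole triple, and the other pair levels intervene in its admissible interval. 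So $\tilde j=3$, not $0$, and the reduction ``\IPOC\ implies the strict similarity data is vacuous'' fails; this is precisely the situation that envelopes are introduced to repair.

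The converse direction also rests on an unproved invariance. You claim $B\ssim A$ implies $B$ satisfies the \IPOC\ because the \POC\ and the incremental condition are ``phrased entirely in terms of preserved features.'' They are not: both conditions, and the data $\lgl l_j\rgl$, $\lgl I_{l_j}\rgl$ entering Definition \ref{defn.ssimtype}, refer to passing numbers at the intermediate levels where new parallel $1$'s first occur, while strong similarity (clause (7) of Definition \ref{def.3.1.likeSauer}) controls passing numbers only at coding nodes; so neither the \IPOC\ for $B$ nor the strict similarity sequence of $B$ is determined by $B\ssim A$. For comparison, the paper offers no proof of this Fact at all — it records it as immediate from the preceding remark — and where it is used, $A$ is an envelope, for which every new set of parallel $1$'s is witnessed immediately and \emph{exactly} by one of the added coding nodes (clauses (2)--(4) of Definition \ref{defn.envelope}); that exact, immediate witnessing is exactly the property your argument needs, and it comes from the construction of envelopes, not from the \IPOC\ by itself. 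A correct write-up should either derive minimal witnessing from hypotheses that actually yield it, or restrict the statement to the class of sets (such as envelopes) for which it holds by construction.
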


The following notion of envelope is defined in terms of structure without regard to an ambient strong coding tree.
In any given strong coding tree $T$, there will
 certainly be  finite subtrees  of  $T$ which have no envelope in $T$.
This poses no problem to our intended application,
as  by the work done in the previous section,
inside a given strong coding tree $T$, there will be
an incremental strong coding tree  $S$ along with a set of witnessing coding nodes $W\sse T$ so that each finite antichain in  $S$ has an   envelope consisting of nodes from $W$.
Thus, envelopes of antichains in $S$ will exist in $T$.

\begin{defn}[Envelopes]\label{defn.envelope}
Let $Z$ be a finite   antichain of coding nodes and
let
$\lgl
\lgl l_j:j<\tilde{j}\rgl,
\lgl I_{l_j}:j<\tilde{j}\rgl,
\lgl |u_m|:m<\tilde{m}\rgl\rgl$
be the strict similarity sequence of $Z$.
A finite set  $E(Z)$  is an {\em   envelope} of $Z$ if
$E(Z)=Z\cup W$ is an antichain of coding nodes,
where
$W=\{w_j:j <\tilde{j}\}$,
such that the following hold:
 For each $j<\tilde{j}$,
\begin{enumerate}
\item
$w_j$ is in the admissible interval of $l_j$; that is,
$l_j\le |w_j|\le l_j^*$;
\item
$I_{|w_j|}=I_{l_j}$;
\item
 $w_j$ has no parallel $1$'s with any member of $Z\cup (W\setminus\{w_j\})$;
and
\item
 $l^*_{j-1}<|w_j^{\wedge}|<l_j$ and there is no member of
 $(Z\cup W)^{\wedge}$ with length in $(|w_j^{\wedge}|,|w_j|)$.
\end{enumerate}
\end{defn}

The set $W$ is called the set of {\em witnessing coding nodes}, since they minimally witness all parallel $1$'s in $Z$  not minimally witnessed  by any coding node  in $Z$.
The next fact follows immediately from the definitions.

\begin{fact}\label{fact.Claim1}
Let $S$ be any  strongly incremental strong coding tree and
 $Z$ be any antichain  in  $S$.
Then any envelope $E$ of $Z$
satisfies the \IPOC,
and hence also the \SPOC.
\end{fact}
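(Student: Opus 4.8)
The plan is to verify the two clauses of the \IPOC\ for an envelope $E=Z\cup W$ directly — that all of $E$'s new sets of parallel $1$'s are minimally witnessed within $E$, and that they appear incrementally — and then harvest the \SPOC\ essentially for free. The first of these, via Fact \ref{fact.thepoint}, already yields that the tree induced by $E$ satisfies the \SPOC, hence in particular the \POC; combined with incrementality this gives the \IPOC\ (Definition \ref{defn.IPOC}), and the \SPOC\ conclusion of the statement follows as well. Throughout I would work with the tree induced by $E$, noting that since $E$ is an antichain of coding nodes, its critical nodes are the splitting nodes of $(Z\cup W)^{\wedge}$ together with the coding nodes of $E$, and its terminal nodes are exactly the coding nodes of $E$.

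The key preliminary observation is that attaching the witnessing nodes creates no new parallel $1$'s: by clause (3) of Definition \ref{defn.envelope} each $w_j$ has no parallel $1$'s with any other node of $E$, so at every level $l$ one has $E_{l,1}=Z_{l,1}$ whenever $|Z_{l,1}|\ge 2$ (a $w_j$ with a $1$ at $l$ would share a $1$ with a member of $Z_{l,1}$), and otherwise $|E_{l,1}|\le 1$; hence the minimal levels of new sets of parallel $1$'s in $E$, and their index sets, are exactly those of $Z$. Now fix such a minimal level $l$, with $Z$-admissible interval $[l,l^*]$ (Definition \ref{defn.admint}). If $I^Z_l$ is already minimally witnessed in $Z$, say by the coding node $z_k\in Z$ with $|z_k|\in\{l,l^*\}$, I would check $z_k$ is still the critical node of $E$ of least length $\ge l$: no earlier node of $Z$ intervenes by minimality of $k$ and the admissible interval, and no $w_{j'}$ or its meet $w_{j'}^{\wedge}$ intervenes because clause (4) forbids any member of $(Z\cup W)^{\wedge}$ — in particular $z_k$ — from having length strictly between $|w_{j'}^{\wedge}|$ and $|w_{j'}|$, forcing $|z_k|\le |w_{j'}^{\wedge}|$ or $|z_k|\ge |w_{j'}|$ and thereby pushing both $w_{j'}$ and $w_{j'}^{\wedge}$ outside $[l,|z_k|)$; so the $E$-admissible interval of $l$ is not shortened below $|z_k|$, and $I^E_l$ is minimally witnessed in $E$. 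If instead $l=l_j$ is not minimally witnessed in $Z$, then $w_j$ witnesses $I^Z_{l_j}$ (clause (2)), $l_j\le |w_j|\le l_j^*$ (clause (1)), and clause (4) puts $w_j^{\wedge}$ in $(l_{j-1}^*,l_j)$ with no member of $(Z\cup W)^{\wedge}$ of length in $(|w_j^{\wedge}|,|w_j|)$, so $w_j$ is the critical node of $E$ of least length $\ge l_j$, $|w_j|$ is an endpoint of the $E$-admissible interval of $l_j$, and $I^E_{l_j}$ is minimally witnessed in $E$ by $w_j$. Every new set of parallel $1$'s in $E$ being minimally witnessed in $E$, Fact \ref{fact.thepoint} gives the \SPOC, hence the \POC.

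For incrementality, I would first note that $Z$, an antichain in the incremental strong coding tree $S$, already has incremental new parallel $1$'s (Definition \ref{defn.incrementalpo}): by Definition \ref{defn.incrementalsct} the proper subsets of parallel $1$'s passing each coding node of $S$ are realized, below it and above the preceding splitting node, in an order in which no earlier set contains a later one, and restricting to the nodes of $Z$ only shrinks these sets while preserving the order of first appearance, so the incrementality witnesses for initial segments of $S$ restrict to ones for $Z$. Passing to $E=Z\cup W$ only inserts, into each admissible interval $[l_{j-1}^*,l_j^*]$, the split $w_j^{\wedge}$ and the coding node $w_j$; by the preliminary observation this adds no new set of parallel $1$'s, and since every added critical node lies at or above the minimal level $l_j$ it witnesses, the quantity ``length of the longest critical node below $l$'' in Definition \ref{defn.incrementalpo} is unchanged at every minimal level of $E$. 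Thus the incrementality condition transfers verbatim from $Z$ to $E$, and together with the \POC\ this gives the \IPOC, and a fortiori the \SPOC.

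The step I expect to be the main obstacle is the intervening-critical-node bookkeeping in the second paragraph: confirming that neither a witnessing node $w_{j'}$ nor its meet ever lands between a minimal level and the coding node intended to witness it. This is exactly where clauses (1) and (4) of Definition \ref{defn.envelope} must be used in concert with the maximality and mutual disjointness built into the admissible intervals of Definition \ref{defn.admint}, and it is also where the incrementality of the ambient tree $S$ — which is what guarantees the minimal levels and their admissible intervals are laid out coherently — actually does its work.
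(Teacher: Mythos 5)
Your first half is essentially right and is the natural unpacking of what the paper records as immediate: clauses (1)--(4) of Definition \ref{defn.envelope} are designed precisely so that every new set of parallel $1$'s in $E$ is minimally witnessed in $E$, and Fact \ref{fact.thepoint} then gives the \SPOC\ (hence the \POC) for the tree induced by $E$. The one soft spot there is your dichotomy: clause (4) alone gives $|z_k|\le |w_{j'}^{\wedge}|$ or $|z_k|\ge |w_{j'}|$, but in the second case it does not by itself push $w_{j'}$ out of $[l,|z_k|)$; for that you also need clause (1) for $j'$ together with the fact that a minimal level $l_{j'}\ne l$ cannot lie in $(l,l^*]$ (Definition \ref{defn.admint}), which is the admissible-interval bookkeeping you only gesture at.

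The genuine gap is in the incrementality step. Your supporting claim that ``every added critical node lies at or above the minimal level $l_j$ it witnesses'' is false: besides the coding node $w_j$, the envelope adds the splitting node $w_j^{\wedge}$ to $E^{\wedge}$, and clause (4) places it strictly below $l_j$, namely $l^*_{j-1}<|w_j^{\wedge}|<l_j$. So the quantity $m$ in Definition \ref{defn.incrementalpo} (the length of the longest critical node below $l_j$) can strictly increase in passing from $Z$ to $E$, and the condition does not ``transfer verbatim.'' Worse, clause (4) forces every earlier minimal level of new parallel $1$'s of $Z$ to lie at or below $|w_j^{\wedge}|$: a minimal level in $(|w_j^{\wedge}|,l_j)$ cannot be one of the $l_{j''}$, so it would be minimally witnessed in $Z$, and its witnessing coding node would then be a member of $(Z\cup W)^{\wedge}$ with length in $(|w_j^{\wedge}|,|w_j|)$, contradicting (4). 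Hence at any $l_j$ with $|Z_{l_j,1}|\ge 3$ there is no earlier minimal level exceeding the new threshold $m$, and the verification of Definition \ref{defn.incrementalpo} for $E$ stalls exactly where a witness pair $w_j^{\wedge},w_j$ was inserted; you need a different treatment of these inserted pairs, or a verification aimed at what the \IPOC\ is actually used for downstream (the \SPOC, which you already have, and the coding-node capture of new parallel $1$'s invoked in Lemma \ref{lem.lastpiece}), rather than the literal transfer. Separately, your one-sentence claim that any antichain in an incremental $S$ inherits incremental parallel $1$'s needs more care (the exactness $Y_{l_{j'},1}=Z_{l_{j'},1}$ and the requirement $l_{j'}>m$ use the full strength of Definition \ref{defn.incrementalsct} together with the \POC\ and skewness of $S$), although there you are at least in step with an assertion the paper itself makes without proof.
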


\begin{lem}\label{lem.envelopesbehave}
Let $Y$ and $Z$ be strictly similar  antichains.
Then  any envelope of $Y$ is strictly similar to any envelope of $Z$;
in particular, any two envelopes of $Y$ are strictly similar.
\end{lem}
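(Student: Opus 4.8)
\textbf{Proof plan for Lemma \ref{lem.envelopesbehave}.}
The plan is to exhibit an explicit strict similarity between an envelope $E(Y)$ of $Y$ and an envelope $E(Z)$ of $Z$, built from the given strict similarity $f:Y\ra Z$. Write $E(Y)=Y\cup W^Y$ with $W^Y=\lgl w^Y_j:j<\tilde{j}\rgl$ and $E(Z)=Z\cup W^Z$ with $W^Z=\lgl w^Z_j:j<\tilde{j}\rgl$, where $\tilde{j}$ is the common number of minimal levels of new parallel $1$'s not minimally witnessed (this number agrees for $Y$ and $Z$ because $Y\sssim Z$, by clause (2) of Definition \ref{defn.ssimtype}). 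The candidate map $g:E(Y)\ra E(Z)$ is defined by $g\re Y=f$ and $g(w^Y_j)=w^Z_j$ for each $j<\tilde{j}$. First I would check that $g$ is a well-defined bijection, which is immediate since $Y$, $W^Y$ are disjoint, $Z$, $W^Z$ are disjoint, and the enumerations are index-preserving.

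The bulk of the argument is to verify that $g$ is a strong similarity of the induced trees $E(Y)^{\wedge}$ and $E(Z)^{\wedge}$ and that both envelopes satisfy the \SPOC\ --- together these give strict similarity by Definition \ref{defn.strictly.similar} and Fact \ref{fact.Claim0}, or directly by Definition \ref{defn.ssimtype}. That both envelopes satisfy the \IPOC, hence the \SPOC, follows from Fact \ref{fact.Claim1} once we note the envelopes live inside strongly incremental strong coding trees; if we wish to avoid invoking the ambient tree, the same conclusion follows directly from the envelope axioms, since clauses (1)--(4) of Definition \ref{defn.envelope} force each $w_j$ to minimally witness $I_{l_j}$ with nothing else intervening, which is exactly what the \SPOC\ demands. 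For the strong similarity, the key is to track where the new critical nodes $w^Y_j$ and the new meet nodes $(w^Y_j)^{\wedge}$ sit relative to $Y^{\wedge}$. By clause (4) of Definition \ref{defn.envelope}, $(w^Y_j)^{\wedge}$ is inserted into the admissible interval $[l^Y_{j-1}{}^*, l^Y_j)$, in lexicographic position $0^{<\om}$, with no node of $(Y\cup W^Y)^{\wedge}$ of length strictly between $|(w^Y_j)^{\wedge}|$ and $|w^Y_j|$; the identical description holds for $Z$ with $l^Z_j$ in place of $l^Y_j$. Since $f:Y^{\wedge}\ra Z^{\wedge}$ already preserves lexicographic order, meets, relative lengths, initial-segment structure, coding nodes, and passing numbers (clauses (1)--(7) of Definition \ref{def.3.1.likeSauer}), and since $Y\sssim Z$ guarantees via clause (4) of Definition \ref{defn.ssimtype} that the order type of the interleaving of $\lgl l^Y_j:j<\tilde{j}\rgl$ with $\lgl |u^Y_m|:m<\tilde{m}\rgl$ matches that of $\lgl l^Z_j\rgl$ with $\lgl |u^Z_m|\rgl$, the inserted nodes $w^Y_j,(w^Y_j)^{\wedge}$ land in exactly the matching slots as $w^Z_j,(w^Z_j)^{\wedge}$. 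One then checks each of the seven clauses of Definition \ref{def.3.1.likeSauer} for $g$ by a case analysis according to whether the nodes involved lie in $Y^{\wedge}$, are new meet/witnessing nodes, or are mixtures: lexicographic order and relative lengths are handled by the order-isomorphism of clause (4) of Definition \ref{defn.ssimtype}; meets involving a $w^Y_j$ are controlled by clause (3) of Definition \ref{defn.envelope} (no parallel $1$'s with anything else) together with strong diagonality, which forces such meets to be initial segments of $0^{<\om}$; and passing numbers at the witnessing coding nodes $w^Y_j$ are determined by clause (2) of Definition \ref{defn.envelope}, which pins $I_{|w^Y_j|}=I^Y_{l_j}$, matched to $I^Z_{l_j}=I_{|w^Z_j|}$ by clause (3) of Definition \ref{defn.ssimtype}.

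I expect the main obstacle to be the bookkeeping in clause (7) (preservation of passing numbers at coding nodes) when the coding node in question is one of the witnessing nodes $w^Y_j$: one must show that for every node $x\in E(Y)$ with $|x|\ge|w^Y_j|$, the passing number $x^+(|w^Y_j|)$ is determined purely by the strict similarity data, and the same for $Z$. This reduces to the observation that a node of $Y^{\wedge}$ passing $w^Y_j$ has passing number $1$ there iff it is indexed by $I^Y_{l_j}$ (by clause (2) of Definition \ref{defn.envelope} and the fact that $w^Y_j$ lies in the admissible interval, so $I_{|w^Y_j|}=I^Y_{l_j}$), while witnessing nodes $w^Y_{j'}$ for $j'\ne j$ have passing number $0$ at $w^Y_j$ by clause (3); both facts transfer to $Z$ via $f$ and clause (3) of Definition \ref{defn.ssimtype}. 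A secondary routine check is that clause (4) of the envelope definition, placing $(w^Y_j)^{\wedge}$ after $l^Y_{j-1}{}^*$, together with clause (2) of Definition \ref{defn.admint} (no minimal level of a new set of parallel $1$'s lies strictly inside an admissible interval), prevents the inserted witnessing structure from colliding with any genuinely new parallel $1$'s of $Y$ beyond the one it is meant to witness --- so that $E(Y)$ has no new sets of parallel $1$'s beyond those of $Y$ together with the $\tilde{j}$ witnessed ones, each now minimally witnessed, and likewise for $E(Z)$. The final statement, that any two envelopes of $Y$ are strictly similar, is the special case $Z=Y$, $f=\mathrm{id}$.
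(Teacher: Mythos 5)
Your proposal is correct and takes essentially the same route as the paper: both consider the canonical map sending $Y$ to $Z$ via the given strict similarity and the $j$-th witnessing node of one envelope to the $j$-th witnessing node of the other, and verify it is a strong similarity using the envelope clauses (positions of $w_j^{\wedge}$ and $w_j$, $I_{|w_j|}=I_{l_j}$, no parallel $1$'s) together with the order-isomorphism of the interleaved strict similarity sequences; the paper merely organizes this verification as an induction over the intervals surrounding the witnessing nodes rather than a clause-by-clause case analysis. Since every set of parallel $1$'s in an envelope is minimally witnessed, strong similarity of envelopes upgrades to strict similarity exactly as you indicate, and the final assertion is the special case $Z=Y$ with the identity map.
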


\begin{proof}
Let $Y=\{y_i:i<\tilde{i}\}$ and $Z=\{z_i:i<\tilde{i}\}$ be the enumerations of $Y$ and $Z$, respectively, in order of increasing length.
Let
\begin{equation}
\lgl \lgl p_j:j<\tilde{j}\rgl,\lgl I^Y_{p_j}:j<\tilde{j}\rgl, \lgl |u^Y_m|:m<\tilde{m}\rgl\rgl
\end{equation}
and
\begin{equation}
\lgl \lgl l_j:j<\tilde{j}\rgl,\lgl I^Z_{l_j}:j<\tilde{j}\rgl, \lgl |u^Z_m|:m<\tilde{m}\rgl\rgl
\end{equation}
be their strict similarity sequences, respectively.
Let $E=Y\cup V$ and $F=Z\cup W$ be any envelopes of $Y$ and $Z$, respectively.
Enumerate the nodes in  $V$ and $W$ in order of increasing length as $\{v_j:j<\tilde{j}\}$ and $\{w_j:j<\tilde{j}\}$, respectively.
Note that $|E|=|F|=\tilde{i}+\tilde{j}$,
since exactly $\tilde{j}$ many coding nodes are added to make envelopes of $Y$ and $Z$.
Let $\tilde{k}=\tilde{i}+\tilde{j}$,
and let $\{e_k:k<\tilde{k}\}$ and $\{f_k:k<\tilde{k}\}$ be the enumerations of $E$ and $F$ in order of increasing length, respectively.
For each $j<\tilde{j}$,
let $k_j$ be the index in $\tilde{k}$ such that
$e_{k_j}=v_j$ and $f_{k_j}=w_j$.
For  $k<\tilde{k}$,
let $E(k)$ denote
the tree induced by $E$ restricted to those nodes of length less than or equal to $|e_k|$; precisely,
$E(k)=\{e\re |t|: e, t\in E^{\wedge}$ and $ |t|\le \min(|e|,|e_k|)\}$.
Likewise for $F$.

If $\tilde{j}=0$, then $E=Y$ and $F=Z$, so $E\ssim F$ follows from $E\sssim F$.
Suppose now that $\tilde{j}\ge 1$.
It must be the case that $p_0>|u^Y_0|$,
since $u^Y_0$ is the stem of the tree induced by $Y$,
and $Y$ does not have any sets of parallel $1$'s below its stem.
Likewise, $l_0>|u^Z_0|$.
Let $m_0$  be the least  integer below $\tilde{m}$  such that
 $|u^Y_{m_0}|>p_0$.
Then the admissible interval $[p_0,p^*_0]$ is contained in the interval $(|u^Y_{m_0-1}|,|u^Y_{m_0}|)$,
and  moreover,
\begin{equation}
|u^Y_{m_0-1}|<|v_0^{\wedge}|<p_0\le
|v_0|\le p_0^*,
\end{equation}
 by  the definition of envelope.
Since $Y\sssim Z$,
it follows that the admissible interval $[l_0,l_0^*]$ is contained in $(|u^Z_{m_0-1}|,|u^Z_{m_0}|)$
and
\begin{equation}
|u^Z_{m_0-1}|<|w_0^{\wedge}|<l_0\le
|w_0|\le l_0^*.
\end{equation}
Thus,
$E(k_0-1)$ is exactly the tree induced by  $Y$ restricted below $|u^Z_{m_0-1}|$,
which is strongly similar to
the tree induced by $Z$ restricted below
$|u^Z_{m_0-1}|$,
this being exactly
$F(k_0-1)$.

Now suppose that $j<\tilde{j}$
and $E(k_j-1)\ssim F(k_j-1)$.
Let $m_j$ be the least integer below $\tilde{m}$ such that
$|u^Y_{m_j}|>p_j$.
Then
the only  nodes in $E^{\wedge}$
in the interval $(|u^Y_{m_j-1}|,|u^Y_{m_j}|)$
are $v_j^{\wedge}$ and $v_j$.
Likewise, the only nodes in $F^{\wedge}$ in the interval
$(|u^Z_{m_j-1}|,|u^Z_{m_j}|)$
are $w_j^{\wedge}$ and $w_j$.
Extend the strong similarity map $g:E(k_j-1)\ra F (k_j-1)$
to the map
$g^*:E(k_j)\ra F(k_j)$
as follows:
Define
$g^*=g$ on $E(k_j-1)$,
 $g^*(v_j^{\wedge})=w_j^{\wedge}$, and
 $g^*(v_j)=(w_j)$.
If the sequence  of  $0$'s of length $|v_j|$ is in $E$,
 then define $g^*$ of that node to be the sequence of $0$'s of length $|w_j|$.
For each node $s$ in $E(k_j)$ of length $|v_j|$ besides $v_j$ itself,
$s$ extends a unique maximal node $s^-$  in $E (k_j-1)$;
define $g^*(s)$ to be the unique node in $F(k_j)$ of length $|w_j|$ extending $g(s^-)$.
Note that
 each node $t$ in $E(k_j)$ of length $|v_j^{\wedge}|$, besides $v_j^{\wedge}$ itself,
is equal to $s\re |v^{\wedge}_j|$ for some unique $s$ as above;
define $g^*(t)$ to be $g^*(s)\re|w^{\wedge}_j|$.
As the only new set of parallel $1$'s in $Y$ in this interval is $I_j^Y$, which is equal to $I_j^Z$,
and as
\begin{equation}
\max(l_{j-1}^*,|u^Y_{m_j-1}|)<|v^{\wedge}_j|<p_j\le |v_j|\le p_j^*,
\end{equation}
and similarly for $w_j$,
and $v_j,w_j$ witness the parallel $1$'s indexed by
$I^Y_j, I^Z_j$, respectively,
it follows that $g^*$ is a strong similarity map from $E(k_j)$ to $F(k_j)$.

If $j<\tilde{j}-1$,
noting that the only nodes in the tree induced by $E$ with
 length in
 the interval $(|v_j|, |v^{\wedge}_{j+1}|)$
are in the tree induced by $Y$,
and likewise, all nodes in the tree induced by $F$ in the interval
$(|w_j|, |w^{\wedge}_{j+1}|)$ are in the tree induced by $Z$,
it follows that $E(k_{j+1}-1)$ is strongly similar to $F(k_{j+1}-1)$.
Then the induction continues.

To finish, when $j=\tilde{j}-1$,
all nodes in the tree induced by
$E$ in the interval $(|v_{\tilde{j}-1}|,|y_{\tilde{i}-1}|]$
are in fact nodes in $Y^{\wedge}$.
Likewise, all nodes in  the tree induced by
 $F$ in the interval $(|w_{\tilde{j}-1}|,|z_{\tilde{i}-1}|]$ are
 in $Z^{\wedge}$.
Further, all sets of parallel $1$'s in $E$ and $F$ in these intervals are already witnessed at or below
$|v_{\tilde{j}-1}|$ and $|w_{\tilde{j}-1}|$, respectively.
Thus, the strict similarity between $Y$ and $Z$ induces
an extension of the strong similarity between
$E(k_{\tilde{j}-1})$
and
$E(k_{\tilde{j}-1})$
to a strong similarity between
$E^{\wedge}$ and $F^{\wedge}$.
\end{proof}

\begin{lem}\label{lem.lastpiece}
Let
$S$ be a strongly incremental strong coding tree, a subtree of $T$.
Let $Z$ be a finite antichain  of coding nodes in  $S$, and let $E$ be any envelope of $Z$ in $T$.
Enumerate the nodes
in $Z$ and $E$ in
order of increasing length as
$\lgl z_i:i<\tilde{i}\rgl$ and
 $\lgl e_k:k<\tilde{k}\rgl$,
respectively.
Then whenever  $F\ssim E$, the subset
$F\re Z:=\{f_{k_i}:i<\tilde{i}\}$ of $F$
is strictly similar to $Z$, where
$\lgl f_k:k<\tilde{k}\rgl$
enumerates the nodes in $F$ in order of increasing length
 and for each $i<\tilde{i}$,
$k_i$ is the index such that $e_{k_i}=z_i$.
\end{lem}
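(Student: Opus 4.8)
The plan is to transfer the structure of $E$ through the strong similarity $g\colon E\to F$ and show that the restriction $g$ makes to $Z$ witnesses a strict similarity onto $F\re Z$. First I would record what $E$ looks like: by definition of envelope, $E = Z\cup W$ where $W=\{w_j:j<\tilde j\}$ consists of the $\tilde j$ witnessing coding nodes, one for each minimal level $l_j$ of a new set of parallel $1$'s in $Z$ not minimally witnessed in $Z$. By Fact~\ref{fact.Claim1}, $E$ satisfies the \IPOC, hence the \SPOC. Applying Fact~\ref{fact.Claim0} to $E$, any $F\ssim E$ is in fact \emph{strictly} similar to $E$, so $F$ also satisfies the \IPOC, and the strong similarity $g\colon E^{\wedge}\to F^{\wedge}$ is automatically a strict similarity. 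Since $g$ is order-preserving on lengths and a bijection, it sends the $i$-th shortest node of $E$ to the $i$-th shortest node of $F$; in particular $g(z_i)=g(e_{k_i})=f_{k_i}$ for each $i<\tilde i$, so $g$ restricts to a bijection $g\re Z\colon Z\to F\re Z$.

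Next I would verify that $F\re Z$ is an antichain of coding nodes: $g$ preserves coding nodes by clause (6) of Definition~\ref{def.3.1.likeSauer}, and it preserves the non-extension relation by clause (3), so the image of the antichain $Z$ is again an antichain. It remains to check the four conditions of Definition~\ref{defn.ssimtype} for $Z$ and $F\re Z$. The key structural observation, which is essentially Lemma~\ref{lem.envelopesbehave} run in reverse, is that the strict similarity sequence of $Z$ is \emph{recoverable} from $E$ together with the distinguished set $W$: a level $l_j$ (a minimal level of a new, non-minimally-witnessed set of parallel $1$'s in $Z$) corresponds in $E$ to the level of the meet $w_j^{\wedge}$ of the $j$-th witnessing node with the $0$-branch, or more precisely lies in the admissible interval bracketed by $w_j^{\wedge}$ and $w_j$; the index set $I^Z_{l_j}$ equals the set of $i<\tilde i$ with $z_i(|w_j|)=1$, which $g$ preserves by clause (7) on passing numbers at coding nodes; and the remaining nodes $u^Z_m$ of $Z^{\wedge}$ are exactly the images under $g^{-1}$ of the nodes of $F^{\wedge}$ that are not among $\{f_{k_j}^{\wedge}, f_{k_j}: j<\tilde j\}$ and do not lie strictly between such a pair. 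Thus I would: (1) note $Z^{\wedge}$ and $(F\re Z)^{\wedge}$ are strongly similar, since $g\re Z$ is the restriction of the strong similarity $g$ of $E^{\wedge}$ onto $F^{\wedge}$ to the sub-antichain $Z$, and the meet closure of the image is the image of the meet closure (clause (4)); (2) get $\tilde j=\tilde j$ and $\tilde m=\tilde m$ for free since $g$ is a length-order-preserving bijection on the critical nodes of $E^{\wedge}$ and these counts are determined by which critical nodes of $E^{\wedge}$ are witnessing nodes versus nodes of $Z^{\wedge}$ — a distinction $g$ respects because it respects coding-versus-splitting nodes and because, as $F\sssim E$, the admissible-interval positions of the $w_j$ are preserved; (3) get $I^{F\re Z}_{|f_{k_j}|}=I^Z_{l_j}$ from the passing-number clause (7); (4) get the order-preserving bijection $\varphi$ of condition (4) of Definition~\ref{defn.ssimtype} by restricting the length-order-isomorphism induced by $g$.

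The main obstacle I anticipate is purely bookkeeping: one must argue that the minimal levels of new non-minimally-witnessed parallel $1$'s in $F\re Z$ are \emph{exactly} the $g$-images of those in $Z$ — that $g$ neither creates a spurious new minimal level nor collapses an existing one, and in particular that the witnessing node $f_{k_j}$ still lies in the admissible interval of the corresponding level in $F\re Z$ and still fails to minimally witness that level \emph{within} $F\re Z$ (so it genuinely is a witnessing node of an envelope and not a node of the underlying antichain). This is where the hypothesis that $E$ satisfies the \IPOC, and hence the incrementality of the strong similarity $g$ (no two sets of parallel $1$'s are witnessed out of order or simultaneously), does the real work: the incremental ordering of the witnessing nodes $w_j$ along $E$, preserved by $g$, pins down the admissible intervals and prevents any reshuffling. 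Once that is in hand, conditions (1)–(4) follow by the clause-by-clause transfer sketched above, and the lemma is proved. I would close by remarking that specializing to $F=E$ and combining with Lemma~\ref{lem.envelopesbehave} shows the correspondence $Z\mapsto$ (envelope of $Z$) is well behaved on strict similarity types, which is the form in which this lemma is used in the proof of Theorem~\ref{thm.mainRamsey}.
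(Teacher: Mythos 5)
Your proposal is correct and follows essentially the same route as the paper: restrict the strong similarity $g\colon E\to F$ to $Z$, use that $E$ (hence $F$) satisfies the \IPOC\ so that every new set of parallel $1$'s is tracked at the level of a witnessing coding node, and then invoke preservation of passing numbers at coding nodes to see that $\iota_{Z,F}=g\re Z$ carries each new set of parallel $1$'s in $Z$ to the corresponding one in $F\re Z$ with the same index set. The ``bookkeeping obstacle'' you flag is exactly what the paper's short displayed equation settles, so no extra argument is needed beyond what you sketch.
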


\begin{proof}
Recall that $F\ssim E$ implies $F\sssim E$ and that $E$ and hence $F$ satisfy the \IPOC,
since  $E$ is an envelope of  a diagonal subset of an incremental strong coding tree.
Let   $\iota_{Z,F}:Z\ra F$ be the injective map defined via $\iota_{Z,F}(z_i)=f_{k_i}$, for each $i<\tilde{i}$,
and let
 $F\re Z$ denote  $\{f_{k_i}:i<\tilde{i}\}$, the image of $\iota_{Z,F}$.
Then $F\re Z$ is a subset of $F$ which we claim is strictly similar to $Z$.

Since $F$ and $E$ satisfy the \IPOC,
 the strong similarity map $g:E\ra F$
satisfies that for each $j<\tilde{k}$, the sets of new parallel $1$'s at level of the $j$-th coding node are equal:
\begin{equation}
\{k<\tilde{k}:e_k(|e_j|)=1\}=
\{k<\tilde{k}: g(e_k)(|g(e_j)|)=1\}
=\{k<\tilde{k}:f_k(|f_j|)=1\}.
\end{equation}
Since $\iota_{Z,F}$ is the restriction of $g$ to $Z$,
$\iota_{Z,F}$ also
takes each
  new set of parallel $1$'s  in $Z$
 to the corresponding set of new parallel $1$'s in  $F\re Z$, with the same set of indices.
Thus, $\iota_{Z,F}$ witnesses that $F\re Z$ is
strictly similar to  $Z$.
\end{proof}




\begin{thm}[Ramsey Theorem for Strict Similarity Types]\label{thm.mainRamsey}
Let $Z$ be a finite antichain of coding nodes in   a strong coding tree $T$,
and let $h$  color of all subsets of $T$ which are strictly similar to $Z$ into finitely many colors.
Then there is an incremental strong coding tree $S\le T$ such that
all subsets of $S$ strictly similar to $Z$ have the same $h$ color.
\end{thm}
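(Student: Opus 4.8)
The plan is to reduce the coloring of strictly similar copies of $Z$ to a coloring of their envelopes, which satisfy the \SPOC, so that Theorem \ref{thm.MillikenIPOC} applies. First I would invoke Lemma \ref{lem.squiggletree} to fix an incremental strong coding tree $T'\le T$ together with a set $W\sse T$ of witnessing coding nodes such that every new set of parallel $1$'s in $T'$ is incrementally witnessed in $T$ by a member of $W$. The purpose of this step is to guarantee that every finite antichain $Y$ of coding nodes in $T'$ actually \emph{has} an envelope realized inside $T$: for each minimal level $l$ of a new set of parallel $1$'s in $Y$ that is not already minimally witnessed by a coding node in $Y$, one picks the appropriate witnessing coding node $w\in W$ supplied by the incremental witnessing structure, and these nodes, lying in the admissible intervals and having no parallel $1$'s with the rest, assemble into an envelope $E(Y)=Y\cup W_Y$ by Fact \ref{fact.Claim1}.

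Next I would fix one envelope $E=E(Z)$ of $Z$ built inside $T$ using nodes of $W$; by Fact \ref{fact.Claim1}, $E$ satisfies the \IPOC, hence the \SPOC. Now define a coloring $h^*$ on the set of strictly similar copies of $E$ in $T'$: given $F\ssim E$ (equivalently $F\sssim E$, by Fact \ref{fact.Claim0}, since $E$ satisfies the \IPOC), let $h^*(F)=h(F\re Z)$, where $F\re Z$ is the subset of $F$ corresponding to $Z$ under the strong similarity map $E\to F$; by Lemma \ref{lem.lastpiece} the set $F\re Z$ is strictly similar to $Z$, so $h^*$ is well-defined. Since $E$ satisfies the \SPOC, Theorem \ref{thm.MillikenIPOC} yields an $S\le T'$ such that all strictly similar copies of $E$ in $S$ receive a single $h^*$-color; note $S\le T'\le T$, and $S$ is incremental since $T'$ is and being $\le T'$ preserves incrementality (each $r_k(S)$ is valid in $T'$, hence has the same new-parallel-$1$'s structure).

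The remaining point is that this single color is in fact the $h$-color of \emph{every} strictly similar copy of $Z$ in $S$. Here I would argue as follows: given any $Y\sse S$ with $Y\sssim Z$, since $S$ is an incremental strong coding tree, $Y$ has an envelope $E(Y)\sse S$ obtained, as above, by adjoining witnessing coding nodes — which we may take to lie in $S$ because $S$ is incremental and hence incrementally witnesses its own parallel $1$'s. By Lemma \ref{lem.envelopesbehave}, $E(Y)$ is strictly similar to $E(Z)=E$; since $E$ satisfies the \IPOC, strict similarity coincides with strong similarity (Fact \ref{fact.Claim0}), so $E(Y)\ssim E$, i.e.\ $E(Y)$ is a strictly similar copy of $E$ in $S$, and therefore $h^*(E(Y))$ equals the fixed color. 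But $E(Y)\re Z=Y$ by construction of the envelope (the coding nodes of $Y$ are exactly the non-witnessing nodes of $E(Y)$, matching $z_i\mapsto e_{k_i}$), so $h(Y)=h^*(E(Y))$ is that fixed color. Hence all strictly similar copies of $Z$ in $S$ have the same $h$-color.

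\textbf{Main obstacle.} The delicate step is ensuring that envelopes behave coherently on both ends of the reduction: that copies of $Z$ in $S$ always have envelopes \emph{inside} $S$ (which is why the incremental structure of $S$, not merely of the auxiliary $T'$, is needed), and simultaneously that envelopes of strictly similar sets are strictly similar to one another and that passing through envelope-then-restrict is the identity on the $Z$-part. This is exactly the content packaged into Lemmas \ref{lem.envelopesbehave} and \ref{lem.lastpiece} and Facts \ref{fact.Claim0}, \ref{fact.Claim1}, so the real work has been front-loaded into the preceding section; the proof of the theorem itself is then a bookkeeping assembly of these pieces, the only subtlety being to verify that $S\le T'$ inherits incrementality and that envelopes can always be found within $S$ rather than merely within $T$.
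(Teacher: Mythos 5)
Your overall strategy (transfer $h$ to a coloring $h^*$ of envelopes, apply Theorem \ref{thm.MillikenIPOC}, then pull homogeneity back along $F\mapsto F\re Z$) is the same as the paper's, but the order in which you apply the two main tools creates a genuine gap at the final step. You apply Lemma \ref{lem.squiggletree} \emph{first}, then Theorem \ref{thm.MillikenIPOC} to get $S\le T'$ homogeneous on copies of $E$ \emph{inside $S$}, and you then need every $Y\sse S$ with $Y\sssim Z$ to have an envelope $E(Y)\sse S$. Your justification — ``$S$ is incremental and hence incrementally witnesses its own parallel $1$'s'' — is not what the machinery provides, and is not true in general. Incrementality of $S$ (Definition \ref{defn.incrementalsct}) is a statement about the levels at which subsets of parallel $1$'s appear in $S$; the \emph{witnessing} coding nodes come from Definition \ref{defn.incremental} and Lemma \ref{lem.squiggletree}, which produce a set $W$ of coding nodes in the \emph{ambient} tree, disjoint from $S$ — the witnesses are deliberately interleaved between the levels of $S$ and are not nodes of $S$. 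Inside $S$ itself there is no reason a coding node in the admissible interval of a new set of parallel $1$'s of an arbitrary antichain $Y$ should satisfy conditions (2)–(4) of Definition \ref{defn.envelope} (exactly the right set $I_{l_j}$ passing it with $1$, no parallel $1$'s with the rest of $Y$, correct meet structure). So the homogeneity you obtained, which is only for copies of $E$ lying in $S$, cannot be applied to $E(Y)$.

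The repair is to reverse the order, as the paper does: first define $h^*$ on copies of $E$ in the big tree and apply Theorem \ref{thm.MillikenIPOC} to get a strong coding tree $T''$ in which $h^*$ is constant on $\Sim^{ss}_{T''}(E)$; \emph{then} apply Lemma \ref{lem.squiggletree} inside $T''$ to obtain an incremental $S\le T''$ together with witnessing coding nodes $W\sse T''$, so that every $Y\sse S$ strictly similar to $Z$ has an envelope $F\sse T''$ — i.e.\ in the tree where homogeneity already holds — and then $h(Y)=h^*(F)$ is the fixed color. This also removes the need for your unsupported claim that $S\le T'$ inherits incrementality (the paper never inherits it; incrementality of the final tree is obtained by construction in the second application of Lemma \ref{lem.squiggletree}). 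A further small slip: you envelope ``$Z$ itself'' using $W$, but $W$ only witnesses antichains of the incremental tree, so one should envelope a copy of $Z$ inside that tree (harmless by Lemma \ref{lem.envelopesbehave}, which makes the choice of copy irrelevant, but the existence of an envelope of the original $Z$ in $T$ is not guaranteed).
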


\begin{proof}
First, note that there is an envelope $E$
of a copy of $Z$ in $T$:
By  Lemma \ref{lem.squiggletree},
 there is a strongly incremental strong coding tree $U\le T$ and a set of coding nodes $V\sse T$
such that each  $Y\sse U$  which is strictly similar to $Z$
has an envelope  in $T$ by adding nodes from $V$.
Since $U$ is strongly similar to $T$,
there is  subset $Y$ of $U$ which is  strictly similar  to  $Z$.
Let
$E$ be any envelope of $Y$ in $T$, using witnessing coding nodes from $V$.

By Lemma \ref{lem.envelopesbehave}, all envelopes of copies of $Z$ are strictly similar.
Define a coloring $h^*$ on $\Sim^{ss}_T(E)$ as follows:
For each $F\in \Sim^{ss}_T(E)$,
define
$h^*(F)=h(F\re Z)$,
where
 $F\re Z$ is  the  subset of $F$ provided by Lemma \ref{lem.lastpiece}.
The set $F\re Z$
is strictly similar to $Z$,
so  the coloring $h^*$ is well-defined.
Since  envelopes satisfy the \SPOC,
Theorem \ref{thm.MillikenIPOC}
yields
a strong coding tree $T'\le T$ such that $\Sim^{ss}_{T'}(E)$  is homogeneous for $h^*$.
 Lemma \ref{lem.squiggletree} implies there is an incremental strong coding tree $S\le T'$ and a set of coding nodes $W\sse T'$
such that each  $Y\sse S$  which is strictly similar to $Z$
has an envelope $F$ in $T'$.
Thus, $h(Y)=h^*(F)$.
Therefore, $h$ takes only one color on the set of all $Y\sse S$ which are strictly similar to $Z$.
\end{proof}

\begin{rem}
If $Z$ is not incremental, then $S$ will have no strictly similar copies of $Z$, since every antichain in $S$ is strongly incremental.
Thus, non-incremental antichains will not contribute to the big Ramsey degrees.
\end{rem}

\begin{rem}
The definition of envelope can be  extended to handle  any finite subset of a strong coding tree,
where maximal nodes can be any nodes in a strong coding tree rather than just coding nodes.
This is accomplished using the same definition of strict similarity type, accounting for all minimal new sets of parallel $1$'s,
and then letting envelopes consist of adding new coding nodes as before to witness these sets of parallel $1$'s in their admissible intervals.
Then
Theorem \ref{thm.mainRamsey} extends to a
Ramsey theorem  for  strict similarity types of any finite subset of a strong coding tree.
However, as the
main result of this paper
only needs Theorem \ref{thm.mainRamsey},
in order to avoid unnecessary length, we do not present the full generality  here.
\end{rem}


\section{The  universal triangle-free graph has finite big Ramsey degrees}\label{sec.7}

The main theorem of this paper, Theorem \ref{finalthm}, will now be proved:
 The universal triangle-free homogeneous graph $\mathcal{H}_3$ has finite  big Ramsey degrees.
This result will  follow from
Theorem \ref{thm.mainRamsey}, which is
the
Ramsey Theorem for Strict Similarity Types,
  along with Lemma \ref{lem.bD},  which shows that any strong coding tree contains an infinite strongly diagonal set of coding nodes  which  code the universal triangle-free graph.

Recall from the discussion in the previous section that in a strong coding tree, a set of coding nodes is strongly diagonal if and only if it is an antichain.
Given an antichain $D$ of coding nodes from a strong coding tree,
its  meet closure,
 $D^{\wedge}$  has  at most one node of any given length.
Let $L_D$ denote the set of all
lengths of nodes $t\in D^{\wedge}$
such that
 $t$ is not the splitting predecessor of any coding node in $D$.
Define
\begin{equation}\label{eq.D^*}
D^*=\bigcup\{t  \re l:t\in D^{\wedge}\setminus D
\mathrm{\ and\ }l\in L_D\}.
\end{equation}
Then $(D^*,\sse)$ is a tree.

For a strong coding tree $T$,  let $(T,\sse)$
be the reduct of $(T,\om;\sse,<,c)$.
Then  $(T,\sse)$
is simply the tree structure of $T$, disregarding the difference between coding nodes and non-coding nodes.
We say that two trees $(T,\sse)$ and $(S,\sse)$ are {\em strongly similar trees}
if they
 satisfy
Definition 3.1 in  \cite{Sauer06}.
This is the  the same as the
modification of Definition \ref{def.3.1.likeSauer}
leaving out (6) and changing (7) to apply to passing numbers of  {\em all} nodes in the trees.
When we say that two finite trees are strongly similar trees, we will be implying that
when
 extending the two trees to include the  immediate extensions of their maximal nodes, the two extensions are still strongly similar.
Thus, strong similarity of finite trees implies passing numbers of  their immediate extensions are preserved.

\begin{lem}\label{lem.bD}
Let $T\le \bT$ be a strong coding tree.
Then there is an infinite  antichain of coding nodes  $D\sse T$  which code
 $\mathcal{H}_3$ in exactly the same way that $\bT$ does:
$c^{D}_n(l^{D}_i)=c^{\bT}_n(l^{\bT}_i)$,
for all $i<n<\om$.
Moreover,
$(D^*,\sse)$  and $(\bT,\sse)$ are strongly similar trees.
\end{lem}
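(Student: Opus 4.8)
The plan is to construct $D$ recursively, choosing coding nodes $c^D_n$ one at a time as carefully placed coding nodes of $T$, so that the passing number pattern matches that of $\bT$ exactly and the whole set remains an antichain. First I would set up the correspondence: since $T\le\bT$, there is a strong similarity $\sigma:\bT\to T$, so the coding nodes of $\bT$ and $T$ already have matching passing numbers among themselves, $c^T_n(l^T_i)=c^{\bT}_n(l^{\bT}_i)$; the real content is to thin to a strongly diagonal (antichain) subset while \emph{preserving} this pattern, so that no accidental additional parallel $1$'s are created at the new lengths $l^D_i$. The point of strong coding trees (Fact~\ref{fact.A_3treeimpliestrianglefreegraph} and Remark~\ref{rem.dense}) is that $\bT$ codes $\mathcal{H}_3$, i.e.\ satisfies $(A_3)^{\tt tree}$, and the goal is to build $D$ so that the tree it induces still satisfies property $(A_3)^{\tt tree}$, hence still codes $\mathcal{H}_3$.

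The recursion proceeds as follows. At stage $n$, having chosen $c^D_0,\dots,c^D_{n-1}$ inside $T$ forming an antichain with the prescribed passing numbers, I want to choose $c^D_n$. Let $F_n=\{i<n: c^{\bT}_n(l^{\bT}_i)=1\}$ be the desired ``edge set''; by the assumption on how $\bT$ codes $\mathcal{H}_3$, the set $\{c^{\bT}_i:i\in F_n\}$ codes no edges, hence by strong similarity $\{c^D_i:i\in F_n\}$ codes no edges in the tree built so far. Apply Lemma~\ref{lem.facts} (or its ingredients, Lemmas~\ref{lem.factssplit} and \ref{lem.pnc}) to the finite valid subtree $A$ consisting of the tree induced by $\{c^D_i:i<n\}$ together with appropriate leftmost extensions: first extend the maximal nodes of $A$ past the last chosen node so that the node destined to become $c^D_n$ branches off on a fresh path (guaranteeing the antichain property and, by strong skewness of $T$, that the new coding node has passing number $0$ at all previously chosen $c^D_i$ with $i\notin F_n$ once we take leftmost extensions), and then apply the Passing Number Choice Extension Lemma~\ref{lem.pnc} to extend to a coding node $c_k^T$ of $T$ (call it $c^D_n$) so that each surviving maximal node above $c^D_i$ for $i\in F_n$ has passing number $1$ at $c^D_n$ and each with $i\notin F_n$ has passing number $0$, while \emph{no new parallel $1$'s} are created among the extensions beyond those witnessed by $c^D_n$ itself. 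The Triangle-Free Criterion is respected precisely because $\{c^D_i:i\in F_n\}$ codes no edges, which is exactly the hypothesis of Lemma~\ref{lem.pnc} (the $\varepsilon_i$ are forced to be $0$ only when there is a parallel $1$, and there is none within $F_n$). Iterating and taking $D=\{c^D_n:n<\om\}$ gives the antichain.

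Next I would verify the two assertions. For the passing numbers: by construction $c^D_n(l^D_i)=1$ iff $i\in F_n$ iff $c^{\bT}_n(l^{\bT}_i)=1$, for all $i<n<\om$, which is the displayed equation. For the strong tree similarity of $(D^*,\sse)$ with $(\bT,\sse)$: $D^*$ as defined in \eqref{eq.D^*} is the tree induced by $D$ with the splitting predecessors of coding nodes removed at the appropriate levels; because $D$ is strongly diagonal in a strong coding tree and the extensions in the recursion were chosen leftmost apart from the prescribed passing numbers (so no pre-determined or spurious parallel $1$'s, and splitting occurs exactly where dictated by the Splitting Criterion for Skew Trees relative to the coding nodes), the induced tree $D^*$ has, at each level, branching and passing-number structure isomorphic to $\bT$'s. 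One checks that the map sending the $n$-th critical node of $D^*$ to the $n$-th critical node of $\bT$ is a strong similarity of trees in the sense of \cite{Sauer06}: it is a lexicographic-order-preserving, meet-preserving, relative-length-preserving bijection, and it preserves passing numbers of all immediate extensions because those are determined by whether a node sits above a splitting node (passing number $0$ off a split, by strong skewness) or realizes a prescribed $1$ at a coding node — and both patterns were copied from $\bT$. Finally, $D$ codes $\mathcal{H}_3$: the passing-number identity shows the graph coded by $D$ is isomorphic to the one coded by $\bT$, which is $\mathcal{H}_3$; alternatively, one verifies $(A_3)^{\tt tree}$ directly for $D^*$ using that each requirement $F_n$ from $\bT$'s construction is met by the corresponding $c^D_n$.

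The main obstacle is the bookkeeping in the recursive step: one must simultaneously (a) keep $D$ an antichain, (b) hit exactly the prescribed set of passing-number-$1$'s at each new coding node, (c) avoid creating any \emph{unwanted} new sets of parallel $1$'s (which could force the Triangle-Free Criterion to fail or, more subtly, break the strong-tree-similarity with $\bT$ by introducing extra splitting constraints), and (d) stay inside $T$ using only its coding nodes. The tools for all four are already in hand — validity of finite subtrees, Lemma~\ref{lem.factssplit} for controlled splitting, and Lemma~\ref{lem.pnc} for controlled passing numbers with no new parallel $1$'s — so the proof is a matter of assembling them in the right order and checking that the hypotheses (no edges among $\{c^D_i:i\in F_n\}$, validity of the finite subtree at each stage) are maintained inductively. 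The delicate point to get exactly right is that the ``leftmost'' extensions used for the nodes not in $F_n$ together with the witnessing coding node $c^D_n$ do not accidentally create a new parallel $1$ that is not witnessed by $c^D_n$; this is exactly what the last sentence of Lemma~\ref{lem.pnc} guarantees, so citing it carefully resolves the issue.
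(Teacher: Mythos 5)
There is a genuine gap, and it sits exactly at the point you flagged as ``delicate.'' In your recursion the only data carried from stage to stage is the tree induced by $\{c^D_i:i<n\}$ together with leftmost extensions, and at stage $n$ you try to arrange $c^D_n(l^D_i)=1$ for $i\in F_n$ by an appeal to Lemma \ref{lem.pnc}. But Lemma \ref{lem.pnc} controls the passing numbers of the \emph{other} maximal nodes at the \emph{new} coding node $c_k$; it cannot retroactively control the values $c^D_n(l^D_i)$ for $i<n$, because those were frozen at stage $i$, when the level $l^D_i$ was fixed. With your bookkeeping the node destined to become $c^D_n$ is, below length $l^D_{n-1}+1$, a leftmost extension, hence has passing number $0$ at every earlier coding node, so the only pattern you can realize is $F_n=\emptyset$. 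To realize an arbitrary $F_n$ one needs foresight: at stage $i$ the finite tree must already contain, for \emph{every} node $t\in\bT\re l^{\bT}_i$, a designated copy $d_t$ whose passing numbers at $c^D_0,\dots,c^D_i$ equal those of $t$ at $c^{\bT}_0,\dots,c^{\bT}_i$, so that at stage $n$ the new coding node can be taken to extend $d_s$ where $s=c^{\bT}_n\re l^{\bT}_{n-1}$. This level-indexed skeleton is precisely what the paper's construction of $\bD$ maintains (the sets $\{d_t:t\in\bT\re l_n\}$, with the coding node branching off $d_s$ through a dedicated splitting node $v_s$ whose other branch continues the skeleton), and it is then pushed into $T$ by the strong similarity map at the very end rather than built inside $T$ node by node.

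The same omission undermines your verification of the ``moreover'' clause. Strong similarity of $(D^*,\sse)$ with $(\bT,\sse)$ in Sauer's sense requires matching, level by level, the widths, the lexicographic and meet structure, and the passing numbers of \emph{all} nodes, and this does not follow from the identity $c^D_n(l^D_i)=c^{\bT}_n(l^{\bT}_i)$ alone: two antichains with identical coding passing numbers can have quite different meet closures. In the paper this is guaranteed by construction, since at every stage $\bD^*\re(\le k_n)$ is made strongly similar as a tree to $\bT\re(\le l_n)$ (the splitting predecessors $v_s$ of the coding nodes are exactly the levels excluded from $L_D$), whereas in your construction nothing controls how many distinct restrictions the later meets leave at a given level or where the splitting nodes of $D^{\wedge}$ fall relative to the coding nodes. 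The ingredients you cite (validity, Lemma \ref{lem.factssplit}, Lemma \ref{lem.pnc}, or their combination Lemma \ref{lem.facts}) are indeed the right tools, but they must be applied to the full skeleton $\{d_t:t\in\bT\re l_n\}$ at each stage, not merely to the tree induced by the coding nodes chosen so far.
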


\begin{proof}
To simplify the indexing of the construction,
we will  construct a subtree
 $\bD\sse\bT$
such that $\bD$
the set of coding nodes in $\bD$  form
 an antichain
satisfying the lemma.
Then, since $T$ is strongly similar to $\bT$,
letting $\varphi:\bT\ra T$ be the strong similarity map  between $\bT$ and $T$,
the image of $\varphi$ on the coding nodes of  $\bD$ will yield an antichain of coding nodes $D\sse T$ satisfying the lemma.

We will construct $\bD$ so that  for each $n$,
the node of length $l^{\bD}_{n}+1$ which  is going to be extended to the next  coding node $c^{\bD}_{n+1}$ will  split at a  level lower than
 any of the other nodes of length $l^{\bD}_{n+1}$  split in $\bD$.
Above that, the splitting will be regular in the interval until  the next coding node.
Recall that  for each $i<\om$,
 $\bT$
 has either a coding node or a splitting node of length $i$.
To avoid some superscripts, let
 $l_n=|c^{\bT}_n|$
and $k_n=|c^{\bD}_n|$.
Let
$j_n$  be the index such that $c^{\bD}_n=c^{\bT}_{j_n}$, so that
 $k_n$  equals $l_{j_n}$.
The set of  nodes in  $\bD\setminus\{c^{\bD}_n\}$ of length  $k_n$
 shall be indexed  as $\{d_t:t\in \bT\re l_n\}$.

Define  $d_{\lgl\rgl}=\lgl\rgl$ and
let $\Lev_{\bD}(0)=\{d_{\lgl\rgl}\}$.
As the node $\lgl\rgl$ splits in $\bT$,
so the node $d_{\lgl\rgl}$ will split in $\bD$.
Extend $\lgl 1 \rgl$ to a splitting node in $\bT$ and label this extension $v_{\lgl 1\rgl}$.
Let $a_{\lgl 0\rgl }$ be the leftmost node in $\bT$ of length $|v_{\lgl 1\rgl}|+1$,
let $a_{\lgl 1\rgl}={v_{\lgl 1\rgl}}^{\frown}0$,
and $u_{\lgl 1\rgl}={v_{\lgl 1\rgl}}^{\frown}1$.
Extend $a_{\lgl 0\rgl }$ to the shortest  splitting node  containing it in $\bT\cap 0^{<\om}$; label this $d_{\lgl 0\rgl}$.
Let $d_{\lgl 1\rgl}$ be the leftmost extension of $a_{\lgl 1\rgl}$ in $\bT$ of length $|d_{\lgl 0\rgl}|$,
and let $u'_{\lgl 1\rgl}$ be the leftmost
extension of $u_{\lgl 1\rgl}$ in $\bT$ of length $|d_{\lgl 0\rgl}|$.
Apply Lemma \ref{lem.facts}
to extend  ${d_{\lgl 0\rgl}}^{\frown}0$, ${d_{\lgl 0\rgl}}^{\frown}1$, ${d_{\lgl 1\rgl}}^{\frown}0$, and
${u'_{\lgl 1\rgl}}^\frown 0$
to nodes
$d_{\lgl 0,0\rgl}$, $d_{\lgl 0,1\rgl}$, $d_{\lgl 1,0\rgl}$, and $c^{\bD}_0$, respectively,
so that
the tree induced by these nodes satisfy the \POC,
$c^{\bD}_0$ is a coding node,
and the immediate extension of $d_{\lgl i_0,i_1\rgl}$
in $\bT$ is $i_1$,
for all $\lgl i_0,i_1\rgl$ in $\Lev_{\bT}(2)$.
Let $k_0=|c^{\bD}_0|$, and notice that we have constructed $\bD\re(\le k_0)$ satisfying the lemma.

For the induction step,
suppose $n\ge 1$ and we have constructed $\bD\re (\le k_{n-1})$
 satisfying the lemma.
Then by the induction hypothesis,
there is a strong similarity map  of the trees
$\varphi :\bT\re(\le l_{n-1})\ra \bD^*\re (\le k_{n-1})$,
where
for each $t\in \bT\re l_{n-1}$,
$d_t=\varphi(t)$.
Let $s$ denote the  node  in $\bT\re l_{n-1}$  which  extends to the coding node $c^{\bT}_n$.
Let $v_s$ be a splitting node in $\bT$ extending $d_s$.
Let $u_s={v_s}^{\frown}1$
and extend all nodes $d_t$,
$t\in (\bT\re l_{n-1})\setminus\{s\}$,
 leftmost to length $|u_s|$ and label these $d_t'$.
Extend ${v_s}^{\frown}0$ leftmost to length $|u_s|$ and label it $d'_s$.
Let $X=\{d'_t:t\in \bT\re l_{n-1}\}\cup\{u_s\}$ and let $\Spl(u_s)$ be the set of all nodes in $X$ which have no parallel $1$'s with $u_s$.
Apply Lemma \ref{lem.facts} to obtain a coding node $c^{\bD}_n$ extending $u_s$
and nodes $d_w$, $w\in \bT\re l_n$,
so that,
letting  $k_n=|c^{\bD}_n|$
and
\begin{equation}
\bD\re k_n=
\{d_m:m\in \bT\re l_n\}\cup\{c^{\bD}_n\},
\end{equation}
 the following hold.
$\bD\re(\le k_n)$ satisfies the \POC,
and
$\bD^*\re(\le k_n)$ is strongly similar  as  a tree
to
$\bT\re (\le l_n)$.
Thus, the coding nodes in $\bD\re (\le k_n)$
code exactly the same graph as the coding nodes in $\bT\re (\le l_n)$.

Let $\bD=\bigcup_{n<\om}\bD\re(\le k_n)$.
Then the set of coding nodes in  $\bD$  forms an antichain of maximal nodes in $\bD$.
Further,
the tree generated by the
 the meet closure of the set $\{c^{\bD}_n:n<\om\}$
is exactly  $\bD$,
and
 $\bD^*$  and  $\bT$
are strongly similar as trees.
By the construction,
for each pair $i<n<\om$, $c^{\bD}_n(k_i)=c^{\bT}_n(l_i)$;
 hence they code $\mathcal{H}_3$ in the same order.

To finish,  let $\psi$ be the strong similarity  map   from $\bT$ to $S$.
Letting $D$ be the $\psi$-image of $\{c^{\bD}_n:n<\om\}$,
we obtain an antichain  of coding nodes in $S$
such that $D^*$ and $\bD^*$ are strongly similar trees,
and hence $D^*$ is strongly similar as  a tree to $\bT$.
Thus, the antichain of coding nodes $D$
 codes $\mathcal{H}_3$ and satisfies  the lemma.
\end{proof}

\begin{figure}
\begin{tikzpicture}[scale=.3]

\foreach \x in {3}{
\foreach \y in {0}{
\node  at (\x,\y) {};
}}

\foreach \x in {-5,11}{
\foreach \y in {2}{
\node at (\x,\y) {};
}}
\draw (-5,2)--(3,0)--(11,2);

\foreach \x in {-6,10,12}{
\foreach \y in {3}{
\node at (\x,\y){};
}}
\draw (-6,3)--(-5,2);
\draw (10,3)--(11,2)--(12,3);

\foreach \x in {-8,8,10}{
\foreach \y in {5}{
\node at (\x,\y){};
}}
\draw (-8,5)--(-6,3);
\draw (8,5)--(10,3);
\draw (10,5)--(12,3);

\foreach \x in {-9,-7,7,9}{
\foreach \y in {6}{
\node at (\x,\y){};
}}
\draw (-9,6)--(-8,5)--(-7,6);
\draw (7,6)--(8,5);
\draw (9,6)--(10,5);

\foreach \x in {-12,-7,4,9}{
\foreach \y in {9}{
\node at (\x,\y){};
}}
\draw (-12,9)--(-9,6);
\draw (-7,6) to [out= 30, in=-110]  (-7,9);
\draw (4,9)--(7,6);
\draw (9,6) to [out= 120, in=-70]  (9,9);

\foreach \x in {-13,-6,3}{
\foreach \y in {10}{
\node at (\x,\y){};
}}
\draw (-13,10)--(-12,9);
\draw (-6,10)--(-7,9);
\draw (3,10)--(4,9);

\foreach \x in {-16,-6,0}{
\foreach \y in {13}{
\node at (\x,\y){};
}}
\draw (-16,13)--(-13,10);
\draw (-6,10) to [out= 45, in=-120] (-6,13);
\draw (0,13)--(3,10);

\foreach \x in {-17,-7,-5,-1}{
\foreach \y in {14}{
\node at (\x,\y){};
}}
\draw (-17,14)--(-16,13);
\draw (-7,14)--(-6,13)--(-5,14);
\draw (-1,14)--(0,13);

\foreach \x in {-19,-9,-7,-1}{
\foreach \y in {16}{
\node at (\x,\y){};
}}
\draw (-19,16)--(-17,14);
\draw(-9,16)--(-7,14);
\draw (-7,16)--(-5,14);
\draw (-1,14) to [out= 110, in=-70] (-1,16);

\foreach \x in {-20,-10,-8,-2,0}{
\foreach \y in {17}{
\node at (\x,\y){};
}}
\draw (-20,17)--(-19,16);
\draw (-10,17)--(-9,16);
\draw(-8,17)--(-7,16);
\draw (-2,17)--(-1,16)--(0,17);

\foreach \x in {-21,-11,-9,-3,-1}{
\foreach \y in {18}{
\node at (\x,\y){};
}}
\draw (-21,18)--(-20,17);
\draw(-11,18)--(-10,17);
\draw(-9,18)--(-8,17);
\draw(-3,18)--(-2,17);
\draw(-1,18)--(0,17);

\foreach \x in {-22,-20,-12,-10,-4,-2}{
\foreach \y in {19}{
\node at (\x,\y){};
}}
\draw (-22,19)--(-21,18)--(-20,19);
\draw(-12,19)--(-11,18);
\draw(-10,19)--(-9,18);
\draw(-4,19)--(-3,18);
\draw(-2,19)--(-1,18);

\foreach \x in {-25,-20,-15,-10,-7,-2}{
\foreach \y in {22}{
\node at (\x,\y){};
}}
\draw (-25,22)--(-22,19);
\draw(-20,19) to [out= 50, in=-120] (-20,22);
\draw(-15,22)--(-12,19);
\draw  (-10,19)  to [out= 120, in=-60] (-10,22);
\draw (-7,22)--(-4,19);
\draw(-2,19)  to [out= 150, in=-35](-2,22);

\foreach \x in {-26,-19,-16,-8,-1}{
\foreach \y in {23}{
\node at (\x,\y){};
}}
\draw (-26,23)--(-25,22);
\draw (-19,23)--(-20,22);
\draw(-16,23)--(-15,22);
\draw (-8,23)--(-7,22);
\draw(-1,23)--(-2,22);

\node[right] at (2,-1) {$d_{\lgl \rgl}$};

\node[right] at (10,1.2) {$v_{\lgl 1\rgl}$};
\node[right] at (12,2.8) {$u_{\lgl 1\rgl}$};
\node[left] at (-8,5) {$d_{\lgl 0\rgl}$};
\node[left] at (8,5)  {$d_{\lgl 1\rgl}$};
\node[right] at (10,5) {$u_{\lgl 1\rgl}'$};

\node[left] at (-12,9) {$d_{\lgl 0,0\rgl}$};
\node[right] at (-7,9) {$d_{\lgl 0,1\rgl}$};
\node[left] at (4,9) {$d_{\lgl 1,0\rgl}$};
\node[right] at (8,10){$c_0^{\bD}$};

\node[left] at (-6,13){$v_{\lgl 0,1\rgl}$};
\node[right] at (-5.2,13.8){$u_{\lgl 0,1\rgl}$};
\node[right] at (-5.2,13.8){$u_{\lgl 0,1\rgl}$};

\node[right] at (-1,16) {$d_{\lgl 1,0,0\rgl}$};

\node[right] at (-1,18) {$d_{\lgl 1,0,0,1\rgl}$};
\node[left] at (-2.8,18) {$d_{\lgl 1,0,0,0\rgl}$};

\node[left] at (-21,18) {$d_{\lgl 0,0,0,0\rgl}$};
\node[left] at  (-25,22) {$d_{\lgl 0,0,0,0,0\rgl}$};
\node[left] at (-9.1,22.8) {$c^{\bD}_1$};

\node[left] at (-19,16) {$d_{\lgl 0,0,0\rgl}$};
\node[left] at (-9,16) {$d_{\lgl 0,1,1\rgl}$};
\node[left] at (-10.8,18) {$d_{\lgl 0,1,1,0\rgl}$};
\node[left] at (-17.3,21.2) {$d_{\lgl 0,0,0,0,1\rgl}$};

\node[right] at (-2,22) {$d_{\lgl 1,0,0,1,0\rgl}$};

\node[left] at (-12,21.2) {$d_{\lgl 0,1,1,0,0\rgl}$};
\node[right] at (-7.8,21.2) {$d_{\lgl 1,0,0,1,0\rgl}$};

\node[circle, fill=black,inner sep=0pt, minimum size=5pt] at (3,0) {};

\node[circle, fill=black,inner sep=0pt, minimum size=5pt] at (-8,5) {};
\node[circle, fill=black,inner sep=0pt, minimum size=5pt] at (8,5) {};

\node[circle, fill=black,inner sep=0pt, minimum size=5pt] at (-12,9) {};
\node[circle, fill=black,inner sep=0pt, minimum size=5pt] at (-7,9) {};
\node[circle, fill=black,inner sep=0pt, minimum size=5pt] at (4,9) {};

\node[circle, draw, inner sep=0pt, minimum size=5pt] at (9,9) {};
\node[circle, draw, inner sep=0pt, minimum size=5pt] at (-10,22) {};

\node[circle, fill=black,inner sep=0pt, minimum size=5pt] at (-1,16) {};
\node[circle, fill=black,inner sep=0pt, minimum size=5pt] at (-9,16) {};
\node[circle, fill=black,inner sep=0pt, minimum size=5pt] at (-19,16) {};

\node[circle, fill=black,inner sep=0pt, minimum size=5pt] at (-21,18) {};
\node[circle, fill=black,inner sep=0pt, minimum size=5pt] at (-11,18) {};
\node[circle, fill=black,inner sep=0pt, minimum size=5pt] at (-3,18) {};
\node[circle, fill=black,inner sep=0pt, minimum size=5pt] at (-1,18) {};

\node[circle, fill=black,inner sep=0pt, minimum size=5pt] at (-25,22) {};
\node[circle, fill=black,inner sep=0pt, minimum size=5pt] at (-20,22) {};
\node[circle, fill=black,inner sep=0pt, minimum size=5pt] at (-15,22) {};
\node[circle, fill=black,inner sep=0pt, minimum size=5pt] at (-7,22) {};
\node[circle, fill=black,inner sep=0pt, minimum size=5pt] at (-2,22) {};

\end{tikzpicture}
\caption{The construction of $\bD$}
\end{figure}

The filled-in nodes in the graphic  form the tree $\bD^*$.
The coding nodes are exactly the maximal nodes of $\bD$ and form an antichain.
Notice that the collection of nodes $\{d_t:t\in \bT\re (\le 2)\}$, which are exactly the filled-in nodes in the figure,
forms a tree strongly similar to $\bT\re 2$.
The bent lines indicate that the next node was
chosen
 either  to be least such that it was a critical node or  according to Lemma \ref{lem.facts}.

\begin{mainthm}\label{finalthm}
The universal triangle-free graph has finite big Ramsey degrees.
\end{mainthm}

\begin{proof}
Let $\G$ be a finite triangle-free graph,
and let $f$ be a coloring of all the copies of $\G$ in $\mathcal{H}_3$ into finitely many colors.
By Theorem \ref{thm.cool}, there is a
 strong coding tree $\bT$ in which the coding nodes code $\mathcal{H}_3$.
Let $\mathcal{A}$ denote the set of all
antichains of coding nodes
 of $\bT$ which code a copy of $\G$.
For each $Y\in\mathcal{A}$,
let
$h(Y)=f(\G')$,
where
$\G'$ is the copy of $\G$
coded by the coding nodes in $Y$.
Then $h$  is a finite coloring on $\mathcal{A}$.

Let $n(\G)$ be the number of different strict similarity types
of incremental strongly diagonal subsets of $\bT$ coding $\G$,
and let $\{Z_i:i<n(\G)\}$ be a set of one representative from  each of  these  different strict similarity types.
Successively
apply Theorem \ref{thm.mainRamsey}
  to obtain incremental strong coding trees $\bT\ge T_0\ge\dots\ge T_{n(\G)-1}$ so that for each $i<n(\G)$,
$h$ is takes only one color on
$\Sim^s_{T_i}(Z_i)$.
Let $S=T_{n(\G)-1}$.

By Lemma \ref{lem.bD}
there is a strongly diagonal subtree $D\sse S$ which also codes $\mathcal{H}_3$.
Then every set of coding nodes in $D$ coding $\G$ is automatically strongly diagonal and incremental.
Therefore, every copy  of $\G$ in the copy of $\mathcal{H}_3$ coded by the coding nodes in $\D$
is coded by an incremental strongly diagonal set.
Thus, the number of
 strict similarity types of  incremental strongly  diagonal  subsets of $\bT$  coding $\G$
provides an upper bound for
the big Ramsey degree of $\G$ in $\mathcal{H}_3$.
\end{proof}


\section{Concluding Remarks}

The number   of strict similarity types of antichains of coding nodes in a strong coding tree
which code
 a given finite graph $\G$
is bounded by the number of subtrees of the binary tree of height $2(|\G|+1)$,
times the number of ways to choose incremental sets of new parallel $1$'s between any successive levels of the tree.
We leave it as an open problem to determine this recursive function precisely.

Although we have not yet proved the lower bounds
to obtain the precise
 big Ramsey degrees  $T(\G,\mathcal{K}_3)$ for finite triangle-free graphs inside the universal triangle-free graph, we conjecture that they will be equal to the number of strict similarity
 types of  strongly
incremental
antichains  coding $\G$.
We further conjecture that once found, the lower bounds will satisfy the conditions   needed for  Zucker's work in \cite{Zucker19} to apply.
If  so, then  $\mathcal{H}_3$ would admit a big Ramsey structure and  any big Ramsey flow will be a universal completion flow, and any two universal completion flows will be universal.
We refer the interested reader to
Theorem 1.6 in \cite{Zucker19} and surrounding comments.


The author is currently working to extend
the techniques developed here to prove that for each $k>3$,
the universal $k$-clique-free homogeneous graph $\mathcal{H}_k$ has finite big Ramsey degrees.
Preliminary analyses indicate that the
methodology
created in this paper is robust enough
 to  apply, with  modifications,  to a large class of \Fraisse\ limits of \Fraisse\ classes of relational structures omitting some irreducible substructure.




\bibliographystyle{amsplain}
\bibliography{references}

\providecommand{\bysame}{\leavevmode\hbox to3em{\hrulefill}\thinspace}
\providecommand{\MR}{\relax\ifhmode\unskip\space\fi MR }
\providecommand{\MRhref}[2]{%
  \href{http://www.ams.org/mathscinet-getitem?mr=#1}{#2}
}
\providecommand{\href}[2]{#2}
\begin{thebibliography}{10}

\bibitem{Abramson/Harringon78}
F.G. Abramson and L.A. Harrington, \emph{Models without indiscernibles},
  Journal of Symbolic Logic \textbf{43} (1978), no.~3, 572--600.

\bibitem{DevlinThesis}
Dennis Devlin, \emph{Some partition theorems for ultrafilters on $\omega$},
  Ph.D. thesis, Dartmouth College, 1979.

\bibitem{DobrinenRIMS17}
Natasha Dobrinen, \emph{Forcing in {R}amsey theory}, Proceedings of 2016 RIMS
  Symposium on Infinite Combinatorics and Forcing Theory (2017), 17--33.

\bibitem{Dobrinen/Hathaway16}
Natasha Dobrinen and Daniel Hathaway, \emph{The {H}alpern-{L}{\"{a}}uchli
  {T}heorem at a measurable cardinal}, Journal of Symbolic Logic \textbf{82}
  (2017), no.~4, 1560--1575.

\bibitem{Dzamonja/Larson/MitchellRado09}
M.~D{\v{z}}amonja, J.~Larson, and W.~J. Mitchell, \emph{Partitions of large
  {R}ado graphs}, Archive for Mathematical Logic \textbf{48} (2009), no.~6,
  579--606.

\bibitem{Erdos/Hajnal/Posa75}
Paul Erd{\H{o}}s, Andr{\'{a}}s Hajnal, and Lajos P{\'{o}}sa, \emph{Strong
  embeddings of graphs into coloured graphs}, Colloquia Mathematica Societatis
  J{\'{a}}nos Bolyai, 10, vol. I, Infinite and finite sets (A.~Hajanal,
  R.~Rado, and V.~S{\'{o}}s, eds.), vol.~10, North-Holland, 1973, pp.~585--595.

\bibitem{Erdos/Rado56}
Paul Erd{\H{o}}s and Richard Rado, \emph{A partition calculus in set theory},
  Bulletin of the American Mathematical Society \textbf{62} (1956), 427--489.

\bibitem{Folkman70}
Jon Folkman, \emph{Graphs with monochromatic complete subgraphs in every edge
  coloring}, SIAM Journal of Applied Mathematics \textbf{18} (1970), 19--24.

\bibitem{Fouche98}
W.L. Fouch{\'{e}}, \emph{Symmetries in {R}amsey theory}, East-West Journal of
  Mathematics \textbf{1} (1998), 43--60.

\bibitem{Graham/Leeb/Rothschild72}
R.~L. Graham, K.~Leeb, and B.~L. Rothschild, \emph{Ramsey's theorem for a class
  of categories}, Advances in Mathematics \textbf{8} (1972), 417--433.

\bibitem{Graham/Leeb/Rothschild73}
\bysame, \emph{Errata: ``{R}amsey's theorem for a class of categories},
  Advances in Mathematics \textbf{10} (1973), 326--327.

\bibitem{Graham/Rothschild71}
R.~L. Graham and B.~L. Rothschild, \emph{Ramsey's theorem for $n$-parameter
  sets}, Transactions of the American Mathematical Society \textbf{159} (1971),
  257--292.

\bibitem{Halpern/Lauchli66}
J.~D. Halpern and H.~L{\"{a}}uchli, \emph{A partition theorem}, Transactions of
  the American Mathematical Society \textbf{124} (1966), 360--367.

\bibitem{Halpern/Levy71}
J.~D. Halpern and A.~L{\'{e}}vy, \emph{The {B}oolean prime ideal theorem does
  not imply the axiom of choice}, Axiomatic Set Theory, Proc. Sympos. Pure
  Math., Vol. XIII, Part I, Univ. California, Los Angeles, Calif., 1967,
  American Mathematical Society, 1971, pp.~83--134.

\bibitem{Henson71}
C.~Ward Henson, \emph{A family of countable homogeneous graphs}, Pacific
  Journal of Mathematics \textbf{38} (1971), no.~1, 69--83.

\bibitem{Kechris/Pestov/Todorcevic05}
Alexander Kechris, Vladimir Pestov, and Stevo Todorcevic,
  \emph{Fra{\"{i}}ss{\'{e}} limits, {R}amsey theory, and topological dynamics
  of automorphism groups}, Geometric and Functional Analysis \textbf{15}
  (2005), no.~1, 106--189.

\bibitem{Komjath/Rodl86}
P{\'{e}}ter Komj{\'{a}}th and Vojt{\v{e}}ch R{\"{o}}dl, \emph{Coloring of
  universal graphs}, Graphs and Combinatorics \textbf{2} (1986), no.~1, 55--60.

\bibitem{Laflamme/NVT/Sauer10}
Claude Laflamme, Lionel Nguyen Van~Th{\'{e}}, and Norbert Sauer,
  \emph{Partition properties of the dense local order and a colored version of
  {M}illiken's theorem}, Combinatorica \textbf{30} (2010), no.~1, 83--104.

\bibitem{Laflamme/Sauer/Vuksanovic06}
Claude Laflamme, Norbert Sauer, and Vojkan Vuksanovic, \emph{Canonical
  partitions of universal structures}, Combinatorica \textbf{26} (2006), no.~2,
  183--205.

\bibitem{Larson08}
Jean Larson, \emph{Counting canonical partitions in the random graph},
  Combinatorica \textbf{28} (2008), no.~6, 659--678.

\bibitem{Laver84}
Richard Laver, \emph{Products of infinitely many perfect trees}, Journal of the
  London Mathematical Society (2) \textbf{29} (1984), no.~3, 385--396.

\bibitem{Milliken79}
Keith~R. Milliken, \emph{A {R}amsey theorem for trees}, Journal of
  Combinatorial Theory, Series A \textbf{26} (1979), 215--237.

\bibitem{Milliken81}
\bysame, \emph{A partition theorem for the infinite subtrees of a tree},
  Transactions of the American Mathematical Society \textbf{263} (1981), no.~1,
  137--148.

\bibitem{Nesetril/Rodl77}
Jaroslav Ne{\v{s}}et{\v{r}}il and Vojt{\v{e}}ch R{\"{o}}dl, \emph{Partitions of
  finite relational and set systems}, Journal of Combinatorial Theory Series A
  \textbf{22} (1977), no.~3, 289--312.

\bibitem{Nesetril/Rodl83}
\bysame, \emph{Ramsey classes of set systems}, Journal of Combinatorial Theory
  Series A \textbf{34} (1983), no.~2, 183--201.

\bibitem{NVT08}
Lionel Nguyen Van~Th{\'{e}}, \emph{Big {R}amsey degrees and divisibility in
  classes of ultrametric spaces}, Canadian Mathematical Bulletin \textbf{51}
  (2008), no.~3, 413--423.

\bibitem{NVT13}
\bysame, \emph{More on the {K}echris-{P}estov-{T}odorcevic correspondence},
  Fundamenta Mathematicae \textbf{222} (2013), no.~1, 19--47.

\bibitem{NVTHabil}
\bysame, \emph{Structural {R}amsey theory with the
  {K}echris-{P}estov-{T}odorcevic correspondence in mind}, Ph.D. thesis,
  Universit{\'{e}} d'Aix-Marseille, 2013, p.~48 pp.

\bibitem{Pouzet/Sauer96}
Maurice Pouzet and Norbert Sauer, \emph{Edge partitions of the {R}ado graph},
  Combinatorica \textbf{16} (1996), no.~4, 505--520.

\bibitem{Ramsey30}
Frank~P. Ramsey, \emph{On a problem of formal logic}, Proceedings of the London
  Mathematical Society \textbf{30} (1929), 264--296.

\bibitem{Sauer98}
Norbert Sauer, \emph{Edge partitions of the countable triangle free homogenous
  graph}, Discrete Mathematics \textbf{185} (1998), no.~1--3, 137--181.

\bibitem{Sauer06}
\bysame, \emph{Coloring subgraphs of the {R}ado graph}, Combinatorica
  \textbf{26} (2006), no.~2, 231--253.

\bibitem{Shelah91}
Saharon Shelah, \emph{Strong partition relations below the power set:
  consistency -- was {S}ierpinski right? ii}, Sets, Graphs and Numbers
  (Budapest, 1991), vol.~60, Colloq. Math. Soc. J{\'{a}}nos Bolyai,
  North-Holland, 1991, pp.~637--688.

\bibitem{TodorcevicBK10}
Stevo Todorcevic, \emph{Introduction to {R}amsey {S}paces}, Princeton
  University Press, 2010.

\bibitem{Farah/TodorcevicBK}
Stevo Todorcevic and Ilijas Farah, \emph{Some {A}pplications of the {M}ethod of
  {F}orcing}, Yenisei Series in Pure and Applied Mathematics, 1995.

\bibitem{Vlitas14}
Dimitris Vlitas, \emph{A canonical partition relation for uniform families of
  finite strong subtrees}, Discrete Mathematics \textbf{335} (2014), 45--65.

\bibitem{Zhang17}
Jing Zhang, \emph{A tail cone version of the {H}alpern-{L}{\"{a}}uchli theorem
  at a large cardinal}, Journal of Symbolic Logic \textbf{84} (2019), no.~2,
  473--496.

\bibitem{Zucker19}
Andy Zucker, \emph{Big {R}amsey degrees and topological dynamics}, Groups,
  Geometry and Dynamics (2018), 235--276.

\end{thebibliography}

\end{document}